\setheadfoot{\onelineskip}{2\onelineskip} 
\theoremstyle{plain}
\newtheorem{proposition}{Proposition}
\newtheorem{theorem}[proposition]{Theorem}
\newtheorem*{theorem*}{Theorem}
\newtheorem*{proposition*}{Proposition}
\newtheorem{corollary}[proposition]{Corollary}
\newtheorem{lemma}[proposition]{Lemma}
\theoremstyle{definition}
\newtheorem{definition}{Definition} 
\newtheorem{exercise}{Exercise} 
\theoremstyle{remark}
\newtheorem{example}{Example}
\newtheorem*{remark}{Remark}
\newcommand{\defeq}{\stackrel{\mathrm{def}}{\, = \,}}
\newcommand{\bN}{{\mathbb{N}}}
\newcommand{\bZ}{{\mathbb{Z}}}
\newcommand{\bQ}{{\mathbb{Q}}}
\begin{document} 

\title{What are discrete valuation rings? What are Dedekind domains?}

\author{A Mathematical Essay by Wayne Aitken}

\author{A mathematical essay by Wayne Aitken\thanks{Copyright \copyright \ 2019 by Wayne Aitken. 
This work is made available under a Creative Commons Attribution 4.0 License.
Readers may copy and redistributed this work under the terms of this license.}}

\date{Fall 2019\thanks{Version of \today.}}

\maketitle

This essay introduces discrete valuation rings (DVRs), and shows that they have several
possible definitions that are equivalent (we call these ``characterizations'').
We will use discrete valuations and other ideas to see that Dedekind domains have several interesting equivalent characterizations as well. Along the way we will study properties of Dedekind domains and their fractional ideals.
For example, we will classify the discrete valuations of a Dedekind domain in terms of its nonzero prime ideals,
and use these valuations to gain a better understanding of fractional ideals.
This essay also introduces local methods to the study of Dedekind domains and integral domains more generally.
We will often work in integral domains, generally or with some constraints, rather than always assuming
we are working in a Dedekind domain.
A purpose of this approach is to open the way to the study of rings that are not quite
Dedekind; for example, Appendix B considers integral domains, such as orders in algebraic number fields,
that are like Dedekind domains except fail to be integrally
closed. Appendix E considers integral domains that a like Dedekind domains except they
fail to be Noetherian.


An intended audience includes readers studying number theory who are familiar with the ring of integers in
an algebraic number field, and who have proved that such rings are Dedekind domains.\footnote{For example, I
envision readers who have studied the relevant chapters of an introductory textbook at the level of
P.~Samuel, \emph{Algebraic Theory of Numbers}, I.~Steward and D.~Tall, \emph{Algebraic Number Theory and Fermat's Last Theorem}, or D.~Marcus, \emph{Number Fields}.}
For such a reader, results about Dedekind domains are mainly applicable in
the context of algebraic number fields.
Such a reader is likely to have proved that every PID is a Dedekind domain (in particular 
that~PIDs, even UFDs, are integrally closed), and perhaps the theorem on the unique factorization of ideals 
into prime ideals in a Dedekind domain. 
For this audience, this essay is not an introduction to Dedekind domains but is,
in a sense, a part two in
the study of Dedekind domains. Even so, I try to make this essay as self-contained as possible.
For example, we will include independent proofs, along the way, that every PID is a Dedekind domain
and that every fractional ideal factors uniquely as the product of powers of prime ideals.
Because it is self-contained in this way,
another possible audience consists of readers interested in the general subject
of commutative algebra and who want to 
explore Dedekind domains and related rings. Such a reader could likely get by with no prior exposure
to Dedekind domains as long as they have enough background to understand the 
concepts used in the definitions,
and is likely not to need extra motivation to pursue the abstract approach followed here.

As the name suggests,
the theory of Dedekind domains owes much to Dedekind's work in the 19th century on ideals 
in the context of algebraic
number fields. The modern version is largely due to Emmy Noether who in the 1920s studied and characterized
Dedekind domains as a general type of ring for which unique factorization of ideals
and other results, including some traditionally associated to integers in an algebraic number field, could be proved.
In a sense, by doing so she invented the field of commutative algebra.
Similarly, this essay does not deal with integers in a number field specifically, but with general integral domains
that satisfy various axiomatic properties.
So this essay is very much in the spirit of Noether's abstract and axiomatic approach.
Of course this essay draws on the work of several other mathematicians in commutative algebra who pioneered
the local approach starting with Wolfgang~Krull in the 1930s. I have also drawn inspiration and ideas from several other expository accounts by
various authors. This material is 
a cornerstone of modern commutative algebra, number theory, and algebraic geometry, and so is pretty standard,
but I have tried to give my own twist on the subject and I hope I have provides some novel viewpoints here and there.

I have attempted to give full and clear statements of the definitions and results,
with motivations provided where possible,
and give indications of any proof that is not straightforward.
However,  my philosophy is that,
at this level of mathematics, straightforward proofs are best 
worked out
by the reader. 
So although this is a leisurely account of the subject, some of the proofs
may be quite terse or missing altogether.
Whenever a proof is not given, this signals to the reader that they
should work out the proof, and that the proof is straightforward. Supplied
proofs are sometimes just sketches, but I have attempted to be detailed enough that the reader can 
supply the details without too much trouble.
Even when a proof is provided, I encourage the reader to attempt a proof first before looking
at the provided proof.  Often the reader's proof will make more sense because it reflects
their own viewpoint, and may even be more elegant.
In addition to this challenge to work out proofs, itself a very good exercise for the reader, I have provided
around~60 labeled exercises so a reader can deepen their knowledge -- these are usually less essential
to the main narrative, so can be skipped in a first reading.

\chapter{Required background}\label{ch1}

This document is written for readers with some basic familiarity with introductory abstract algebra including
some basic facts about groups and their homomorphisms, rings (at least commutative rings), integral domains, fields, polynomial
rings (in one variable), ideals, and at least some exposure to modules.
In this document all rings will be commutative with a unity element.
I assume familiarity with principal, prime and maximal ideals, and with the basics  
concerning PIDs (principal ideal domains:  integral domains whose
ideals are all principal).
For example, readers should be familiar with the fact that every maximal ideal $\mathfrak m$ in a 
commutative ring~$R$ is a prime ideal,
and that~$R/\mathfrak m$ is a field.
In Exercise~\ref{euclidean_exercise}, I also assume familiarity with Euclidean domains, but this
exercise is optional.
For us, a \emph{proper ideal} is any ideal that is not the whole ring.
I assume the result that every proper ideal
is contained in a maximal ideal (we often work in Noetherian domains
where this follows from the ascending chain
condition; for general commutative rings, we would have to use Zorn's lemma).
 I assume familiarity with the result that an ideal is the whole ring
if and only if it contains a unit. 
For us, a \emph{local ring} is a commutative ring with exactly one maximal ideal. I assume the reader knows,
or can verify, that in a local ring~$R$ the unit group~$R^\times$ is just~$R\smallsetminus \mathfrak m$
where $\mathfrak m$ is the maximal ideal, and conversely if the units~$R^\times$ of $R$
are such that $R\smallsetminus R^\times$ is an ideal then $R$ is a local ring
with maximal ideal $\mathfrak m = R\smallsetminus R^\times$.
The reader should also be familiar with the multiplication of ideals  (which we review and
extend in Sections~\ref{ch2.5} and~\ref{ch3}).

I assume that the reader is familiar with Noetherian rings.\footnote{See, for example,
 my short expository essay \emph{Noetherian modules and rings}.} 
 One definition is that a Noetherian ring is a commutative ring such that every ideal is finitely
 generated. So the reader should be familiar with the concept of finitely generated ideals and modules.
 We really only need the Noetherian concept for integral domains, and we use the term \emph{Noetherian domain}
 for an integral domain that is Noetherian.
In Section~\ref{ch3} we will make use of the following result.

\begin{proposition*}
Suppose $R$ is a Noetherian ring.
If $M$ is a finitely generated~$R$-module then 
all $R$-submodules of $M$ are also finitely generated $R$-modules.
\end{proposition*}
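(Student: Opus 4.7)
The plan is to show that the class of \emph{Noetherian modules}---those $R$-modules all of whose submodules are finitely generated---is closed under the operations needed to build any finitely generated module out of $R$ itself, and then invoke the hypothesis that $R$ is Noetherian as a ring (i.e.\ as a module over itself).

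First I would establish the standard equivalence: an $R$-module $N$ has the property that every submodule is finitely generated if and only if every ascending chain $N_1 \subseteq N_2 \subseteq \cdots$ of submodules stabilizes. The forward direction uses that the union $\bigcup N_i$ is a submodule, hence finitely generated, and its generators must all live in some $N_k$. The reverse direction picks elements of a submodule one at a time and uses ACC to force the process to terminate. This equivalence will be convenient for the induction step to come.

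Next I would prove the key lemma: in a short exact sequence $0 \to M' \to M \to M'' \to 0$, the middle module $M$ is Noetherian if and only if both $M'$ and $M''$ are. One direction is immediate since submodules of $M'$ are submodules of $M$ and submodules of $M''$ correspond to submodules of $M$ containing $M'$. The substantive direction takes a submodule $N \subseteq M$, notes that $N \cap M'$ is finitely generated in $M'$ and the image of $N$ in $M''$ is finitely generated in $M''$, and then assembles generators of $N$ from lifts of these---this is the standard ``sandwich'' argument.

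With the lemma in hand, the rest is straightforward. Since $R$ is Noetherian, $R$ is a Noetherian $R$-module. Induction on $n$, using the split exact sequence $0 \to R^{n-1} \to R^n \to R \to 0$, shows each free module $R^n$ is Noetherian. Finally, a finitely generated $R$-module $M$ with generators $m_1, \ldots, m_n$ admits a surjection $R^n \twoheadrightarrow M$, exhibiting $M$ as a quotient of a Noetherian module and hence Noetherian by the lemma. The main obstacle, if there is one, is simply bookkeeping in the short-exact-sequence lemma; everything else is routine once the ACC reformulation is available.
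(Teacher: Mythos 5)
Your proof is correct and is the standard argument for this fact. Note, however, that the paper itself does \emph{not} prove this proposition: it is stated in Section~1 as assumed background, with a footnote pointing the reader to a separate expository essay on Noetherian modules and rings. So there is no proof in the paper to compare against.

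That said, the approach you outline is exactly the canonical one and it is complete in its logical structure: establish the equivalence between ``every submodule is finitely generated'' and the ascending chain condition; prove the two-out-of-three lemma for short exact sequences (with the substantive direction being the assembly of generators for $N \subseteq M$ from generators of $N \cap M'$ and of the image of $N$ in $M''$); bootstrap from $R$ Noetherian to $R^n$ Noetherian by induction on the split sequence $0 \to R^{n-1} \to R^n \to R \to 0$; and finish by writing any finitely generated $M$ as a quotient of some $R^n$. One small remark: you actually do not need the ACC reformulation at all for this result if you are willing to carry out the ``sandwich'' generator argument directly, since the finite-generation statement is what is used at every step. The ACC equivalence is a convenience (it makes the ``only if'' direction of the two-out-of-three lemma and certain closure properties slightly cleaner to phrase), but it is not load-bearing, and a reader wanting the most economical route to this proposition could omit it. Either way, your argument is sound.
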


We also use the ascending chain condition for Noetherian rings, and the following:

\begin{proposition*}
Let $\mathcal I$ be a collection of ideals of
a Noetherian ring~$R$.
If~$\mathcal I$ is nonempty, then there is a maximal element $I\in \mathcal I$
in the sense that there is no~$I'\in \mathcal I$ with $I \subsetneq I'$.
\end{proposition*}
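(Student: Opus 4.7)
The plan is to argue by contradiction using the ascending chain condition (ACC), which the excerpt explicitly allows us to assume for Noetherian rings. The statement is essentially a direct reformulation of ACC in terms of the existence of maximal elements in arbitrary nonempty families of ideals, so the proof should be short.

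First I would suppose, for contradiction, that $\mathcal I$ is nonempty but has no maximal element. Then for every $I \in \mathcal I$ there exists some $I' \in \mathcal I$ with $I \subsetneq I'$. Pick any starting element $I_0 \in \mathcal I$ (possible because $\mathcal I$ is nonempty). Then, using dependent choice, I would recursively define a sequence $I_0, I_1, I_2, \ldots$ of elements of $\mathcal I$ such that $I_n \subsetneq I_{n+1}$ for all $n$, by invoking the failure of maximality at each stage to select $I_{n+1}$ strictly containing~$I_n$.

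This produces a strictly ascending chain
\[
I_0 \subsetneq I_1 \subsetneq I_2 \subsetneq \cdots
\]
of ideals in~$R$, directly contradicting the ascending chain condition that holds in the Noetherian ring~$R$. Hence our assumption was false, and a maximal element $I \in \mathcal I$ must exist.

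The only step that requires any care is the recursive construction of the chain, which uses (countable) dependent choice; in a purely set-theoretic treatment one would note this, but in standard commutative algebra this is taken for granted. There is no real obstacle here: the entire content of the argument is simply repackaging ACC, together with the fact that ``no maximal element'' means every element is properly contained in another, into a concrete infinite ascending chain.
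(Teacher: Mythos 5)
Your proof is correct and is the standard argument: the paper states this proposition without proof in the ``Required background'' chapter (it is listed as assumed knowledge, alongside the ascending chain condition), so there is no in-text proof to compare against. Your contrapositive construction of a strictly ascending chain from the hypothetical absence of a maximal element is exactly the textbook derivation, and you are right to flag the (harmless, universally assumed in this context) use of dependent choice in selecting each successor~$I_{n+1}$.
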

 
Some authors regard fields as Dedekind domains, some do not.
We will regard fields as Dedekind domains, and will give the following traditional definition:
\begin{definition}\label{dedekind_domain_def}
A \emph{Dedekind domain} is an integral domain~$R$ such that
\begin{enumerate}
\item
$R$ is Noetherian,
\item
$R$ is integrally closed (in the fraction field of~$R$), and
\item
every nonzero prime ideal of~$R$ is maximal.
\end{enumerate}
\end{definition}
As mentioned above, 
I expect many readers to already have some basic familiarity with Dedekind domains, including
the definition and the following results:
\begin{enumerate}
\item
The ring of integers in an algebraic number field
satisfies the above definition of Dedekind domain.
\item
Sometimes unique factorization of elements fails in such Dedekind domains.
\item
But still Dedekind domains generalize PIDs.
In particular, every PID is integrally closed and is in fact a Dedekind domain.
\item
Unique factorization is restored at the level of ideals:
every ideal in a Dedekind domain factors essentially uniquely as the product of prime
ideals.
\end{enumerate}
We don't really use (1) and (2) in this document, but they provide a primary motivation for 
considering Dedekind domains at all, at least for number theorists.
We will use~(3) and~(4)  in our study of DVRs, but will end up giving independent proofs
as we move on to Dedekind domains. So they are not strictly necessary as background from a logical point of view.
In fact we will prove (4) twice, once using local methods in the main body, and then
a second time in Appendix A,
where we consider a standard non-local proof likely similar to proofs that the reader may have seen.
This appendix is provided for the convenience of the reader, and to help the reader compare the two approaches.

Since the notion of integrally closed appears in the definition of Dedekind domain, 
we expect that the reader is 
 familiar with the idea of integral elements over a ring (in terms
of roots of monic polynomials with coefficients in a given ring), and 
the notion of integral closure.

Every integral domain~$R$ is a subfield of its field of fractions~$K$. In essence,~$K$
is the smallest field containing~$R$. We assume the reader is familiar with such \emph{fields of fractions},
which we also call \emph{fraction fields}.
This is a simple example of a localization $S^{-1} R$ of $R$.
Starting with Section~\ref{ch5}, I will assume the 
reader is familiar with the basics of localization, at least in 
 the setting of an integral domain.\footnote{See
 my previous expository essay on localization in integral domains.}
 Since we localize only in an integral domain~$R$, any localization $S^{-1} R$
 can be regarded as an intermediate ring 
 $$R \subseteq S^{-1} R \subseteq K.$$
Here $S$ is a multiplicative system of~$R$.
For us, a multiplicative system of $R$ is a subset closed under multiplication, containing $1$
but not~$0$. 
We assume the reader is also familiar with the localization of ideals of an integral domain.
This includes the relationship between ideals of~$R$ and ideals of the localization,
which is especially simple in the case of prime ideals.
We assume the reader is also familiar with the localization of modules,
at least for~$R$-submodules of the field of fractions~$K$. For such a module~$I$,
the localization~$S^{-1} I$ will also be an $R$-submodules of the field of fractions~$K$.
We also assume familiarity with the following:
 
\begin{proposition*}
Suppose $R$ is an integral closed integral domain and that $S$ is a multiplicative system.
Then the localization $S^{-1} R$ is integrally closed.
 
 Suppose $R$ is a Dedekind domain and that $S$ is a multiplicative system.
 Then the localization $S^{-1} R$ is a Dedekind domain.
 \end{proposition*}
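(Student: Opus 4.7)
The plan is to handle the two claims in order, using the fact that the fraction field of $R$ and of $S^{-1}R$ coincide (call it $K$) and that we already know the bijection between prime ideals of $S^{-1}R$ and prime ideals of $R$ disjoint from $S$.

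For the first claim, I would take an element $\alpha \in K$ that is integral over $S^{-1}R$, so it satisfies a relation
\[
\alpha^n + \frac{a_{n-1}}{s_{n-1}}\alpha^{n-1} + \cdots + \frac{a_0}{s_0} = 0
\]
with $a_i \in R$ and $s_i \in S$. The main idea is to clear denominators cleverly so that $t\alpha$ becomes integral over $R$ itself, where $t = s_0 s_1 \cdots s_{n-1} \in S$. Multiplying the relation through by $t^n$ and then regrouping the factors of $t$ into the powers of $\alpha$ gives a monic polynomial relation for $t\alpha$ whose coefficients lie in $R$ (each $s_i$ divides $t$, so no denominators remain). Since $R$ is integrally closed in $K$, we conclude $t\alpha \in R$, hence $\alpha \in S^{-1}R$. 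This is the only step requiring any real care; the bookkeeping of exponents to ensure the rewritten polynomial is monic is the mild obstacle.

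For the second claim, I would verify the three axioms of a Dedekind domain for $S^{-1}R$. Integrally closed follows immediately from the first part. For Noetherian: every ideal of $S^{-1}R$ has the form $S^{-1}I$ for some ideal $I$ of $R$; since $R$ is Noetherian, $I$ is finitely generated, and the images of its generators in $S^{-1}R$ generate $S^{-1}I$. For the Krull-dimension condition, I would invoke the standard order-preserving bijection between primes of $S^{-1}R$ and primes of $R$ disjoint from $S$. A nonzero prime $\mathfrak{p}$ of $S^{-1}R$ corresponds to a nonzero prime $\mathfrak{q}$ of $R$ (with $\mathfrak{q} \cap S = \emptyset$), which is maximal in $R$ because $R$ is Dedekind. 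Any strictly larger prime of $S^{-1}R$ containing $\mathfrak{p}$ would correspond to a strictly larger prime of $R$ disjoint from $S$, which cannot exist except $R$ itself, and $R$ meets $S$ at $1$; so $\mathfrak{p}$ is maximal in $S^{-1}R$.

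No step here is really delicate, so the likely sticking point is just the algebraic manipulation in the first part: one needs to choose the right power of the common denominator and redistribute it to produce an honest monic integrality equation for $t\alpha$ over $R$ rather than a non-monic equation one would get from naively clearing denominators.
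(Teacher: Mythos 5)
Your proof is correct, and for both claims it follows essentially the paper's route. The paper declines to write out a proof of the first claim in the main text (it calls the denominator-clearing argument ``the usual proof'' and relegates it to an exercise, sketching instead an alternative that needs a Noetherian hypothesis); you supply exactly that standard argument, and the bookkeeping works: with $t = s_0\cdots s_{n-1}$, multiplying by $t^n$ turns the coefficient of $(t\alpha)^i$ into $t^{n-i}a_i/s_i$, which lies in $R$ because $n-i\ge 1$ and $s_i\mid t$. For the second claim, your three-axiom verification matches the paper's Theorem~\ref{localize_dd_thm} --- same use of the prime-ideal correspondence to show a nonzero prime of $S^{-1}R$ is maximal, same use of the ideal correspondence for the Noetherian property; the one small gloss worth adding is that to pass from ``no strictly larger prime ideal'' to ``maximal'' you tacitly use that every proper ideal lies in a maximal (hence prime) ideal, which the paper handles by explicitly choosing a maximal ideal $\mathfrak m \supseteq \mathfrak q$ at the outset.
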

 Actually, we really only require the first claim as background (which is a good exercise
 if a reader has not seen it), since we supply arguments for the rest.
 
Suppose $R$ is an integral domain, and $x \in K$ where $K$ is the field of fractions.
Then we can form the ring extension of~$R$ generated by $x$.
We write this as $R[x]$, and since $x \in K$ this will be a subring of $K$.
 We use this construction in Section~\ref{ch4}.
On the other hand $R[X]$ with a upper-case $X$ will denote the ring of polynomials
with coefficients in~$R$. The rings $R[x]$ and $R[X]$ are related: $y \in R[x]$ if and only if there is a polynomial
$f\in R[X]$ such that $y = f(x)$.
A key result is that $x$ is integral over~$R$ if and only if $R[x]$ is a finitely generated $R$-module.
We will supply an argument when needed that finitely-generated here implies integral.
The other direction is more straightforward: 
the reader should be able to show that if~$x$ is integral and satisfies a monic polynomial of degree $d$
then $R[x]$ is generated by~$1, x, \ldots, x^{d-1}$ (one way is to show by induction that
$1R+ xR + \ldots+ x^{d-1}R$ contains the power~$x^{d+k}$ for all $k\ge 0$).

When we get to the Chinese remainder theorem in Section~\ref{approximation_theorem_ch} we will need to work
with Cartesian products of rings $R_1 \times R_2$. The reader should know that the product of rings is a ring
under componentwise operations. (Although for our application we will only need to know that it is a group
under addition).

If $I$ is an ideal in an integral domain $R$ (or more generally, a commutative ring) then there is natural surjective ring homomorphism 
$R[X] \to (R/I)[X]$
between polynomial rings that sends $X$ to~$X$.
It essentially acts by replacing each coefficient with its equivalence class mod~$I$.
We use this homomorphism in Section~\ref{ch_gauss_lemma} to help prove Gauss's lemma in Dedekind domains.


\chapter{Discrete valuation rings} \label{ch2}

In this section we will consider several equivalent characterization of discrete valuation rings.
We start by thinking of such rings as  rings arising from discrete valuations.

\begin{definition}
A \emph{discrete valuation} of a field $K$ is a surjective homomorphism
$$
v\colon K^\times \to \bZ
$$
between the multiplicative group $K^\times$ of the field and the additive group $\bZ$
such that the following law holds for sums: for all $x, y \in K^\times$, if $x + y \ne 0$
then
$$
v(x + y) \ge \min\{v(x), v(y)\}.
$$
\end{definition}

\begin{remark}
Observe that valuations satisfy a multiplicative law $v(xy) = v(x) + v(y)$ and an additive 
law $v(x + y) \ge \min\{v(x), v(y)\}$. We can extend these laws to all 
elements~$x, y \in K$ by defining  $v(0) = \infty$ and then extending addition
and the order relation  to~$\bZ\cup \{ \infty\}$  in the obvious way. In particular, we can then remove the restriction
$x+y \ne 0$
in the above definition. 
\end{remark}

\begin{exercise}
Suppose $w\colon K^\times \to \bZ$ satisfies the above, except for the assumption of surjectivity.
Assume instead that $w$ has nontrivial image.
Show that~$v \defeq \frac{1}{e} w$ is a discrete valuation where $e$ is the smallest positive number in the image of $w$.
 Such a $w$ is called a \emph{unnormalized discrete valuation}, and sometimes
for emphasis what we call a discrete valuation is called a \emph{normalized discrete valution}.
\end{exercise}

\begin{exercise}
Let $F[X]$ be the ring of polynomials over a field~$F$. Can the degree map be extended into
a valuation of the field of fractions $F(X)$ of~$F[X]$?
What about $(-1)$ times the degree map? 
\end{exercise}

\begin{lemma}
If $v$ is a discrete valuation then $v(1) = 0$, $v(-1) = 0$, and $v(-x) = v(x)$ for all $x \in K^\times$.
\end{lemma}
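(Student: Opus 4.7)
The plan is to use only the fact that $v\colon K^\times \to \bZ$ is a group homomorphism from the multiplicative group of $K$ to the additive group of $\bZ$; the additive law plays no role here. All three claims should fall out from basic homomorphism identities.

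First I would handle $v(1) = 0$: since $1$ is the multiplicative identity of $K^\times$ and $0$ is the additive identity of $\bZ$, and $v$ is a group homomorphism, this is immediate from the standard fact that any group homomorphism sends identity to identity. Then for $v(-1) = 0$, I would compute $v(-1) + v(-1) = v((-1)(-1)) = v(1) = 0$ using the homomorphism property; since we are in $\bZ$ (a torsion-free group), $2\, v(-1) = 0$ forces $v(-1) = 0$. Note that this step uses the fact that the target is $\bZ$, not just any abelian group.

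Finally, for any $x \in K^\times$, the homomorphism property gives
\[
v(-x) \;=\; v((-1)\cdot x) \;=\; v(-1) + v(x) \;=\; 0 + v(x) \;=\; v(x).
\]

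There is no real obstacle: the lemma is just the remark that purely multiplicative facts about group homomorphisms into $\bZ$ force these values, and no use is made of the ultrametric inequality.
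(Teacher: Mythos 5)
Your proof is correct and is exactly the straightforward argument the paper intends (the paper leaves this lemma to the reader). Your observation that only the homomorphism property is used, together with torsion-freeness of $\bZ$ to get $v(-1)=0$ from $2\,v(-1)=0$, is precisely the point.
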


In the following we use the convention $v(0) = \infty$ mentioned above.
 (We can easily rephrase the definitions and statements
to avoid this though, but it is a convenient convention). 

\begin{proposition} \label{prop1}
Let $v\colon K^\times \to \bZ$ be a discrete valuation.
Then
$$
\mathcal O_v\; \defeq \;\{ x \in K \mid v(x) \ge 0\}
$$
is a local integral domain, with maximal ideal
$$
\mathfrak p_v\; \defeq\; \{ x \in K \mid v(x) \ge 1\}
$$
and unit group
$$
\mathcal O^\times_v = \{ x \in K \mid v(x) = 0\}.
$$
Also $x\in \mathcal O_v$ or $x^{-1} \in \mathcal O_v$
for all $x \in K^\times$, so $K$ is the fraction field of $\mathcal O_v$.
\end{proposition}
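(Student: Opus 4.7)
The plan is to derive everything from the two axioms of a discrete valuation (the multiplicative law $v(xy) = v(x)+v(y)$ and the additive law $v(x+y) \ge \min\{v(x),v(y)\}$), together with the facts $v(1) = 0$ and $v(-x) = v(x)$ established in the preceding lemma. Because $\mathcal O_v$ is a subset of the field $K$, as soon as I check it is a subring it is automatically an integral domain, so the real content is (a) subring, (b) identification of units, (c) $\mathfrak p_v$ is an ideal equal to $\mathcal O_v \smallsetminus \mathcal O_v^\times$, and (d) the dichotomy $x \in \mathcal O_v$ or $x^{-1} \in \mathcal O_v$.

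First I would verify that $\mathcal O_v$ is a subring of $K$. Containment of $0$ and $1$ comes from $v(0) = \infty$ and $v(1) = 0$; closure under addition is immediate from $v(x+y) \ge \min\{v(x),v(y)\} \ge 0$; closure under multiplication from the multiplicative law; and closure under negation from $v(-x) = v(x)$. Next, for $x \in K^\times$, the element $x$ is a unit of $\mathcal O_v$ iff both $x$ and $x^{-1}$ lie in $\mathcal O_v$, and since $v(x^{-1}) = -v(x)$, this happens iff $v(x) \ge 0$ and $-v(x) \ge 0$, that is, iff $v(x) = 0$. This gives the unit group.

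From this description it follows directly that $\mathcal O_v \smallsetminus \mathcal O_v^\times = \mathfrak p_v$ (including $0$, which has $v(0) = \infty \ge 1$). To finish the local ring claim, I would show $\mathfrak p_v$ is an ideal: closure under addition again uses the additive law (both summands have valuation $\ge 1$), and absorption under multiplication by $r \in \mathcal O_v$ uses $v(rx) = v(r)+v(x) \ge 0 + 1 = 1$. Once the set of non-units is known to be an ideal, the characterization of local rings recalled in Chapter~\ref{ch1} immediately gives that $\mathcal O_v$ is a local ring with maximal ideal $\mathfrak p_v$.

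For the last assertion, given any $x \in K^\times$, either $v(x) \ge 0$, in which case $x \in \mathcal O_v$, or $v(x) < 0$, in which case $v(x^{-1}) = -v(x) > 0$ and so $x^{-1} \in \mathcal O_v$. In the second case $x = 1/x^{-1}$ is a ratio of elements of $\mathcal O_v$, so $K$ is (contained in, hence equal to) the fraction field of $\mathcal O_v$. There is no real obstacle here; the only point worth flagging is that the local ring conclusion comes for free from the preamble's criterion rather than from a direct maximality argument, which is the cleanest way to avoid reproving a standard fact.
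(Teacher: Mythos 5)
Your proof is correct and is exactly the argument the author has in mind; the paper deliberately omits a proof here, signaling that it is straightforward, and your write-up covers all the claims (subring, unit group, ideal of non-units, local-ring criterion, dichotomy and fraction field) in the expected way. The one point you flag — invoking the preamble's criterion that a ring is local when its non-units form an ideal — is indeed the cleanest route and is precisely what the author anticipates.
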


If $v(x) \ne v(y)$ then the inequality $v(x + y) \ge \mathrm{min}(v(x), v(y))$
becomes an equality:

\begin{proposition}
Let $v\colon K^\times \to \bZ$ be a discrete valuation
which we extend to $0 \in K$ using the convention $v(0) = \infty$.
If $x, y \in K$ are such that $v(x) \ne v(y)$ then
$$
v(x + y) = \min\{v(x), v(y)\}.
$$
\end{proposition}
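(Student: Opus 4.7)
The plan is to assume without loss of generality that $v(x) < v(y)$ (the case $v(x)>v(y)$ being symmetric, and the hypothesis $v(x) \ne v(y)$ rules out equality). Note that by convention we must also handle the case in which one of $x,y$ is zero, but this is easy: if $y = 0$, then $v(x+y)=v(x)=\min\{v(x),\infty\}$ directly. So we may further assume $x,y \in K^\times$, and under the assumption $v(x) < v(y)$ we additionally have $x+y \ne 0$ (since $x+y=0$ would force $v(x)=v(-y)=v(y)$).

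The first inequality $v(x+y) \ge \min\{v(x),v(y)\} = v(x)$ is simply the additive law from the definition of a discrete valuation. The whole content of the proposition is the reverse inequality $v(x+y) \le v(x)$ under the strict-inequality hypothesis.

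To establish this, I would argue by contradiction: suppose $v(x+y) > v(x)$. The key trick is to write $x = (x+y) + (-y)$ and apply the additive law to this sum. Using the earlier lemma that $v(-y) = v(y)$, we get
\[
v(x) = v\bigl((x+y) + (-y)\bigr) \ge \min\{v(x+y),\, v(y)\}.
\]
But both $v(x+y)$ and $v(y)$ are strictly greater than $v(x)$ by our hypothesis and our assumption $v(y) > v(x)$, so the right-hand side is strictly greater than $v(x)$, contradicting the left-hand side.

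The only real obstacle is recognizing the trick of recovering $x$ as $(x+y)+(-y)$ and invoking $v(-y) = v(y)$; once that is in hand, the proof is a three-line application of the additive law. Everything else (handling $x=0$ or $y=0$, and swapping the roles of $x$ and $y$) is routine bookkeeping, easily absorbed into a ``without loss of generality'' statement.
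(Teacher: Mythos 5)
Your proof is correct, and the key step — recovering $x$ as $(x+y)+(-y)$, applying the additive law, and deriving a contradiction from $v(x+y) > v(x)$ — is exactly the paper's second argument. The extra bookkeeping about zero cases and the without-loss-of-generality reduction is fine but not essential.
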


\begin{proof}
Suppose, say, that $v(y) > v(x)$.
We outline two arguments for the result.

In the first argument, observe that $x$ cannot be zero. 
By dividing $x$ and $y$ by~$x$,
we reduce to the case $x_1 = x/x = 1$ and $y_1 = y/x$.
Since $v(y_1) > v(x_1) = 0$, we have $y_1 \in \frak p_v$.
Thus $1 + y_1$ is a unit in $\mathcal O_v$, so $v(1 + y_1) = 0 = v(1)$.

For the second argument, suppose that $v(x + y) \ne v(x)$. Thus $v(x+y) > v(x)$.
Then
$$
v(x) = v((x+y) - y) \ge \min\{ v(x+y), v(y) \} > v(x),
$$
a contradiction.
\end{proof}

\begin{corollary}
Let $v\colon K^\times \to \bZ$ be a discrete valuation
which we extend to $0 \in K$ using the convention $v(0) = \infty$.
Suppose $x_1, \ldots, x_k \in K$ are such that~$v(x_1)$ is strictly less than
$v(x_i)$ for all $i \ne 1$. Then
$$
v(x_1 + \ldots + x_k) = v(x_1).
$$
In particular, $x_1 + \ldots + x_k$ is nonzero.
\end{corollary}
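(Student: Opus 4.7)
The plan is to prove this corollary by induction on $k$, leveraging the preceding proposition (which is exactly the $k=2$ case, provided both summands are interpreted via the $v(0) = \infty$ convention).

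First I would dispose of the base case $k=1$, which is immediate, and note that $k=2$ is precisely the previous proposition applied to $x_1$ and $x_2$. For the inductive step, assume the result for sums of at most $k-1$ terms, and set $y \defeq x_2 + \ldots + x_k$. The key observation is that $v(y) > v(x_1)$: by the additive law (extended to $\bZ \cup \{\infty\}$), we have $v(y) \ge \min\{v(x_2), \ldots, v(x_k)\}$, and by hypothesis every $v(x_i)$ with $i \ne 1$ is strictly greater than $v(x_1)$, so the minimum is strictly greater than $v(x_1)$. (If $y = 0$, then $v(y) = \infty > v(x_1)$, which still satisfies the strict inequality.)

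Now I would apply the preceding proposition to $x_1$ and $y$: since $v(x_1) \ne v(y)$, we get
$$
v(x_1 + \ldots + x_k) = v(x_1 + y) = \min\{v(x_1), v(y)\} = v(x_1),
$$
which is the desired equality. Finally, since $v(x_1) \in \bZ$ is finite while $v(0) = \infty$, the equality $v(x_1 + \ldots + x_k) = v(x_1)$ forces $x_1 + \ldots + x_k \ne 0$.

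There is no real obstacle here; the only subtlety is being careful about the possibility that the tail sum $y$ might vanish, which is handled cleanly by the $v(0) = \infty$ convention and the fact that the preceding proposition was explicitly stated to accommodate this extension.
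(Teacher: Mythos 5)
Your proof is correct and takes essentially the same approach as the paper: split off $x_1$, set $y = x_2 + \ldots + x_k$, observe $v(y) > v(x_1)$, and apply the preceding proposition. The only cosmetic difference is that you frame the iterated additive inequality $v(y) \ge \min\{v(x_2),\ldots,v(x_k)\}$ as a formal induction, which the paper leaves implicit.
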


\begin{proof}
Let $x = x_1$ and $y = x_2 + \ldots + x_k$. Use the above proposition.
\end{proof}

\begin{definition}\label{def2}
A \emph{discrete valuation ring} (DVR) is any ring of the form
$$
\mathcal O_v = \{ x \in K \mid v(x) \ge 0\}
$$
where $K$ is a field and where
$v \colon K^\times \to \bZ$ is a discrete valuation.
\end{definition}

\begin{definition}
Let $v\colon K^\times \to \bZ$ be a discrete valuation. If $\pi \in K^\times$ is such that
$$v(\pi) = 1$$
then we call $\pi$ a \emph{uniformizer} of $v$.
Since discrete valuations are surjective, such uniformizers exist.
\end{definition}

The following shows the usefulness of a uniformizer in terms of unique factorization.

\begin{proposition}
Let $v\colon K^\times \to \bZ$ be a discrete valuation with uniformizer $\pi$.
Then every element $x$ of $K^\times$ can be written uniquely as
$$
x= u \pi^k
$$
where $k\in \bZ$ and where $u$ is a unit in~$\mathcal O_v$.
When $x$ is written in this form, $v(x) = k$.
\end{proposition}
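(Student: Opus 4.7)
The plan is to derive both existence and uniqueness by exploiting the homomorphism property $v(xy) = v(x) + v(y)$, together with the characterization of the unit group $\mathcal O_v^\times$ as the set of elements with valuation $0$, which was established in Proposition~\ref{prop1}.

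First I would tackle existence. Given any $x \in K^\times$, set $k \defeq v(x) \in \bZ$ and define $u \defeq x\pi^{-k}$, which lies in $K^\times$ since $\pi \ne 0$ and $x \ne 0$. The multiplicative law gives
\[
v(u) \;=\; v(x) + v(\pi^{-k}) \;=\; v(x) - k\,v(\pi) \;=\; k - k \;=\; 0,
\]
so by Proposition~\ref{prop1}, $u \in \mathcal O_v^\times$. Rearranging yields $x = u\pi^k$ as required.

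Next I would handle uniqueness together with the valuation formula, which in fact fall out of the same calculation. Suppose $x = u\pi^k = u'\pi^{k'}$ with $u,u' \in \mathcal O_v^\times$ and $k,k' \in \bZ$. Applying $v$ to $u\pi^k$ and using $v(u) = 0$ and $v(\pi) = 1$ gives
\[
v(x) \;=\; v(u) + k\,v(\pi) \;=\; k,
\]
and identically $v(x) = k'$, so $k = k'$. Then cancellation in $K^\times$ forces $u = u'$, giving uniqueness. The very same computation $v(x) = k$ establishes the final claim of the proposition.

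There is no real obstacle here: the key inputs --- that $v$ is a group homomorphism, that units in $\mathcal O_v$ are precisely the valuation-zero elements, and that $v(\pi) = 1$ --- are exactly what the statement was designed to combine. The mildest subtlety is remembering that $k$ may be negative, so one works in $K^\times$ rather than $\mathcal O_v$ throughout; the argument handles this uniformly because $v$ is defined on all of $K^\times$.
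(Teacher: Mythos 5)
Your proof is correct, and since the paper leaves this proposition to the reader (in keeping with its stated philosophy that straightforward proofs are omitted), there is no written proof to compare against; yours is exactly the natural argument the author intends, using $k=v(x)$, $u = x\pi^{-k}$, the homomorphism property of $v$, and the characterization $\mathcal O_v^\times = \{x : v(x)=0\}$ from Proposition~\ref{prop1}.
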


\begin{remark}
This shows that every DVR is a simple type of UFD.
\end{remark}

Given a nonzero ideal $I$ of $\mathcal O_v$ we must have a smallest value $v(x)$ among
elements~$x \in I$. This can be used to show the following:

\begin{proposition}
Let $v\colon K^\times \to \bZ$ be a discrete valuation with uniformizer $\pi$.
The nonzero ideals of
the ring~$\mathcal O_v$
are $\pi^k \mathcal O_v$ where $k\ge 0$. These ideals are distinct and
$$
\pi^k \mathcal O_v = \left \{ x \in K \mid v(x) \ge k \right\}.
$$
An element of $K$ is a uniformizer if and only if it generates $\frak p_v$ as an ideal.
\end{proposition}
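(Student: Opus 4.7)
The plan is to follow exactly the hint given just before the statement: pick a nonzero ideal $I \subseteq \mathcal O_v$ and focus on the smallest value of $v$ achieved on $I \setminus \{0\}$. Since $I \subseteq \mathcal O_v$, the set $\{v(x) : x \in I, x \ne 0\}$ is a nonempty subset of $\bN \cup \{0\}$, and so contains a least element $k$. Choose $x_0 \in I$ with $v(x_0) = k$. By the preceding proposition (unique factorization in $K^\times$), we may write $x_0 = u \pi^k$ for some unit $u \in \mathcal O_v^\times$. Then $\pi^k = u^{-1} x_0 \in I$, which gives the inclusion $\pi^k \mathcal O_v \subseteq I$. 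For the reverse inclusion, any nonzero $y \in I$ has $v(y) \ge k$ and factors as $y = u' \pi^{v(y)} = (u' \pi^{v(y)-k}) \pi^k$, and since $v(y) - k \ge 0$ the element in parentheses lies in $\mathcal O_v$, so $y \in \pi^k \mathcal O_v$. Hence $I = \pi^k \mathcal O_v$.

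Next I would identify $\pi^k \mathcal O_v$ with $\{x \in K : v(x) \ge k\}$. One inclusion follows from the multiplicative law: if $x = \pi^k y$ with $y \in \mathcal O_v$, then $v(x) = k + v(y) \ge k$. The other direction uses unique factorization again: if $v(x) \ge k$ (and $x \ne 0$), write $x = u \pi^{v(x)}$ with $u$ a unit, and factor out $\pi^k$ to land in $\pi^k \mathcal O_v$; the case $x = 0$ is trivial. Distinctness of these ideals is now immediate, since the description as a level set pins down $k$ as the minimum of $v$ on the ideal.

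Finally, for the characterization of uniformizers: by definition $\mathfrak p_v = \{x \in K : v(x) \ge 1\}$, which by the displayed formula with $k = 1$ is exactly $\pi \mathcal O_v$. If $\pi'$ is any uniformizer, then $v(\pi') = 1 = v(\pi)$, so $\pi'/\pi$ is a unit, giving $\pi' \mathcal O_v = \pi \mathcal O_v = \mathfrak p_v$. Conversely, if $\pi' \mathcal O_v = \mathfrak p_v$, then $\pi'$ is a nonzero element achieving the minimum value $1$ of $v$ on $\mathfrak p_v$, so $v(\pi') = 1$ and $\pi'$ is a uniformizer.

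There is no real obstacle here; everything is a direct consequence of the previous proposition on the factorization $x = u\pi^k$, together with the multiplicative law for $v$. The one point to be slightly careful about is the very first step, confirming that the infimum of $v$ on $I \setminus \{0\}$ is attained — which is automatic because it is an infimum of a nonempty subset of $\bN \cup \{0\}$, requiring no Noetherian hypothesis.
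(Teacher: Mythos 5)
Your proof is correct and follows exactly the hint the paper gives before the statement (use the minimum of $v$ on a nonzero ideal); the paper itself leaves the details to the reader. The only cosmetic point is in the final converse: rather than asserting directly that a generator $\pi'$ of $\mathfrak p_v$ "achieves the minimum," it is slightly cleaner to write $\pi' = u\pi^{v(\pi')}$, conclude $\pi'\mathcal O_v = \pi^{v(\pi')}\mathcal O_v = \mathfrak p_v = \pi\mathcal O_v$, and invoke the distinctness just established to get $v(\pi')=1$.
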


\begin{corollary}\label{cor6}
Every DVR is a PID with a unique nonzero prime ideal.
\end{corollary}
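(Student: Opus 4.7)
The plan is to read off both claims directly from the preceding proposition, which classifies the ideals of $\mathcal O_v$ as $(0)$ together with $\pi^k \mathcal O_v$ for $k \ge 0$. Since each such ideal is generated by a single element, $\mathcal O_v$ is immediately a PID, and the only remaining task is to identify which of these ideals are prime.

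First I would dispose of the zero ideal by noting that $\mathcal O_v$ is an integral domain (as a subring of the field $K$), so $(0)$ is prime but not nonzero; this ideal plays no further role. Next, $\pi^0 \mathcal O_v = \mathcal O_v$ is improper and so not prime. The key step is to argue that for $k \ge 2$, the ideal $\pi^k \mathcal O_v$ is not prime: we have $\pi \cdot \pi^{k-1} = \pi^k \in \pi^k \mathcal O_v$, yet the distinctness of the ideals $\pi^j\mathcal O_v$ (guaranteed by the previous proposition, via the fact that any generator has valuation exactly $j$) forces neither $\pi$ nor $\pi^{k-1}$ to lie in $\pi^k \mathcal O_v$. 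Finally, $\pi \mathcal O_v = \mathfrak p_v$ is prime because Proposition~\ref{prop1} already identifies it as the maximal ideal of the local ring $\mathcal O_v$, and maximal ideals are prime.

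Putting these observations together, $\mathfrak p_v$ is the one and only nonzero prime ideal of $\mathcal O_v$. There is no real obstacle here; the work was done in establishing the classification of ideals, and this corollary is essentially a bookkeeping step that repackages that classification in the language of PIDs and prime ideals.
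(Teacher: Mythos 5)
Your proof is correct and is exactly the argument the paper intends: the corollary is stated without proof as an immediate consequence of the preceding classification of ideals of $\mathcal O_v$, and you fill in the routine verification (all ideals $\pi^k\mathcal O_v$ are principal; $\pi^k$ with $k\ge 2$ fails primality via $\pi\cdot\pi^{k-1}$; $\mathfrak p_v$ is prime since maximal).
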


\begin{exercise}\label{euclidean_exercise}
Show that every DVR is a Euclidean domain.
\end{exercise}

Recall that an integral domain $R$ is integrally closed if, for any monic polynomial~$f \in R[X]$, every 
root of $f$ in the field of fractions of~$R$ is actually in~$R$.

\begin{proposition}\label{every_dvr_is_integrally_closed_prop}
Every DVR is integrally closed.
\end{proposition}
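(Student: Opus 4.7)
The plan is to feed the integral relation into the corollary about sums with a strictly smallest term. Let $R = \mathcal{O}_v$ with fraction field $K$, and suppose $x \in K$ satisfies a monic equation
$$x^n + a_{n-1} x^{n-1} + \ldots + a_1 x + a_0 = 0, \qquad a_i \in R.$$
I want to conclude that $v(x) \geq 0$, which says $x \in R$. The case $x = 0$ is trivial, so I would assume $x \in K^\times$ and argue by contradiction, supposing $v(x) < 0$.

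The crux is then comparing the valuations of the $n+1$ summands. Since each $a_i \in R$ satisfies $v(a_i) \geq 0$, and since $v(x) < 0$ forces $i\, v(x) > n\, v(x)$ whenever $i < n$, I expect
$$v(a_i x^i) \;=\; v(a_i) + i\, v(x) \;\geq\; i\, v(x) \;>\; n\, v(x) \;=\; v(x^n)$$
for every $i < n$. Thus the leading term $x^n$ has the strictly smallest valuation among the summands, and the corollary to the previous proposition then says the whole sum is nonzero with valuation $n\,v(x)$. This directly contradicts the integral equation, so $v(x) \geq 0$ must hold.

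There is no real obstacle in this argument: the ultrametric reasoning is already packaged in the corollary, and the essential ingredient is monicity, which pins down $v(x^n) = n\,v(x)$ exactly, with no leading coefficient to shift things. If the leading coefficient were merely assumed to lie in $R$ (rather than equal to $1$), the trick would fail, which is a nice way to see why \emph{integral} over $R$ — not just \emph{algebraic} over $K$ — is the right hypothesis for this kind of result.
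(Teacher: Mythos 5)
Your proof is correct and follows essentially the same approach as the paper's: both exploit monicity to observe that the leading term $x^n$ has strictly smaller valuation than every other summand when $v(x) < 0$, and both derive a contradiction from the ultrametric inequality. The only cosmetic difference is that you invoke the corollary on sums with a unique strictly smallest valuation directly, whereas the paper rearranges the equation as $x^d = a_{d-1}x^{d-1} + \cdots + a_0$ and compares the valuations of the two sides.
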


\begin{proof}
We can appeal to the theorem that every PID or even every UFD is integrally closed. However, there is a nice
proof that uses properties of valuations.
Let~$\mathcal O_v$ be a~DVR with valuation $v: K^\times \to \bZ$ 
where $K$ is the fraction field of~$\mathcal O_v$.
Let~$f$ be a monic polynomial in~$\mathcal O_v[X]$ with root $x \in K$. 
We wish to show $x \in \mathcal O_v$, so we assume otherwise:
suppose that $x \not\in \mathcal O_v$, so $v(x) = - k < 0$.
We can express the equation $f(x) = 0$ as follows:
$$
x^d = a_{d-1} x^{d-1} + \ldots + a_1 x + a_0
$$
where each $a_i \in \mathcal O_v$. 
The valuation of the left hand side is $-d k$, and the valuation of each  term on the right
is greater than or equal to $-(d-1) k$. So the right hand side must have 
strictly greater valuation than the left hand side, a contradiction.
Thus $v(x) \ge 0$, and $x \in \mathcal O_v$.
\end{proof}

\begin{exercise}\label{ex3}
Suppose $v_1, v_2 \colon K^\times \to \bZ$ are two valuations. Show that if $\mathcal O_{v_1}  = \mathcal O_{v_2}$
then $v_1 = v_2$.
Conclude that $v \mapsto \mathcal O_v$ is a bijection between the set of discrete valuations of a given field~$K$ and the set of discrete valuation
rings with field of fractions $K$.
\end{exercise}

We have established various properties of discrete valuation rings. Some of these properties are actually
sufficient for an integral domain to be a discrete valuation ring. We start with one such property:

\begin{proposition}\label{propA}
Suppose that $R$ is an integral domain and suppose $\pi$ is a nonzero element of~$R$ that is not a unit.
Suppose that every nonzero element of $R$ can be written as $u \pi^k$ with $k\ge 0$ and with $u \in R^\times$.
Then the fraction field~$K$ of $R$ has the property that every nonzero element can
be written uniquely as $u \pi^k$ with $k\in \bZ$ and with $u \in R^\times$.
The function $u \pi^k \mapsto k$ defines a discrete valuation~\text{$K^\times \to \bZ$},
and~$R$ is the associated discrete valuation ring with uniformizer~$\pi$.
\end{proposition}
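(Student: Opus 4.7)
The plan is to first establish the existence and uniqueness of the factorization $u\pi^k$ in $K^\times$, then build $v$ from this normal form, and finally verify that $R$ is recovered as $\mathcal O_v$.

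I first handle uniqueness in $R$. Suppose $u\pi^k = u'\pi^{k'}$ in $R$ with $u,u' \in R^\times$ and $0 \le k \le k'$. Then $\pi^{k'-k}$ divides $u$ in $R$, so $\pi^{k'-k}$ is a unit. If $k' > k$, writing $\pi^{k'-k} = \pi \cdot \pi^{k'-k-1}$ exhibits $\pi$ as a unit, contradicting the hypothesis. Thus $k = k'$ and $u = u'$. To extend to $K$, I take any nonzero $x \in K$, write $x = a/b$ with $a, b \in R$ nonzero, apply the assumption to get $a = u\pi^j$ and $b = u'\pi^m$, and then $x = (u u'^{-1})\pi^{j-m}$ with $u u'^{-1} \in R^\times$ and $j-m \in \bZ$. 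For uniqueness in $K$, suppose $u\pi^k = u'\pi^{k'}$ in $K$ with $k \le k'$; multiplying through by $\pi^{-k}$ moves us into $R$, where uniqueness of the $R$-representation (applied to $u = u'\pi^{k'-k}$) forces $k = k'$ and $u = u'$.

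Next I define $v\colon K^\times \to \bZ$ by $v(u\pi^k) = k$. Surjectivity is immediate from $v(\pi^k) = k$, and the multiplicative law $v(xy) = v(x) + v(y)$ follows directly from $(u_1\pi^{k_1})(u_2\pi^{k_2}) = (u_1 u_2)\pi^{k_1+k_2}$ together with uniqueness. The one substantive step is the ultrametric inequality: given $x = u_1\pi^{k_1}$ and $y = u_2\pi^{k_2}$ with $k_1 \le k_2$ and $x+y \neq 0$, factor out to get
$$
x + y = \pi^{k_1}\bigl(u_1 + u_2 \pi^{k_2 - k_1}\bigr).
$$
The parenthesized element lies in $R$ since $k_2 - k_1 \ge 0$, so it is either zero (excluded) or equals $u\pi^m$ for some $u \in R^\times$ and $m \ge 0$. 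Thus $v(x+y) = k_1 + m \ge k_1 = \min\{v(x), v(y)\}$, as required.

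Finally I identify $R$ with $\mathcal O_v$. By construction, $x \in R$ nonzero gives $v(x) = k \ge 0$, so $R \subseteq \mathcal O_v$ (and $0 \in R$ trivially). Conversely, any nonzero $x \in \mathcal O_v$ has the form $u\pi^k$ with $k \ge 0$ and $u \in R^\times$, hence $x \in R$. Since $v(\pi) = 1$, the element $\pi$ is a uniformizer.

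I do not anticipate a serious obstacle: the only point requiring care is the use of the hypothesis that $\pi$ is not a unit, which enters precisely at the uniqueness step in $R$. Once that is in hand, everything else is bookkeeping with the normal form $u\pi^k$.
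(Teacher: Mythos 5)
Your proof is correct and complete; the paper supplies no proof for this proposition, treating it as a routine verification, and your argument is exactly the natural one: establish uniqueness of the form $u\pi^k$ by cancelling powers of $\pi$ and using that $\pi$ is a non-unit, extend the normal form to $K^\times$ via fractions, read off the group homomorphism and surjectivity, prove the ultrametric bound by factoring out $\pi^{\min\{k_1,k_2\}}$, and then recognize $R = \mathcal O_v$. The one point deserving care, the use of the non-unit hypothesis in the uniqueness step, is handled correctly.
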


\begin{corollary}\label{cor9}
Suppose that $R$ is an integral domain.
Then $R$ is a DVR if and only if 
there is a non-unit, nonzero $\pi \in R$ such that every nonzero element of $R$ can be written as $u \pi^k$
for some unit $u\in R^\times$ and some $k\in \bN$.
\end{corollary}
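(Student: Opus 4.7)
The plan is to read this off as an immediate corollary of Proposition~\ref{propA} together with the structure theorem (the proposition about writing $x = u\pi^k$) and the definition of a uniformizer already established earlier in the chapter. The statement is a biconditional, so I would split into the two directions and handle each in a few lines.

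For the reverse direction, suppose there exists a non-unit, nonzero $\pi\in R$ such that every nonzero element of $R$ can be written as $u\pi^k$ with $u\in R^\times$ and $k\in\bN$. This is exactly the hypothesis of Proposition~\ref{propA}, which hands me a discrete valuation $v\colon K^\times \to \bZ$ on the fraction field $K$ of $R$ such that $R$ is the associated discrete valuation ring $\mathcal O_v$. Hence $R$ is a DVR by Definition~\ref{def2}.

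For the forward direction, suppose $R = \mathcal O_v$ is a DVR for some discrete valuation $v\colon K^\times \to \bZ$. By surjectivity of $v$, pick a uniformizer $\pi \in K^\times$ with $v(\pi)=1$. I first need to check $\pi \in R$ (immediate since $v(\pi)=1\ge 0$), that $\pi \ne 0$ (clear), and that $\pi$ is not a unit of $R$ (since units are precisely those elements of valuation $0$ by Proposition~\ref{prop1}). Then, by the earlier uniqueness-of-factorization proposition, any nonzero $x\in R$ can be written $x = u\pi^k$ with $u\in R^\times$ and $k = v(x)$; since $x\in R=\mathcal O_v$, we have $v(x)\ge 0$, i.e.\ $k \in \bN$.

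There is no real obstacle here: the substantive work was already done in Proposition~\ref{propA} (reverse direction) and in the structure proposition for elements of $K^\times$ (forward direction). The only small care needed is to verify that the $\pi$ produced in the forward direction actually lies in $R$ and is a non-unit of $R$, but both follow at once from the characterizations of $\mathcal O_v$ and $\mathcal O_v^\times$ in Proposition~\ref{prop1}.
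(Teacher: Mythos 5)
Your proof is correct and follows exactly the route the paper intends: the reverse direction is an immediate invocation of Proposition~\ref{propA}, and the forward direction reads off the factorization $x = u\pi^k$ from the earlier structure proposition for $K^\times$ together with the descriptions of $\mathcal O_v$ and $\mathcal O_v^\times$ in Proposition~\ref{prop1}. The paper deliberately omits the proof of this corollary because it is just this straightforward bookkeeping, and your checks that $\pi \in R$ is a nonzero non-unit and that $k = v(x) \ge 0$ are precisely the small verifications needed.
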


Building on the above results, we can prove a deeper theorem:

\begin{theorem}\label{thmB}
Suppose that $R$ is a local Noetherian domain whose maximal ideal~$\frak m$ is nonzero and principal. Then
$R$ is a DVR. Conversely, every DVR is a local Noetherian domain with a nonzero principal maximal ideal.
\end{theorem}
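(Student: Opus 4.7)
The plan is to reduce the forward direction to Corollary~\ref{cor9}: if I can show that every nonzero element of $R$ admits a representation $u\pi^k$ with $\pi$ a fixed generator of $\mathfrak m$, $u \in R^\times$, and $k \ge 0$, then $R$ is automatically a DVR with uniformizer $\pi$. The converse direction is immediate from Corollary~\ref{cor6}: a DVR is a PID (hence Noetherian), and its unique nonzero prime ideal is its maximal ideal, generated by any uniformizer.

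So fix a generator $\pi$ of $\mathfrak m$ and a nonzero $x \in R$. Starting with $x_0 = x$, I would build a sequence by the rule: as long as $x_k \in \mathfrak m$, write $x_k = \pi x_{k+1}$ (this $x_{k+1}$ is uniquely determined since $R$ is a domain and $\pi \ne 0$). If the process halts at some stage with $x_k \notin \mathfrak m$, then the local hypothesis gives $x_k \in R^\times$, and $x = \pi^k x_k$ is the representation we want.

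The main obstacle, and the only delicate point, is ruling out the possibility that the process never halts. Here I would exploit all three hypotheses at once. The ideals $(x_0) \subseteq (x_1) \subseteq (x_2) \subseteq \cdots$ form an ascending chain, so by the Noetherian hypothesis $(x_n) = (x_{n+1})$ for some $n$, giving an equation $x_{n+1} = r x_n$ in $R$. Combined with $x_n = \pi x_{n+1}$, this yields
$$x_{n+1}(1 - r\pi) = 0.$$
Since $r\pi \in \mathfrak m$, the element $1 - r\pi$ lies in $R \setminus \mathfrak m$, which by the local hypothesis equals $R^\times$; so $1 - r\pi$ is a unit. Since $R$ is a domain we conclude $x_{n+1} = 0$, and unwinding the defining relations forces $x = 0$, contradicting our choice of $x$.

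Once termination is established, the representation $x = \pi^k x_k$ with $x_k \in R^\times$ combines with the observation that $\pi$ is a non-unit (as $\mathfrak m$ is a proper ideal) to meet the hypotheses of Corollary~\ref{cor9}, completing the proof. The rest is bookkeeping; the real content is the interplay of Noetherian, local, and domain conditions in the termination step.
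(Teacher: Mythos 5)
Your proof is correct and follows essentially the same route as the paper's: build the sequence $x = \pi x_1 = \pi^2 x_2 = \cdots$, use the ascending chain of principal ideals $(x_i)$ together with the Noetherian hypothesis, and observe that $1 - r\pi$ is a unit in the local ring to force $x = 0$ if the process does not terminate, then invoke Corollary~\ref{cor9}. The only cosmetic difference is organizational — the paper first proves the inclusion $x R \subsetneq x_1 R$ is strict and then cites the ascending chain condition, while you let the chain stabilize and extract the contradiction from the stabilization equation — but the underlying computation and logic are identical.
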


\begin{proof}
Write $\frak m$ as $\pi R$.
Observe that if $x \in R$ is not a unit then $x$ can be written as~$x_1 \pi $ for some $x_1 \in R$. 
If $x_1$ is not a unit
then we can write~$x =  x_1\pi =  x_2 \pi^2$ for some $x_2 \in R$. And so on.
There are two possible types of elements in~$R$: (1) those for which this process stops and so can be written
as $u \pi^k$ for some unit~$u$ and~$k \ge 0$, and (2)  those
for which the process does not stop, and so
we can write such an element~$x$ as~$x_k \pi^k$ for all~$k\in \bN$.
Call the second type of element ``exceptional''. Of course $0$ is exceptional. We wish to show that $0$ is the only exceptional
element.

First we show that if $x = x_1 \pi \ne 0$ then $xR \subsetneq x_1 R$. Of course $xR \subseteq x_1 R$ holds,
but suppose $xR = x_1 R$. Then $x_1 = x y$ for some $y\in R$. In other words,~\text{$x =x y \pi $}.
We write this as $x(1-y\pi) = 0$. However, $1 - \pi y$ is a unit. Thus $x = 0$, a contradiction.

So, let $x$ be exceptional and nonzero, and write
$$
x = x_1 \pi = x_2 \pi^2 = x_3 \pi^3 = \ldots
$$
with $x_i = x_{i+1} \pi$.
By the above claim,
$$
x R \subsetneq x_1 R \subsetneq x_2 R \subsetneq  x_3 R \subsetneq  \ldots.
$$
This contradicts the Noetherian assumption. Thus the only exceptional element is~$0$.
By Corollary~\ref{cor9} this means that $R$ is a DVR. (The converse is straightforward).
\end{proof}

\begin{remark}
Serre points out in his \emph{Local Fields} (Chapter I, \S 2) that we do not need to assume in advance that $R$ is
an integral domain: we can replace that assumption with the assumption that $R$ is a commutative ring
(with unity) such that $\pi$ is not a nilpotent element~($\pi^k \ne 0$ for all $k$).
He goes on to say that the proof is much simpler if $R$ is assumed to be an integral domain.
However, if one uses the proof given here, the general case is not much harder:
just add the observation (after showing that $0$ is the only exceptional element) that~$(u \pi^k)(v \pi^l) \ne 0$ if $u, v$ are units, 
so $R$ is an integral domain.
\end{remark}

\begin{exercise}
Check that the above proof can be modified, as mentioned in the remark, to prove the following:
\emph{Let $R$ be a local Noetherian ring whose maximal ideal is $\pi R$ where $\pi$
is not a nilpotent element ($\pi^k \ne 0$ for all $k\in \bN$). Then
$R$ is a discrete valuation ring. }
\end{exercise}

We can use Theorem~\ref{thmB} to strengthen Corollary~\ref{cor6}.

\begin{corollary} \label{cor11}
Let $R$ be an integral domain. Then the following are equivalent:
\begin{enumerate}
\item
$R$ is a DVR.
\item
$R$ is a PID with a unique nonzero prime ideal. 
\item
$R$ is a PID with a unique nonzero maximal ideal. 
\end{enumerate}
\end{corollary}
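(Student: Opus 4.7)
My plan is to run the cyclic chain $(1)\Rightarrow(2)\Rightarrow(3)\Rightarrow(1)$, using Corollary~\ref{cor6} to get started and Theorem~\ref{thmB} to close the loop. The direction $(1)\Rightarrow(2)$ is immediate: this is exactly Corollary~\ref{cor6}, so nothing new is needed.

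For $(2)\Rightarrow(3)$, I first note that $R$ cannot be a field, since a field has no nonzero prime ideals while (2) asserts existence of one. So $R$ is a PID that is not a field, and hence it has at least one maximal ideal, every maximal ideal is proper and prime, and every maximal ideal is nonzero (as the zero ideal fails to be maximal in a non-field). Combined with the uniqueness clause in (2), this forces $R$ to have exactly one maximal ideal, namely the unique nonzero prime. Thus (3) holds.

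For $(3)\Rightarrow(1)$, the same observation shows $R$ is not a field (otherwise no nonzero maximal ideal would exist), so every maximal ideal of $R$ is nonzero, and the uniqueness clause in (3) therefore says $R$ has exactly one maximal ideal; i.e.\ $R$ is local. Since $R$ is a PID, that maximal ideal is principal and nonzero, and $R$ is Noetherian. Theorem~\ref{thmB} then delivers that $R$ is a DVR.

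The main ``obstacle'' is really just a bookkeeping subtlety: the condition ``unique nonzero maximal ideal'' in (3) is not verbatim the definition of a local ring, which demands a unique maximal ideal (allowing the possibility of the zero ideal being maximal, i.e.\ $R$ a field). The cleanup is to rule out the field case from the hypotheses, after which ``unique nonzero maximal ideal'' and ``local'' coincide. The same minor point recurs in $(2)\Rightarrow(3)$ and is dispatched the same way. Once that is handled, each implication is either a direct quotation of an earlier result or a one-line application of it.
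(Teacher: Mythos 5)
Your proposal is correct, and it follows exactly the route the paper has in mind: the paper introduces Corollary~\ref{cor11} with the remark ``We can use Theorem~\ref{thmB} to strengthen Corollary~\ref{cor6}'' and then leaves the verification to the reader, which is precisely what you carry out (Corollary~\ref{cor6} for $(1)\Rightarrow(2)$, elementary bookkeeping for $(2)\Rightarrow(3)$, and Theorem~\ref{thmB} for $(3)\Rightarrow(1)$). Your observation about ruling out the field case so that ``unique nonzero maximal ideal'' coincides with ``local'' is exactly the small detail the author implicitly expects you to supply.
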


Recall that every PID is a Dedekind domain (but not every Dedekind domain is a~PID).
The above corollary actually can be generalized to all Dedekind domains
with a unique nonzero prime ideal.
If we are willing to accept the unique factorization of ideals result, the proof is simple.
Later in the document we will pursue another approach that leads to a proof
that does not rely on the unique factorization of ideals.

\begin{theorem}\label{thmC}
Let $R$ be an integral domain.
Then $R$ is a discrete valuation ring if and only if 
$R$ is a Dedekind domain with a unique nonzero prime ideal.
\end{theorem}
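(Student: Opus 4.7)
The plan is to split the proof into the two directions. The forward direction is immediate from earlier results: by Corollary~\ref{cor11}, a DVR is a PID with a unique nonzero prime ideal, and since every PID is a Dedekind domain (recalled in the required background), the DVR is in particular a Dedekind domain with a unique nonzero prime.

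For the converse, let $R$ be a Dedekind domain with a unique nonzero prime ideal $\mathfrak{p}$. The strategy is to verify the hypotheses of Theorem~\ref{thmB}, namely that $R$ is a local Noetherian domain whose maximal ideal is nonzero and principal. Noetherianity is built into the definition of a Dedekind domain. The hypothesis that $R$ has a nonzero prime ideal rules out $R$ being a field, and because every nonzero prime of a Dedekind domain is maximal, the assumption of a unique nonzero prime forces $\mathfrak{p}$ to be the unique maximal ideal of $R$; hence $R$ is local with maximal ideal $\mathfrak{p} \ne 0$.

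The only substantive step is showing that $\mathfrak{p}$ is principal, and this is where I would import the unique factorization of ideals theorem for Dedekind domains, as the author has signalled. By that result, every nonzero ideal of $R$ factors uniquely as a product of nonzero prime ideals; since $\mathfrak{p}$ is the only nonzero prime, the nonzero ideals of $R$ are exactly the powers $\mathfrak{p}^k$ for $k \ge 0$, and uniqueness forces these powers to be pairwise distinct. In particular $\mathfrak{p}^2 \subsetneq \mathfrak{p}$, so I may choose $\pi \in \mathfrak{p} \setminus \mathfrak{p}^2$. The principal ideal $\pi R$ is then a nonzero ideal contained in $\mathfrak{p}$, so $\pi R = \mathfrak{p}^k$ for some $k \ge 1$, and the condition $\pi \notin \mathfrak{p}^2$ forces $k = 1$. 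Hence $\mathfrak{p} = \pi R$ is principal, and Theorem~\ref{thmB} then yields that $R$ is a DVR.

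The main obstacle here is precisely the step showing $\mathfrak{p}$ is principal: the rest is bookkeeping. The author has advertised that granting unique factorization of ideals makes this easy, and indeed the argument above reduces to the choice of $\pi \in \mathfrak{p} \setminus \mathfrak{p}^2$. A proof avoiding unique factorization of ideals (promised later in the document) would presumably bypass this by exploiting the integrally closed hypothesis directly, for instance through the inverse fractional ideal $\mathfrak{p}^{-1}$, but under the assumptions granted here the two-line argument just sketched suffices.
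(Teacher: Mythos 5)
Your proof is correct and follows essentially the same route as the paper: for the forward direction, appeal to the fact that a DVR is a PID with a unique nonzero prime and that PIDs are Dedekind; for the converse, invoke the unique factorization theorem to show the nonzero ideals are powers of $\mathfrak p$, pick $\pi \in \mathfrak p \smallsetminus \mathfrak p^2$ to see $\mathfrak p = \pi R$ is principal, and conclude via Theorem~\ref{thmB}. Your added remarks about distinctness of the powers $\mathfrak p^k$ (justifying that $\mathfrak p \smallsetminus \mathfrak p^2$ is nonempty) are a small but welcome bit of extra care that the paper leaves implicit.
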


\begin{proof}
Let $R$ be a DVR.
We can appeal to the theorem that every PID is a Dedekind domain.
However, we can give a direct proof that uses
Proposition~\ref{every_dvr_is_integrally_closed_prop}
to show that~$R$ is integrally closed. The ring $R$ has only one nonzero prime ideal, and it is maximal. Finally, $R$ is Noetherian since $R$ is a PID.
Thus $R$ is a Dedekind domain.

For the other direction, 
suppose $R$ is a Dedekind domain with unique nonzero prime ideal~$\mathfrak p$.
By the unique factorization theorem for ideals in a Dedekind domain, we have that the nonzero ideals of $R$
are all uniquely of the form $\frak p^k$. If $\pi \in \frak p \smallsetminus \frak p^2$ then~$\pi R$
cannot be $\frak p^k$ with $k\ge 2$, so $\pi R = \frak p$.
By definition of Dedekind domain,~$R$ is a Noetherian domain. So we apply Theorem~\ref{thmB}.

(Another approach, once we admit fractional ideals, is to define the valuation of the fractional ideal $\frak p^k$ to be $k$, where $k \in \bZ$.
Next define the valuation of $\alpha\ne 0$ in the field of fractions $K$ to be the valuation
of the fractional ideal~$\alpha R$.)
\end{proof}

The proof of the above result suggests another charactarization. Call a ring \emph{strongly local} (to coin a new term) if
it is local and if there
is a largest proper nonmaximal ideal $\mathfrak n$ in the sense that all proper nonmaximal ideals are contained
in~$\mathfrak n$.
For example, a DVR is strongly local whose largest proper nonprime ideal is~$\mathfrak n = \frak p^2$.
A DVR is also Noetherian. These two properties characterize DVRs:

\begin{proposition}\label{propH}
Suppose $R$ is an integral domain.
Then $R$ is a discrete valuation ring if and only if 
$R$ is a strongly local Noetherian domain.
\end{proposition}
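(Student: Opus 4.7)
The plan is to reduce to Theorem~\ref{thmB} by showing that a strongly local Noetherian domain has a nonzero principal maximal ideal.

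First I would handle the easy forward direction. If $R$ is a DVR with uniformizer $\pi$ and maximal ideal $\mathfrak p_v = \pi R$, then Corollary~\ref{cor6} gives that $R$ is a local PID (hence Noetherian) with a unique nonzero prime. The ideals of $R$ are exactly $\pi^k R$ for $k \ge 0$ by the proposition classifying ideals of $\mathcal O_v$. The non-maximal proper ideals among these are precisely $\pi^k R$ for $k \ge 2$, and all of them are contained in $\pi^2 R$. So $\mathfrak n = \pi^2 R$ witnesses that $R$ is strongly local.

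For the reverse direction, let $R$ be a strongly local Noetherian domain with maximal ideal $\mathfrak m$ and largest proper non-maximal ideal $\mathfrak n$. Because $R$ is local, $\mathfrak n \subseteq \mathfrak m$, and since $\mathfrak n$ is non-maximal the containment is strict: $\mathfrak n \subsetneq \mathfrak m$. In particular $\mathfrak m \ne 0$. By Theorem~\ref{thmB} it suffices to show $\mathfrak m$ is principal. I pick any $\pi \in \mathfrak m \smallsetminus \mathfrak n$ and claim $\mathfrak m = \pi R$.

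The key observation — and the one step that really uses the strongly local hypothesis — is this: since $\pi \in \mathfrak m$, $\pi$ is not a unit, so $\pi R$ is a proper ideal. Because $R$ is local, either $\pi R = \mathfrak m$ or $\pi R$ is a proper non-maximal ideal, in which case $\pi R \subseteq \mathfrak n$; but this would force $\pi \in \mathfrak n$, contrary to choice. Hence $\pi R = \mathfrak m$, and Theorem~\ref{thmB} finishes the argument. The proof has no real obstacle — the whole content is noticing that elements of $\mathfrak m \smallsetminus \mathfrak n$ must generate $\mathfrak m$.
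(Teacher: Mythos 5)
Your proof is correct and follows the same route as the paper: pick $\pi \in \mathfrak m \smallsetminus \mathfrak n$, observe that $\pi R$ must then equal $\mathfrak m$ (since otherwise it would be a proper nonmaximal ideal forced into $\mathfrak n$), and invoke Theorem~\ref{thmB}. You spell out the details the paper leaves terse, but there is no difference in the underlying idea.
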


\begin{proof}
We discussed already one implication. For the other implication,
let $\frak m$ be the maximal ideal, and let $\mathfrak n$ 
be the largest proper nonmaximal ideal. Let~$\pi \in \frak m \smallsetminus \mathfrak n$.
Then $\frak m = \pi R$. Now apply Theorem~\ref{thmB}.
\end{proof}

Next we consider a fairly simple characterization of DVRs among Noetherian domains:

\begin{proposition}\label{propI}
Let $R$ be a Noetherian domain with fraction field~$K$. Assume~$R$ is not a field.
Then~$R$ is a DVR if and only if $x$ or $x^{-1}$ is in~$R$ for each $x \in K^\times$.
\end{proposition}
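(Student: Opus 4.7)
The forward direction is immediate from Proposition~\ref{prop1}: in a DVR, any nonzero $x \in K$ satisfies $v(x) \ge 0$ or $v(x^{-1}) = -v(x) \ge 0$. The plan for the converse is to verify the hypotheses of Theorem~\ref{thmB}, namely that $R$ is a local Noetherian domain whose maximal ideal is nonzero and principal.

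First I would show $R$ is local by proving that $\mathfrak m \defeq R \smallsetminus R^\times$ is an ideal. For absorption: if $a$ is a non-unit and $r \in R$ had $ra \in R^\times$, then $a$ would divide a unit and so be a unit, a contradiction. For closure under addition: given non-units $a, b \in R$ with $b \ne 0$, the hypothesis gives $a/b \in R$ or $b/a \in R$; say $a/b \in R$. Then $a + b = b(1 + a/b)$, and if this product were a unit of $R$, then $b$ would have to be a unit as well, a contradiction. (The edge cases $a = 0$, $b = 0$, or $a+b = 0$ are trivial, since $0 \in \mathfrak m$.) Thus $R$ is local with maximal ideal $\mathfrak m$, which is nonzero because $R$ is not a field.

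Next, since $R$ is Noetherian, write $\mathfrak m = (a_1, \ldots, a_n)$ with each $a_i \ne 0$. The hypothesis says that for any $i, j$, either $a_i/a_j \in R$ or $a_j/a_i \in R$; equivalently, divisibility in $R$ totally preorders the $a_i$. So some $a_k$, which after relabeling we may call $a_1$, divides every other $a_i$ in $R$. Then each generator lies in $(a_1)$, giving $\mathfrak m = (a_1)$. Now Theorem~\ref{thmB} delivers that $R$ is a DVR.

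The main obstacle is really the local step, specifically the closure of non-units under addition; the factorization trick $a+b = b(1+a/b)$ is the key observation, and it relies in an essential way on having the whole fraction field available. Once locality is in hand, extracting a single generator from finitely many using the total divisibility preorder is routine.
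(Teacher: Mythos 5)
Your proof is correct, and it takes a genuinely different route from the paper's. The paper first observes that your hypothesis is exactly the statement that the principal ideals of $R$ are totally ordered by inclusion, then invokes the Noetherian property to produce a maximal element $\pi R$ among the proper nonzero principal ideals; total ordering makes this a maximum, so every non-unit lies in $\pi R$, which gives locality and principality of the maximal ideal in one stroke. You instead establish locality directly: you prove $R \smallsetminus R^\times$ is closed under addition via the factorization $a+b = b(1 + a/b)$, which works precisely because the hypothesis lets you choose which of $a/b$, $b/a$ is integral. Then you use the Noetherian hypothesis only to write $\mathfrak m$ with finitely many generators, and the total divisibility preorder to collapse them to one. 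Both arguments are clean and of comparable length. The paper's version buys you the locality claim ``for free'' once you have the maximum principal ideal, whereas your version makes the valuation-ring flavor of the hypothesis more explicit (the $1 + a/b$ trick is essentially the observation that in a local ring, $1$ plus a non-unit is a unit, run in reverse). Your proof is also closer in spirit to the generality of Exercise~\ref{valuation_ring_ex} in the text, where the same total-ordering observation is extracted for non-Noetherian valuation rings.
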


\begin{proof}
One direction is straightforward,
so assume $x$ or~$x^{-1}$ is in~$R$ for each $x \in K^\times$.
We restate this assumption as follows:
if $a, b \in R$ are nonzero then either~$a/b$ or~$b/a$ is in~$R$.
In other words, $a \in b R$ or $b \in a R$. So $aR \subseteq bR$ or $bR \subseteq aR$.
Thus the set of principal ideals in $R$ are totally ordered by inclusion.

Since $R$ is Noetherian, there is a maximal element $\pi R$ among the set of proper nonzero principal
ideals of~$R$. Since the set of principal ideals in $R$ are totally ordered, $\pi R$ is the maximum proper
principal ideal. So for each non-unit~$a \in R$ we have $a R \subseteq \pi R$. Thus $a \in \pi R$.
We conclude that $\pi R$ is the set of non-units.
Thus~$\pi R$ is the unique maximal ideal.
Now apply Theorem~\ref{thmB}.
\end{proof}

%

\begin{exercise}\label{valuation_ring_ex}
A \emph{valuation ring} is an integral domain~$R$ such that for all $x \ne 0$ in the field of fractions~$K$
either $x$ or $x^{-1}$ is in~$R$. The above proposition shows that a Noetherian valuation ring is a DVR.
Let us consider valuation rings that are not necessarily Noetherian. 
(1) Show that the collection of principal ideals in such a valuation ring~$R$ is totally ordered by inclusion.
(Hint: see the proof of the above proposition).
(2) Show that any finitely generated ideal in such a valuation ring $R$ is principal. 
(3) Show that every valuation ring is local. 
\end{exercise}

\begin{exercise}
Suppose $R$ is a commutative ring such that the collection of principal ideals is
totally ordered by inclusion. Show that the collection of \emph{all} ideals is also totally ordered by inclusion.
Based on the previous exercise, conclude that the
collection of ideals of a valuation ring are totally ordered by inclusion.
\end{exercise}

\begin{exercise}
Suppose $R$ is an integral domain such that the collection of principal ideals is
totally ordered by inclusion. Show that $R$ is a valuation ring.
\end{exercise}

\begin{exercise}\label{valuation_ring_ex2}
Let $R$ be a valuation ring with nonzero maximal ideal $\mathfrak m$.
Show that either $\mathfrak m = \mathfrak m^2$ or
$\mathfrak m$ is principal.
Hint: let $\pi \in \mathfrak m \smallsetminus \mathfrak m^2$ (if such exists), and use Exercise~\ref{valuation_ring_ex}
to argue that $a R \subseteq \pi R$ for all non-units $a\in R$.
\end{exercise}

Now we assume the reader is comfortable with the concept of the product of ideals. (If not the reader
can skip ahead to the next section where this concept is reviewed
in some generality.) This concept leads naturally to the notion of divisibility:

\begin{definition}
Let $I$ and $J$ be two ideals of an integral domain.
We say that $I$ \emph{divides} $J$ if there is an ideal $I'$
such that $I I' = J$. In this case we write $I \mid J$.
\end{definition}

\begin{proposition}
Let $I$ and $J$ be ideals of an integral domain. If $I \mid J$ then~$J \subseteq I$.
\end{proposition}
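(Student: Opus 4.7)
The plan is to unpack both the definition of $I \mid J$ and the definition of the product of two ideals, and then observe that the containment $J \subseteq I$ falls out almost immediately from the absorption property of ideals.

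First I would assume $I \mid J$, so by definition there exists an ideal $I'$ with $II' = J$. Recall that $II'$ is the ideal consisting of all finite sums $\sum_k a_k b_k$ with $a_k \in I$ and $b_k \in I'$. Since $I$ is an ideal, each product $a_k b_k$ lies in $I$ (the ideal $I$ absorbs multiplication by the ring element $b_k$). Finite sums of elements of $I$ also lie in $I$, so every element of $II'$ lies in $I$. Hence $J = II' \subseteq I$.

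There is essentially no obstacle here; the whole argument is a one-line consequence of ideal absorption applied to the generators of the product ideal. The only subtlety worth flagging is to be explicit about which definition of the product ideal is being used, since an informal reader might picture $II'$ as literally $\{ab : a \in I, b \in I'\}$ rather than the set of finite sums — but either way each generator already lies in $I$, so the conclusion is unchanged. I would therefore keep the write-up to one or two sentences, matching the pattern of other "straightforward" claims in the essay.
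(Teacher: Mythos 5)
Your proof is correct, and since the paper leaves this proposition unproved (signaling it as straightforward), your one-line argument via ideal absorption is exactly the intended approach. No issues.
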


Throughout this document, we will be interested in the situation where $J \subseteq I$
guarantees that $I \mid J$. Clearly this happens in DVRs given the explicit description of ideals.
We now consider two results related to divisibility in the case of principal ideals in local integral domains.

\begin{lemma}
Let $R$ be a local integral domain with maximal ideal~$\mathfrak m$. 
Suppose $I$ and~$J$ are ideals with $I J = a R$
where $a\in R$. Then there are elements $b \in I$ and~$c \in J$ such that $b c = a$.
\end{lemma}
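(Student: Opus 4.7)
The plan is to unpack the definition of the product $IJ$ and then exploit the fact that $R$ is local to extract a single term from the finite sum expressing $a$.

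First I would handle the trivial case $a=0$ by taking $b=c=0$. Assuming $a\ne 0$, by the definition of the product of ideals we can write
$$a = b_1 c_1 + b_2 c_2 + \ldots + b_n c_n$$
with each $b_i \in I$ and each $c_i \in J$. Since each product $b_i c_i$ lies in $IJ = aR$, there exist $r_i \in R$ with $b_i c_i = a r_i$. Summing gives $a = a(r_1 + \ldots + r_n)$, and cancelling $a$ (valid in an integral domain) yields $r_1 + \ldots + r_n = 1$.

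Next I would use the locality assumption to find some $r_j$ that is a unit. Recall from the background section that in a local ring $R$ the complement $R \smallsetminus \mathfrak{m}$ equals the unit group $R^\times$. If every $r_i$ lay in $\mathfrak{m}$, then their sum would lie in $\mathfrak{m}$, but $1 \notin \mathfrak{m}$ since $\mathfrak{m}$ is proper. Hence some $r_j \notin \mathfrak{m}$, i.e.\ $r_j \in R^\times$.

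With such a $j$ in hand, set $b \defeq b_j$ and $c \defeq r_j^{-1} c_j$. Then $b \in I$, and $c \in J$ because $J$ is an ideal closed under multiplication by elements of $R$. Finally,
$$bc = b_j \, r_j^{-1} c_j = r_j^{-1}(b_j c_j) = r_j^{-1}(a r_j) = a,$$
which is what we wanted. The one step that requires a little thought is the passage from an expression of $a$ as a sum of products to an expression as a single product, and the point where locality enters is precisely in guaranteeing that at least one coefficient $r_j$ is a unit; this is the crux of the argument.
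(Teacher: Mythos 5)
Your proof is correct and follows essentially the same route as the paper: write $a$ as a finite sum $\sum b_i c_i$, express each term as $a r_i$, deduce $\sum r_i = 1$, and use locality to find a unit $r_j$ so that a single term can be scaled to give $a$. The only (cosmetic) difference is that you absorb the unit into $c_j$ where the paper absorbs it into $b_i$, and you spell out the $a=0$ case and the cancellation step a bit more explicitly.
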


\begin{proof}
We jump to the case $a\ne 0$, and we
write $a$ as a finite sum: $a = \sum b_i c_i$ with each  $b_i \in I$ and~\text{$c_i \in J$}.
Since each $b_i c_i \in aR$, we can write $b_i c_i = a u_i$ with $u_i\in R$.
Observe that $1 = \sum u_i$, so is not possible for each $u_i$ to be in~$\mathfrak m$. Thus $u_i$ is a unit
for at least one value of~$i$.
For such $i$ we have $(u_i^{-1} b_i) c_i = a$ as desired.
\end{proof}

\begin{proposition} \label{divisibility_principal_prop}
Suppose $I$ is an ideal in a local integral domain. If $I$ divides a
nonzero principal ideal then $I$ is itself principal.
\end{proposition}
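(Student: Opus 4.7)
The plan is to apply the preceding lemma to extract a candidate generator for $I$, and then use cancellation in the integral domain to show it actually generates.

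Concretely, suppose $I \cdot J = aR$ for some nonzero $a \in R$ and some ideal $J$. The lemma just proved gives elements $b \in I$ and $c \in J$ with $bc = a$. My claim is then that $I = bR$. The inclusion $bR \subseteq I$ is immediate from $b \in I$. For the reverse inclusion, take any $x \in I$. Then $xc \in IJ = aR$, so $xc = ar$ for some $r \in R$. Substituting $a = bc$ gives $xc = brc$, and since $a = bc$ is nonzero we have $c \ne 0$; because $R$ is an integral domain we may cancel $c$ to conclude $x = br \in bR$.

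There is essentially no obstacle here — the whole content is packaged into the previous lemma, which is where locality enters (through the fact that not all of the coefficients $u_i$ can lie in $\mathfrak m$). Once that lemma hands us the factorization $a = bc$ with $b \in I$, $c \in J$, the rest is just cancellation in a domain. The only small thing to double-check is that $c \ne 0$, which follows from $a \ne 0$, so the hypothesis that the principal ideal is \emph{nonzero} is used exactly once and is genuinely needed.
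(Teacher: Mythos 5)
Your proof is correct and follows essentially the same route as the paper: both invoke the preceding lemma to produce $b \in I$, $c \in J$ with $bc = a$, and then show $I = bR$ by taking an arbitrary $d \in I$, observing $dc = ar$, and canceling. The only cosmetic difference is that you cancel $c$ directly from $dc = brc$, whereas the paper multiplies by $b$ to get $ad = arb$ and cancels $a$ — the same cancellation in a domain, just packaged slightly differently.
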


\begin{proof}
We have $I J = a R$ for some ideal $I$ and some nonzero $a \in R$. By the previous lemma
we also have $b c = a$ where $b \in I$ and $c \in J$. Clearly $b R \subseteq I$. We claim
that in fact $I = b R$.

Let $d \in I$. Observe that $d c \in aR$, so $dc = ar$ for some $r\in R$.
Thus
$$
a d = (b c) d = (d c) b = ar b.
$$
Since $a$ is nonzero, $d = r b \in b R$ as desired.
\end{proof}

\begin{theorem}\label{divisibility_DVR_thm}
Suppose that $R$ is an integral domain.
Then $R$ is a~DVR if and only if $R$ has a unique nonzero maximal ideal
and has the property that  $I \mid J$ for all ideals~$I$ and $J$
with $J \subseteq I$.
\end{theorem}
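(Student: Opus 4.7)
The plan is to handle the two directions separately, with the forward direction being immediate from the explicit ideal structure of a DVR, and the reverse direction reducing to a clean application of Proposition~\ref{divisibility_principal_prop} followed by Corollary~\ref{cor11}.

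For the forward direction, I would simply invoke the classification of ideals in a DVR: every nonzero ideal is of the form $\pi^k \mathcal O_v$ for some $k \ge 0$, and $\mathcal O_v$ is local with nonzero maximal ideal $\pi\mathcal O_v$. Given $\pi^k \mathcal O_v \subseteq \pi^j \mathcal O_v$ one has $k \ge j$ and then $(\pi^j \mathcal O_v)(\pi^{k-j}\mathcal O_v) = \pi^k \mathcal O_v$, so the divisibility property holds. The zero ideal case is handled by $I \cdot (0) = (0)$.

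For the reverse direction, let $R$ be an integral domain with a unique nonzero maximal ideal (so $R$ is local and not a field) satisfying the divisibility hypothesis. The key observation is that the hypothesis forces every nonzero ideal to be principal: given a nonzero ideal $I$, pick any nonzero $a \in I$, so $aR \subseteq I$; the hypothesis says $I \mid aR$, and Proposition~\ref{divisibility_principal_prop} then tells us $I$ is principal because it divides a nonzero principal ideal in a local integral domain. Hence $R$ is a PID. Since $R$ is a PID with a unique nonzero maximal ideal, Corollary~\ref{cor11} identifies $R$ as a DVR.

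There is no real obstacle here; the content has been packaged into earlier results. The only subtle point worth stating explicitly is the justification that $R$ is not a field, which is guaranteed by the phrase \emph{nonzero} maximal ideal in the hypothesis, so that the application of Corollary~\ref{cor11} gives a genuine DVR (which by definition has a nonzero maximal ideal) rather than the degenerate case.
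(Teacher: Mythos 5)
Your proof is correct and follows essentially the same route as the paper: pick a nonzero $a \in I$, use $I \mid aR$ together with Proposition~\ref{divisibility_principal_prop} to conclude $I$ is principal, so $R$ is a PID with a unique nonzero maximal ideal. The only superficial difference is that you close via Corollary~\ref{cor11} while the paper invokes Theorem~\ref{thmB}; these are interchangeable here.
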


\begin{proof}
One direction is straightforward given our description of ideals of a DVR.
So suppose $R$ has a unique maximal ideal, and
that  $I \mid J$ for all ideals $I$ and $J$
with $J \subseteq I$. Let $I$ be a nonzero ideal, and let $a \in I$ be nonzero.
Since $a R \subseteq I$ we have that $I$ divides $a R$. So by the previous proposition
we have that $I$ is principal. Thus $I$ is a PID, which means it is a DVR
by Theorem~\ref{thmB}.
\end{proof}

In summary, we have shown that we can characterize DVRs in myriad ways.
Let~$R$ is an integral domain with fraction field~$K$. Then any of the following
provides a necessary and sufficient condition for $R$ to be a DVR.
\begin{enumerate}
\item 
$R$ is $\mathcal O_v$ for a discrete valuation $v \colon K^\times \to \bZ$. (Definition~\ref{def2})
\item
There is a non-unit, nonzero $\pi \in R$ such that every nonzero element of $R$ can be written as $u \pi^k$
for some unit $u\in R^\times$ and some $k\in \bN$. (Corollary~\ref{cor9})
\item
$R$ is local and Noetherian, with a principal, nonzero maximal ideal. (Theorem~\ref{thmB})
\item
$R$ is a PID with a unique nonzero prime ideal. (Corollary~\ref{cor11})
\item
$R$ is a PID with a unique nonzero maximal ideal. (Corollary~\ref{cor11})
\item
$R$ is a Dedekind domain with a unique nonzero prime ideal. (Theorem~\ref{thmC})
\item
$R$ is a strongly local Noetherian domain. (Proposition~\ref{propH})
\item
$R$ is Noetherian, not a field, and $x$ or $x^{-1}$ in $R$ for all $x$ in the field of fractions of~$R$. (Proposition~\ref{propI})
\item
$R$ has a unique nonzero maximal ideal and has the property that  $I \mid J$ for all ideals~$I$ and $J$
with $J \subseteq I$. (Theorem~\ref{divisibility_DVR_thm})
\item
$R$ is local and Noetherian with invertible maximal ideal. 
(Definitions and proof below, see especially Theorem~\ref{theoremI} or the remarks after Proposition~\ref{propInv})
\end{enumerate}
The first nine summarize the previous material. The last of these will be proved after we review the theory of fractional ideals.\footnote{There is also an 11th characterization.  Exercise~\ref{dimension_ex} gives the following necessary and sufficient condition:
$R$ is local and Noetherian with maximal ideal~$\mathfrak m$ having the property that
$\mathfrak m/\mathfrak m^2$ is a vector space of dimension 1 over the field $R/\mathfrak m$.
This exercise can be skipped in a first reading.}

If we begin with a Noetherian domain~$R$ with a  unique nonzero prime ideal $\mathfrak p$, then $R$ is a DVR
if and only if one, and hence all, of the following hold:

\begin{enumerate}
\item
$\mathfrak p$ is principal (Theorem~\ref{thmB})
\item
$R$ is a PID. (Corollary~\ref{cor11})
\item
$R$ is a Dedekind domain (i.e., $R$ is integrally closed). (Theorem~\ref{thmC})
\item
$R$ is a strongly local. (Proposition~\ref{propH})
\item
$x$ or $x^{-1}$ in $R$ for all $x$ in the field of fractions of~$R$. (Proposition~\ref{propI})
\item
$R$ has the property that  $I \mid J$ for all ideals~$I$ and $J$
with $J \subseteq I$. (Theorem~\ref{divisibility_DVR_thm})
\item
$\frak p$ is invertible. (Definitions and proof below, see especially Theorem~\ref{theoremI})
\end{enumerate}
(Exercise~\ref{dimension_ex} adds a eighth condition:
$\mathfrak p/\mathfrak p^2$ is a vector space of dimension 1 over the field $R/\mathfrak p$.)

Finally, we observe that DVRs are maximal proper subrings of their fraction fields.

\begin{proposition}\label{prop15}
Suppose $R$ is a DVR with field of fractions $K$. If $R \subsetneq R' \subseteq K$ and if $R'$ is a subring of~$K$,
then $R' = K$.
\end{proposition}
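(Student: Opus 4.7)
The plan is to pick any element $x \in R' \smallsetminus R$ and use the valuation $v$ on $K$ associated with the DVR $R$ to generate all of $K$ inside $R'$. Concretely, I would use the unique factorization of nonzero elements $x = u \pi^k$ (with $u \in R^\times$ and $k \in \bZ$) from Proposition~\ref{propA} as the main bookkeeping tool, with $\pi$ a uniformizer.

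First I would observe that any $x \in R' \smallsetminus R$ has $v(x) = -k < 0$ (since $R = \{x\in K \mid v(x) \ge 0\}$ by Proposition~\ref{prop1}), so I can write $x = u \pi^{-k}$ for some unit $u \in R^\times$ and some $k \ge 1$. Because $R \subseteq R'$ and $R'$ is a subring, $u^{-1} \in R'$ and hence
$$
\pi^{-k} \;=\; u^{-1} x \;\in\; R'.
$$
By closure under multiplication in $R'$, this yields $\pi^{-jk} \in R'$ for every $j \ge 1$.

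Next I would pick an arbitrary $y \in K^\times$ and show $y \in R'$. By Proposition~\ref{propA} again, $y = v \pi^m$ for some unit $v \in R^\times$ and some $m \in \bZ$. If $m \ge 0$ then $y \in R \subseteq R'$. Otherwise write $m = -n$ with $n \ge 1$, choose $j$ large enough so that $jk \ge n$, and factor
$$
\pi^{-n} \;=\; \pi^{jk - n} \cdot \pi^{-jk}.
$$
The first factor lies in $R$ (nonnegative exponent) and hence in $R'$, while the second lies in $R'$ by the previous paragraph. So $\pi^{-n} \in R'$, giving $y = v \pi^{-n} \in R'$. Since $0 \in R'$ trivially, this shows $R' = K$.

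There is no real obstacle here; the only subtlety is noticing that a single element of negative valuation in $R'$ forces a \emph{cofinal} sequence of negative valuations (via taking powers), which combined with the nonnegative side already present in $R$ covers all of $\bZ$. The uniformizer-times-unit normal form from Proposition~\ref{propA} is what makes this entirely mechanical once set up.
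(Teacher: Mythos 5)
Your proof is correct, and since the paper leaves this proposition to the reader, the valuation-theoretic argument you give is exactly the intended one. A tiny streamlining: once you know $\pi^{-k}\in R'$ with $k\ge 1$, you can immediately get $\pi^{-1}=\pi^{k-1}\cdot\pi^{-k}\in R'$ (since $\pi^{k-1}\in R\subseteq R'$), and then $\pi^{-n}=(\pi^{-1})^n\in R'$ for all $n\ge 0$ without needing to choose $j$ large enough.
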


\chapter{Submodules of the fraction field} \label{ch2.5}

Our next goals are to (1) make good on the promise to
show that any local Noetherian domain with invertible maximal ideal is
a DVR which includes explaining the ideal of invertible, and 
(2) give another proof of Theorem~\ref{thmC} that does not rely on the pre-acceptance of the
unique factorization theorem for Dedekind domains. We start by
explaining the concept inverse for ideals.
This concept will help us meet these two immediate goals, but will 
also be useful for better understanding Dedekind domains and similar rings.
(Much of this section is likely review, but the reader should verify for themselves
any unfamiliar result.)

Let $R$ be an integral domain with fraction field $K$. 
The collection of nonzero ideals of~$R$ forms a commutative monoid under the product
operation with~\text{$I=R$} providing the identity element. When $R$ is a Dedekind
domain this monoid can be expanded into a group by adding fractional ideals
to the monoid. Such fractional ideals will provide (multiplicative) inverses for nonzero ideals.
We will define fractional ideals in the next section, but for now we
mention that they are a kind of~$R$-submodule of~$K$.
So in order to prepare the way for fractional ideals, we will first discuss~$R$-submodules of~$K$
in general.

We assume the reader has at least a basic familiarity with modules in general, but we will
focus on $R$-submodules of~$K$. 
Recall that $K$ is an $R$-module. In fact, any ring containing $R$ is an $R$-module.
Every ideal of $R$ is an $R$-submodule of~$K$, and
an  $R$-submodule $I$ of $K$ is an ideal of~$R$ if and only $I$ is contained in~$R$.
(Note we will use letters such as $I$ and $J$ for $R$-submodules of~$K$ since we are thinking 
of such submodules as a straightforward generalization of ideals.)

Recall that if $I_1$ and $I_2$ are $R$-submodules of~$K$ (or submodules of any fixed module)
then we can define the sum
$$
I_1 + I_2 \; \defeq \; \left\{ x_1 + x_2 \mid x_1 \in I_1, x_2 \in I_2 \right\}.
$$
The sum is an $R$-submodule of~$K$. This sum is commutative and associative.
The zero submodule is the identity. So we get an additive monoid of $R$-submodules of~$K$.

The intersection $I_1 \cap I_2$ of two $R$-submodules of~$K$ is itself an $R$-submodule.
This operation is commutative and associative, and $K$ is the identity. So we get a 
commutative monoid of $R$-submodules of~$K$ under intersection.

The operations of addition and intersection are meaningful for the collection of submodules of any fixed
$R$-module $M$, and do not make special use of our case where $M = K$. However we will be especially
concerned with an operation, the product of submodules, that uses the fact that $M=K$
has a product. The product of $R$-submodules $I_1, I_2$ of $K$ is defined as follows:
$$
I_1  I_2 \; \defeq \; \left\{ \sum_{i} x_i y_i \; \middle| \;
\text{where each $x_i \in I_1$ and where each $y_i \in I_2$} \right\}.
$$
Here the sums are finite sums, and a sum with zero terms is defined here
to be~$0$.
This operation results in an $R$-submodule of~$K$. This operation is commutative and associative,
and the ideal $I = R$ is the identity. Thus we get a commutative monoid.
Note also that $I_1 I_2$ is the minimum $R$-submodule of $K$ (under inclusion) containing all
products $x y$ where $x\in I_1$ and~$y \in I_2$. So we can think of it as the submodule generated
by such products. (Of course, we can similarly think of $I_1 + I_2$ as the submodule generated by sums
of the form~$x+y$ with~$x\in I_1, y\in I_2$.)

The operations of sum, intersection, and product are monotonic in the sense that if $I_1 \subseteq I_1'$
then 
$$
I_1 + I_2 \subseteq I_1'+I_2, \qquad I_1 \cap I_2 \subseteq I_1'\cap I_2, \qquad I_1  I_2 \subseteq I_1'I_2.
$$
We also have a distributive law
$$
I ( J_1 + J_2) = I J_1 + I J_2.
$$

Given $x \in K$ then 
$$
x R\; \defeq\; \{ x r \mid r\in R \}.
$$
We sometimes write $Rx$ for $xR$ (since $R$ is an integral domain, it is commutative).
Observe that $x R$ is an  $R$-submodule of $K$; it is called a \emph{principal submodule}.
It is the minimum among submodules of $K$ (under inclusion) containing $x$, and so is sometimes
called the \emph{$R$-submodule of $K$ generated by~$x$}.
We extend this notation a bit: given $x \in K$ and an $R$-submodule  $I$ of $K$ then
$$
x I\; \defeq \; \{ x y  \mid y\in I \}.
$$
This results in an $R$-submodule of~$K$.
We have identities and properties such as 
$$
x(y I) = (xy)I, \quad x I = (x R) I, \quad (xR)(yR) = (xy)R \quad
\text{and}\quad (I \subseteq J \implies x I \subseteq x J).
$$
The identity $(xR)(yR) = (xy)R$ implies that the
collection of principal submodules is closed under multiplication.
Obviously $R = 1 R$ is principal, so 
the collection of principal submodules forms a submonoid of the collection of all
$R$-submodules of~$K$.
The collection of nonzero principal submodules also forms a submonoid of the collection of all
$R$-submodules of~$K$.

Given $x_1, \ldots, x_k \in K$, there is a minimum $R$-submodule of~$K$ containing these elements.
It is
$$
x_1 R + \ldots + x_k R.
$$
(Here minimum is with respect to inclusion).
Given an infinite subset $U$ of $K$, we can also form
a minimum $R$-submodule of~$K$ containing $U$: just take the intersection of
all $R$-submodules that contain~$U$. Its elements
are all the finite $R$-linear combinations of elements of $U$. We will not
need the infinite case in this document.

\begin{definition}
Suppose $I$ is an $R$-submodule of $K$. We say that $I$ is \emph{invertible}
if there is an $R$-submodule $J$ of $K$ such that 
$$
I J = R.
$$
In this case $J$ is called the \emph{inverse} of $I$.
\end{definition}

\begin{proposition} \label{easy_invert_prop}
Let $R$ be an integral domain with field of fractions~$K$.
If $x \in K^{\times}$ then $x R$ is invertible with inverse $x^{-1}R$.
So if $R$ is a PID them every nonzero ideal is invertible.

If $I$ and $J$ are $R$-submodules of $K$ such that $I J = x R$
for some $x \in K^{\times}$, then~$I$ and $J$ are invertible.
More generally, $I$ and $J$ are both invertible if and only if $IJ$ is invertible.
\end{proposition}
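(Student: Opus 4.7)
The proof is essentially a sequence of direct computations built on the identity $(xR)(yR) = (xy)R$ together with the associativity and commutativity of the product of submodules, both of which the excerpt already records. My plan has four short steps corresponding to the four assertions.

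First, for the claim that $xR$ is invertible with inverse $x^{-1}R$ when $x \in K^{\times}$, I would simply compute
\[
(xR)(x^{-1}R) = (x \cdot x^{-1}) R = 1 \cdot R = R,
\]
using the identity $(xR)(yR) = (xy)R$ already recorded. The PID consequence is then immediate: in a PID, every nonzero ideal has the form $xR$ for some nonzero $x \in R \subseteq K^{\times}$, so the previous sentence applies.

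Second, for the statement that $IJ = xR$ with $x \in K^{\times}$ forces $I$ and $J$ to be invertible, I would exhibit explicit inverses. Multiply both sides by $x^{-1}R$ and use associativity:
\[
I\bigl(J \cdot x^{-1}R\bigr) = (IJ)(x^{-1}R) = (xR)(x^{-1}R) = R,
\]
so $J \cdot x^{-1} R$ is an inverse of $I$. By symmetry (or by the same argument with $I$ and $J$ swapped), $I \cdot x^{-1} R$ is an inverse of $J$.

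Finally, for the equivalence that $IJ$ is invertible iff both $I$ and $J$ are, one direction is a one-line product: if $I I' = R$ and $J J' = R$, then associativity and commutativity give $(IJ)(I'J') = (II')(JJ') = R \cdot R = R$, so $IJ$ is invertible with inverse $I'J'$. For the converse, suppose $(IJ)L = R$ for some $R$-submodule $L$ of $K$. Then $R = (IJ)L = I(JL) = J(IL)$, so $JL$ is an inverse of $I$ and $IL$ is an inverse of $J$. I do not anticipate any real obstacle here; the only thing one must be careful about is invoking associativity and commutativity of the submodule product and the identities $(xR)(yR) = (xy)R$ and $R \cdot R = R$, all of which are stated in the preceding discussion.
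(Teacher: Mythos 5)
Your proof is correct, and since the paper leaves this proposition without a proof (in keeping with its policy of leaving straightforward arguments to the reader), your argument is precisely the intended one: compute $(xR)(x^{-1}R) = R$ directly, deduce the PID case, exhibit explicit inverses $Jx^{-1}R$ and $Ix^{-1}R$ when $IJ = xR$, and use associativity and commutativity of the submodule product for the biconditional. One small observation: your third step is actually a special case of the ``$\Leftarrow$'' direction of your fourth step, since $IJ = xR$ is invertible by the first step; but spelling it out explicitly as you did is at least as clear.
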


Since a DVR is a PID, its maximal ideal~$\frak m$ is invertible (along with all nonzero ideals). This is one direction
of our promised result.
We are also ready to prove the more substantial converse
 that a local Noetherian domain with invertible maximal ideal is a DVR.
All we need here is (1) the notion of invertibility, and (2) our previous result on strongly local Noetherian domains.

\begin{theorem}\label{theoremI}
Let $R$ be an integral domain.
Then $R$ is a discrete valuation ring if and only if 
$R$ is a local Noetherian domain whose maximal ideal is invertible. 
\end{theorem}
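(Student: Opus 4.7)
The plan is to prove both directions, with the forward implication essentially folding in material already established and the converse doing the real work.

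For the easy direction, suppose $R$ is a DVR. Then $R$ is local by Corollary~\ref{cor11} and Noetherian since it is a PID. The maximal ideal is a nonzero principal ideal $\pi R$, so by Proposition~\ref{easy_invert_prop} it is invertible (with inverse $\pi^{-1} R$). That takes care of one direction in a sentence or two.

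For the converse, suppose $R$ is a local Noetherian domain whose maximal ideal $\mathfrak m$ is invertible. In view of Theorem~\ref{thmB}, it suffices to prove that $\mathfrak m$ is nonzero and principal. Nonzeroness is automatic: if $\mathfrak m = 0$ then $\mathfrak m J = 0 \ne R$ for any submodule $J$ of $K$, contradicting invertibility. For principality, I would mimic the proof of the lemma preceding Proposition~\ref{divisibility_principal_prop}. Pick an inverse $J$ of $\mathfrak m$, so $\mathfrak m J = R$, and write
$$1 = \sum_{i=1}^n x_i y_i, \qquad x_i \in \mathfrak m,\ y_i \in J.$$
Each product $x_i y_i$ lies in $\mathfrak m J = R$. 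Since $R$ is local and $1 \notin \mathfrak m$, the $x_i y_i$ cannot all lie in $\mathfrak m$, so some $x := x_i$ and $y := y_i$ satisfy $xy = u \in R^\times$.

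Now I claim $\mathfrak m = xR$. On the one hand, $x \in \mathfrak m$ gives $xR \subseteq \mathfrak m$. Conversely, for any $m \in \mathfrak m$ we have $my \in \mathfrak m J = R$, so $m = (my) \cdot u^{-1} x \in xR$ (using $y = u x^{-1}$ in $K$). Hence $\mathfrak m \subseteq xR$, and $\mathfrak m = xR$ is principal. Applying Theorem~\ref{thmB} yields that $R$ is a DVR. The only real obstacle is the step of producing a single product $x_i y_i$ that is a unit, and this is handled cleanly by the localness hypothesis; everything else is bookkeeping that works equally well for submodules of $K$ as for ideals.
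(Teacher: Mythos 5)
Your proof is correct, but it takes a different route from the paper's. The paper's proof reduces the converse direction to the ``strongly local'' characterization of DVRs (Proposition~\ref{propH}): given $\mathfrak m \mathfrak m^{-1} = R$, any proper nonmaximal ideal $I \subseteq \mathfrak m$ satisfies $\mathfrak m^{-1} I \subseteq R$; since $I \neq \mathfrak m$ forces $\mathfrak m^{-1} I \neq R$, local-ness gives $\mathfrak m^{-1} I \subseteq \mathfrak m$, hence $I \subseteq \mathfrak m^2$, so $R$ is strongly local and Proposition~\ref{propH} finishes the job. You instead bypass the strongly local machinery and directly exhibit a generator of $\mathfrak m$: from $1 = \sum x_i y_i$ with $x_i \in \mathfrak m$, $y_i \in \mathfrak m^{-1}$, local-ness forces one $x_i y_i$ to be a unit $u$, and then $\mathfrak m = x_i R$ follows by a short computation. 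This is precisely the route the paper flags as an alternative in the remark after Proposition~\ref{propInv} (``invertible implies principal in a local domain, then apply Theorem~\ref{thmB}''), except you inline the argument rather than citing that proposition. Both proofs funnel through Theorem~\ref{thmB}; yours is a touch more elementary and self-contained in that it avoids introducing the nonstandard ``strongly local'' vocabulary, while the paper's buys a one-line argument at the cost of one more intermediate characterization. Your nonzero check for $\mathfrak m$ is correct and worth keeping, though note it is already implicit in both proofs: $\mathfrak m = 0$ would give $\mathfrak m J = 0 \neq R$, so the invertibility hypothesis excludes the field case automatically.
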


\begin{proof}
If $R$ is a DVR, then it is a PID, and all nonzero principal ideals are invertible.

Now suppose $R$ is a local Noetherian domain with maximal ideal~$\mathfrak m$ and suppose there is an $R$-submodule~$\mathfrak m^{-1}$ of~$K$
with $\mathfrak m \mathfrak m^{-1} = R$.
To show that $R$ is a DVR, it is enough,
by Proposition~\ref{propH}, 
to show that $R$ is strongly local.
Let $I$ be a proper nonmaximal ideal.
From $I \subseteq \mathfrak m$ we have
$\mathfrak m^{-1} I \subseteq R$.
However, $\mathfrak m^{-1} I =R$ cannot hold since~$I \ne \frak m$. So~$\mathfrak m^{-1} I \subseteq \frak m$.
Hence~$I \subseteq \frak m^2$. 
We conclude that $R$ is strongly local since every proper nonmaximal ideal is contained in~$\frak m^2$.
\end{proof}

\begin{remark}
See the remark after Proposition~\ref{propInv} for another short argument.
\end{remark}

If an $R$-submodule of $R$ is invertible, then its inverse has the following explicit description:

\begin{proposition}\label{inverse_formula_prop}
Suppose $IJ = R$ where $I$ and $J$ are 
$R$-submodules of~$K$. In other words, suppose $I$ and $J$ are inverses. 
Then
$$
J = \left\{x \in K \mid xI \subseteq R \right\}.
$$
\end{proposition}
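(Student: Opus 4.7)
The plan is to prove the two inclusions separately, with the forward inclusion being essentially a one-line consequence of the hypothesis and the reverse inclusion requiring a small but characteristic trick involving writing $1$ as a finite sum.

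For the inclusion $J \subseteq \{x \in K \mid xI \subseteq R\}$, I would just observe that if $x \in J$, then for any $y \in I$ we have $xy \in JI = IJ = R$, so $xJ \subseteq R$. This is immediate from the definition of the product of submodules and the commutativity of the monoid operation.

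For the reverse inclusion, the main (only) obstacle is producing an element of $J$ out of the abstract assumption $xI \subseteq R$. The key idea is that since $1 \in R = IJ$, and the product $IJ$ consists of finite sums of products $a_i b_i$ with $a_i \in I$ and $b_i \in J$, we may write
\[
1 \;=\; \sum_{i=1}^{n} a_i b_i, \qquad a_i \in I,\; b_i \in J.
\]
Then for any $x \in K$ with $xI \subseteq R$, I multiply through by $x$ to get
\[
x \;=\; \sum_{i=1}^{n} (x a_i) b_i.
\]
Now each $xa_i$ lies in $R$ by hypothesis, and each $b_i$ lies in $J$, so $(xa_i)b_i \in R \cdot b_i \subseteq J$ since $J$ is an $R$-submodule of $K$. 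The sum of elements of $J$ is again in $J$, so $x \in J$, as desired.

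This completes the proof; the whole argument is just the observation that the two submodules are \emph{a priori} paired via the product, plus the standard trick of evaluating the identity $1 = \sum a_i b_i$ against an arbitrary test element $x$. No further tools beyond the definitions in the preceding section are needed, and in particular no Noetherian, local, or finitely generated hypotheses are required on $I$ or $J$.
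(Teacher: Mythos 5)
Your proof is correct, and both inclusions work as claimed. One small slip: in the forward inclusion you conclude ``so $xJ \subseteq R$'' where you clearly mean ``so $xI \subseteq R$''; the reasoning is right and the typo is harmless.

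Your reverse inclusion takes a slightly different route than the paper's. The paper argues at the level of submodule inclusions: from $xI \subseteq R$, i.e.\ $(xR)I \subseteq R$, it multiplies both sides by $J$ (using monotonicity and associativity of the product) to get $(xR)IJ \subseteq RJ$, hence $xR \subseteq J$ and $x \in J$. You instead unwind what $IJ = R$ means at the element level, writing $1 = \sum a_i b_i$ with $a_i \in I$, $b_i \in J$, and then evaluating $x = \sum (xa_i)b_i \in J$. These are the same underlying mechanism viewed at different levels of abstraction: the paper's version is more compact and leans on the submodule calculus set up in this section, while yours is more concrete and makes visible the exact ``write $1$ as a finite sum'' trick that also powers Proposition~\ref{prop_inv_fg} (invertible implies finitely generated). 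Either is perfectly acceptable here; the paper's is shorter once the submodule algebra is internalized, and yours requires no prior comfort with manipulating inclusions of products.
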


\begin{proof}
The direction $J \subseteq  \left\{x \in K \mid xI \subseteq R \right\}$ is straightforward.

Suppose that~\text{$x I \subseteq R$}. In other words, $(x R) I \subseteq R$. So
$(xR) I J \subseteq R J$.
Thus~$x R \subseteq J$, and so $x \in J$.
\end{proof}

Proposition~\ref{inverse_formula_prop} implies
that $\left\{x \in K \mid xI \subseteq R \right\}$ is the unique candidate for the inverse of $I$.
It also suggests a necessary criterion for invertibility. Observe that in the above proposition
neither $I$ nor $J$ can be the zero module. This forces $J$ to contain some nonzero $x$.
In other words, there is an $x \in K^{\times}$ such that $x I \subseteq R$.

\begin{example}
As long as $R$ is not all of $K$, the module
$I = K$ is not invertible. Otherwise, there is an $x \in K^{\times}$ with $x K \subseteq R$. 
But clearly $x K = K$, a contradiction.
\end{example}

In spite of the fact that $I$ may not be invertible, we will still 
label~$\left\{x \in K \mid xI \subseteq R \right\}$ as $I^{-1}$
with the caveat that although it is the only possible inverse, it may fail to be the inverse
simply because no inverse of $I$ exists.

\begin{definition}
Let $R$ be an integral domain and let $K$ be its fraction field.
If~$I$ is an $R$-submodule of $K$ then 
$$I^{-1} \;\defeq\; \left\{ x \in K \mid x I \subseteq R \right\}.$$
\end{definition}

\begin{proposition}
Let $R$ be an integral domain, and let $K$ be its fraction field.
If~$x \in K^\times$ then 
$$
(x R)^{-1} = x^{-1} R.
$$
\end{proposition}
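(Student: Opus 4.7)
The plan is to simply unwind the definition of $I^{-1}$ for the case $I = xR$ and observe that the resulting condition is precisely membership in $x^{-1}R$. Since $x \in K^\times$, the element $x^{-1}$ exists in $K$, so both sides of the claimed equality make sense as $R$-submodules of $K$.

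First I would write out $(xR)^{-1} = \{y \in K \mid y(xR) \subseteq R\}$, and then note that $y(xR) = (yx)R$. The containment $(yx)R \subseteq R$ is equivalent to $yx \in R$: one direction is immediate (take the generator $1 \in R$), and the other follows because $R$ is closed under multiplication. So the set simplifies to $\{y \in K \mid yx \in R\}$.

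Next I would check the two inclusions. For $x^{-1}R \subseteq (xR)^{-1}$: if $y = x^{-1} r$ with $r \in R$, then $yx = r \in R$. For the reverse inclusion: if $yx = r \in R$, then multiplying by $x^{-1} \in K$ gives $y = x^{-1} r \in x^{-1} R$. This multiplication step uses that $x$ is invertible in the fraction field, which is exactly the hypothesis $x \in K^\times$.

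There is no real obstacle here; the whole proof is a one-line chain of definitions once one observes that $y(xR) \subseteq R$ collapses to $yx \in R$ because $R$ contains $1$. The only thing worth flagging explicitly is the use of $x \in K^\times$ (not merely $x \in K$), which is what lets us pass from $yx \in R$ back to $y \in x^{-1}R$.
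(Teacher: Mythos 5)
Your proof is correct. The paper takes a slightly different route: it invokes Proposition~\ref{inverse_formula_prop} (that whenever $I$ is invertible, the set $\{y \in K \mid yI \subseteq R\}$ \emph{is} the inverse) together with the already-established fact from Proposition~\ref{easy_invert_prop} that $xR$ is invertible with inverse $x^{-1}R$. That argument is two sentences of pure citation and illustrates how the general uniqueness-of-inverse result streamlines such verifications. You instead unwind the definition of $(xR)^{-1}$ from scratch and verify the equality of sets by double inclusion, which is more elementary and self-contained --- it never needs the uniqueness lemma. Both are clean; what the paper's version buys is a demonstration of the machinery in action, while yours buys independence from that machinery. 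Your flagging of exactly where $x \in K^\times$ (as opposed to $x \in K$) is used --- in passing from $yx \in R$ back to $y \in x^{-1}R$ --- is a nice touch that the paper's proof leaves implicit.
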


\begin{proof}
Recall that $(xR)^{-1}$ is the inverse of $x R$ if an inverse exists.
But $xR$ is in fact invertible with inverse $x^{-1} R$.
\end{proof}

\begin{proposition}
Let $R$ be an integral domain and let $K$ be its fraction field.
If~$I$ is an $R$-submodule of $K$ then the following hold:
\begin{itemize}
\item
$I^{-1}$ is an $R$-submodule of~$K$.
\item
$I I^{-1}$ is an ideal of $R$.
\item
In fact, $I I^{-1}$ is the maximum (for inclusion) among ideals of $R$
of the form~$IJ$ where~$J$ is an $R$-submodule of~$K$.
\end{itemize}
If $I$ and $J$ are $R$-submodules of $K$ then the following holds:
\begin{itemize}
\item
 $I J \subseteq R$ then $J \subseteq I^{-1}$
\item
If $I \subseteq J$ then $J^{-1} \subseteq I^{-1}$.
\end{itemize}
\end{proposition}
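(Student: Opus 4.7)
The plan is to unpack the definition $I^{-1} = \{x \in K \mid xI \subseteq R\}$ and verify each bullet by direct computation; nothing here requires a clever trick, and no step stands out as a serious obstacle. I would dispatch the five bullets in roughly the order listed, since the last two essentially fall out of the work done for the first three.

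First I would check that $I^{-1}$ is an $R$-submodule of $K$. Given $x, y \in I^{-1}$ and $r \in R$, for any $z \in I$ I have $(x+y)z = xz + yz \in R$ and $(rx)z = r(xz) \in R$, so $x+y$ and $rx$ both lie in $I^{-1}$; the element $0$ is clearly in $I^{-1}$. Next, for the ideal claim, every generating product $xy$ with $x \in I^{-1}$ and $y \in I$ lies in $R$ by the very definition of $I^{-1}$, so the submodule $II^{-1}$ of $K$ generated by such products is contained in $R$, hence is an ideal.

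For the maximality claim, suppose $J$ is an $R$-submodule of $K$ with $IJ \subseteq R$. Then for each $y \in J$ we have $yI \subseteq IJ \subseteq R$, so $y \in I^{-1}$; thus $J \subseteq I^{-1}$, and multiplying by $I$ gives $IJ \subseteq II^{-1}$. This is exactly the maximality statement, and the same argument simultaneously proves the fourth bullet ($IJ \subseteq R \implies J \subseteq I^{-1}$)—indeed, it is natural to prove that bullet first and then cite it for the maximality step.

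Finally, for the monotonicity statement, if $I \subseteq J$ and $x \in J^{-1}$, then $xI \subseteq xJ \subseteq R$, so $x \in I^{-1}$, giving $J^{-1} \subseteq I^{-1}$. The only mildly subtle point in the whole argument is to be careful that the maximality is phrased correctly: $II^{-1}$ need not equal $R$ (that is the content of invertibility), but among all products $IJ$ that happen to land inside $R$, the choice $J = I^{-1}$ is the largest. Everything else is formal manipulation of the defining condition $xI \subseteq R$.
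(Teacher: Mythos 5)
Your proof is correct and follows exactly the route the paper hints at: it explicitly says to prove the fourth bullet ($IJ \subseteq R \implies J \subseteq I^{-1}$) first and deduce the maximality claim from it, which is precisely what you do. The remaining bullets are handled by the same direct unwinding of the definition that the paper leaves as ``straightforward.''
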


\begin{proof}
These are straightforward. (It is efficient to prove the fourth claim before the third claim, since
the fourth claim implies the third.)
\end{proof}

\chapter{Fractional ideals} \label{ch3}

Let $R$ be an integral domain with fraction field $K$. 
The collection of nonzero ideals of $R$ forms a commutative monoid under the product operation.
A fundamental discovery in algebraic number theory and commutative algebra is that for a
certain class of widely used integral domains (namely Dedekind domains)
this monoid can be expanded to include some nonzero $R$-submodules of $K$ such
that the result is an Abelian group. However, we want to be careful about what
to add. For example, we do not want to add $I=K$ to the monoid since, as we have seen,~$I=K$ is not
invertible (except for the trivial situation where $R=K$). In the last section we introduced
the definition
$$I^{-1} \;\defeq\; \left\{ x \in K \mid x I \subseteq R \right\}$$
where $I^{-1}$ will be the inverse, if it exists, of a given $R$-submodule~$I$ of $K$.
If, however,~$I^{-1} = \{0\}$ then, of course, $I^{-1}$ cannot be the inverse of $I$. So
the existence of a nonzero $x \in K$ with $x I \subseteq R$ is a necessary condition
for invertibility. 

So in order to allow invertibility, we will restrict our attention to nonzero $I$ which satisfy this condition.
This motivates the definition of \emph{fractional ideal}:

\begin{definition}
Let $R$ be an integral domain with fraction field~$K$. A \emph{fractional ideal} of $R$
is a nonzero $R$-submodule $I$ of~$K$ such that $x I \subseteq R$ for some $x \in K^{\times}$.
In other words, $x I$ is an ideal of~$R$ for some nonzero $x \in K$.  In this context,
a regular ideal $I$ of~$R$ is sometimes called an \emph{integral ideal}.
\end{definition}

Here are a few useful equivalent characterizations of fractional ideals:

\begin{proposition}
Let $I$ be a nonzero $R$-submodule of~$K$. Then the following are equivalent:
\begin{itemize}
\item
$I^{-1}$ is not $\{0\}$.
\item
$I$ is a fractional ideal: there is a nonzero $x \in K$ such that $x I \subseteq R$.
\item
There is a nonzero $d \in R$ such that $d I \subseteq R$.

\item
There is a nonzero $d \in R$ such that $I \subseteq d^{-1} R$.

\item
There is a nonzero $x \in K$ such that $I \subseteq x R$.

\end{itemize}
\end{proposition}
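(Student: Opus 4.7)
The plan is to verify all five conditions are equivalent by a short cycle of implications, using only the definition of $I^{-1}$ and the fact that every element of $K$ can be written as a ratio of elements of $R$.

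First I would dispatch the equivalence of (a) and (b) immediately from the definition $I^{-1} = \{x \in K \mid xI \subseteq R\}$: saying $I^{-1} \ne \{0\}$ is, word for word, saying there exists a nonzero $x \in K$ with $xI \subseteq R$. Next I would show (b) implies (c): given nonzero $x \in K^\times$ with $xI \subseteq R$, write $x = a/b$ with $a, b \in R$ and $a, b \ne 0$; multiplying the inclusion $xI \subseteq R$ through by $b$ yields $aI = b(xI) \subseteq bR \subseteq R$, so $d \defeq a$ works. The reverse (c) implies (b) is immediate since $R \subseteq K$.

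Then (c) is equivalent to (d) by the principle that for nonzero $d \in R$, the inclusion $dI \subseteq R$ is the same as $I \subseteq d^{-1}R$ (multiply either side by $d$ or $d^{-1}$, using that multiplication by a unit of $K^\times$ is a bijection $K \to K$ preserving and reflecting submodule containment). For (d) implies (e), just take $x \defeq d^{-1} \in K^\times$. Finally, (e) implies (b): if $I \subseteq xR$ for some nonzero $x \in K$, multiply by $x^{-1}$ to get $x^{-1}I \subseteq R$, so $y \defeq x^{-1}$ witnesses (b).

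There is no real obstacle here; the only thing to be a little careful about is not accidentally using that $I$ itself contains a nonzero element when deducing that the witness $a$ in the step (b) implies (c) is nonzero. That nonzeroness comes purely from $x \ne 0$ forcing its numerator $a$ to be nonzero, independent of $I$. Assembling the implications (a) $\Leftrightarrow$ (b) $\Leftrightarrow$ (c) $\Leftrightarrow$ (d) $\Rightarrow$ (e) $\Rightarrow$ (b) closes the cycle and gives the full equivalence.
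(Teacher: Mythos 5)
Your proof is correct, and since the paper omits the argument for this proposition (treating it as one of the straightforward verifications left to the reader), the natural cycle of implications you give is exactly what is expected. The one small remark is that the implication (b) $\Rightarrow$ (c) could also be obtained from (b) $\Rightarrow$ (e) $\Rightarrow$ (d) $\Rightarrow$ (c) once the rest of the cycle is in place, so one of your two "bridges" between the $K$-denominator and $R$-denominator conditions is redundant; this does not affect correctness, and your care in noting that the nonzeroness of the numerator $a$ comes from $x \ne 0$ rather than from any property of $I$ is well placed.
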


\begin{remark}
Let $I$ be a fractional ideal. Then any nonzero $d\in R$ such that $I \subseteq d^{-1} R$
can be called a ``common denominator''. So a fractional ideal is just an $R$-submodule of $K$
with a common denominator.
\end{remark}

\begin{proposition}
Every nonzero ideal of an integral domain is a fractional ideal.
If $x \in K^{\times}$ then $xR$ is a fractional ideal.
Every nonzero $R$-submodule of a fractional ideal is a fractional ideal.
\end{proposition}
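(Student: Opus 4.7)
The plan is to verify each of the three claims directly from the definition, namely that a fractional ideal is a nonzero $R$-submodule $I$ of $K$ admitting some $x \in K^\times$ with $xI \subseteq R$. Each claim reduces to exhibiting such a witness $x$.

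For the first claim, let $I$ be a nonzero ideal of $R$. It is automatically a nonzero $R$-submodule of $K$ since $R \subseteq K$. The witness is $x = 1$, because $1 \cdot I = I \subseteq R$ by the very fact that $I$ is an ideal of $R$. For the second claim, let $x \in K^\times$ and consider $xR$. It is nonzero because $x \cdot 1 = x \ne 0$, and it is an $R$-submodule of $K$ as discussed in the preceding section (principal submodule). The witness needed is $x^{-1} \in K^\times$: indeed $x^{-1}(xR) = (x^{-1} x) R = R \subseteq R$, using the identity $y(zR) = (yz)R$ already noted.

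For the third claim, let $I$ be a fractional ideal and let $J$ be a nonzero $R$-submodule of $I$. By hypothesis there is some $x \in K^\times$ with $xI \subseteq R$. By monotonicity of the product by a principal submodule (or just elementwise), $J \subseteq I$ implies $xJ \subseteq xI \subseteq R$. Since $J$ is nonzero by assumption and is an $R$-submodule of $K$ (any $R$-submodule of an $R$-submodule of $K$ is one), the same $x$ serves as a witness that $J$ is a fractional ideal.

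There is no serious obstacle: each assertion is an immediate unpacking of the definition, with the only mild care needed being to note that the nonzero hypothesis carries over in the third part, and to invoke the already-established identities about principal submodules in the second. I would present the three verifications in succession, possibly without much commentary, or even leave them to the reader as the author's stated philosophy suggests for straightforward claims.
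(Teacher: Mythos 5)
Your proof is correct and is exactly the straightforward verification the paper has in mind (the paper omits it as an exercise for the reader, per the author's stated philosophy). Each of the three parts is handled by exhibiting the right witness $x$ from the definition, which is the only natural route.
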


\begin{remark}
If $x \in K^{\times}$ then we call $x R$ a
\emph{principal fractional ideal}.
\end{remark}

\begin{exercise}
Let $R$ be a PID.
Show that every fractional ideal is principal.
\end{exercise}

\begin{proposition}
If $I_1$ and $I_2$ are fractional ideals then
so are the sum $I_1 + I_2$, the product $I_1 I_2$, and the intersection $I_1 \cap I_2$.
If $I$ is a fractional ideal, then $I^{-1}$ is a fractional ideal.
\end{proposition}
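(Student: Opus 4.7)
The plan is to use the third characterization from the preceding proposition, namely that a nonzero $R$-submodule $I$ of $K$ is a fractional ideal if and only if there exists a nonzero $d \in R$ with $dI \subseteq R$. This reduces everything to producing common denominators in $R$ together with checking nonvanishing.

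For the sum and product, let $d_1, d_2 \in R$ be nonzero common denominators with $d_i I_i \subseteq R$. Then $d_1 d_2(I_1 + I_2) \subseteq d_2 R + d_1 R \subseteq R$, and $d_1 d_2 (I_1 I_2) \subseteq R$, using the basic identities from Section~\ref{ch2.5}. Since $I_1 \subseteq I_1 + I_2$, the sum is nonzero. For the product, I would pick nonzero $x \in I_1$ and nonzero $y \in I_2$, note $xy \in I_1 I_2$, and observe $xy \ne 0$ because $K$ is a field. So both $I_1 + I_2$ and $I_1 I_2$ are fractional ideals.

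For the intersection, a common denominator is immediate ($d_1$ alone works, since $I_1 \cap I_2 \subseteq I_1$). The subtle point, and the main obstacle, is that $I_1 \cap I_2$ could a priori be zero. To rule this out I would exhibit an explicit nonzero common element: take nonzero $x \in I_1$ and nonzero $y \in I_2$, and consider $z \defeq (d_1 x)(d_2 y) = (d_1 d_2 y) x = (d_1 d_2 x) y$. Since $d_1 d_2 y \in R$, the first expression shows $z \in R \cdot I_1 \subseteq I_1$; symmetrically $z \in I_2$. And $z \ne 0$ because all four factors are nonzero in the field $K$. So $I_1 \cap I_2$ is a nonzero $R$-submodule of the fractional ideal $I_1$, hence a fractional ideal by the preceding proposition.

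Finally, for $I^{-1}$: by the first characterization in the preceding proposition, $I$ being a fractional ideal is equivalent to $I^{-1} \ne \{0\}$, so nonvanishing is free. It remains to supply a common denominator in $R$. Pick any nonzero $y \in I$. By definition of $I^{-1}$, every $x \in I^{-1}$ satisfies $xy \in xI \subseteq R$, so $y I^{-1} \subseteq R$. If $y \notin R$, clear denominators: choose nonzero $d \in R$ with $dy \in R$ (possible since $y \in K$), and then $(dy) I^{-1} \subseteq R$ with $dy$ a nonzero element of $R$. Thus $I^{-1}$ is a fractional ideal, completing the proof.
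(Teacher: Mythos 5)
Your proof is correct and follows essentially the same route as the paper: common denominators $d_1 d_2$ for the sum and product, a product of nonzero elements of $I_1 \cap R$ and $I_2 \cap R$ to show the intersection is nonzero, and $yI^{-1} \subseteq R$ for a nonzero $y \in I$ to handle the inverse. The only cosmetic difference is that for $I^{-1}$ the paper uses the defining condition $xI \subseteq R$ with $x \in K^\times$ and so takes $x = y$ directly, whereas you insist on a denominator in $R$ and add a short clearing step; both are fine.
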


\begin{proof}
Let $a_1, a_2 \in R$ be nonzero elements such that $a_1 I_1$ and~$a_2 I_2$ are ideals of~$R$.
Then $a_1 a_2 (I_1 + I_2)$ is an ideal, as is $a_1 a_2 (I_1  I_2)$.
Note $I_1 \cap I_2$ is nonzero: multiply a nonzero element of $I_1 \cap R$ with a nonzero 
element of $I_2 \cap R$.
Since $I_1 \cap I_2$ is an~$R$-submodule of a fractional ideal, it must be a fractional ideal.

For the last claim, note that $I$ and $I^{-1}$ are both nonzero since $I$ is a fractional ideal.
If~$y \in I$ is nonzero, then $y I^{-1} \subseteq R$. (One can also argue that $y R \subseteq I$ 
so~$I^{-1} \subseteq y^{-1} R$).
\end{proof}

\begin{corollary}
The collection of fractional ideals of an integral domain~$R$ forms a commmutative
monoid under the product operation. This monoid contains, as  submonoids, (1) the monoid of nonzero
ideals, (2) the monoid of principal fractional ideals.
\end{corollary}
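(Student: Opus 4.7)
The plan is to check the monoid axioms one by one and then verify the two submonoid claims, drawing almost entirely on material assembled earlier in Section~\ref{ch2.5} and the immediately preceding proposition. Essentially nothing here is new; the corollary is a bookkeeping statement that packages what has already been done.

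First I would establish that the product operation is well-defined on fractional ideals. The preceding proposition already tells us that the product of two fractional ideals is a fractional ideal, giving closure. Associativity and commutativity of the product were established in Section~\ref{ch2.5} for arbitrary $R$-submodules of $K$, and fractional ideals are in particular $R$-submodules of $K$, so both properties are inherited. The identity element is $R$ itself: $R$ is a nonzero ideal of itself, hence (by the earlier proposition) a fractional ideal, and in Section~\ref{ch2.5} we noted that $R$ acts as identity for the product on $R$-submodules of $K$. This gives the commutative monoid structure on fractional ideals.

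Next I would handle the two submonoid claims. For the nonzero ideals: every nonzero ideal of $R$ is a fractional ideal (from the earlier proposition), and $R$ is a nonzero ideal serving as identity. The only thing worth pausing on is closure, that is, the product of two nonzero ideals is again nonzero; this is where we use the integral domain hypothesis, since a product $xy$ of nonzero $x \in I_1$ and $y \in I_2$ lies in $I_1 I_2$ and is nonzero. So nonzero ideals form a submonoid. For the principal fractional ideals: the identity $(xR)(yR) = (xy)R$ from Section~\ref{ch2.5} gives closure, and $R = 1R$ is principal, providing the identity. Hence both collections are submonoids.

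There is no serious obstacle here; the statement is a consolidation of already-proved facts. The one spot requiring a moment of care is closure of nonzero ideals under product, which relies on $R$ being a domain, but this is immediate. Everything else follows directly from the monoid structure on $R$-submodules of $K$ recorded earlier.
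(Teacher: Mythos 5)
Your proposal is correct and follows exactly the route the paper intends: the corollary is stated without proof precisely because it is a consolidation of the preceding proposition (closure of fractional ideals under products) with the monoid structure on $R$-submodules of $K$ established in Section~\ref{ch2.5}, including the identity $(xR)(yR)=(xy)R$ for principal submodules. You also correctly flag the one small point worth making explicit — that the product of nonzero ideals in an integral domain is nonzero — which justifies closure for the first submonoid claim.
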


If $R$ is a DVR, then the monoid of fractional ideals is easily described.

\begin{proposition}\label{prop24}
Let $R$ be a DVR with uniformizer $\pi$. Then the fractional ideals of $R$
are all principal of the form $\pi^k R$ with $k\in \bZ$. These are distinct, and
form a group under multiplication.
The map $k \to \pi^k R$ is an isomorphism between the additive group $\bZ$
and the multiplicative group of fractional ideals of $R$.
\end{proposition}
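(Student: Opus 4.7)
The plan is to use the explicit form of elements in $K^{\times}$ from the earlier proposition: every nonzero element of $K$ can be written uniquely as $u \pi^k$ with $u \in R^{\times}$ and $k \in \bZ$, and moreover the nonzero integral ideals of $R$ are exactly the $\pi^k R$ with $k \ge 0$ (already established in the chapter on DVRs). These two facts should do essentially all the work.

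First I would show that every fractional ideal $I$ has the form $\pi^k R$ for some $k \in \bZ$. By definition there is a nonzero $d \in R$ with $dI \subseteq R$, i.e., $dI$ is a nonzero integral ideal of $R$. Writing $d = u\pi^m$ with $u \in R^{\times}$ and $m \ge 0$, and using the classification of nonzero integral ideals, we get $dI = \pi^n R$ for some $n \ge 0$. Then
\[
I \;=\; d^{-1}\pi^n R \;=\; u^{-1}\pi^{n-m} R \;=\; \pi^{n-m} R,
\]
since $u^{-1} \in R^{\times}$ means $u^{-1} R = R$. So $I = \pi^k R$ with $k = n - m \in \bZ$.

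Next I would verify distinctness of the $\pi^k R$: if $\pi^k R = \pi^j R$ for $k, j \in \bZ$, then $\pi^{k-j} \in R^{\times}$, so $v(\pi^{k-j}) = k - j = 0$, giving $k = j$. The group structure and the isomorphism come from the identities $(\pi^k R)(\pi^j R) = \pi^{k+j} R$ (a special case of $(xR)(yR) = (xy)R$ from Chapter~3), together with $\pi^0 R = R$ and the observation that $\pi^{-k} R$ is the multiplicative inverse of $\pi^k R$. Hence $k \mapsto \pi^k R$ is a surjective (by the first step) and injective (by distinctness) monoid homomorphism $\bZ \to \{\text{fractional ideals}\}$ that takes $+$ to $\cdot$, i.e., a group isomorphism; in particular the target is a group.

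There is no real obstacle here, as everything reduces to the unique factorization of elements of $K^{\times}$ as $u \pi^k$ and the previously established classification of integral ideals. The only point to handle with a little care is confirming that the set of $\pi^k R$ for $k \in \bZ$ really does form a group (as opposed to just noting that each element has an inverse in the larger monoid), which follows at once once closure, associativity, identity, and inverses are in hand.
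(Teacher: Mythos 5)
Your argument is correct, and since the paper states this proposition without a written proof (in keeping with its stated policy of leaving straightforward proofs to the reader), there is no competing approach to compare against. The route you take is the natural one: clear denominators with a nonzero $d\in R$ to reduce $I$ to a nonzero integral ideal $dI=\pi^n R$, write $d=u\pi^m$ via the unique-factorization-of-elements result, cancel to get $I=\pi^{n-m}R$, and then deduce distinctness, the group law from $(xR)(yR)=(xy)R$, and the isomorphism. One small point worth making explicit if you write this up: each $\pi^k R$ with $k<0$ is indeed a fractional ideal (this follows from the earlier observation that $xR$ is a fractional ideal for every $x\in K^\times$), so the map $k\mapsto \pi^k R$ really does land in the set of fractional ideals.
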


\begin{exercise}
Let $R$ be a DVR. Show that the only nonzero $R$-submodule of $K$
that is not a fractional ideal is $K$ itself.
\end{exercise}

We will see that for Dedekind domains every fractional ideal is invertible, so, as in the
special case of a DVR, this monoid is actually a group.
However, for general
integral domains the question of invertibility is trickier. In fact, we have another necessary condition
for invertibility. 

\begin{proposition}\label{prop_inv_fg}
If $I$ is an invertible $R$-submodule of $K$ then $I$ is finitely generated.
\end{proposition}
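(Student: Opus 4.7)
The plan is to extract a finite generating set directly from the product relation $I \cdot I^{-1} = R$. Since $1 \in R = I I^{-1}$, and elements of $I I^{-1}$ are by definition finite sums of products, we can write
$$
1 = x_1 y_1 + x_2 y_2 + \ldots + x_n y_n
$$
for some $x_i \in I$ and $y_i \in I^{-1}$. I claim that $x_1, \ldots, x_n$ generate $I$ as an $R$-module, which is exactly what we need.

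To verify the claim, take an arbitrary $z \in I$. Multiplying the displayed equation by $z$ yields
$$
z = \sum_{i=1}^n (z y_i) x_i.
$$
The key observation is that each coefficient $z y_i$ lies in $R$: indeed, $z \in I$ and $y_i \in I^{-1}$, so $z y_i \in I \cdot I^{-1} = R$. (Alternatively, this is immediate from the definition $I^{-1} = \{x \in K \mid xI \subseteq R\}$.) Hence $z$ is an $R$-linear combination of $x_1, \ldots, x_n$, and $I = x_1 R + \ldots + x_n R$.

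There is no real obstacle here — the whole argument hinges on the single trick of writing $1$ as a finite sum in $IJ$ and then multiplying through by an arbitrary element of $I$. The only thing worth being careful about is that the sum expressing $1$ must be finite (which is built into the definition of the product of submodules), and that the coefficients $zy_i$ genuinely land in $R$, which follows immediately from either the definition of $I^{-1}$ or the relation $IJ = R$. Note also that this proof does not use any Noetherian or local hypothesis on $R$: invertibility alone forces finite generation.
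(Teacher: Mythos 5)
Your proof is correct and matches the paper's argument essentially step for step: write $1 = \sum x_i y_i$ with $x_i \in I$, $y_i$ in the inverse, then multiply through by an arbitrary $z \in I$ to see that $z \in x_1R + \ldots + x_nR$. The only cosmetic difference is that the paper works with an arbitrary $J$ satisfying $IJ = R$ rather than naming it $I^{-1}$, which changes nothing.
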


\begin{proof}
If $I J = R$ then there are finite sequences $x_1, \ldots, x_k \in I$ and $y_1, \ldots, y_k \in J$ of elements 
such that
$$
1 = \sum x_i y_i.
$$
If $x \in I$ then
$$
x = \sum x_i (x y_i)
$$
which is in $x_1 R + \ldots + x_nR$.
Thus $I = x_1 R + \ldots + x_nR$.
\end{proof}

We now have two necessary conditions for invertibility of nonzero $R$-submodules~$I$ of~$K$:  (1) $x I \subseteq R$ for some $x \in K^\times$, and (2) $I$ is finitely generated.
However, the second clearly implies the first.

\begin{proposition}
Suppose $I$ is a finitely generated $R$-submodule of $K$. Then $I$ is a fractional ideal.
\end{proposition}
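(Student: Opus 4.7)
The plan is to reduce to showing that every generator has a ``common denominator'' in $R$, then take the product of these denominators as an element $x \in K^\times$ witnessing that $I$ is a fractional ideal. Implicit in the statement is that $I$ is nonzero (otherwise $I = \{0\}$ does not meet the definition of fractional ideal); I will assume this, noting that a finitely generated nonzero submodule must have at least one nonzero generator.

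First I would write $I = x_1 R + \ldots + x_n R$ with $x_i \in K$. Since each $x_i$ lies in the field of fractions of $R$, I can write $x_i = a_i/b_i$ with $a_i, b_i \in R$ and $b_i \ne 0$. Set
$$d \defeq b_1 b_2 \cdots b_n \in R.$$
Because $R$ is an integral domain, $d \ne 0$. For each $i$, the element $d x_i = a_i \prod_{j \ne i} b_j$ lies in $R$, so $d (x_i R) \subseteq R$ for every $i$. Summing yields $d I \subseteq R$.

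This shows $I$ has a nonzero common denominator $d \in R \subseteq K^\times$, which is precisely the condition in the definition of fractional ideal (indeed, one of the equivalent criteria already recorded above: there is a nonzero $d\in R$ with $d I \subseteq R$). Together with the standing assumption that $I$ is nonzero, this establishes that $I$ is a fractional ideal.

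I do not anticipate any real obstacle; the proof is a one-line reduction to the fact that finitely many elements of $K$ share a common denominator in $R$. The only mild subtlety is remembering to appeal to the integral-domain hypothesis to ensure that the product of the $b_i$ is nonzero, and to dispatch the degenerate case where $I$ is the zero submodule (which is excluded from the statement by the convention that fractional ideals are nonzero).
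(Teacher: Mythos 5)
Your proof is correct and is the standard argument the paper clearly expects (the paper leaves this proposition unproved for the reader). You correctly identify the one subtlety that the paper's statement glosses over, namely that $I$ must be nonzero for the conclusion to literally hold, and you handle the common-denominator step exactly as intended.
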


The proceeding two propositions  suggests that, for the purposes of invertibility, 
we focus on finitely generated fractional ideals.
However, if $R$ is not Noetherian, this excludes even some nonzero integral ideals.
So if we want all nonzero integral ideals to be invertible, we should focus on Noetherian
domains.\footnote{In a non-Noetherian
ring the best you can do is for all finitely generated fractional ideals to be 
invertible. Integral domains where every finitely generated
fractional ideal is invertible are called \emph{Pr\"ufer domains}.  See Appendix E for more information.}

The collection of finitely generated fractional ideals has closure properties. 
This is summarized by the next proposition (to see this, express
any finitely generated fractional ideal as $x_1 R_1 + \ldots + x_2 R_2$ and use
distributive laws in the case of $I_1 I_2$):

\begin{proposition}
Let $I_1$ and $I_2$ be finitely generated fractional ideals of~$R$. Then~$I_1 + I_2$
and~$I_1 I_2$ are also finitely generated fractional ideals. In particular, the collection of 
finitely generated fractional ideals forms
a commutative monoid under products (with identity $R$).
\end{proposition}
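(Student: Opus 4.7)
The plan is to reduce everything to writing the fractional ideals as finite sums of principal submodules $x_i R$, and then to use the distributive and monotonicity laws for sums and products of $R$-submodules of $K$ that were established in Section~\ref{ch2.5}. By the preceding proposition, we already know $I_1 + I_2$ and $I_1 I_2$ are fractional ideals, so the only genuinely new content is that finite generation is preserved.

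First I would write $I_1 = x_1 R + \ldots + x_m R$ and $I_2 = y_1 R + \ldots + y_n R$ with $x_i, y_j \in K$. For the sum, concatenating the two generating lists gives
$$
I_1 + I_2 = x_1 R + \ldots + x_m R + y_1 R + \ldots + y_n R,
$$
which is manifestly finitely generated. For the product, the distributive law $I(J_1 + J_2) = IJ_1 + IJ_2$ (applied twice, on each factor) together with $(xR)(yR) = (xy)R$ gives
$$
I_1 I_2 \;=\; \sum_{i=1}^{m} \sum_{j=1}^{n} (x_i y_j) R,
$$
so the $mn$ elements $x_i y_j$ form a generating set for $I_1 I_2$.

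To finish, I would note that $R$ itself is finitely generated (by $1$) and is a fractional ideal, and it serves as the identity for the product of $R$-submodules of $K$; commutativity and associativity of the product were established in Section~\ref{ch2.5}. Combined with the closure statement just proved, this shows the collection of finitely generated fractional ideals is a commutative submonoid of the monoid of all fractional ideals under product.

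There is no real obstacle here: everything reduces to the distributive law and the identity $(xR)(yR)=(xy)R$, both already in hand. The only thing worth being careful about is making sure the expressions $\sum_{i,j} (x_i y_j) R$ actually equal $I_1 I_2$ as submodules (not merely generate a submodule contained in it); this is exactly the content of the iterated distributive law, so it is immediate once one is willing to expand.
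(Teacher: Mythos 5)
Your proof is correct and follows the same route the paper sketches: express each ideal as a finite sum of principal submodules, concatenate generators for the sum, and apply the distributive law together with $(xR)(yR)=(xy)R$ for the product. Nothing further to add.
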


As mentioned above, if we want every nonzero integral ideal to have a chance of being
invertible, we should work in a Noetherian domain.
In this case all fractional ideals are automatically finitely generated:

\begin{proposition}
If $R$ is a Noetherian domain then all fractional ideals are finitely generated.
In fact, a nonzero $R$-submodule $I$ of the fraction field~$K$ is finitely generated
if and only if $I$ is a fractional ideal.
\end{proposition}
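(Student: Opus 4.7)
The plan is to prove the two assertions together by establishing the stronger ``if and only if'' statement, from which the first claim is immediate. For a nonzero $R$-submodule $I$ of $K$, I want to show that being finitely generated is equivalent to being a fractional ideal, under the assumption that $R$ is Noetherian.

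For the easy direction (finitely generated implies fractional ideal), this is essentially the content of the previous proposition: if $I = x_1 R + \ldots + x_n R$ with each $x_i \in K$, write $x_i = a_i / b_i$ with $a_i, b_i \in R$ and $b_i \neq 0$, and take $d = b_1 \cdots b_n \in R \smallsetminus \{0\}$. Then $d x_i \in R$ for each $i$, so $d I \subseteq R$, which shows $I$ is a fractional ideal (it is nonzero by hypothesis).

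For the other direction, suppose $I$ is a fractional ideal, so there is a nonzero $d \in R$ with $d I \subseteq R$. The key observation is that $d I$ is now an honest ideal of $R$. Since $R$ is Noetherian, every ideal of $R$ is finitely generated, so $d I = y_1 R + \ldots + y_n R$ for some $y_1, \ldots, y_n \in R$. Setting $x_i = y_i / d \in K$, one checks immediately that $I = x_1 R + \ldots + x_n R$ as $R$-submodules of $K$ (multiplying by the nonzero element $d$ gives a bijection between $R$-submodules of $K$ and their $d$-translates). Hence $I$ is finitely generated.

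The argument is quite direct, and there is not really a genuine obstacle: the Noetherian hypothesis is used exactly once, and applied precisely to the integral ideal $dI$ produced by the common denominator. The conceptual takeaway is that a common denominator reduces finite generation of submodules of $K$ to finite generation of ordinary ideals, which is the defining property of a Noetherian ring.
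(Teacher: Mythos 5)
Your proof is correct. It differs from the paper's in a small but real way: the paper views $I$ as a submodule of the finitely generated module $d^{-1}R$ and then invokes the background proposition (stated in Chapter~1) that over a Noetherian ring, every submodule of a finitely generated module is finitely generated. You instead pass to the honest ideal $dI \subseteq R$, apply only the definition of Noetherian ring (every ideal is finitely generated), and then translate back via $I = d^{-1}(dI)$. The two arguments are essentially the same underneath — multiplication by $d$ is an $R$-module isomorphism from $I$ onto $dI$, so the two containments $I \subseteq d^{-1}R$ and $dI \subseteq R$ carry identical information — but your version is slightly more elementary in that it sidesteps the general module-theoretic fact and uses nothing beyond the definition of Noetherian. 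The paper's phrasing is better if one wants to emphasize the module-theoretic viewpoint (submodules of a fixed module), which pays off later when localizing; yours is better if one wants the minimal dependencies.
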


\begin{proof}
Recall that if $M$ is a finitely generated module over a Noetherian ring then 
all its submodules are finitely generated.
In the case of $R$-submodules $I$ of $K$, we have already established that (1)  if $I$ is a fractional ideal then
it is a submodule of a principal fractional ideal,
and (2)  that if $I$ is finitely generated and nonzero then it is a fractional ideal.
\end{proof}


\chapter{Invertibility criteria and results}\label{ch4}

In this section we will begin in earnest our study of invertible fractional ideals, and see how
invertibility is connected with the integrally closed condition. 
Recall that if~$I$ is a fractional ideal then $I^{-1}$ is defined
as the fractional ideal $\{x \in K \mid x I \subseteq R \}$, with the caveat
that~$I^{-1}$ might not be an actual inverse. In general we can only expect $I I^{-1} \subseteq R$.
But if~$I$ is invertible, then~$I^{-1}$ will
be the true inverse in the sense that $I I^{-1} = R$.
In this section we will be interested in determining when $I^{-1} $
is the true inverse of $I$. 

We have established an important case where we have invertibility: every principal
fractional ideal is invertible. For local integral domains we have the following
tidy result:

\begin{proposition}\label{propInv}
Let $R$ be a local integral domain.
Then a fractional ideal $I$ of $R$ is invertible if and only if $I$ is principal.
\end{proposition}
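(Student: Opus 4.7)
The forward direction is already in hand: a principal fractional ideal $xR$ with $x\in K^{\times}$ is invertible by Proposition~\ref{easy_invert_prop}. So the content is the converse, and my plan is to reduce the fractional-ideal problem to the previous result on integral ideals, namely Proposition~\ref{divisibility_principal_prop}.

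Assume $I$ is an invertible fractional ideal, so $II^{-1}=R$. The key move is to clear denominators on both sides. Since $I$ and $I^{-1}$ are fractional ideals, there exist nonzero $d,e\in R$ with $dI\subseteq R$ and $eI^{-1}\subseteq R$. Set $A \defeq dI$ and $B\defeq eI^{-1}$; these are honest (integral) ideals of~$R$. Using the identities $x(yJ)=(xy)J$ and $(xR)J = xJ$ worked out in Section~\ref{ch2.5}, their product is
$$
AB \;=\; (dI)(eI^{-1}) \;=\; de\,(II^{-1}) \;=\; de\,R,
$$
which is a nonzero principal ideal of~$R$ (nonzero because $d,e\ne 0$ and $R$ is a domain).

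Now apply Proposition~\ref{divisibility_principal_prop}, which is precisely the statement that in a local integral domain, an ideal that divides a nonzero principal ideal is itself principal. Thus $A=dI$ is principal: $dI=fR$ for some $f\in R$. Multiplying by $d^{-1}\in K^\times$ gives $I=(f/d)R$, a principal fractional ideal, and we are done.

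The main things to be careful about are the book-keeping between fractional and integral ideals (making sure $A$ and $B$ genuinely live in $R$ so that Proposition~\ref{divisibility_principal_prop} applies) and noticing that the locality hypothesis is used only once, through that proposition. No new ingredients are needed. A slightly more self-contained alternative would be to expand $1=\sum x_iy_i$ with $x_i\in I$, $y_i\in I^{-1}$, observe that some $x_iy_i$ is a unit since $R$ is local, and show directly that this $x_i$ generates $I$; but citing Proposition~\ref{divisibility_principal_prop} after clearing denominators is shorter and reuses the machinery already developed.
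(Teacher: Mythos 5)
Your proof is correct and follows essentially the same route as the paper's: clear denominators to replace $I$ and $I^{-1}$ by integral ideals whose product is a nonzero principal ideal, then invoke Proposition~\ref{divisibility_principal_prop}. The only cosmetic difference is that the paper picks a single common denominator $a$ for both $I$ and $I^{-1}$, whereas you use separate $d$ and $e$.
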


\begin{proof}
One direction is clear, so suppose $I$ is invertible: $I I^{-1} = R$.
Let $a \in R$ be nonzero such that $J_1 = a I$ and $J_2 = a I^{-1}$ are ideals.
Since $J_1 J_2 = a^2 R$, we can apply
Proposition~\ref{divisibility_principal_prop} 
to conclude that $J_1$ and $J_2$ are principal. Thus $I$ is principal.
\end{proof}

\begin{remark}
This can be used to give another proof of Theorem~\ref{theoremI}
since we have established that a local Noetherian domain with a principal nonzero maximal ideal
is a DVR.
\end{remark}

Now we will see how the integrally closed condition can give us necessary
conditions for invertibility. Recall that $R$ is integrally closed
if, for each monic $f\in R[X]$, every root of $f$ in $K$ is actually in~$R$.
To relate this to invertibility of fractional ideals,
we begin with what seems like an unrelated question:
Given a fractional ideal $I$ for $R$, can we find a larger subring of~$K$
such that $I$ is a module for that ring as well (with scalar multiplication coming from the multiplication of~$K$)?
This would require that for every $x$ in the larger ring,~$x I \subseteq I$ still holds.
This motivate the following definition:

\begin{definition}
Let $R$ be an integral domain and let $K$ be its fraction field.
If~$I$ is a fractional ideal, then
$$
\mathcal{R}(I) \; \defeq \; \{ x \in K \mid x I \subseteq I\}.
$$
\end{definition}

This set $\mathcal{R}(I)$ turns out to be the sort of ring we want:

\begin{proposition}
Let $R$ be an integral domain and let $K$ be its fraction field.
If $I$ is a fractional ideal of $R$, then the following hold:
\begin{itemize}
\item $\mathcal{R}(I)$ is a subring of~$K$ containg~$R$: so  $R \subseteq \mathcal{R}(I) \subseteq K$.
\item $I$ is a fractional ideal of~$\mathcal R(I)$, where scalar multiplication is induced
 by the product of~$K$.
In fact, $\mathcal R (I)$ is the maximum subring $R'$ of~$K$ (under inclusion) such that $I$ is a fractional
ideal of~$R'$.
\item $\mathcal{R}(I) \subseteq (I I^{-1})^{-1}$.
\item $\mathcal{R}(I)$ is a fractional ideal of~$R$.
\end{itemize}
\end{proposition}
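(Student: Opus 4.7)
The plan is to tackle the four bullet points in order, with each later claim leaning on the earlier ones. For the first bullet, I would check directly that $\mathcal{R}(I)$ is closed under addition and multiplication and contains $R$: if $x,y \in \mathcal{R}(I)$ then $(x+y)I \subseteq xI + yI \subseteq I$ and $(xy)I = x(yI) \subseteq xI \subseteq I$, while $1 \cdot I = I$ shows $1 \in \mathcal{R}(I)$; and since $I$ is already an $R$-module, every $r \in R$ satisfies $rI \subseteq I$, so $R \subseteq \mathcal{R}(I)$.

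For the second bullet, the containment $\mathcal{R}(I) \cdot I \subseteq I$ is built into the definition, so $I$ is certainly an $\mathcal{R}(I)$-submodule of $K$, nonzero because $I$ was a fractional ideal of $R$. To see it is a fractional ideal of $\mathcal{R}(I)$, I would produce a nonzero common denominator in $\mathcal{R}(I)$: any nonzero $d \in R$ with $dI \subseteq R$ works, since $R \subseteq \mathcal{R}(I)$. Maximality is the observation that if $R'$ is any subring of $K$ for which $I$ is an $R'$-module, then by the module axiom $xI \subseteq I$ for every $x \in R'$, so $R' \subseteq \mathcal{R}(I)$.

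For the third bullet, I would just compute: if $x \in \mathcal{R}(I)$ then $x(II^{-1}) = (xI)I^{-1} \subseteq I \cdot I^{-1} \subseteq R$, which by definition places $x$ in $(II^{-1})^{-1}$. The fourth bullet then drops out of the third: $II^{-1}$ is a nonzero integral ideal of $R$ (nonzero because $I$ is a fractional ideal, so $I^{-1} \ne \{0\}$), hence itself a fractional ideal, and we already know that the inverse of a fractional ideal is a fractional ideal; since $\mathcal{R}(I)$ is a nonzero $R$-submodule of the fractional ideal $(II^{-1})^{-1}$, the earlier proposition (nonzero $R$-submodules of fractional ideals are fractional ideals) finishes it off.

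Nothing here looks deep, but the step I would be most careful about is the fourth bullet, because the temptation is to try to produce a common denominator for $\mathcal{R}(I)$ by hand, which is awkward without first knowing that $\mathcal{R}(I)$ sits inside some specific fractional ideal. Routing through $(II^{-1})^{-1}$ via the third bullet avoids that and is the main reason I would do the bullets in the stated order rather than try to prove the fourth independently.
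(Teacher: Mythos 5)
Your proposal is correct and follows essentially the same route as the paper: the first two bullets are verified directly from the definition, the third via the computation $x(II^{-1}) = (xI)I^{-1} \subseteq II^{-1} \subseteq R$, and the fourth by noting that $(II^{-1})^{-1}$ is a fractional ideal and that nonzero $R$-submodules of fractional ideals are fractional ideals. Your closing remark about why routing the fourth bullet through the third is the right move matches the structural point the paper's proof is implicitly making.
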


\begin{proof}
Verifying these properties are mostly straightforward.
For example, to show~$\mathcal{R}(I) \subseteq (I I^{-1})^{-1}$, observe that
$x I \subseteq I$ implies $x I I^{-1} \subseteq I I^{-1} \subseteq R$.
Note that $(I I^{-1})^{-1}$ is a fractional ideal of~$R$, and recall
that any nonzero $R$-submodule of a fractional ideal is a fractional ideal.
\end{proof}

When $R$ is Noetherian we see a connection between $\mathcal R(I)$ and integral elements.

\begin{proposition}
Let $R$ be a Noetherian domain.
Then every element of $\mathcal{R}(I)$
is integral over~$R$.
\end{proposition}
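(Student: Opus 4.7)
The plan is to show that if $x \in \mathcal{R}(I)$ then $R[x]$ is a finitely generated $R$-module, which is the standard characterization of integrality mentioned explicitly in the background section of the excerpt.

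First I would observe that $xI \subseteq I$ implies, by an easy induction, that $x^n I \subseteq I$ for every $n \geq 0$. Since any element of $R[x]$ is an $R$-linear combination of such powers $x^n$, this gives $R[x] \cdot I \subseteq I$. So $I$ is actually an $R[x]$-submodule of $K$, not just an $R$-submodule.

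Next, I would pick any nonzero $y \in I$ (which exists since $I$ is a fractional ideal, hence nonzero). Then $R[x] \cdot y \subseteq I$. The key finiteness input comes here: because $R$ is Noetherian and $I$ is a fractional ideal, the earlier proposition tells us $I$ is finitely generated as an $R$-module. The background proposition on Noetherian modules then says every $R$-submodule of $I$ is finitely generated, so $R[x]\cdot y$ is a finitely generated $R$-module.

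Finally I would transfer this finite generation back to $R[x]$ itself. Since $y \in K^\times$ and $K$ is a field (in particular, an integral domain), the $R$-module homomorphism $R[x] \to R[x] \cdot y$ sending $r \mapsto r y$ is injective, hence an $R$-module isomorphism. Therefore $R[x]$ is finitely generated as an $R$-module, so $x$ is integral over $R$. The only place any delicacy is needed is making sure we pick a \emph{nonzero} $y \in I$ so that multiplication by $y$ is injective on $R[x]$; everything else is formal. (Alternatively one could run the standard determinant/adjugate trick on a generating set of $I$, but routing through $R[x]\cdot y$ is cleaner here since it uses only what the excerpt has already established.)
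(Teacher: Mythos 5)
Your proof is correct, but it takes a noticeably different route to finite generation than the paper does. Both arguments culminate in showing that $R[x]$ is a finitely generated $R$-module and then conclude integrality; the difference is how one gets there. The paper observes that $R[x]$ is a subring of $\mathcal{R}(I)$, and hence an $R$-submodule of $\mathcal{R}(I)$, then invokes the preceding proposition which established that $\mathcal{R}(I)$ is itself a fractional ideal of $R$ (proved there via the inclusion $\mathcal{R}(I) \subseteq (I I^{-1})^{-1}$). Since nonzero submodules of fractional ideals are fractional ideals, $R[x]$ is a fractional ideal, hence finitely generated in a Noetherian domain. You instead fix a nonzero $y \in I$, show $R[x]\, y \subseteq I$ from $R[x]\, I \subseteq I$, and transport finite generation along the $R$-module isomorphism $R[x] \cong R[x]\, y$ given by multiplication by $y$. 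This makes your argument somewhat more self-contained at this point: it needs only the finite generation of $I$ itself and bypasses the earlier result about $\mathcal{R}(I)$ entirely. The one thing worth flagging is that the paper carries out, in place, the final step ``$R[x]$ finitely generated $\Rightarrow$ $x$ integral'' (pick generators $f_i(x)$, let $d$ be the maximum degree, and produce a monic polynomial of degree $d+1$ vanishing at $x$) precisely because the background chapter promised to supply this argument when it was first needed, which is here. You simply cite it as a known characterization; that is logically fine given the background statement, but a reader following the paper's own conventions would expect that argument to be spelled out at this spot.
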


\begin{proof}
Let~$x \in \mathcal{R}(I)$. Since $\mathcal{R}(I)$ is a ring,
the ring $R[x]$ is a subring of $\mathcal{R}(I)$.
Also observe that $R[x]$ is an $R$-submodule of~$\mathcal R(I)$, so $R[x]$ is also a fractional ideal.
In particular it is finitely generated as an $R$-module since $R$ is Noetherian. 
Fix a finite generating set of $R[x]$. Each generator is of the form $f(x)$
for some polynomial~$f \in R[X]$. Fix such a polynomial for each generator, and let
$d$ be the largest degree among these polynomials.
Observe that since $x^{d+1}$ can be written as an $R$-linear
combination of the generators, we can find a monic polynomial $g$ of~$R[X]$ of degree $d+1$
such that~$g(x) = 0$. Thus $x$ is integral over~$R$.
\end{proof}

\begin{corollary}\label{corF}
Let $R$ be a Noetherian domain that is integrally closed (in its field of fractions).
If $I$ is a fractional ideal of $R$ then  
$$\mathcal{R}(I) = R.$$
\end{corollary}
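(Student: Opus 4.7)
The plan is to assemble this as an immediate corollary of the two statements that directly precede it, so there is really no new work to do beyond citing them in the right order. The inclusion $R \subseteq \mathcal{R}(I)$ is already part of the preceding proposition, so only the reverse inclusion $\mathcal{R}(I) \subseteq R$ requires argument.

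First I would fix an arbitrary $x \in \mathcal{R}(I)$ and invoke the immediately preceding proposition, which uses the Noetherian hypothesis to conclude that $x$ is integral over $R$ (the key mechanism being that $R[x] \subseteq \mathcal{R}(I)$ is an $R$-submodule of a fractional ideal, hence itself a fractional ideal, hence finitely generated as an $R$-module because $R$ is Noetherian, and finitely-generated-as-$R$-module implies integral). Since $\mathcal{R}(I) \subseteq K$, the element $x$ lies in $K$ and is integral over $R$.

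Now I would apply the hypothesis that $R$ is integrally closed in $K$: every element of $K$ that is integral over $R$ belongs to $R$. Hence $x \in R$, which gives $\mathcal{R}(I) \subseteq R$. Combined with the reverse inclusion already known, this yields $\mathcal{R}(I) = R$.

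There is no real obstacle here; the proof is a two-line citation. The only thing to be careful about is that both hypotheses (Noetherian and integrally closed) get used exactly once and in the right place: Noetherian to guarantee integrality of elements of $\mathcal{R}(I)$, and integrally closed to push those integral elements back into $R$.
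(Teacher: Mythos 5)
Your proof is correct and is precisely the argument the paper intends (it leaves the proof unstated because it is an immediate consequence of the two preceding propositions). You correctly use the Noetherian hypothesis to get integrality of elements of $\mathcal{R}(I)$ and the integrally closed hypothesis to conclude $\mathcal{R}(I) \subseteq R$, with the reverse inclusion already established.
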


This leads to the following consequence in the case that $R$ is an integrally closed
Noetherian domain.

\begin{proposition}\label{subset_cancel_prop}
Let $R$ be an integrally closed Noetherian domain. If $I$ and $J$ are fractional ideals such
that $I J \subseteq I$ in~$R$, then $J$ is an ideal of~$R$.
\end{proposition}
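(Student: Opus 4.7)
The plan is to reduce this directly to Corollary~\ref{corF}, which asserts $\mathcal{R}(I)=R$ for any fractional ideal $I$ of an integrally closed Noetherian domain. Since $J$ is already known to be a nonzero $R$-submodule of $K$, showing it is an (integral) ideal amounts to showing $J\subseteq R$. So the goal reduces to proving each $y\in J$ lies in $R$.

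First I would unpack the hypothesis $IJ\subseteq I$ elementwise. For any fixed $y\in J$ and any $x\in I$, the product $xy$ is one of the generators of the submodule $IJ$, so $xy\in IJ\subseteq I$. Thus $yI\subseteq I$, which is precisely the condition that $y\in \mathcal{R}(I)$. This step uses nothing beyond the definition of the product of submodules given in Section~\ref{ch2.5}.

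Next I would invoke Corollary~\ref{corF}: because $R$ is an integrally closed Noetherian domain and $I$ is a fractional ideal, we have $\mathcal{R}(I)=R$. Combining with the previous step, every $y\in J$ lies in $R$, so $J\subseteq R$. Together with the fact that $J$ is a nonzero $R$-submodule of $K$, this says $J$ is a (nonzero) ideal of~$R$, completing the proof.

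There is essentially no obstacle here; the substance of the argument has already been packaged into Corollary~\ref{corF}, and the proposition is just a rephrasing of the identity $\mathcal{R}(I)=R$ in the language of fractional-ideal inclusions. The only point worth flagging explicitly is the translation between the module-theoretic statement $IJ\subseteq I$ and the pointwise condition $yI\subseteq I$ defining membership in $\mathcal{R}(I)$; once that is observed, the result is immediate.
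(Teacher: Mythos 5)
Your proof is correct and follows exactly the intended route: the paper states this proposition immediately after Corollary~\ref{corF} as a ``consequence,'' leaving the proof to the reader, and the expected argument is precisely your observation that $IJ\subseteq I$ means each $y\in J$ satisfies $yI\subseteq I$, i.e.\ $y\in\mathcal R(I)=R$.
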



We can use these ideas to come up with criteria for invertibility, at least in the case of maximal ideals.
The first criterion is useful for general integral domain~$R$.

\begin{proposition}\label{prop35}
Let $R$ be an integral domain and let $\frak m$ be a nonzero maximal ideal of~$R$.  
Then  $\frak m$ is invertible if and only if $\mathfrak m^{-1}$ is not contained in~$\mathcal{R}(\mathfrak m)$.
\end{proposition}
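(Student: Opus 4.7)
The plan is to exploit the maximality of $\mathfrak m$ to force a dichotomy between $\mathfrak m \mathfrak m^{-1} = \mathfrak m$ and $\mathfrak m \mathfrak m^{-1} = R$, then translate the first alternative into the containment $\mathfrak m^{-1} \subseteq \mathcal R(\mathfrak m)$.

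First I would record the chain $\mathfrak m \subseteq \mathfrak m\mathfrak m^{-1} \subseteq R$. The right inclusion is immediate from the definition of $\mathfrak m^{-1}$, and the left inclusion follows because $1 \in \mathfrak m^{-1}$ (since $R \subseteq \mathfrak m^{-1}$: for $r \in R$ we have $r\mathfrak m \subseteq \mathfrak m \subseteq R$). Using the proposition at the end of Chapter~\ref{ch2.5}, $\mathfrak m \mathfrak m^{-1}$ is an ideal of $R$. By the maximality of $\mathfrak m$, the only two possibilities are $\mathfrak m \mathfrak m^{-1} = \mathfrak m$ or $\mathfrak m \mathfrak m^{-1} = R$, and the latter is exactly the condition that $\mathfrak m$ be invertible.

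Next I would identify the first alternative with the failure of invertibility. If $\mathfrak m \mathfrak m^{-1} = \mathfrak m$, then by definition every $x \in \mathfrak m^{-1}$ satisfies $x\mathfrak m \subseteq \mathfrak m$, so $x \in \mathcal R(\mathfrak m)$; hence $\mathfrak m^{-1} \subseteq \mathcal R(\mathfrak m)$. Conversely, if $\mathfrak m^{-1} \subseteq \mathcal R(\mathfrak m)$, then $\mathfrak m \mathfrak m^{-1} \subseteq \mathfrak m$, and combined with the reverse inclusion from the first paragraph, this forces $\mathfrak m \mathfrak m^{-1} = \mathfrak m$, so $\mathfrak m$ is not invertible.

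Putting these together, $\mathfrak m$ is invertible iff $\mathfrak m\mathfrak m^{-1} \ne \mathfrak m$ iff $\mathfrak m^{-1} \not\subseteq \mathcal R(\mathfrak m)$. There is no real obstacle here; the only point that needs care is noticing that $\mathfrak m \subseteq \mathfrak m\mathfrak m^{-1}$ (which relies on $1 \in \mathfrak m^{-1}$) so that maximality of $\mathfrak m$ can actually be invoked to produce the dichotomy.
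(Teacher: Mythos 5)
Your proof is correct and follows essentially the same route as the paper: establish the chain $\mathfrak m \subseteq \mathfrak m\mathfrak m^{-1} \subseteq R$, invoke maximality to obtain the dichotomy $\mathfrak m\mathfrak m^{-1} = \mathfrak m$ or $\mathfrak m\mathfrak m^{-1} = R$, and unwind the first alternative into the containment $\mathfrak m^{-1} \subseteq \mathcal R(\mathfrak m)$. You spell out a couple of small points (why $R \subseteq \mathfrak m^{-1}$, and that $\mathfrak m\mathfrak m^{-1}$ is an ideal) that the paper takes as read, but the substance is identical.
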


\begin{proof}
Since $R \subseteq \frak m^{-1}$, we have that $\frak m \subseteq \frak m \frak m^{-1} \subseteq R$.
Since $\mathfrak m$ is maximal, either~$\frak m \frak m^{-1} = \frak m$ or $\frak m \frak m^{-1} = R$.

We now prove the contrapositive version of the claim. 
Suppose $\mathfrak m$ is not invertible. Then $\frak m \frak m^{-1} = \frak m$.
Thus $\frak m^{-1} \subseteq \mathcal{R}(\frak m)$.
Conversely, if $\frak m^{-1} \subseteq \mathcal{R}(\frak m)$ then~$\frak m \frak m^{-1} \subseteq \frak m$,
so $\mathfrak m$ is not invertible.
\end{proof}

The next criterion is useful for integrally closed Noetherian domains. 
It follows from the previous proposition using the equality $\mathcal R(\mathfrak m) = R$.
(We do not need to exclude the trivial case where~$\mathfrak m = \{0\}$, since in 
this case~$\mathfrak m^{-1} = K = R$
and the result holds.)

\begin{proposition} 
Let $R$ be an integrally closed Noetherian domain
and let $\frak m$ be a maximal ideal of~$R$.
Then~$\mathfrak m$ is invertible if and only if $\mathfrak m^{-1}$ is 
not an integral ideal.
\end{proposition}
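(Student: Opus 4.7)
The plan is to deduce this directly from the preceding Proposition~\ref{prop35} by using the integrally closed and Noetherian hypotheses to identify $\mathcal{R}(\mathfrak m)$ with $R$. Proposition~\ref{prop35} tells us that, for a nonzero maximal ideal $\mathfrak m$, the ideal $\mathfrak m$ is invertible if and only if $\mathfrak m^{-1} \not\subseteq \mathcal{R}(\mathfrak m)$. Under our hypotheses, Corollary~\ref{corF} applies and gives $\mathcal{R}(\mathfrak m) = R$ (noting $\mathfrak m$ is a fractional ideal since it is a nonzero integral ideal). So the criterion collapses to: $\mathfrak m$ is invertible iff $\mathfrak m^{-1} \not\subseteq R$.

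Next I would translate ``$\mathfrak m^{-1} \not\subseteq R$'' into the stated language. Recall that $\mathfrak m^{-1}$ is always a fractional ideal (we proved that $I^{-1}$ is a fractional ideal whenever $I$ is), and by definition a fractional ideal is an integral ideal precisely when it is contained in $R$. Hence $\mathfrak m^{-1}$ fails to be an integral ideal exactly when $\mathfrak m^{-1} \not\subseteq R$, which is exactly the condition for invertibility obtained in the previous step. This gives the claimed biconditional.

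It remains to address the degenerate case $\mathfrak m = \{0\}$ mentioned in the parenthetical remark. If $\{0\}$ is maximal then $R$ is a field, so $K = R$; then $\mathfrak m^{-1} = \{x \in K \mid x\cdot 0 \subseteq R\} = K = R$, which is an integral ideal, while $\mathfrak m = \{0\}$ is plainly not invertible since $\{0\}\cdot J = \{0\} \ne R$. So both halves of the equivalence are false together, and the statement holds trivially. I do not expect any real obstacle here; the only point requiring attention is to verify that both standing hypotheses (Noetherian and integrally closed) are being used precisely in the invocation of Corollary~\ref{corF}.
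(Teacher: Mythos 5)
Your proof is correct and follows the paper's own route exactly: Proposition~\ref{prop35} gives the criterion in terms of $\mathcal R(\mathfrak m)$, Corollary~\ref{corF} collapses $\mathcal R(\mathfrak m)$ to $R$, and the degenerate case $\mathfrak m = \{0\}$ is dispatched by noting $\mathfrak m^{-1} = K = R$. The paper simply states this more tersely; you supply the same steps with the details written out.
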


\begin{remark}
Since $R \subseteq \mathfrak m^{-1}$, we can rephrase the above criterion as
giving a condition for $\mathfrak m^{-1}$ to be~$R$.
This condition is $\mathfrak m^{-1} = R$ if and only if
$\mathfrak m$ is not invertible.
\end{remark}

In practice, we sometimes use the following criterion that follows immediately from the above.

\begin{corollary} \label{new_criterion_cor}
Let $R$ be an integrally closed Noetherian domain
and let $\mathfrak m$ be a maximal ideal of $R$.
If there is a fractional ideal $I$ of~$R$ that is not an integral ideal  
and if~$I \mathfrak m \subseteq R$,
then $\mathfrak m$ is invertible.
\end{corollary}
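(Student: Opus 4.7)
The plan is to reduce this immediately to the preceding proposition, which states that for a maximal ideal $\mathfrak{m}$ in an integrally closed Noetherian domain, $\mathfrak{m}$ is invertible exactly when $\mathfrak{m}^{-1}$ fails to be an integral ideal. So my job is simply to produce an element of $\mathfrak{m}^{-1}$ that lies outside $R$.

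First I would unfold the definition $\mathfrak{m}^{-1} = \{ x \in K \mid x \mathfrak{m} \subseteq R \}$. The hypothesis $I \mathfrak{m} \subseteq R$ says exactly that every $x \in I$ satisfies $x\mathfrak{m} \subseteq R$, so $I \subseteq \mathfrak{m}^{-1}$. Second, because $I$ is a fractional ideal that is not integral, there is some $x \in I$ with $x \notin R$; that same $x$ then lies in $\mathfrak{m}^{-1} \smallsetminus R$, so $\mathfrak{m}^{-1}$ is not contained in $R$ and hence is not an integral ideal of $R$.

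Invoking the preceding proposition (whose hypotheses, $R$ integrally closed and Noetherian, are exactly those given) then yields invertibility of $\mathfrak{m}$. There is no real obstacle here — the corollary is essentially a repackaging of the previous criterion into a form that is easier to use in practice: rather than having to check that $\mathfrak{m}^{-1}$ itself escapes $R$, one only needs to exhibit any single fractional ideal that escapes $R$ and whose product with $\mathfrak{m}$ still lies inside $R$.
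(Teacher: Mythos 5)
Your argument is correct and is essentially identical to the paper's proof: both observe that $I\mathfrak m\subseteq R$ gives $I\subseteq\mathfrak m^{-1}$, note that $\mathfrak m^{-1}$ therefore cannot be contained in $R$ since $I$ is not, and then cite the preceding proposition. You spell out the element-picking step slightly more explicitly, but the route is the same.
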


\begin{proof}
From $I \mathfrak m \subseteq R$ we have $I \subseteq \mathfrak m^{-1}$.
So $\mathfrak m^{-1}$ cannot be an integral ideal of~$R$ (it cannot be contained in~$R$).
Now use the above proposition.
\end{proof}

We will use this criterion to show that a Dedekind domain that is not a field must have at least one invertible maximal ideal.
But first we need a lemma.

\begin{lemma} \label{lemma38}
Let $R$ be a Noetherian domain with fraction field~$K$.
If $R$ is not a field then
there is a nonzero prime ideal $\mathfrak p$ of $R$ and an $x \in K^\times \smallsetminus R$
such that~$x \mathfrak p \subseteq R$.
\end{lemma}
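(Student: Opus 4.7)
The plan is to reduce the conclusion to the following well-known lemma: in a Noetherian domain, every nonzero proper ideal contains a (nonempty) product of nonzero prime ideals. Once we have this, a minimality argument produces the desired $x$ and $\mathfrak p$. Since $R$ is not a field, we can choose a nonzero non-unit $a\in R$, making $aR$ a nonzero proper ideal to feed into the lemma.

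For the auxiliary lemma, I would argue by contradiction using the Noetherian maximal element principle cited in the background. Let $\mathcal S$ denote the collection of nonzero ideals of $R$ that contain no nonempty product of nonzero prime ideals, and assume $aR\in \mathcal S$. Pick a maximal element $I_0\in\mathcal S$. Note $R\notin\mathcal S$ because $R$ contains any nonzero maximal ideal of $R$ (which exists since $R$ is not a field), so $I_0$ is proper. Also $I_0$ is not itself a nonzero prime, so there exist $b,c\in R\smallsetminus I_0$ with $bc\in I_0$. The ideals $I_0+bR$ and $I_0+cR$ are nonzero and strictly contain $I_0$, hence are not in $\mathcal S$; each therefore contains a nonempty product of nonzero primes. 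Multiplying those two products gives a nonempty product of nonzero primes inside $(I_0+bR)(I_0+cR)\subseteq I_0$ (using $bc\in I_0$), contradicting $I_0\in\mathcal S$.

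Now apply the lemma to $aR$: there exist nonzero prime ideals $\mathfrak p_1,\ldots,\mathfrak p_k$ with $\mathfrak p_1\cdots\mathfrak p_k\subseteq aR$, and we pick $k\ge 1$ \emph{minimal}. Then $\mathfrak p_2\cdots\mathfrak p_k\not\subseteq aR$ (interpreted as $R\not\subseteq aR$ when $k=1$, which holds because $aR$ is proper). Choose $b$ in $\mathfrak p_2\cdots\mathfrak p_k$ but not in $aR$ (take $b=1$ if $k=1$), and set
$$
x \;\defeq\; \tfrac{b}{a}\in K.
$$
Since $b\notin aR$, we have $x\notin R$, so $x\in K^\times\smallsetminus R$. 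On the other hand,
$$
x\,\mathfrak p_1 \;=\; \tfrac{1}{a}\bigl(b\,\mathfrak p_1\bigr) \;\subseteq\; \tfrac{1}{a}\bigl(\mathfrak p_1\mathfrak p_2\cdots\mathfrak p_k\bigr) \;\subseteq\; \tfrac{1}{a}\,aR \;=\; R,
$$
so $\mathfrak p=\mathfrak p_1$ is the required nonzero prime.

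The main obstacle is the auxiliary lemma about products of primes; everything else is a short minimality argument together with the computation $x\mathfrak p_1\subseteq R$. The subtle points in that lemma are (i) ensuring the primes produced are \emph{nonzero} (handled by working with nonempty products and noting that $R$ itself is excluded from $\mathcal S$), and (ii) smoothly handling the boundary case $k=1$ in the subsequent minimality step.
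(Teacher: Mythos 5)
Your proof is correct but follows a genuinely different route from the paper's. The paper works directly with the collection $\mathcal S$ of nonzero ideals $I$ for which some $x\in K^\times\smallsetminus R$ satisfies $xI\subseteq R$ (which is nonempty because any nonzero proper principal ideal belongs to it), takes a Noetherian-maximal element $\mathfrak p\in\mathcal S$, and then shows directly that this $\mathfrak p$ is prime: given $ab\in\mathfrak p$ with $b\notin\mathfrak p$, the maximality is applied twice --- once to $\mathfrak p + bR$ to get $ax\in R$, and once to $\mathfrak p + aR$ to force $a\in\mathfrak p$. Your argument instead first establishes the classical ``every nonzero ideal of a Noetherian domain contains a nonempty product of nonzero primes'' lemma and then extracts $x=b/a$ and $\mathfrak p=\mathfrak p_1$ by a minimality argument. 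Both approaches rest on the Noetherian maximal-element principle, and yours is entirely sound (your handling of the nonzero-ness of the primes and of the $k=1$ boundary case are exactly the points that need care, and you got them right). The trade-off: the paper's proof is shorter and more self-contained, building the prime directly from the defining property of $\mathcal S$ without auxiliary machinery; your route is the standard textbook argument and has the advantage that the prime-products lemma is independently useful --- indeed, the paper itself uses your exact approach in Appendix~A as part of the ``non-local'' proof of unique factorization, so you have essentially rediscovered that variant.
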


\begin{proof}
Let $\mathcal S$ be the collection of all nonzero ideals $I$ for which there is an
$x \in K^\times \smallsetminus R$ such that $x I \subseteq R$.  Any nonzero proper principal
ideal is in $\mathcal S$,  so $\mathcal S$ is not empty. By the Noetherian property there
is a maximal element $\mathfrak p$ in $\mathcal S$.
Observe that $\mathfrak p$ is a proper ideal since the identity ideal $R$ is not in~$\mathcal S$.
Fix $x \in K^\times \smallsetminus R$ where~$x \mathfrak p \subseteq R$
 
Suppose $a b \in \mathfrak p$ but $b\not\in \mathfrak p$ where $a, b \in R$.
Since~$x \mathfrak p \subseteq R$, we 
have~$ax (\mathfrak p + b R) \subseteq R$. By maximality of $\mathfrak p$, we have $ax \in R$.
Thus $x (\mathfrak p + aR) \subseteq R$. By maximality of $\mathfrak p$ again we have
$\mathfrak p + aR = \mathfrak p$. Thus $a \in \mathfrak p$. Hence $\mathfrak p$ is prime.
\end{proof}

\begin{theorem} \label{thm39}
Let $R$ be a Dedekind domain that is not a field. Then $R$ has an invertible
prime ideal.
\end{theorem}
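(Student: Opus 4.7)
The plan is to combine the newly proved Lemma~\ref{lemma38} with the invertibility criterion of Corollary~\ref{new_criterion_cor}. The lemma supplies a candidate prime ideal and an external element that almost inverts it, and the corollary then turns that into genuine invertibility, exploiting the fact that a Dedekind domain is integrally closed and Noetherian (so the hypotheses of the corollary are met) and that every nonzero prime in a Dedekind domain is maximal (so the candidate prime is automatically maximal, as the corollary requires).

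First I would apply Lemma~\ref{lemma38} to $R$: since $R$ is Noetherian and not a field, there exist a nonzero prime $\mathfrak{p}$ of $R$ and some $x \in K^\times \smallsetminus R$ with $x\mathfrak{p} \subseteq R$. Because $R$ is a Dedekind domain, this $\mathfrak{p}$ is in fact maximal.

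Next I would package $x$ into a fractional ideal. Set $I = xR$. This is a principal fractional ideal (it is nonzero and clearly has a common denominator in $R$, namely any denominator of $x$), and because $x \notin R$, the element $x \in I$ witnesses $I \not\subseteq R$, so $I$ is \emph{not} an integral ideal of $R$. Moreover $I\mathfrak{p} = (xR)\mathfrak{p} = x\mathfrak{p} \subseteq R$ by choice of $x$.

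Finally I would invoke Corollary~\ref{new_criterion_cor}: since $R$ is an integrally closed Noetherian domain, $\mathfrak{p}$ is a maximal ideal, and we have exhibited a fractional ideal $I$ that is not integral with $I\mathfrak{p} \subseteq R$, the corollary yields that $\mathfrak{p}$ is invertible, completing the proof. I don't expect any serious obstacle here: all the technical work has been front-loaded into Lemma~\ref{lemma38} (whose Noetherian/maximality argument is the real content) and into Corollary~\ref{new_criterion_cor} (which in turn relies on $\mathcal{R}(\mathfrak{p}) = R$ from Corollary~\ref{corF}). The only thing to check carefully is that the lemma's hypotheses genuinely apply to a Dedekind domain that is not a field, and that the upgrade from ``nonzero prime'' to ``maximal'' really is free in this setting, both of which are immediate from the definition of Dedekind domain.
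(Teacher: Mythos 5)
Your proof is correct and follows exactly the same route as the paper: invoke Lemma~\ref{lemma38} to produce a nonzero prime $\mathfrak p$ and a non-integral principal fractional ideal $xR$ with $xR\cdot\mathfrak p\subseteq R$, note $\mathfrak p$ is maximal since $R$ is Dedekind, and apply Corollary~\ref{new_criterion_cor}. The only difference is that you spell out why $xR$ is a fractional ideal that is not integral, which the paper leaves implicit.
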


\begin{proof}
By Lemma~\ref{lemma38}, there is a nonzero prime ideal $\mathfrak p$ and
a fractional principal ideal $x R$ that is not an integral ideal
such that $(x R) \mathfrak p \subseteq R$. 
Since $R$ is a Dedekind domain, the prime ideal $\mathfrak p$ is maximal.
By Corollary~\ref{new_criterion_cor},
$\mathfrak p$ is invertible.
\end{proof}

As promised, we now prove Theorem~\ref{thmC} without 
using the unique factorization theorem for ideals. We need to reprove the following:

\begin{theorem}\label{thmC2}
A Dedekind domain $R$ with a unique nonzero prime ideal is a DVR.
\end{theorem}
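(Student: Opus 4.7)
The plan is to combine the new tool we just built (Theorem~\ref{thm39}) with our earlier characterization of DVRs as local Noetherian domains with invertible maximal ideal (Theorem~\ref{theoremI}). This avoids any appeal to unique factorization of ideals, which is exactly the pre-accepted result we want to do without.

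First I would observe that $R$ is not a field: a field has no nonzero prime ideal, so the hypothesis that $R$ has a unique nonzero prime ideal $\mathfrak{p}$ rules this out. Second, since $R$ is a Dedekind domain, every nonzero prime is maximal, so $\mathfrak{p}$ is maximal. Because $\mathfrak{p}$ is the unique nonzero prime and every proper ideal is contained in some maximal ideal, $\mathfrak{p}$ is in fact the only maximal ideal; thus $R$ is local with maximal ideal $\mathfrak{p}$, and it is Noetherian by definition of Dedekind domain.

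Now I would invoke Theorem~\ref{thm39}: since $R$ is a Dedekind domain that is not a field, $R$ has an invertible prime ideal. The zero ideal is not invertible (its product with anything is $\{0\}$, not $R$), so the invertible prime must be a nonzero prime, and the only candidate is $\mathfrak{p}$ itself. Hence $\mathfrak{p}$ is invertible.

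With $R$ now exhibited as a local Noetherian domain whose maximal ideal is invertible, Theorem~\ref{theoremI} immediately concludes that $R$ is a DVR, completing the proof. The main conceptual step is the second paragraph, where Theorem~\ref{thm39} does the real work of producing invertibility of $\mathfrak{p}$ from the integrally closed hypothesis baked into ``Dedekind domain''; everything else is a matter of noticing that the uniqueness of $\mathfrak{p}$ funnels the conclusion of Theorem~\ref{thm39} onto the right ideal.
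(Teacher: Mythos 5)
Your proof is correct and follows essentially the same route as the paper: invoke Theorem~\ref{thm39} to produce an invertible (necessarily equal to $\mathfrak p$) prime ideal, then apply Theorem~\ref{theoremI}. You simply spell out a few steps the paper leaves implicit, such as why $R$ is not a field, why $R$ is local, and why the invertible prime supplied by Theorem~\ref{thm39} must be $\mathfrak p$.
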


\begin{proof}
Let $\mathfrak p$ be the nonzero prime ideal of~$R$.
By Theorem~\ref{thm39}, $\mathfrak p$ is invertible. 
Thus by Theorem~\ref{theoremI} (or Proposition~\ref{propInv} plus Theorem~\ref{thmB})
$R$ is a DVR.
\end{proof}

\begin{exercise}
Let $R$ be a Dedekind domain with a unique prime ideal~$\mathfrak p$. 
Let $x$ be as in Lemma~\ref{lemma38}. Show that $x^{-1} \in R$ and that $\mathfrak p = x^{-1} R$.
Use this to give another proof of Theorem~\ref{thmC2} by using Theorem~\ref{thmB}
to conclude that $R$ is a DVR.

Hint: if $x \mathfrak p \subseteq \mathfrak p$ then $x \in \mathcal R(\mathfrak p)$, 
which cannot happen. So what is $x \mathfrak p$?
\end{exercise}

The next two exercises concern the rings $\mathcal R(I)$.

\begin{exercise}\label{ex8_new}
Show that if $\mathcal R(I) = R$ for all fractional ideals of an integral domain~$R$ then $R$
is integrally closed. 

Hint: suppose $x\in K\smallsetminus R$ is integral over~$R$.
Show that~$I = R[x]$ is an~$R$-submodule of~$K$. Show that $I$ is a finitely generated $R$-module,
hence is a fractional ideal. Show that
since $I$ is a ring we have $I^2 \subseteq I$. Conclude that 
$$R \subsetneq I \subseteq \mathcal R(I).$$

Note: an integral domain $R$ such that $\mathcal R(I) = R$
for all fractional ideals $I$ is said to be \emph{completely integrally closed}.
This exercise shows that a completely integrally closed domain is indeed
integrally closed. Corollary~\ref{corF} shows that for Noetherian domains
integrally closed implies completely integrally closed.
\end{exercise}

\begin{exercise}\label{ex12}
The \emph{normalization} of an integral domain $R$ is defined to be the set of all elements of
its fraction field $K$ that
are integral over~$R$. Suppose $R$ is a Noetherian domain. Show that $x\in K$ is in the normalization
of~$R$ if and only if~$x\in \mathcal R (I)$ for some fractional ideal $I$ (Hint: see previous exercise).
Conclude that the normalization is the union of the rings~$\mathcal R (I)$.

If $I, J$ are fractional ideals, 
show that $\mathcal R(I)$ is contained in $\mathcal R(IJ)$. 
Show then that if $x, y\in K$ are in the normalization of $R$ then $x, y \in \mathcal R(I)$
for some fractional ideal $I$.
Conclude that the normalization
is a subring of~$K$.
\end{exercise}

The next three exercises concern cancellation in special cases.
Inverses are very handy for cancellation, but unfortunately 
we cannot hope to have invertibility for a general fractional ideal except in Dedekind domains.
There are situations where we, nevertheless, have cancellation even for non-invertible fractional ideals.
The next two exercises illustrate some special cases. (See Appendix E for other situations where we have cancellation.)

\begin{exercise} \label{exA}
Let $R$ be a local integral domain, and suppose
$$
I J = J = R J
$$
where $I$ is a nonzero ideal and where $J$ is a finitely generated fractional ideal.
Show that we can cancel $J$ to get~$I = R$.

Hint: otherwise, note that $\frak m J = J$ where $\frak m$ is the maximal ideal.
Take a minimal generating set of $J$ as an $R$-module, and get a contradiction by making it smaller.
(Recall that $1 - a$ is a unit if $a \in \mathfrak m$).
(This is related to Nakayama's lemma in commutative algebra. See Exercise~\ref{ex8}.)
\end{exercise}

\begin{exercise}\label{exx} 
Extend the above to any integral domain $R$. In other words, suppose
$$
I J = J
$$
where $I$ is a nonzero ideal and where and $J$ is a finitely generated fractional ideal. 
Show that $I = R$.

Hint (using localization, see Section~\ref{ch5} below):
Suppose otherwise that $I$ is contained in a maximal ideal~$\frak m$,
so that $\frak m J = J$. Now localize, and use the previous
exercise to derive a contradiction.

Hint (using linear algebra over the fraction field of~$R$): Set up a system of equations, and 
identify a singular matrix. From the resulting determinant,
show that~$1 \in I$. 
\end{exercise}

\begin{exercise}\label{ex15}
Use Proposition~\ref{subset_cancel_prop} and the previous exercise
to prove the following cancellation law when $R$ is an integrally closed Noetherian domain.
If $I$ and $J$ are fractional ideals such that $I J = J$, then $I = R$. 

Conclude further that
if $I_1 J = I_2 J$ where $I_1, I_2,$ and $J$ are fractional ideals one of which is invertible,
then $I_1 = I_2$.
\end{exercise}

The following  four exercises build on each other
 to culminate in another condition 
that characterizes discrete valuation rings.

\begin{exercise} \label{ex8}
Let $R$ be a local commutative ring, let $I$ be a ideal of~$R$,
and let $M$ be a finitely generated~$R$-module. 
Generalize Exercise~\ref{exA} and prove the following
version of Nakayama's lemma: if $I M = M$ then $M=0$ or $I = R$.
(Here $I M$ is defined as the submodule of $M$ given by finite sums
of elements of the form $a m$ where $a\in I$ and $m \in M$).
\end{exercise}

\begin{exercise} \label{nakayama_ex}
Let $R$ be a local commutative ring and let $I$ be a proper ideal of~$R$.
Let~$M$ be an $R$-module and let $N$ be a submodule of~$M$.
Assume that either (1)~$M$ is finitely generated as an $R$-module,
or at least that (2) the quotient $M/N$ is finitely generated as an~$R$-module.
Use the previous exercise to prove the following
version of Nakayama's lemma:
If $M = N + I M$ then~$M = N$.
(Hint: consider the quotient module $M/N$).
\end{exercise}

\begin{exercise} \label{ex18}
Let $R$ be a local commutative ring with maximal ideal~$\mathfrak m$. 
Let $k$ be the field $R/\mathfrak m$, called the \emph{residue field}.

(1) Show that the scalar multiplication law
$$
R/\mathfrak m \times \mathfrak m/\mathfrak m^2\to \mathfrak m/\mathfrak m^2,
\qquad [a]\cdot [b] \defeq [ab]\quad\text{with $a\in R$ and $b \in \mathfrak m$}
$$ 
is well-defined and
makes the Abelian group $\mathfrak m/\mathfrak m^2$ into a $k$-vector space.

(2) Suppose $\mathfrak m$ is finitely generated.
Use the previous exercise to show that if~$[a_1], \ldots, [a_n] \in \mathfrak m/\mathfrak m^2$
spans the $k$-vector space $\mathfrak m/\mathfrak m^2$ where $a_1, \ldots, a_n \in \mathfrak m$,
then~$a_1, \ldots, a_n$ generate the ideal~$\mathfrak m$.
Hint: use the previous exercise with
$$M = I = \mathfrak m, \qquad N = a_1 R + \ldots + a_n R.$$
\end{exercise}

\begin{exercise} \label{dimension_ex}
Let $R$ be a local Noetherian domain with maximal ideal~$\mathfrak m$, 
and residue field  $k = R/\mathfrak m$.
Show that $R$ is a DVR if and only if 
the $k$-vector space~$\mathfrak m/\mathfrak m^2$ has dimension 1.
\end{exercise}

\begin{exercise}\label{exxx}
Define an \emph{irreducible} ideal in an integral domain to be a nonzero proper ideal that is not  equal to $I J$
for nonzero proper ideals $I$ and $J$.  
Show that in a Noetherian domain every nonzero proper ideal of $R$ factors as the product
of irreducible ideals. 

Hint: use ascending chains and use Exercise~\ref{exx}
to show that if $I = J_1 J_2$ for nonzero proper ideals $J_1, J_2$ then $I \subsetneq J_1$
and $I \subsetneq J_2$.
\end{exercise}


\chapter{Localizing fractional ideals}\label{ch5}

In this section, and the remaining sections, we assume the reader is familiar with localization, at
least in the context of integral domains.\footnote{See,
for example, my
expository essay on localization in integral domains.}
In this section we review this theory and expand the theory to 
include fractional ideals.
(Although much of this section is likely review, the reader should verify for themselves
any unfamiliar result.)

Localization is a process of forming a new ring $S^{-1} R$ from a given
commutative ring and multiplicative system~$S$.
Although this can be done for any commutative ring, the prototypical setting and the most
accessible situation is when we localize with
integral domains.
In this document, when we localize we will always assume $R$ is an integral domain, and that
$S$ is a subset of $R$ closed under multiplication that contains $1$ but does not
contain~$0$. In other words, $S$ is a multiplicative submonoid of $R \smallsetminus \{ 0\}$.
The nice thing about this situation is that the ring
$S^{-1} R$ can be identified with the subring of the fraction field $K$ of~$R$ consisting
of elements of the form $r/s$ where $r\in R$ and $s\in S$.
An important example is where $S = R \smallsetminus \mathfrak p$ where~$\mathfrak p$ is a prime
ideal of~$R$. In this case $S^{-1} R$ is written $R_{\mathfrak p}$.
In this case $R_{\mathfrak p}$ is a local ring.

We can localize modules as well. Given an $R$-module $M$, localization produces
an $S^{-1} R$-module called $S^{-1} M$.
 We will limit ourselves to the nice case where $I$ is an~$R$-submodule
of~$K$. 
If $I$ is an $R$-submodule of $K$, then $S^{-1} I$ can be identified with the set elements of the form $x / s$
with~$x\in I$ and~$s \in S$. Here $x/s$ is just notation for~$x s^{-1}$. 
The nice thing about this case is that $S^{-1} I$ is again a subset of~$K$.
Note that the localization of ideals is a special case of this type of localization.
We assume the reader is familiar with localizing such modules (if not, it is a reasonable
exercise to check the details).
For example, we take it as established
from earlier work
(or leave it to the reader to check) that $S^{-1} I$ is an $S^{-1} R$ submodule of $K$.
Note that~$y\in S^{-1} I$ if and only if it is of the form $x (r/s)$ where~$x\in I, r\in R, s\in S$.
So we sometimes write~$I  (S^{-1} R)$ for $S^{-1} I$. This is especially
common when~$S^{-1} R$ is $R_{\frak p}$ for some prime ideal~$\mathfrak p$.
In this case we often write~$I R_{\frak p}$ for $S^{-1} I$.

\begin{proposition}
If $I$ is a fractional ideal of~$R$, then $S^{-1} I$ is a fractional ideal of~$S^{-1} R$.
\end{proposition}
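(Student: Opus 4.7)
The plan is to verify directly the two defining properties of a fractional ideal of $S^{-1}R$: namely, that $S^{-1}I$ is a nonzero $S^{-1}R$-submodule of $K$, and that there is a nonzero $y \in K$ with $y \cdot S^{-1}I \subseteq S^{-1}R$.

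First, I would invoke the background material already cited: for any $R$-submodule $I$ of $K$, the localization $S^{-1}I$ is an $S^{-1}R$-submodule of $K$. Next, to see that $S^{-1}I$ is nonzero, pick any nonzero $x \in I$ (which exists since $I$ is a fractional ideal and so nonzero); then $x = x/1 \in S^{-1}I$ is nonzero in $K$.

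The essential step is producing the common denominator. Using the equivalent characterizations of fractional ideals from the preceding proposition, fix a nonzero $d \in R$ with $dI \subseteq R$. Since $R \subseteq S^{-1}R$, we have $d \in (S^{-1}R)^\times$ inside $K$, and $d$ is certainly a nonzero element of $K$. For any $y \in S^{-1}I$, write $y = x/s$ with $x \in I$, $s \in S$; then $dy = (dx)/s$ with $dx \in R$ and $s \in S$, so $dy \in S^{-1}R$. Hence $d \cdot S^{-1}I \subseteq S^{-1}R$, which is exactly the common-denominator condition with respect to the ring $S^{-1}R$.

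There is not really a hard step here — the argument is essentially the observation that the common denominator $d$ for $I$ inside $R$ continues to serve as a common denominator for $S^{-1}I$ inside $S^{-1}R$, since denominators from $S$ are already invertible in $S^{-1}R$. The only thing to be careful about is to note that nonzeroness of $I$ really does pass to $S^{-1}I$, which it does because localization of an integral domain is injective on $K$-submodules (elements of $S$ are units in $K$).
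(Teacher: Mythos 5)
Your proof is correct and takes essentially the same approach as the paper: the common denominator $x \in K^\times$ (or equivalently $d \in R$) witnessing that $I$ is a fractional ideal of $R$ continues to work as a common denominator for $S^{-1}I$ in $S^{-1}R$. The paper's proof is a one-liner that leaves the routine checks (submodule property, nonzeroness) to the reader, whereas you spell them out, but the essential observation is the same.
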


\begin{proof}
We have $x I \subseteq R$ for some $x \in K^{\times}$. Observe $x (S^{-1} I) \subseteq S^{-1} R$.
\end{proof}

We take the next two propositions as established from previous work (or we leave
the proofs to the reader):

\begin{proposition}
Let $I_1, I_2$ be $R$-submodule of $K$. 
Then, as $S^{-1} R$-modules,
$$S^{-1} (I_1 + I_2) = (S^{-1} I_1) +  (S^{-1} I_2), $$
$$S^{-1} (I_1 I_2) = (S^{-1} I_1) (S^{-1} I_2),$$
and
$$S^{-1} (I_1 \cap I_2) = (S^{-1} I_1) \cap  (S^{-1} I_2).$$
\end{proposition}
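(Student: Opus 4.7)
The plan is to prove each of the three equalities by double inclusion. Throughout, I will exploit the fact, set up in this section, that because $R$ is an integral domain and $S\subseteq R\smallsetminus\{0\}$, each $S^{-1}I$ sits inside $K$ as the literal set $\{x/s \mid x\in I,\ s\in S\}$ with $x/s = xs^{-1}$ computed in $K$. In particular, an equality of two such expressions is an honest equality in $K$, which lets me clear denominators without worrying about annihilators.

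For the sum and product the arguments are routine common-denominator calculations. For the sum, any element of $S^{-1}(I_1+I_2)$ has the form $(x_1+x_2)/s = x_1/s + x_2/s$, giving the forward inclusion; conversely, $x_1/s_1 + x_2/s_2 = (s_2x_1 + s_1x_2)/(s_1s_2)$, and $s_2x_1\in I_1$, $s_1x_2\in I_2$ since $I_1,I_2$ are $R$-modules, so this element lies in $S^{-1}(I_1+I_2)$. For the product, a generator $(\sum x_iy_i)/s$ of $S^{-1}(I_1I_2)$ can be written as $\sum(x_i/s)(y_i/1)\in (S^{-1}I_1)(S^{-1}I_2)$; and conversely a generator $(x_1/s_1)(x_2/s_2) = (x_1x_2)/(s_1s_2)$ of the right-hand side lies in $S^{-1}(I_1I_2)$. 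Summing up finitely many such generators and combining over a common denominator keeps us inside $S^{-1}(I_1I_2)$ by the sum case already handled.

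The intersection identity is the step I expect to be the main obstacle, since unlike sums and products there is no algebraic formula that directly produces a representative $x/s$ with $x\in I_1\cap I_2$; one has to construct it. The inclusion $S^{-1}(I_1\cap I_2)\subseteq S^{-1}I_1\cap S^{-1}I_2$ is immediate because $I_1\cap I_2\subseteq I_i$. For the reverse inclusion, take $z\in S^{-1}I_1\cap S^{-1}I_2$ and write $z = x_1/s_1 = x_2/s_2$ with $x_i\in I_i$ and $s_i\in S$. Here is where the integral-domain hypothesis does the work: the equality of these two fractions holds in $K$, hence $s_2x_1 = s_1x_2$ as an element of $R$. This common value lies in $I_1$ (since $s_2\in R$ and $x_1\in I_1$) and in $I_2$ (since $s_1\in R$ and $x_2\in I_2$), so it lies in $I_1\cap I_2$, and then $z = (s_2x_1)/(s_1s_2)\in S^{-1}(I_1\cap I_2)$. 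This completes the proof.
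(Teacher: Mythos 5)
Your proof is correct. The paper gives no proof of this proposition -- it explicitly says ``We take the next two propositions as established from previous work (or we leave the proofs to the reader)'' -- so there is nothing in the text to diverge from; your double-inclusion argument, with the cross-multiplication trick in $K$ for the intersection case, is exactly the standard way to fill that gap. One small slip worth flagging: in the intersection case you write that $s_2x_1 = s_1x_2$ ``as an element of $R$,'' but since $I_1$ and $I_2$ are merely $R$-submodules of $K$ (not necessarily ideals), this common value lives in $K$ rather than $R$; the very next sentence, which places it in $I_1 \cap I_2$, is what the argument actually needs, and that part is correct.
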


The correspondence is also well-behaved with respect to principle ideals:

\begin{proposition}
If $x \in K$ then
$$
S^{-1} ( xR) = x (S^{-1} R).
$$
More generally, if $U$ is a set of elements of~$K$, and if $I$ is the $R$-submodule generated by~$U$
then $S^{-1} I$ is the $R$-submodule generated by $U$ in $S^{-1}R$.
\end{proposition}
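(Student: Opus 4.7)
The plan is to prove both claims by direct double inclusion in $K$, relying on the fact that everything in sight is already a subset of the fraction field so equality of submodules amounts to equality of element sets.

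For the first claim $S^{-1}(xR) = x(S^{-1}R)$, I would just unpack the definitions. A typical element of $S^{-1}(xR)$ is $(xr)/s = xr \cdot s^{-1}$ with $r\in R$ and $s\in S$; using the commutativity and associativity of multiplication in $K$, this equals $x \cdot (r/s) \in x(S^{-1}R)$. Conversely, any element of $x(S^{-1}R)$ has the form $x \cdot (r/s) = (xr)/s$, and since $xr \in xR$, this lies in $S^{-1}(xR)$. Both inclusions are one-line calculations.

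For the general claim, I would combine the same elementwise computation with a clearing-denominators argument. An element of $I$ is a finite $R$-linear combination $\sum_i r_i u_i$ with $r_i \in R$ and $u_i \in U$, so a typical element of $S^{-1}I$ has the form
$$\frac{1}{s}\sum_i r_i u_i = \sum_i \frac{r_i}{s}\, u_i,$$
an $S^{-1}R$-linear combination of elements of $U$, which therefore lies in the $S^{-1}R$-submodule of $K$ generated by $U$. For the reverse inclusion, given any finite $S^{-1}R$-linear combination $\sum_i (r_i/s_i)\, u_i$ with $r_i \in R$ and $s_i \in S$, I would put the coefficients over the common denominator $s = \prod_j s_j \in S$, writing $r_i/s_i = r_i'/s$ with $r_i' \in R$. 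Then
$$\sum_i \frac{r_i}{s_i}\, u_i \;=\; \frac{1}{s}\sum_i r_i' u_i,$$
which lies in $S^{-1}I$ because $\sum_i r_i' u_i \in I$.

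There is essentially no obstacle here; the proof is a bookkeeping exercise. The only minor point requiring care is the interpretation of ``the $R$-submodule generated by $U$ in $S^{-1}R$'' as the $S^{-1}R$-submodule of $K$ generated by $U$, and handling the trivial case $U = \emptyset$, in which both sides reduce to the zero submodule. I would also remark that the first claim is the special case $U = \{x\}$ of the second, so in a polished write-up only the second claim actually needs proof.
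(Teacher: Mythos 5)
Your proof is correct and is exactly the straightforward double-inclusion argument (with common denominators for the reverse inclusion) that the paper expects; the paper deliberately omits a proof here, signaling that it is a routine verification left to the reader. Your observation that the first claim is the special case $U = \{x\}$ of the second is a nice economy the paper does not mention.
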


\begin{remark}
The module generated by a $x_1, \ldots, x_k \in K$
is just $x_1 R + \ldots + x_k R$. We have also mentioned the $R$-submodule generated by an infinite set $U$,
and the above proposition does apply to this case.
However, we do not need the case of infinite $U$ in this document.
\end{remark}

\begin{corollary}\label{cor41_new}
Suppose $I$ is a principal fractional ideal of $R$, then $S^{-1} I$ is a principal
fractional ideal of~$S^{-1} R$. 
Suppose $I$ is a finitely generated fractional ideal of~$R$, then $S^{-1} I$ is a finitely
generated fractional ideal of~$S^{-1} R$. 
\end{corollary}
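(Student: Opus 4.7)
The plan is to derive both statements directly from the preceding proposition, which describes how localization interacts with the module generated by a set of elements of $K$. I would first note that by the earlier proposition (localization of a fractional ideal is a fractional ideal of $S^{-1}R$), the only thing left to verify in each case is the structural property (principal or finitely generated).

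For the principal case, I would start by writing $I = xR$ for some $x \in K^\times$. Applying the preceding proposition directly (or equivalently the formula $S^{-1}(xR) = x(S^{-1}R)$ displayed just above), we immediately get $S^{-1} I = x(S^{-1} R)$, which is by definition a principal $S^{-1}R$-submodule of $K$. Combined with the fact that localization preserves the fractional-ideal property, this gives a principal fractional ideal of $S^{-1} R$.

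For the finitely generated case, I would write $I = x_1 R + \ldots + x_k R$ for some $x_1, \ldots, x_k \in K$. Either by iterating the compatibility of $S^{-1}$ with finite sums from the earlier proposition $S^{-1}(I_1 + I_2) = S^{-1} I_1 + S^{-1} I_2$, or simply by invoking the generating-set form of the proposition with $U = \{x_1, \ldots, x_k\}$, we conclude that
$$S^{-1} I = x_1 (S^{-1} R) + \ldots + x_k (S^{-1} R),$$
so $S^{-1} I$ is finitely generated as an $S^{-1} R$-module by the same $k$ elements. Again combining with the preservation of the fractional-ideal property finishes the proof.

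There is really no obstacle here: the content is entirely packaged in the two propositions that immediately precede the corollary, and the proof amounts to reading off their consequences for the special cases of one-generator and finitely-many-generator modules. The only minor subtlety worth mentioning explicitly is that one should confirm that the generators $x_i$, viewed as elements of $K$, are legitimate generators of $S^{-1} I$ as an $S^{-1} R$-module (not just as an $R$-module), but this is exactly what the generating-set form of the preceding proposition asserts.
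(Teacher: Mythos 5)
Your proposal is correct and matches the paper's intended argument: the paper offers no explicit proof precisely because the corollary is read off directly from the two preceding propositions (that $S^{-1}(xR) = x(S^{-1}R)$, the generating-set version, and that $S^{-1}$ preserves the fractional-ideal property), which is exactly what you do.
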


We can also derive results about inverses. Warning: we assume $I$ is finitely generated here.

\begin{proposition}\label{prop49}
Suppose $I$ is a finitely generated fractional ideal of~$R$. Consider~$I^{-1}$ as a fractional ideal of~$R$
and consider $(S^{-1} I)^{-1}$ as a fractional ideal of~$S^{-1}R$. Then
$$
(S^{-1} I)^{-1} = S^{-1} I^{-1}.
$$
\end{proposition}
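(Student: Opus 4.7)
The plan is to establish the two inclusions separately, noting that only one of them requires the hypothesis that $I$ be finitely generated.

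For the inclusion $S^{-1} I^{-1} \subseteq (S^{-1} I)^{-1}$, I would start from a typical element $y/s$ with $y \in I^{-1}$ and $s \in S$, and a typical element of $S^{-1} I$ written as $x/t$ with $x \in I$ and $t \in S$. The product $(y/s)(x/t) = (yx)/(st)$ lies in $S^{-1} R$ because $yx \in R$ by definition of $I^{-1}$. Passing to finite sums gives $(y/s)(S^{-1}I) \subseteq S^{-1} R$, so $y/s \in (S^{-1} I)^{-1}$. This half needs no finiteness hypothesis.

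For the reverse inclusion $(S^{-1} I)^{-1} \subseteq S^{-1} I^{-1}$, I would take $x \in (S^{-1} I)^{-1}$ and aim to produce a single $s \in S$ such that $sx \in I^{-1}$, i.e., $sxI \subseteq R$; this would exhibit $x = (sx)/s$ as an element of $S^{-1} I^{-1}$. Since $I \subseteq S^{-1} I$, we already know $xI \subseteq S^{-1} R$, so each element $xz$ with $z \in I$ can be written with some denominator from $S$; but a priori the denominator depends on $z$, and we need a common denominator. This is where the finite generation of $I$ enters: writing $I = x_1 R + \ldots + x_n R$, we get $x x_i = r_i/s_i$ for some $r_i \in R$ and $s_i \in S$, and then $s \defeq s_1 \cdots s_n \in S$ clears all denominators at once, giving $s x x_i \in R$ for each $i$ and hence $sxI \subseteq R$.

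The main subtlety is exactly the clearing-denominators step: without finite generation of $I$ we cannot in general combine the denominators $s_i$ into a single $s \in S$, and the inclusion $(S^{-1}I)^{-1} \subseteq S^{-1} I^{-1}$ can genuinely fail. So the proof naturally divides into the easy containment (valid in complete generality) and the harder containment (using the hypothesis via a common-denominator argument applied to a finite generating set), which together yield the stated equality.
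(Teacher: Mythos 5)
Your proof is correct and follows essentially the same route as the paper: the easy inclusion is checked directly, and the harder one is handled by taking a finite generating set $x_1, \ldots, x_n$ of $I$, writing $x x_i = r_i/s_i$, and clearing denominators with $s = s_1 \cdots s_n$ to land $sx$ in $I^{-1}$. Your remarks on where finite generation is actually used (and where it isn't) are a helpful elaboration but do not change the substance of the argument.
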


\begin{proof}
The inclusion $S^{-1} I^{-1} \subseteq (S^{-1} I)^{-1}$ is straightforward.
For the other inclusion, let $x_1, \ldots, x_k$ be generators of $I$ as an $R$-module.
Suppose $x \in (S^{-1} I)^{-1}$. Then $x x_i = r_i / s_i$ for some $r_i \in R$ and $s_i \in S$.
So $(s_1 \cdots s_k) x \in I^{-1}$.
\end{proof}

We take is as established from earlier work that every ideal of $S^{-1} R$ is of
the form $S^{-1} I$ for some ideal $I$ of~$R$.\footnote{In fact, given an ideal $J$ of $S^{-1} R$,
the ideal $I=(J \cap R)$ will work, and will be the maximum such~$I$.}
We can extend this to fractional ideals.

\begin{proposition}
If $J$ is a fractional ideal of~$S^{-1} R$, then there is a fractional ideal~$I$ of $R$ such that
$J = S^{-1} I$.
\end{proposition}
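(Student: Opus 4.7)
The plan is to reduce the fractional ideal case to the integral ideal case by ``clearing a denominator'' from $J$. Since $J$ is a fractional ideal of $S^{-1}R$, there exists $y \in K^\times$ such that $yJ \subseteq S^{-1}R$. Then $yJ$ is a nonzero $S^{-1}R$-submodule of $S^{-1}R$, that is, a nonzero integral ideal of $S^{-1}R$. The assumed result for integral ideals guarantees an ideal $I'$ of $R$ with $S^{-1}I' = yJ$ (one can even take $I' = (yJ) \cap R$).

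Next I would take $I \defeq y^{-1} I'$, which is an $R$-submodule of $K$. I need to check $I$ is a fractional ideal of $R$: it is nonzero because $I'$ is (since $yJ$ is nonzero, $J$ being a fractional ideal), and I must produce a common denominator. Write $y = a/b$ with $a,b \in R$, $b \ne 0$; then
$$
a \cdot I = a \cdot (b/a) I' = b I' \subseteq R,
$$
so $a \in R \smallsetminus \{0\}$ is a common denominator and $I$ is indeed a fractional ideal of $R$.

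Finally, I would compute
$$
S^{-1} I = S^{-1} (y^{-1} I') = y^{-1}\, S^{-1} I' = y^{-1} (yJ) = J,
$$
where the middle equality uses the identity $S^{-1}(xM) = x\, S^{-1}M$ for $x \in K$ and $M$ an $R$-submodule of $K$, which is essentially the content of the proposition on localization of principal submodules combined with $S^{-1}(I_1 I_2) = (S^{-1} I_1)(S^{-1} I_2)$ applied to $I_1 = y^{-1}R$. This gives the desired fractional ideal $I$ of $R$.

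The only nontrivial step is producing $I'$, and that is quoted from the earlier work on integral ideals; the rest is bookkeeping. The mild obstacle is remembering that $J$ need not sit inside $R$, so one cannot directly take $I = J \cap R$ — this is exactly what makes the multiplication-by-$y$ trick necessary.
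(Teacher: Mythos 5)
Your proof is correct and is essentially the same as the paper's: the paper also picks $x \in K^\times$ with $xJ$ an integral ideal of $S^{-1}R$, invokes the earlier correspondence to get an ideal $I'$ of $R$ with $S^{-1}I' = xJ$, and then takes $I = x^{-1}I'$, leaving the routine verifications to the reader. You have simply filled in those checks — that $I$ is indeed a fractional ideal of $R$ (nonzero, with common denominator the numerator of $y$) and that $S^{-1}I = J$ via the compatibility of localization with multiplication by an element of $K$ — which is exactly the intent.
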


\begin{proof}
Recall that $x J$ is an ideal of $S^{-1}R$ for some $x \in K^\times$.
Let $I'$ be an ideal of~$R$ with $S^{-1} I' = x J$.
Now consider $I = x^{-1} I'$.
\end{proof}

\begin{corollary} \label{noeth_cor}
Suppose $R$ is an integral domain with multiplicative system~$S$. If~$R$ is a PID then
so is~$S^{-1} R$. If $R$ is Noetherian, then so is $S^{-1}R$.
\end{corollary}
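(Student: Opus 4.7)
The plan is to reduce both statements to the correspondence, established just before the corollary, that every ideal of $S^{-1}R$ has the form $S^{-1}I$ for some ideal $I$ of $R$. Once we have that, the task becomes transferring a generating set from $I$ to $S^{-1}I$ using the earlier propositions about how localization interacts with sums of submodules and with principal submodules.

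First I would handle the PID case. Start with an arbitrary ideal $J$ of $S^{-1}R$ and invoke the ideal correspondence to write $J = S^{-1}I$ for some ideal $I$ of $R$. Since $R$ is a PID, $I = xR$ for some $x \in R$. Then by the proposition $S^{-1}(xR) = x(S^{-1}R)$ (and, equivalently, the fact that the submodule generated by a singleton localizes to the submodule generated by that same singleton in $S^{-1}R$), we get $J = x(S^{-1}R)$, a principal ideal of $S^{-1}R$. Hence $S^{-1}R$ is a PID.

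Next I would handle the Noetherian case by essentially the same strategy, only with finite generating sets in place of a single generator. Given any ideal $J$ of $S^{-1}R$, write $J = S^{-1}I$ with $I$ an ideal of $R$. Because $R$ is Noetherian, $I$ is finitely generated, say $I = x_1 R + \ldots + x_k R$. Applying the proposition that localization preserves sums, together with the fact that $S^{-1}(x_iR) = x_i(S^{-1}R)$, yields
\[
J = S^{-1}I = x_1(S^{-1}R) + \ldots + x_k(S^{-1}R),
\]
so $J$ is finitely generated as an $S^{-1}R$-module. Thus $S^{-1}R$ is Noetherian.

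There is no real obstacle here; both parts are direct consequences of the ideal correspondence combined with the fact that localization of an $R$-submodule generated by a given finite set is the $S^{-1}R$-submodule generated by that same set. The only mild subtlety is to remember that we are invoking the (cited) result that the $R$-ideals already surject onto the $S^{-1}R$-ideals via $I \mapsto S^{-1}I$, rather than trying to reprove that surjectivity here.
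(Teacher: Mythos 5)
Your proof is correct and takes the same approach as the paper: write each ideal of $S^{-1}R$ as $S^{-1}I$ using the ideal correspondence, then transfer the generating set of $I$ across localization (the paper cites Corollary~\ref{cor41_new} for exactly this transfer). Your version simply spells out the details that the paper leaves implicit.
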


\begin{proof}
This follows from the above proposition and Corollary~\ref{cor41_new}.
\end{proof}

\begin{corollary}
Suppose $R$ is an integral domain with multiplicative system~$S$. If every 
fractional ideal of $R$ is invertible, then every fractional ideal of $S^{-1}R$
is invertible.
\end{corollary}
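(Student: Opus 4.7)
The plan is to reduce the problem to the corresponding statement about a fractional ideal $I$ of $R$ via the preceding proposition, which tells us every fractional ideal $J$ of $S^{-1}R$ has the form $J = S^{-1}I$ for some fractional ideal $I$ of $R$. Once we have such an $I$, the hypothesis supplies an $R$-submodule $I^{-1}$ of $K$ with $I I^{-1} = R$, and then localization should carry this identity across to $S^{-1}R$.

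First I would start with an arbitrary fractional ideal $J$ of $S^{-1}R$ and invoke the preceding proposition to write $J = S^{-1}I$ for some fractional ideal $I$ of $R$. Next I would apply the hypothesis to produce the inverse $I^{-1}$, a fractional ideal of $R$, with $I I^{-1} = R$. Then I would localize this equation: using the proposition stating that $S^{-1}(I_1 I_2) = (S^{-1}I_1)(S^{-1}I_2)$ and the obvious fact that $S^{-1}R = S^{-1}R$, I obtain
\[
(S^{-1}I)(S^{-1}I^{-1}) = S^{-1}R.
\]
This exhibits $S^{-1}I^{-1}$ as a multiplicative inverse for $J = S^{-1}I$ in the monoid of fractional ideals of $S^{-1}R$, so $J$ is invertible.

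The only subtlety is checking that $S^{-1}I^{-1}$ really is a fractional ideal of $S^{-1}R$ — i.e.\ that $I^{-1}$ is itself a fractional ideal of $R$ so that Corollary~\ref{cor41_new} or the earlier proposition applies. This follows from the fact that a fractional ideal has a fractional inverse (established earlier: the proposition asserting that if $I$ is a fractional ideal, so is $I^{-1}$). Note that we do not need to invoke Proposition~\ref{prop49} here — that proposition requires $I$ to be finitely generated, but our argument only needs to localize a known identity, not to compute $(S^{-1}I)^{-1}$ intrinsically.

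I expect no real obstacle: the main ideas (lifting fractional ideals to $R$, compatibility of localization with products) are already packaged as propositions in the preceding section, so the proof is essentially a one-line appeal to each. If one did want an intrinsic description of the inverse of $J$, one would additionally note that invertible fractional ideals are finitely generated (Proposition~\ref{prop_inv_fg}), so $I$ is finitely generated, and then Proposition~\ref{prop49} gives $(S^{-1}I)^{-1} = S^{-1}I^{-1}$; but this refinement is not needed for the existence claim itself.
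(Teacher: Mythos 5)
Your proof is correct and is the intended argument: lift $J$ to a fractional ideal $I$ of $R$ via the preceding proposition, use the hypothesis to get $II^{-1}=R$, and push that identity through the multiplicative homomorphism $I \mapsto S^{-1}I$. The paper leaves the proof unwritten in keeping with its ``straightforward proofs are omitted'' convention, and this is exactly the reasoning it expects; your remark that Proposition~\ref{prop49} is not needed (because you are transporting a known inverse rather than computing $(S^{-1}I)^{-1}$ intrinsically) is a correct and worthwhile observation.
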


In a similar vein, we have the following:

\begin{proposition}\label{int_closed_prop}
Let $R$ be an integral domain with field of fractions~$K$.
Let~$S$ be a multiplicative system of~$R$. If~$R$
is integrally closed in $K$, then $S^{-1} R$ is integrally closed in~$K$.
\end{proposition}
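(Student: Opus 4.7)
The plan is to reduce integrality over $S^{-1}R$ to integrality over $R$ by clearing denominators. First I would note that the fraction field of $S^{-1}R$ is again $K$: since $R \subseteq S^{-1}R \subseteq K$ and $K$ is already the fraction field of $R$, the fraction field of $S^{-1}R$ sits between $S^{-1}R$ and $K$ and contains $R$, hence equals $K$. So to show $S^{-1}R$ is integrally closed it suffices to show: every $x \in K$ that is integral over $S^{-1}R$ lies in $S^{-1}R$.

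Next I would take such an $x$ and write down an integral relation
$$x^n + a_{n-1} x^{n-1} + \cdots + a_1 x + a_0 = 0$$
with each $a_i \in S^{-1}R$. Writing $a_i = b_i/s_i$ with $b_i \in R$ and $s_i \in S$, set $s = s_0 s_1 \cdots s_{n-1} \in S$. Multiplying the integral relation through by $s^n$ and grouping powers of $x$ with powers of $s$ gives
$$(sx)^n + (s a_{n-1})(sx)^{n-1} + \cdots + (s^{n-1} a_1)(sx) + s^n a_0 = 0.$$
Each coefficient $s^{n-i} a_i = s^{n-i-1}(s/s_i) b_i$ lies in $R$ because $s_i$ divides $s$ in $R$ and $n-i-1 \ge 0$ for $0 \le i \le n-1$. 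Hence $sx$ satisfies a monic polynomial equation over $R$, so $sx$ is integral over $R$.

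Now I would invoke the hypothesis that $R$ is integrally closed in $K$: this forces $sx \in R$, and therefore $x = (sx)/s \in S^{-1}R$, as required. No step here should be a real obstacle; the only thing that needs a moment's care is the bookkeeping of denominators to ensure that after multiplying by $s^n$ the resulting polynomial in $sx$ is genuinely monic with coefficients in $R$, which is handled by choosing $s$ to be the product of all $s_i$.
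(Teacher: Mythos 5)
Your proof is correct, and it is exactly the ``usual proof'' that the paper alludes to but deliberately does not write out: the paper says the standard argument ``involves manipulating polynomials'' and refers the reader to Exercise~\ref{ex38} (which packages the same clear-the-leading-coefficient trick in a slightly different form). The bookkeeping in your displayed relation is right: after multiplying by $s^n$, the coefficient of $(sx)^k$ is $s^{n-k}a_k = s^{n-k-1}(s/s_k)b_k$, which lies in $R$ since $s_k$ divides $s$ and $n-k-1\ge 0$; so $sx$ is integral over $R$, hence in $R$, hence $x = (sx)/s \in S^{-1}R$.

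What the paper actually sketches is a different, more structural argument using the rings $\mathcal R(I)$: it shows (for Noetherian $R$) that $S^{-1}\mathcal R(I) = \mathcal R(S^{-1}I)$, uses $\mathcal R(I)=R$ from Corollary~\ref{corF}, and invokes Exercise~\ref{ex8_new} to conclude $S^{-1}R$ is integrally closed. That route has the advantage of exercising the paper's local/fractional-ideal machinery, but it needs the extra Noetherian hypothesis. Your elementary denominator-clearing argument is more general (no Noetherian assumption) and is self-contained; the only cost is that it does not illustrate the $\mathcal R(I)$ technology the surrounding sections are developing.
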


\begin{remark}
The usual proof involves manipulating polynomials, and I assume the reader
has seen it. (If not, it is a good exercise; See for example,
Exercise~\ref{ex38}).
We sketch another
argument that highlights the techniques given in this document. It requires the
extra assumption that $R$ is Noetherian, which holds in the situations we are most interested in.

Start by showing, 
for any finitely generated fractional ideal $I$,
the identity (where the right-hand side is in $S^{-1} R$):
$$S^{-1} \mathcal R(I) = \mathcal R(S^{-1} I).$$
We have $\mathcal R(I) = R$ (Corollary~\ref{corF}), so~$S^{-1} R= \mathcal R(S^{-1} I)$.
This holds for all fractional ideals $I$ (assuming $R$ is Noetherian), and all fractional ideals of
$S^{-1} R$ are of the form $S^{-1}I$. By Exercise~\ref{ex8_new}, 
the integral domain $S^{-1} R$ is integrally closed.
\end{remark}

We can express some of the above results in terms of homomorphisms of monoids.

\begin{proposition}
Let $R$ be an integral domain and
let $S$ be a multiplicative system.
The map 
$$I \mapsto S^{-1} I$$ is a monoid homomorphism from 
the multiplicative monoid of nonzero $R$-submodules of $K$
to the multiplicative monoid of nonzero $S^{-1} R$-submodules of $K$.

This map restricts to a surjective monoid homomorphism from 
the multiplicative monoid of fractional ideals of~$R$ to 
the multiplicative monoid of fractional ideals of~$S^{-1} R$.
If the domain of this map is a group, then so is the image,
and the map is a group homomorphism.

This map further restricts to a surjective monoid homomorphism from 
the multiplicative monoid of nonzero integral ideals of~$R$ to 
the multiplicative monoid of nonzero integral ideals of~$S^{-1} R$.
\end{proposition}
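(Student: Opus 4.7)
The plan is to assemble the proposition from results already established in the excerpt, treating each of the three assertions (general submodules, fractional ideals with the group remark, integral ideals) in turn. Most of the work has already been done; what remains is to organize it and verify well-definedness and surjectivity at each level.

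First I would handle the monoid homomorphism claim for nonzero $R$-submodules of $K$. That $S^{-1}I$ is an $S^{-1}R$-submodule of $K$ is taken as background, and if $I$ is nonzero then any $0 \ne x \in I$ gives $x/1 \in S^{-1}I$, so $S^{-1}I$ is nonzero. The identity $R$ in the domain goes to $S^{-1}R$, which is the identity of the codomain monoid. Preservation of products is exactly $S^{-1}(I_1 I_2) = (S^{-1}I_1)(S^{-1}I_2)$, stated earlier. Hence $I \mapsto S^{-1}I$ is a monoid homomorphism.

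Next I would restrict to fractional ideals. We have already observed that if $I$ is a fractional ideal of $R$ then $S^{-1}I$ is a fractional ideal of $S^{-1}R$, so the restriction is well-defined, and it remains a homomorphism by the previous paragraph. Surjectivity is the content of the proposition just before Corollary~\ref{noeth_cor}: every fractional ideal of $S^{-1}R$ has the form $S^{-1}I$ for some fractional ideal $I$ of $R$. For the group remark, suppose every fractional ideal of $R$ is invertible. Given a fractional ideal $J$ of $S^{-1}R$, write $J = S^{-1}I$ and let $I^{-1}$ be the inverse of $I$ in the group of fractional ideals of $R$. Then applying the homomorphism to $I \cdot I^{-1} = R$ gives $(S^{-1}I)(S^{-1}I^{-1}) = S^{-1}R$, so $J$ is invertible in the codomain. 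Thus every element of the codomain is invertible, the codomain is a group, and a monoid homomorphism between groups is automatically a group homomorphism.

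Finally I would handle integral ideals. If $I \subseteq R$ is a nonzero ideal, then $S^{-1}I \subseteq S^{-1}R$ is a nonzero ideal of $S^{-1}R$ (nonzero by the same $x/1$ argument), so the restriction is well-defined, and it inherits the homomorphism property. Surjectivity at this level is the standard fact on ideals in localizations stated earlier in the excerpt: every ideal of $S^{-1}R$ is $S^{-1}I$ for some ideal $I$ of $R$ (for nonzero ideals, take the maximum preimage $I = J \cap R$, which is nonzero whenever $J$ is). There is no real obstacle here; the only subtlety worth flagging is that in the fractional-ideal case one must invoke the earlier surjectivity lemma rather than try to take $J \cap R$ directly (since a fractional ideal of $S^{-1}R$ need not meet $R$ in anything useful), but that lemma is already proved and can simply be cited.
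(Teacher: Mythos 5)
The paper gives no explicit proof of this proposition, treating it as a repackaging of results already established (the product identity $S^{-1}(I_1I_2)=(S^{-1}I_1)(S^{-1}I_2)$, the preservation of fractional ideals under localization, and the surjectivity onto fractional/integral ideals of $S^{-1}R$). Your proof assembles exactly those ingredients, correctly checks the small points the paper leaves implicit (nonzeroness, identity, the group remark via surjectivity), and so matches the intended argument.
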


We can say something about the kernel:

\begin{proposition}
Let $R$ be an integral domain with fraction field $K$
and let $S$ be a multiplicative system.
Suppose $I$ is an $R$-submodule of~$K$ that maps to the identity under
the above described homomorphism: in other words, suppose
$S^{-1} I = S^{-1} R$. Then $I$ must intersect~$S$.
Conversely, if $I$ is an integral ideal that intersects~$S$, then
it maps to the identity:~$S^{-1} I = S^{-1} R$.
\end{proposition}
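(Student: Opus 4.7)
The plan is to handle the two directions separately; both are short and essentially follow by unwinding the definition of $S^{-1}I$ as the subset $\{x/s \mid x \in I,\ s \in S\}$ of $K$.

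For the forward direction, I would start from the assumption $S^{-1}I = S^{-1}R$ and exploit the fact that $1 \in S^{-1}R$. Hence $1 \in S^{-1}I$, so there exist $x \in I$ and $s \in S$ with $1 = x/s$ in $K$. Multiplying by $s$ (equivalently, using that $K$ is an integral domain) gives the equality $x = s$ as elements of $K$. Since both $x$ and $s$ lie in $R$, they coincide there, and thus $s = x \in I$, which exhibits $s$ as an element of $I \cap S$.

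For the converse, I would assume $I$ is an integral ideal and fix some element $s \in I \cap S$. The inclusion $S^{-1}I \subseteq S^{-1}R$ is automatic from $I \subseteq R$. For the reverse inclusion, take an arbitrary element $r/t \in S^{-1}R$ with $r \in R$ and $t \in S$ and rewrite it as
\[
\frac{r}{t} \;=\; \frac{sr}{st}.
\]
Here $sr \in I$ because $I$ is an ideal of $R$ containing $s$ and $r \in R$, and $st \in S$ because $S$ is closed under multiplication. Hence $r/t \in S^{-1}I$, giving $S^{-1}R \subseteq S^{-1}I$.

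There is no real obstacle; the only point requiring a moment's care is the forward direction, where one should note that the equation $1 = x/s$ in $K$ forces $x = s$ as elements of $K$ (and hence of $R$), so the element produced really lies in $I \cap S$ and not merely in a localized copy thereof. The hypothesis that $I$ be an integral ideal in the converse is needed so that the product $sr$ lands back in $I$; for a general fractional ideal, the argument still goes through whenever one can arrange $sr \in I$, but the cleanest statement is for integral ideals as given.
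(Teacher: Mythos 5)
Your proof is correct and takes essentially the same approach as the paper: the forward direction comes from writing $1 = x/s$ with $x \in I$, $s \in S$ and observing $x = s \in I \cap S$, which is exactly the paper's one-line argument, and the converse is the routine verification the paper leaves implicit. Nothing further to add.
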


\begin{proof}
If $S^{-1} I = S^{-1} R$ then $1 = x/s$ for some $x \in I$ and $s \in S$.
\end{proof}

Finally, we remind the reader of the correspondence of prime ideals. We take this
as established from earlier work:

\begin{proposition}\label{prime_correspondence_prop}
Let $R$ be an integral domain, and let $S$ be a multiplicative system of~$R$.
The rule $\frak p \mapsto S^{-1} \frak p$ defines an inclusion preserving bijection 
$$
\left\{ \text{Prime ideals of $R$ disjoint from~$S$} \right\} \; \to \; \left\{ \text{Prime ideals of $S^{-1} R$} \right\}.
$$
The inverse sends a prime ideal $\frak p$ of $S^{-1} R$ to $\frak p \cap R$, and is also inclusion preserving.
\end{proposition}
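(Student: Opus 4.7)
The plan is to prove both maps land in the correct sets, then verify they are mutual inverses, and finally check inclusion preservation. First I would show that if $\mathfrak{p}$ is a prime ideal of $R$ disjoint from $S$, then $S^{-1}\mathfrak{p}$ is a prime ideal of $S^{-1}R$. Properness: if $1 \in S^{-1}\mathfrak{p}$, then $1 = p/s$ for some $p \in \mathfrak{p}$ and $s \in S$, which gives $s = p \in \mathfrak{p} \cap S$, contradicting disjointness. Primality: suppose $(a/s)(b/t) \in S^{-1}\mathfrak{p}$, so $ab/(st) = p/u$ for some $p \in \mathfrak{p}$ and $u \in S$. Working in $K$, we get $u\,ab = st\,p \in \mathfrak{p}$, and since $u \in S$ cannot lie in $\mathfrak{p}$, primality of $\mathfrak{p}$ forces $ab \in \mathfrak{p}$, hence $a \in \mathfrak{p}$ or $b \in \mathfrak{p}$, which places $a/s$ or $b/t$ in $S^{-1}\mathfrak{p}$.

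Next I would show that for any prime $\mathfrak{q}$ of $S^{-1}R$, the contraction $\mathfrak{q} \cap R$ is a prime ideal of $R$ disjoint from $S$. Primality is the standard fact that the preimage of a prime ideal under a ring homomorphism is prime (concretely, $R/(\mathfrak{q} \cap R)$ embeds into the integral domain $(S^{-1}R)/\mathfrak{q}$). Disjointness: any $s \in \mathfrak{q} \cap S$ would be a unit of $S^{-1}R$ lying in the proper ideal $\mathfrak{q}$, impossible.

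The heart of the argument is checking the two compositions are the identity. Given a prime $\mathfrak{q}$ of $S^{-1}R$, I would verify $S^{-1}(\mathfrak{q} \cap R) = \mathfrak{q}$: the inclusion $\subseteq$ is immediate, and for $\supseteq$, any element $r/s \in \mathfrak{q}$ satisfies $r = s \cdot (r/s) \in \mathfrak{q} \cap R$, so $r/s \in S^{-1}(\mathfrak{q}\cap R)$. Conversely, given a prime $\mathfrak{p}$ of $R$ disjoint from $S$, I would verify $(S^{-1}\mathfrak{p}) \cap R = \mathfrak{p}$: the inclusion $\supseteq$ is clear, and for $\subseteq$, if $r \in R$ is expressible as $p/s$ with $p \in \mathfrak{p}$ and $s \in S$, then $rs = p \in \mathfrak{p}$, whence primality and $s \notin \mathfrak{p}$ force $r \in \mathfrak{p}$. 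This last step is the main obstacle in the sense that it is the one place where one truly needs $\mathfrak{p}$ to be prime rather than merely an ideal; everywhere else the arguments are formal manipulations of fractions. Inclusion preservation in both directions is immediate from the definitions of $S^{-1}\mathfrak{p}$ and $\mathfrak{q} \cap R$.
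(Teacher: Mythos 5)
Your proof is correct and complete. Note that the paper itself gives no proof of this proposition --- it is introduced with the phrase ``we take this as established from earlier work,'' deferring to the reader's prior familiarity with localization --- so there is no argument in the text to compare against; what you have written is the standard argument and fills in exactly the material the paper assumes as background. One small observation: your check that $S^{-1}\mathfrak p$ is \emph{proper} is in fact subsumed by the round-trip identity $(S^{-1}\mathfrak p)\cap R=\mathfrak p$ (since a non-proper $S^{-1}\mathfrak p$ would contract to all of $R$, not to $\mathfrak p$), but including it explicitly does no harm and makes the primality step cleaner to state.
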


This correspondence, together with earlier results, leads to the following theorem:

\begin{theorem}\label{localize_dd_thm}
Let $R$ be an integral domain, and let $S$ be a multiplicative system of~$R$.
If $R$ is a Dedekind domain, then so is $S^{-1} R$.
\end{theorem}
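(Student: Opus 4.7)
The plan is to verify each of the three defining conditions of a Dedekind domain (Noetherian, integrally closed, every nonzero prime maximal) for $S^{-1} R$, leveraging the propositions assembled earlier in this chapter. Two of the three are immediate, so the only substantive work lies in handling the dimension condition on prime ideals.

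First I would note that $S^{-1} R$ is Noetherian by Corollary~\ref{noeth_cor}, and that $S^{-1} R$ is integrally closed in $K$ by Proposition~\ref{int_closed_prop}. (Here one uses that the fraction field of $S^{-1} R$ is the same $K$ as for $R$, since $S^{-1} R$ sits between $R$ and $K$.) These two steps require no additional argument beyond citing the relevant results.

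The remaining task is to show that every nonzero prime ideal of $S^{-1} R$ is maximal. For this I would invoke the inclusion-preserving bijection of Proposition~\ref{prime_correspondence_prop}. Let $\mathfrak{q}$ be a nonzero prime ideal of $S^{-1} R$, and let $\mathfrak{p} = \mathfrak{q} \cap R$, so that $\mathfrak{q} = S^{-1} \mathfrak{p}$. Then $\mathfrak{p}$ is a prime ideal of $R$ disjoint from $S$. I would first observe that $\mathfrak{p}$ is nonzero: if $\mathfrak{p} = \{0\}$, then $\mathfrak{q} = S^{-1}\{0\} = \{0\}$, contradicting the assumption. Since $R$ is a Dedekind domain, $\mathfrak{p}$ is therefore a maximal ideal of $R$.

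To conclude that $\mathfrak{q}$ is maximal in $S^{-1} R$, suppose $\mathfrak{q}'$ is any prime ideal of $S^{-1} R$ with $\mathfrak{q} \subseteq \mathfrak{q}'$. By the bijection, $\mathfrak{q}' = S^{-1} \mathfrak{p}'$ where $\mathfrak{p}' = \mathfrak{q}' \cap R$ is a prime of $R$ disjoint from $S$, and the inclusion-preserving property gives $\mathfrak{p} \subseteq \mathfrak{p}'$. Since $\mathfrak{p}$ is maximal in $R$, either $\mathfrak{p}' = \mathfrak{p}$ or $\mathfrak{p}' = R$. The second case is excluded because $\mathfrak{p}'$ must be a proper prime ideal disjoint from $S$, so $\mathfrak{p}' = \mathfrak{p}$, whence $\mathfrak{q}' = \mathfrak{q}$. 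Thus $\mathfrak{q}$ is maximal, completing the verification. The hard part, if any, is simply checking that the prime $\mathfrak{p} = \mathfrak{q} \cap R$ is nonzero — and even this reduces to an easy computation with the localization correspondence.
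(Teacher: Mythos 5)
Your proof is correct and follows essentially the same route as the paper: both verify the Noetherian and integrally closed conditions by citing Corollary~\ref{noeth_cor} and Proposition~\ref{int_closed_prop}, and both handle the condition on prime ideals via the inclusion-preserving bijection of Proposition~\ref{prime_correspondence_prop}. The only cosmetic difference is that the paper immediately chooses a \emph{maximal} ideal $\mathfrak m$ of $S^{-1}R$ containing $\mathfrak q$ and shows $\mathfrak q = \mathfrak m$, whereas you consider an arbitrary prime $\mathfrak q' \supseteq \mathfrak q$ and show $\mathfrak q' = \mathfrak q$; to conclude maximality from the latter you are implicitly using that $\mathfrak q$ sits inside some maximal (hence prime) ideal, which is harmless but worth noting as an unstated step.
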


\begin{proof}
We know that $S^{-1} R$ must be an integrally closed Noetherian domain
by Proposition~\ref{int_closed_prop} (and the remark following it) and Corollary~\ref{noeth_cor}.
So we just need to show that every nonzero prime ideal $\mathfrak q$ of $S^{-1} R$
is maximal. 

Suppose that $\mathfrak q$ is a nonzero prime ideal of $S^{-1} R$, and let $\mathfrak m$ be a maximal
ideal of $S^{-1} R$ containing it.
By the above proposition, there are prime ideal $\mathfrak p_1, \mathfrak p_2$ of~$R$
such that $\mathfrak q = S^{-1} \mathfrak p_1$ and $\mathfrak m = S^{-1} \mathfrak p_2$.
Since the correspondence is inclusion perserving in both directions, 
$\mathfrak p_1\subseteq \mathfrak p_2$.
Obviously $\mathfrak p_1, \mathfrak p_2$ are not zero, so they must be equal since every
nonzero prime ideal of $R$ is maximal. Thus their images $\mathfrak q, \mathfrak m$
are equal, and so $\mathfrak q$ is maximal.
\end{proof}


\chapter{Some local to global results}\label{ch7}

Now we investigate the relationship between properties of an integral domain $R$
and the corresponding properties of the localizations $R_{\mathfrak m}$.
This will allow us to prove results about various types of integral domains, including
Dedekind domains, in a unified and elegant manner
by reducing to the easier local situation.

\begin{proposition}\label{propJ}
Let $R$ be an integral domain with field of fractions~$K$.
If $I$ is an~$R$-submodule of~$K$ then
$$
I = \bigcap_{\frak m\in \mathcal M} I R_{\frak m}
$$
where $\mathcal M$ is the set of maximal ideals of~$R$.
\end{proposition}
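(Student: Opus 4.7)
The plan is to prove equality via the standard conductor/denominator ideal trick, which makes the forward inclusion trivial and reduces the reverse inclusion to a clean argument using the fact that every proper ideal lies in some maximal ideal.

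First I would dispense with the easy inclusion $I \subseteq \bigcap_{\mathfrak m} I R_{\mathfrak m}$: for every maximal $\mathfrak m$, the ring $R$ embeds into $R_{\mathfrak m}$ inside $K$, so any $y \in I$ is $y \cdot 1 \in I R_{\mathfrak m}$. This takes one line.

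For the reverse inclusion, fix $x \in \bigcap_{\mathfrak m} I R_{\mathfrak m}$ and introduce the auxiliary set
$$J \;\defeq\; \{\, r \in R \mid r x \in I \,\}.$$
The plan is to show $J = R$, since $1 \in J$ immediately gives $x \in I$. I would first observe that $J$ is an ideal of $R$: it is closed under $R$-linear combinations because $I$ is an $R$-submodule of $K$. Now argue by contradiction: if $J \ne R$, then $J$ is a proper ideal, so it lies inside some maximal ideal $\mathfrak m$. But by hypothesis $x \in I R_{\mathfrak m}$, meaning $x = y/s$ for some $y \in I$ and $s \in R \smallsetminus \mathfrak m$; then $s x = y \in I$ forces $s \in J$. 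This contradicts $J \subseteq \mathfrak m$, since $s \notin \mathfrak m$. Hence $J = R$, so $1 \cdot x = x \in I$.

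I do not foresee a genuine obstacle here: the only subtlety is confirming that the localization $I R_{\mathfrak m}$ really consists of fractions $y/s$ with $y \in I$ and $s \in R \smallsetminus \mathfrak m$, but this is exactly the description of $S^{-1} I$ reviewed at the start of Section~\ref{ch5}. Note also that the argument needs only that $I$ is an $R$-submodule of $K$ (not a fractional ideal), and uses only the standard fact that proper ideals are contained in maximal ideals, both of which are part of the stated background.
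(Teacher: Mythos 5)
Your proof is correct and follows essentially the same approach as the paper: you define the same conductor-style ideal (the paper calls it $J_x$), observe it is an ideal, and derive a contradiction by locating a maximal ideal containing it and then producing an element $s \notin \mathfrak m$ that must lie in $J$. There is no substantive difference.
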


\begin{proof}
One direction is straightforward.

Suppose $x \in \bigcap I R_{\frak m}$. Let $J_x$ be defined as follows:
$$
J_x \;\defeq\; \left\{ y \in R \mid y x \in I\right\}.
$$
Observe that $J_x$ is an ideal of~$R$.
If $J_x \ne R$ then let $\frak m$ be a maximal
ideal containing~$J_x$. However, $x \in I R_{\frak m}$ so is of the form $a/s$ with $a \in I$ and $s\not\in \frak m$.
Thus~$s \in J_x$, a contradiction. So $J_x$ contains $1$.
\end{proof}

\begin{corollary}\label{intersection_cor}
If $R$ is an integral domain then
$$
R = \bigcap_{\frak m\in \mathcal M} R_{\frak m}
$$
where $\mathcal M$ is the set of maximal ideals of~$R$.
\end{corollary}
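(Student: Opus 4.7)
The plan is to obtain this as an immediate specialization of Proposition~\ref{propJ}. That proposition asserts that every $R$-submodule $I$ of $K$ equals the intersection $\bigcap_{\mathfrak m} I R_{\mathfrak m}$, so I just need to check that the choice $I = R$ produces the claimed identity.

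First, I would observe that $R$ itself is an $R$-submodule of $K$: it is an additive subgroup of $K$ and is closed under scalar multiplication by $R$. Hence the proposition applies and gives $R = \bigcap_{\mathfrak m \in \mathcal M} R \cdot R_{\mathfrak m}$. Second, I would identify the factor $R \cdot R_{\mathfrak m}$ with $R_{\mathfrak m}$. Recall from the earlier localization discussion that, as a submodule of $K$, the localization $S^{-1} R$ for $S = R \setminus \mathfrak m$ consists of all elements $r/s$ with $r \in R$ and $s \in S$, and in the module-generation notation we already have $R \cdot R_{\mathfrak m} = R_{\mathfrak m}$ because $1 \in R$ and $R \subseteq R_{\mathfrak m}$.

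Substituting this into the proposition yields $R = \bigcap_{\mathfrak m \in \mathcal M} R_{\mathfrak m}$, which is the statement of the corollary.

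There is really no obstacle here; the entire content is in Proposition~\ref{propJ}, and the corollary is the special case $I = R$. The only point worth flagging for the reader is the trivial verification that $R \cdot R_{\mathfrak m} = R_{\mathfrak m}$, so that the general formula specializes cleanly, and that $\mathcal M$ is nonempty (which holds since every proper ideal, including $\{0\}$, is contained in a maximal ideal, a fact assumed in Chapter~\ref{ch1}); without a maximal ideal the intersection would be the empty intersection, equal to $K$ rather than $R$, and the corollary would require the separate convention $R = K$ in that case. Since $R$ being an integral domain does have at least one maximal ideal, no such issue arises.
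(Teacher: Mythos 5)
Your proof is correct and is exactly the intended argument: the paper leaves this corollary unproved precisely because it is the specialization $I = R$ of Proposition~\ref{propJ}, which is what you carry out. Your side remarks (that $R \cdot R_{\mathfrak m} = R_{\mathfrak m}$ and that $\mathcal M$ is nonempty) are accurate and harmless, but not strictly necessary to record.
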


The above proposition yields a very useful test for inclusion and equality:

\begin{corollary}\label{equality_test_cor}
Let $R$ be an integral domain and let $I$ and $J$ be $R$-submodules of the fraction field of~$R$.
Then $I \subseteq J$ if and only if $I R_{\mathfrak m}  \subseteq J R_{\mathfrak m}$
for all maximal ideals~$\mathfrak m$. 
Similarly, $I = J$ if and only if $I R_{\mathfrak m}  = J R_{\mathfrak m}$
for all maximal ideals~$\mathfrak m$. 
\end{corollary}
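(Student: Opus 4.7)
The plan is to deduce this immediately from Proposition~\ref{propJ}, which gives the intersection formula $I = \bigcap_{\mathfrak m \in \mathcal M} I R_{\mathfrak m}$ for any $R$-submodule $I$ of $K$. The corollary essentially says that containment (and hence equality) of submodules can be detected locally, and the intersection formula already packages up this local-to-global information.

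First I would handle the easy direction of the inclusion claim. If $I \subseteq J$, then for any multiplicative system $S$ we have $S^{-1} I \subseteq S^{-1} J$ (an element $x/s$ with $x \in I$ belongs to $S^{-1}J$). Specializing to $S = R \smallsetminus \mathfrak m$ gives $IR_{\mathfrak m} \subseteq JR_{\mathfrak m}$ for every maximal ideal $\mathfrak m$.

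For the nontrivial direction, suppose that $IR_{\mathfrak m} \subseteq JR_{\mathfrak m}$ for every maximal ideal $\mathfrak m$. Then, using Proposition~\ref{propJ} applied to both $I$ and $J$,
\[
I \;=\; \bigcap_{\mathfrak m \in \mathcal M} I R_{\mathfrak m} \;\subseteq\; \bigcap_{\mathfrak m \in \mathcal M} J R_{\mathfrak m} \;=\; J,
\]
which is the desired containment. No other obstacle arises; the real content is already in Proposition~\ref{propJ}.

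Finally, the equality statement follows by applying the inclusion result in both directions: $I = J$ holds if and only if both $I \subseteq J$ and $J \subseteq I$, which is equivalent to both $IR_{\mathfrak m} \subseteq JR_{\mathfrak m}$ and $JR_{\mathfrak m} \subseteq IR_{\mathfrak m}$ for all maximal $\mathfrak m$, i.e.\ $IR_{\mathfrak m} = JR_{\mathfrak m}$ for all $\mathfrak m$. There is no genuinely hard step here; the whole argument is a one-line consequence of the intersection formula already established.
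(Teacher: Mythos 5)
Your proof is correct and takes exactly the route the paper intends: the paper states this as a corollary to Proposition~\ref{propJ} without a written proof, and your deduction via the intersection formula plus monotonicity of localization is precisely the intended one-liner.
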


Now we illustrate the power of the local approach by investigating various properties of integral
domains and their fractional ideals:

\begin{proposition}\label{prop56}
Suppose that $R$ is an integral domain with fraction field $K$.
Then~$R$ is integrally closed in $K$ if and only if $R_{\mathfrak m}$ is integrally closed
in $K$ for all maximal ideals~$\mathfrak m$.
\end{proposition}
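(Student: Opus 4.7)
The plan is to split the biconditional into its two directions and handle each with one of the tools just developed. The forward direction is essentially free from Proposition~\ref{int_closed_prop}: if $R$ is integrally closed in $K$, then localizing at any multiplicative system $S = R\smallsetminus \mathfrak m$ preserves this, so each $R_{\mathfrak m}$ is integrally closed in $K$. This direction needs only a sentence.

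For the converse, I would use Corollary~\ref{intersection_cor}, which identifies $R$ with $\bigcap_{\mathfrak m} R_{\mathfrak m}$. Suppose every $R_{\mathfrak m}$ is integrally closed in $K$, and let $x \in K$ be integral over $R$. Then $x$ satisfies a monic polynomial $f \in R[X]$. Since $R \subseteq R_{\mathfrak m}$ for each maximal ideal $\mathfrak m$, the polynomial $f$ also lies in $R_{\mathfrak m}[X]$, so $x$ is integral over $R_{\mathfrak m}$. By hypothesis $R_{\mathfrak m}$ is integrally closed in $K$, and so $x \in R_{\mathfrak m}$. This holds for every maximal ideal $\mathfrak m$, so by Corollary~\ref{intersection_cor} we conclude
$$x \in \bigcap_{\mathfrak m \in \mathcal M} R_{\mathfrak m} = R,$$
as required.

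There is no real obstacle here; the argument is essentially the observation that integrality is preserved under passing to a larger ring of coefficients, combined with the local-to-global intersection principle. The only thing to be careful about is that $R_{\mathfrak m}$ is integrally closed \emph{in $K$} (not merely in its own fraction field), which is exactly how the statement is phrased and how Proposition~\ref{int_closed_prop} is stated, so no additional work is needed to reconcile fraction fields.
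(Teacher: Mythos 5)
Your proof is correct and follows essentially the same route as the paper: Proposition~\ref{int_closed_prop} gives the forward direction, and the converse is handled by taking a monic witness $f \in R[X]$ for the integrality of $x$, observing $x \in R_{\mathfrak m}$ for every $\mathfrak m$, and invoking Corollary~\ref{intersection_cor}. Your remark about the fraction field being $K$ throughout (rather than the fraction field of each $R_{\mathfrak m}$) is a small but worthwhile point of care, implicit in the paper.
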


\begin{proof}
Proposition~\ref{int_closed_prop} yields one direction. Suppose that 
$R_{\mathfrak m}$ is integrally closed for each maximal ${\mathfrak m}$.
Let $f\in R[X]$ be a monic polynomial, and let $x \in K$ be a root.
In particular, $x \in R_{\mathfrak m}$ for each $\mathfrak m$ since $R_{\mathfrak m}$ is integrally closed.
So $x \in \cap R_{\mathfrak m}$. Thus~$x \in R$ by Corollary~\ref{intersection_cor}.
\end{proof}

\begin{proposition}\label{prop57}
Let $R$ be an integral domain.
Then $R$ has the property that every nonzero prime ideal is maximal if and only
if $R_{\mathfrak m}$ has that property for 
all maximal ideals~$\mathfrak m$.
\end{proposition}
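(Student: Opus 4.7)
The plan is to reduce the equivalence to the prime ideal correspondence between $R$ and its localizations $R_{\mathfrak m}$, which is Proposition~\ref{prime_correspondence_prop}. Recall that if $S = R \smallsetminus \mathfrak m$, then the primes of $R_{\mathfrak m}$ are exactly the $S^{-1}\mathfrak p$ where $\mathfrak p$ is a prime of $R$ contained in $\mathfrak m$, and the correspondence preserves inclusions in both directions.

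For the forward direction, I would assume that every nonzero prime of $R$ is maximal and fix a maximal ideal $\mathfrak m$ of $R$. Given a nonzero prime $\mathfrak q$ of $R_{\mathfrak m}$, the correspondence gives $\mathfrak q = S^{-1}\mathfrak p$ for some prime $\mathfrak p \subseteq \mathfrak m$ of $R$. Since $\mathfrak q$ is nonzero, so is $\mathfrak p$ (a nonzero element of $\mathfrak p$ produces a nonzero element of $\mathfrak q$ inside $K$), hence $\mathfrak p$ is maximal by hypothesis. But $\mathfrak p \subseteq \mathfrak m$ forces $\mathfrak p = \mathfrak m$, and therefore $\mathfrak q = S^{-1}\mathfrak m$ is the unique maximal ideal of $R_{\mathfrak m}$.

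For the converse, I would assume each localization $R_{\mathfrak m}$ has the stated property and let $\mathfrak p$ be a nonzero prime of $R$. Choose a maximal ideal $\mathfrak m \supseteq \mathfrak p$; such an $\mathfrak m$ exists by the background assumption on proper ideals. Then $S^{-1}\mathfrak p$ is a nonzero prime of $R_{\mathfrak m}$ contained in the maximal ideal $S^{-1}\mathfrak m$. By hypothesis $S^{-1}\mathfrak p$ is itself maximal in $R_{\mathfrak m}$, so $S^{-1}\mathfrak p = S^{-1}\mathfrak m$. Applying the inclusion-preserving bijection in Proposition~\ref{prime_correspondence_prop} pulls this equality back to $\mathfrak p = \mathfrak m$, so $\mathfrak p$ is maximal.

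There is no real obstacle here; the whole argument is bookkeeping with the prime correspondence, with the only minor care needed being the verification that nonzero primes localize to nonzero primes (immediate, since localization embeds into $K$) and handling the trivial case where $R$ itself is a field (in which case the statement is vacuous on both sides).
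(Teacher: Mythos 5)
Your proof is correct and follows essentially the same route as the paper: both directions are handled via the inclusion-preserving prime correspondence of Proposition~\ref{prime_correspondence_prop}, with the forward direction spelled out in slightly more detail than the paper's terse version. No substantive difference.
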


\begin{proof}
Suppose $R$ has the property in question. By the correspondence
of primes (Proposition~\ref{prime_correspondence_prop}), $R_\mathfrak m$ must
have the property in question as well.
Conversely, suppose $R_\mathfrak m$ has the property in question for all maximal ideals~$\mathfrak m$.
Let $\mathfrak p$ be any nonzero prime ideal of $R$ and let $\mathfrak m$ be a maximal ideal containing
$\mathfrak p$.  By assumption and the prime correspondence 
$\mathfrak p R_\mathfrak m = \mathfrak m R_\mathfrak m$, so
by the prime correspondence (Proposition~\ref{prime_correspondence_prop}) $\mathfrak p = \mathfrak m$.
\end{proof}

Now we are ready to prove one of the most important characterizations of Dedekind domains:

\begin{theorem}\label{thm58}
Let $R$ be a Noetherian domain. Then $R$ is 
a Dedekind domain if and only if $R_{\frak m}$  is a 
discrete valuation ring for all nonzero maximal ideals $\frak m$ of~$R$.
\end{theorem}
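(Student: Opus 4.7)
The plan is to prove both directions by applying the local-to-global machinery just established (Propositions~\ref{prop56} and~\ref{prop57}) together with the characterization of DVRs as Dedekind domains with a unique nonzero prime ideal (Theorem~\ref{thmC}). The argument should mostly be a bookkeeping exercise: chase the three defining properties of a Dedekind domain through the localization maps. One preliminary observation: if $R$ is a field, then $R$ has no nonzero maximal ideals, so the right-hand condition is vacuously true; and $R$ is, by convention, a Dedekind domain. So I may as well assume $R$ is not a field, in which case every maximal ideal of $R$ is automatically nonzero.

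For the forward direction, assume $R$ is a Dedekind domain. By Theorem~\ref{localize_dd_thm}, each localization $R_{\mathfrak m}$ is again a Dedekind domain. Via the prime correspondence (Proposition~\ref{prime_correspondence_prop}), the nonzero primes of $R_{\mathfrak m}$ correspond to nonzero primes of $R$ that are contained in $\mathfrak m$; but in a Dedekind domain every nonzero prime is maximal, so the only such prime is $\mathfrak m$ itself. Hence $R_{\mathfrak m}$ is a Dedekind domain with a unique nonzero prime ideal, and Theorem~\ref{thmC} identifies it as a DVR.

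For the reverse direction, assume each $R_{\mathfrak m}$ is a DVR. I need to verify the three clauses of Definition~\ref{dedekind_domain_def}. The Noetherian clause is already part of the hypothesis. Every DVR is integrally closed (Proposition~\ref{every_dvr_is_integrally_closed_prop}), so Proposition~\ref{prop56} immediately upgrades this local statement to the global one: $R$ is integrally closed in $K$. Every DVR has a unique nonzero prime, which is maximal, so in particular each $R_{\mathfrak m}$ has the property that all nonzero primes are maximal; Proposition~\ref{prop57} then passes this up to $R$.

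There is no serious obstacle here; the hard work has all been done in the local-to-global propositions. The only point requiring a sentence of care is the field/nonzero-maximal-ideal edge case mentioned above, and the small remark that the prime correspondence together with the Dedekind property pins $\mathfrak m$ down as the unique nonzero prime of $R_{\mathfrak m}$ in the forward direction. Once those are in place, the theorem follows by assembling the cited results.
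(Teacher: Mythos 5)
Your proof is correct and takes essentially the same route as the paper: both directions rest on Theorem~\ref{localize_dd_thm}, Theorem~\ref{thmC}, and Propositions~\ref{prop56} and~\ref{prop57}, with the same handling of the field edge case. No gaps.
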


\begin{proof}
If the zero ideal is maximal, then $R$ is a field, which is considered a Dedekind domain.
So the claim is trivially true in this case. We now assume that the zero ideal is not maximal.

Suppose $R$ is a Dedekind domain. Then $R_{\mathfrak m}$
is a Dedekind domain for each nonzero maximal ideal $\mathfrak m$
by Theorem~\ref{localize_dd_thm}.
 Observe that each such~$R_{\mathfrak m}$ has a unique nonzero prime ideal, namely $\mathfrak m R_{\mathfrak m}$.
So each such $\mathfrak m R_{\mathfrak m}$ is a~DVR by Theorem~\ref{thmC}.
 
Suppose that $R_{\mathfrak m}$ is a DVR for each nonzero maximal ideal~$\mathfrak m$.
Then $R$ is Noetherian by assumption, every nonzero prime ideal of $R$ is maximal 
by~Proposition~\ref{prop57}, and $R$ is integrally closed by Proposition~\ref{prop56}
\end{proof}

\begin{remark}
An \emph{almost Dedekind domain} is defined to be an integral domain $R$ such
that~$R_{\frak m}$ is a DVR for all nonzero maximal ideals~$\mathfrak m$.
The above theorem implies that every Noetherian almost Dedekind
domain is a true Dedekind domain. However, there exists non-Noetherian
almost Dedekind domains.  See Appendix~E for more information.
\end{remark}

Every PID is a Dedekind domain, a fact we have taken to be established background knowledge. 
In a sense, we do not need to take it as established since it follows as an easy corollary to the above theorem.

\begin{corollary}
Every PID is a Dedekind domain.
\end{corollary}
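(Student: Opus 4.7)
The plan is to apply Theorem~\ref{thm58}, which reduces the verification that a Noetherian domain is Dedekind to checking that each of its nontrivial localizations is a DVR. So the strategy breaks cleanly into two steps: verify the Noetherian hypothesis for a PID, then verify the local DVR condition at each nonzero maximal ideal.

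For the first step, a PID is Noetherian essentially by definition, since every ideal is generated by one element and hence is finitely generated. For the second step, fix a nonzero maximal ideal $\mathfrak m$ of a PID $R$. I would invoke Corollary~\ref{noeth_cor}, which tells us that localizations of PIDs are again PIDs, so $R_{\mathfrak m}$ is a PID. Moreover, by the prime correspondence (Proposition~\ref{prime_correspondence_prop}), the maximal ideals of $R_{\mathfrak m}$ correspond to primes of $R$ contained in $\mathfrak m$ and disjoint from $R \smallsetminus \mathfrak m$, which forces the unique nonzero maximal ideal of $R_{\mathfrak m}$ to be $\mathfrak m R_{\mathfrak m}$. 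Then Corollary~\ref{cor11} (a PID with a unique nonzero maximal ideal is a DVR) completes the local verification.

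With both hypotheses of Theorem~\ref{thm58} in hand, the conclusion that $R$ is a Dedekind domain follows immediately. I do not anticipate any real obstacle here: each ingredient has already been established in the excerpt, and the corollary is genuinely a two-line deduction once one has Theorem~\ref{thm58}. The only minor subtlety is handling the degenerate case when $R$ is itself a field (so the zero ideal is maximal and there are no nonzero maximal ideals), but fields are Dedekind domains by convention in this essay, so this case is immediate.
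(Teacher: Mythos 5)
Your proof is correct and follows essentially the same route as the paper: reduce via Theorem~\ref{thm58} to showing $R_{\mathfrak m}$ is a DVR, observe that $R_{\mathfrak m}$ is a PID by Corollary~\ref{noeth_cor}, and conclude it is a DVR. The only cosmetic difference is which DVR characterization you cite at the final step (you invoke Corollary~\ref{cor11}, the paper invokes Theorem~\ref{thmB}; the two are immediate relatives), plus some extra care on the degenerate field case and the prime correspondence that the paper leaves implicit.
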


\begin{proof}
Let $R$ be a PID and let $\mathfrak m$ be a nonzero maximal ideal of $R$.
Then $R_{\mathfrak m}$ is a~PID by Corollary~\ref{noeth_cor}.
In fact, $R_{\mathfrak m}$ is a DVR by Theorem~\ref{thmB}.
So $R$ is a Dedekind domain by the above theorem.
\end{proof}

\begin{theorem}
Let $I$ be a finitely generated fractional ideal of an integral domain~$R$.
Then $I$ is invertible as a fractional ideal of $R$ if and only if
$I R_{\frak m}$ is invertible as a fractional ideal of $R_{\frak m}$ for all maximal ideals $\frak m$ of~$R$.
\end{theorem}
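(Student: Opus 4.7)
The plan is to prove both directions by working with the ideal $I \cdot I^{-1}$, which is always contained in $R$, and showing invertibility is the question of whether this containment is an equality. The key inputs are the fact that localization is a monoid homomorphism on $R$-submodules of $K$ (so commutes with products), Proposition~\ref{prop49} (which says $(S^{-1}I)^{-1} = S^{-1}I^{-1}$ provided $I$ is finitely generated, which is exactly our hypothesis on $I$), and Corollary~\ref{equality_test_cor} (the local-to-global test for equality of submodules).

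For the forward direction, suppose $I \cdot I^{-1} = R$. Fix a maximal ideal $\mathfrak m$ and localize. By the homomorphism property of localization on products,
$$
(IR_{\mathfrak m})(I^{-1}R_{\mathfrak m}) = (I \cdot I^{-1})R_{\mathfrak m} = R R_{\mathfrak m} = R_{\mathfrak m},
$$
so $I^{-1}R_{\mathfrak m}$ witnesses the invertibility of $IR_{\mathfrak m}$. No use of the finite generation hypothesis is needed here.

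For the backward direction, suppose $IR_{\mathfrak m}$ is invertible for every maximal ideal $\mathfrak m$. Since $I$ is a fractional ideal we always have $I \cdot I^{-1} \subseteq R$, and I want to upgrade this to equality. By Corollary~\ref{equality_test_cor}, it suffices to show $(I \cdot I^{-1})R_{\mathfrak m} = R_{\mathfrak m}$ for every maximal $\mathfrak m$. Localizing and applying Proposition~\ref{prop49} (here is where I use that $I$ is finitely generated),
$$
(I \cdot I^{-1})R_{\mathfrak m} = (IR_{\mathfrak m})(I^{-1}R_{\mathfrak m}) = (IR_{\mathfrak m})(IR_{\mathfrak m})^{-1}.
$$
Since $IR_{\mathfrak m}$ is invertible in $R_{\mathfrak m}$, the right-hand side equals $R_{\mathfrak m}$, completing the proof.

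The one place care is required is in making sure we really can identify $I^{-1}R_{\mathfrak m}$ with $(IR_{\mathfrak m})^{-1}$; this is precisely Proposition~\ref{prop49}, which needs $I$ finitely generated. That is the only slightly delicate step, and it explains why the hypothesis is stated as ``finitely generated fractional ideal'' rather than just ``fractional ideal.'' Otherwise the argument is a direct application of the local-global framework developed in this section.
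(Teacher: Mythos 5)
Your proof is correct and follows the same route the paper indicates: one direction is routine localization, and the other applies the local-to-global equality test (Corollary~\ref{equality_test_cor}) to $II^{-1} = R$. You have usefully filled in the detail the paper leaves implicit, namely that Proposition~\ref{prop49} is needed to identify $I^{-1}R_{\mathfrak m}$ with $(IR_{\mathfrak m})^{-1}$, which is exactly where the finite-generation hypothesis enters.
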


\begin{proof}
One direction is straightforward.
For the other direction, 
apply Corollary~\ref{equality_test_cor} to show $I I^{-1} = R$.
\end{proof}

\begin{corollary}\label{cor41}
If $R$ is a Dedekind domain, then every fractional ideal is invertible, and so the monoid
of fractional ideals under products forms an Abelian group. This group is generated by
the nonzero ideals of $R$.
\end{corollary}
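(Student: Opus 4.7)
The plan is to combine the immediately preceding local-to-global invertibility theorem with the DVR-localization characterization of Dedekind domains in Theorem~\ref{thm58}. Let $I$ be any fractional ideal of $R$. Since $R$ is a Dedekind domain it is Noetherian, and so $I$ is finitely generated. This places $I$ in the hypothesis of the preceding theorem, so to prove $I$ is invertible it suffices to verify that $I R_{\mathfrak m}$ is invertible in $R_{\mathfrak m}$ for every maximal ideal $\mathfrak m$ of $R$.

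For each nonzero maximal ideal $\mathfrak m$, Theorem~\ref{thm58} tells us that $R_{\mathfrak m}$ is a DVR. By Proposition~\ref{prop24}, every fractional ideal of a DVR is of the form $\pi^k R_{\mathfrak m}$ with $k\in\bZ$ and such ideals form a group; in particular $I R_{\mathfrak m}$ is invertible. If $R$ itself happens to be a field then there are no nonzero maximal ideals and the only fractional ideal is $R = K$, which is trivially invertible, so there is nothing to check in that case. Applying the preceding theorem then yields $I I^{-1} = R$ for every fractional ideal $I$.

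Once every fractional ideal of $R$ is invertible, the commutative monoid of fractional ideals automatically becomes an abelian group. To see that this group is generated by the nonzero integral ideals, take any fractional ideal $I$ and choose a common denominator $d \in R\smallsetminus\{0\}$, so that $J \defeq dI$ is a nonzero ideal of $R$. Then $dR$ is also a nonzero integral ideal, and within the group we have $I = J\cdot (dR)^{-1}$, exhibiting $I$ as the product of a nonzero integral ideal with the inverse of one. The main obstacle is already absorbed by the preceding theory: once the local-to-global test for invertibility is available and the localizations of a Dedekind domain are known to be DVRs, the argument is essentially automatic, and the generation statement is just the observation that every fractional ideal has a common denominator.
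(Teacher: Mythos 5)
Your proof is correct and follows essentially the same route as the paper: localize via Theorem~\ref{thm58}, invoke invertibility of fractional ideals in a DVR together with the finitely-generated local-to-global invertibility test, and then exhibit each fractional ideal as a nonzero integral ideal times the inverse of a principal ideal via a common denominator. The only differences are cosmetic: you spell out that Noetherian gives finite generation and you note the trivial field case, both of which the paper leaves implicit.
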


\begin{proof}
Use Theorem~\ref{thm58} and the fact that every fractional ideal of a DVR is 
invertible. 
Finally, note that every fractional ideal $J$ in~$R$ has the property that $a J$ is an ideal for some nonzero $a \in R$.
Thus $(aR) J = I$ for some nonzero ideal~$I$. Hence~$J = I (aR)^{-1}$.
\end{proof}

\begin{remark}
Later we will see that this group is generated by the nonzero prime ideals of~$R$.
\end{remark}

\begin{exercise}\label{fg_invert_ex}
Show that every finitely generated ideal in an almost Dedekind domain is invertible. 
(See the remarks after Theorem~\ref{thm58} for the definition of almost Dedekind domain.)
\end{exercise}

\begin{corollary}
Let $I$ be a finitely generated fractional ideal of an integral domain~$R$.
Then $I$ is invertible if and only if
$I R_{\frak m}$ is a principal fractional ideal of $R_{\frak m}$ for all maximal ideals $\frak m$ of~$R$.
\end{corollary}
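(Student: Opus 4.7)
The plan is to chain together the previous theorem in the excerpt with the local invertibility-equals-principal result from Proposition~\ref{propInv}. Specifically, the previous theorem says that a finitely generated fractional ideal $I$ of $R$ is invertible if and only if $IR_{\mathfrak m}$ is invertible as a fractional ideal of $R_{\mathfrak m}$ for every maximal ideal $\mathfrak m$. So my task reduces to replacing ``invertible'' by ``principal'' in the local condition.

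First I would note that since $I$ is finitely generated, Corollary~\ref{cor41_new} guarantees that $IR_{\mathfrak m}$ is a finitely generated fractional ideal of the localization $R_{\mathfrak m}$ for every maximal ideal $\mathfrak m$. Next I would observe that $R_{\mathfrak m}$ is a local integral domain, so Proposition~\ref{propInv} applies: a fractional ideal of $R_{\mathfrak m}$ is invertible if and only if it is principal. Applying this characterization to $IR_{\mathfrak m}$ converts the condition ``$IR_{\mathfrak m}$ is invertible for all maximal $\mathfrak m$'' into the condition ``$IR_{\mathfrak m}$ is principal for all maximal $\mathfrak m$''.

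Putting these two ingredients together with the previous theorem gives the biconditional exactly as stated. There is no real obstacle here — the proof is essentially a one-line corollary combining the two results, and the only routine checks are that finite generation really does localize (Corollary~\ref{cor41_new}) and that $R_{\mathfrak m}$ is local so Proposition~\ref{propInv} applies. If I wanted to be slightly more careful in one direction, I would note explicitly that the ``only if'' direction uses the straightforward half of the previous theorem together with the easy half of Proposition~\ref{propInv} (principal implies invertible), while the ``if'' direction uses the nontrivial halves of both results.
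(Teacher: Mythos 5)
Your proof is correct and is essentially the paper's own argument: the paper's proof is the one-liner ``Use Proposition~\ref{propInv},'' implicitly combined with the immediately preceding theorem, which is exactly the chain you spell out. The only superfluous step is invoking Corollary~\ref{cor41_new} to note $IR_{\mathfrak m}$ is finitely generated, since Proposition~\ref{propInv} applies to an arbitrary fractional ideal of a local integral domain and does not need finite generation.
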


\begin{proof}
Use Proposition~\ref{propInv}.
\end{proof}

We can strengthen Corollary~\ref{cor41}, giving us one of the major 
results about Dedekind domains. (A result of Emmy Noether).

\begin{theorem}\label{thm64}
Let $R$ be an integral domain. Then $R$ is a Dedekind domain
if and only if every nonzero ideal of~$R$ is invertible.
\end{theorem}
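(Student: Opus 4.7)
The plan is to prove the forward direction by quoting Corollary~\ref{cor41} directly, and to prove the reverse direction by reducing to Theorem~\ref{thm58}, i.e., by showing that $R$ is Noetherian and that every localization $R_{\mathfrak{m}}$ at a nonzero maximal ideal is a DVR.

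For the reverse direction, first I would establish the Noetherian property. If every nonzero ideal of $R$ is invertible, then by Proposition~\ref{prop_inv_fg} every nonzero ideal is finitely generated, and the zero ideal is trivially finitely generated. Hence $R$ is a Noetherian domain, and in particular $R_{\mathfrak{m}}$ is Noetherian for every maximal ideal $\mathfrak{m}$ by Corollary~\ref{noeth_cor}.

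Next I would show that for each nonzero maximal ideal $\mathfrak{m}$ of $R$, the localization $R_{\mathfrak{m}}$ is a DVR. By hypothesis, $\mathfrak{m}\,\mathfrak{m}^{-1}=R$ for some $R$-submodule $\mathfrak{m}^{-1}$ of $K$. Localizing at $S=R\smallsetminus\mathfrak{m}$ and using that the localization map respects products of $R$-submodules of $K$, we obtain
$$
(\mathfrak{m}R_{\mathfrak{m}})(\mathfrak{m}^{-1}R_{\mathfrak{m}}) = S^{-1}(\mathfrak{m}\,\mathfrak{m}^{-1}) = S^{-1}R = R_{\mathfrak{m}},
$$
so $\mathfrak{m}R_{\mathfrak{m}}$ is an invertible fractional ideal of $R_{\mathfrak{m}}$. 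Since $R_{\mathfrak{m}}$ is a local Noetherian domain with invertible maximal ideal, Theorem~\ref{theoremI} gives that $R_{\mathfrak{m}}$ is a DVR.

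With $R$ Noetherian and $R_{\mathfrak{m}}$ a DVR for every nonzero maximal ideal $\mathfrak{m}$, Theorem~\ref{thm58} immediately yields that $R$ is a Dedekind domain, completing the reverse direction. The main subtlety, which is mild, is verifying that invertibility of $\mathfrak{m}$ in $R$ transfers to invertibility of $\mathfrak{m}R_{\mathfrak{m}}$ in $R_{\mathfrak{m}}$; this is handled by the fact that localization commutes with the module product (and does not require the finite-generation hypothesis of Proposition~\ref{prop49}, though that would also suffice since $\mathfrak{m}$ is finitely generated by the first step). Everything else is a direct appeal to earlier results.
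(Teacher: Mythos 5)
Your proposal is correct and follows essentially the same route as the paper: Corollary~\ref{cor41} for the forward direction, and for the converse Proposition~\ref{prop_inv_fg} to get Noetherian, then localizing to invoke Theorem~\ref{theoremI} and finally Theorem~\ref{thm58}. Your extra sentence explaining why invertibility of $\mathfrak m$ passes to $\mathfrak m R_{\mathfrak m}$ (localization commutes with the module product) fills in a step the paper merely asserts, but it is the same argument.
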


\begin{proof}
Corollary~\ref{cor41} gives one direction, so suppose that every 
nonzero ideal of $R$ is invertible. By Proposition~\ref{prop_inv_fg},
every nonzero ideal of~$R$ is finitely generated, so~$R$ must
be a Noetherian domain.
By Theorem~\ref{thm58} it is now enough to show
$R_{\frak m}$  is a~DVR for any nonzero maximal ideals $\frak m$ of~$R$.
Let $\mathfrak m$ be a nonzero maximal ideal of~$R$.
Since $\mathfrak m$ is invertible in~$R$, the 
maximal ideal $\mathfrak m R_{\frak m}$ is invertible in $R_{\frak m}$.
By Theorem~\ref{theoremI}, this implies that  $R_{\frak m}$  is a~DVR.
\end{proof}

\begin{exercise}
Let $R$ be an integral domain and let $u$ be in fraction field of~$R$. 
Show that $u$ is a unit in~$R$ if and only if it is a unit in $R_{\frak m}$
for each maximal ideal~$\mathfrak m$ of~$R$.
\end{exercise}


\chapter{Discrete valuations of Dedekind domains}\label{dedekind_vals_ch}

Suppose $R$ is a Dedekind domain with fraction field~$K$ and let $\mathfrak p$ be a nonzero prime ideal of~$R$.
Then $R_{\mathfrak p}$ is a DVR. Let $v_{\mathfrak p}\colon K^\times \to \bZ$
be the discrete valuation associated with~$R_{\mathfrak p}$. We consider this valuation for elements of~$R$:

\begin{proposition}
Suppose $R$ is a Dedekind domain with discrete valuation $v_{\mathfrak p}$ where~$\mathfrak p$
is a nonzero prime ideal. (As usual, let $v_{\mathfrak p}(0) = \infty$).
If $a \in R$ then $(1)$ $v_{\mathfrak p}(a) \ge 0$, and
$(2)$ $v_{\mathfrak p}(a) > 0$
if and only if~$a \in \mathfrak p$.
\end{proposition}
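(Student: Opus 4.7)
The plan is to unpack what the valuation $v_{\mathfrak p}$ actually sees, using the two facts we have about the DVR $R_{\mathfrak p}$: its ring of integers is exactly $\{x \in K \mid v_{\mathfrak p}(x) \ge 0\}$, and its maximal ideal is exactly $\{x \in K \mid v_{\mathfrak p}(x) \ge 1\}$ (Proposition~\ref{prop1}). The maximal ideal of $R_{\mathfrak p}$ is $\mathfrak p R_{\mathfrak p}$ by the prime correspondence (Proposition~\ref{prime_correspondence_prop}), since $\mathfrak p$ is the unique prime of $R$ disjoint from $S = R \smallsetminus \mathfrak p$ that maps to a prime of $R_{\mathfrak p}$ containing $\mathfrak p R_{\mathfrak p}$, and it is in fact maximal.

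For part (1), I would observe that $R \subseteq R_{\mathfrak p}$ as subrings of $K$, so every $a \in R$ satisfies $v_{\mathfrak p}(a) \ge 0$ immediately from the description of $R_{\mathfrak p}$ as the valuation ring of $v_{\mathfrak p}$.

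For part (2), I would argue that $v_{\mathfrak p}(a) > 0$ if and only if $a$ lies in the maximal ideal $\mathfrak p R_{\mathfrak p}$ of $R_{\mathfrak p}$, so the claim reduces to showing that for $a \in R$ we have $a \in \mathfrak p R_{\mathfrak p}$ if and only if $a \in \mathfrak p$. One direction is trivial. For the other, suppose $a \in \mathfrak p R_{\mathfrak p}$; then $a = p/s$ for some $p \in \mathfrak p$ and $s \in R \smallsetminus \mathfrak p$, whence $sa = p \in \mathfrak p$, and since $\mathfrak p$ is prime with $s \notin \mathfrak p$, we conclude $a \in \mathfrak p$.

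There is no real obstacle here; the only thing to be careful about is invoking the prime correspondence to identify the maximal ideal of $R_{\mathfrak p}$ with $\mathfrak p R_{\mathfrak p}$, and then the primality of $\mathfrak p$ cleans up the contraction back to $R$.
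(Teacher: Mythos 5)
Your argument is correct and takes essentially the same route as the paper, whose entire proof is the observation that $R \subseteq R_{\mathfrak p}$ together with the contraction identity $(\mathfrak p R_{\mathfrak p}) \cap R = \mathfrak p$; you have simply supplied the verification of that contraction identity (using primality of $\mathfrak p$) that the paper leaves implicit.
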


\begin{proof}
Observe that $R \subseteq R_{\mathfrak p}$, and that $(\mathfrak p R_{\mathfrak p})\cap R = \mathfrak p$.
\end{proof}

\begin{corollary}
Suppose $R$ is a Dedekind domain with fraction field~$K$. 
Then the map $\mathfrak p \mapsto v_{\mathfrak p}$
is an injective map from the set of nonzero prime ideals of $R$ to the set of discrete valuations of~$K$.
\end{corollary}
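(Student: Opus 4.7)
The plan is to show that the prime ideal $\mathfrak{p}$ can be recovered from the valuation $v_{\mathfrak{p}}$, which immediately gives injectivity. The key tool is the previous proposition, which characterizes membership in $\mathfrak{p}$ in terms of the valuation: for $a \in R$, we have $a \in \mathfrak{p}$ if and only if $v_{\mathfrak{p}}(a) > 0$.

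First I would observe that this characterization lets us write
$$\mathfrak{p} = \{ a \in R \mid v_{\mathfrak{p}}(a) \ge 1 \} = R \cap \mathfrak{p}_{v_{\mathfrak{p}}},$$
where $\mathfrak{p}_{v_{\mathfrak{p}}}$ is the maximal ideal of the DVR $\mathcal{O}_{v_{\mathfrak{p}}}$ inside $K$. Thus $\mathfrak{p}$ is completely determined by the valuation $v_{\mathfrak{p}}$.

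To conclude injectivity, suppose $\mathfrak{p}_1$ and $\mathfrak{p}_2$ are nonzero prime ideals of $R$ with $v_{\mathfrak{p}_1} = v_{\mathfrak{p}_2}$. Applying the recovery formula to each side yields
$$\mathfrak{p}_1 = \{ a \in R \mid v_{\mathfrak{p}_1}(a) \ge 1 \} = \{ a \in R \mid v_{\mathfrak{p}_2}(a) \ge 1 \} = \mathfrak{p}_2,$$
so the map is injective.

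I do not expect any serious obstacle here; the content is essentially that the previous proposition already encodes the whole point, and the corollary is just repackaging it. The only thing to take care of is to note that both inclusions $\{v_{\mathfrak{p}}(a) \ge 1\} \subseteq \mathfrak{p}$ and $\mathfrak{p} \subseteq \{v_{\mathfrak{p}}(a) \ge 1\}$ hold (the previous proposition gives both directions since $a \in \mathfrak{p} \iff v_{\mathfrak{p}}(a) > 0$ on $R$), and that we are viewing both $v_{\mathfrak{p}_1}$ and $v_{\mathfrak{p}_2}$ as the same function on all of $K^{\times}$, so their restrictions to $R$ agree as well.
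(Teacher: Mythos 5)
Your proof is correct and takes the intended approach: the paper leaves this corollary without an explicit proof because it is an immediate consequence of the preceding proposition, and your recovery formula $\mathfrak{p} = \{a \in R \mid v_{\mathfrak{p}}(a) > 0\}$ is exactly how that proposition yields injectivity. Nothing to add.
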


Our next goal is to strengthen this corollary by identifying the image of the 
map~$\mathfrak p \mapsto v_{\mathfrak p}$. In other words,
we want to identify which valuations are of the form~$v_{\mathfrak p}$.
By the above proposition, only valuations that are nonnegative on~$R$ can
be of the form~$v_{\mathfrak p}$, so we have at least this restriction on the image.

\begin{lemma}
Let $R$ be an integral domain with fraction field~$K$.
If
$v \colon K^\times \to \bZ$ is a valuation with $v(r) \ge 0$ for all $r \in R$
then
$$
\mathfrak p = \left\{ a \in R \mid v(a) > 0 \right\}
$$
is a nonzero prime ideal of $R$.
\end{lemma}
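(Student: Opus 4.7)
The plan is to verify in turn that $\mathfrak p$ is (i) an ideal, (ii) proper and prime, and (iii) nonzero. The first two are essentially direct from the valuation axioms and the hypothesis $v(r)\ge 0$ on $R$; the only part requiring a small argument is nonzeroness, which uses surjectivity of $v$.

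First I would check $\mathfrak p$ is an ideal. For closure under addition, if $a, b \in \mathfrak p$ then using the additive law (and the convention $v(0)=\infty$) we get $v(a+b) \ge \min\{v(a),v(b)\} > 0$, so $a+b\in\mathfrak p$. For closure under scaling, if $r\in R$ and $a\in\mathfrak p$, then $v(ra) = v(r) + v(a) \ge 0 + v(a) > 0$, using $v(r)\ge 0$ from the hypothesis. So $\mathfrak p$ is an ideal of $R$.

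Next I would show $\mathfrak p$ is a proper prime ideal. It is proper because $v(1)=0$, so $1\notin \mathfrak p$. For primality, suppose $a,b\in R$ with $ab\in\mathfrak p$. Then $v(a)+v(b) = v(ab) > 0$, and since both $v(a),v(b)\ge 0$ by hypothesis, at least one must be strictly positive, so $a\in\mathfrak p$ or $b\in\mathfrak p$.

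Finally, the one step that needs a little care is that $\mathfrak p$ is nonzero. Here I would use surjectivity of $v$: pick $x\in K^\times$ with $v(x)=1$, and write $x = a/b$ with $a, b \in R$ and $b\ne 0$. Then $v(a) - v(b) = 1$, so $v(a) = 1 + v(b) \ge 1 > 0$, and $a \ne 0$ since $v(a)$ is finite. Thus $a$ is a nonzero element of $\mathfrak p$. This step is the only one that genuinely uses that $v$ has image all of $\bZ$ rather than just being nonnegative on $R$; everything else is a direct translation of the valuation laws.
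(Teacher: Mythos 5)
Your proof is correct and takes essentially the same approach as the paper: routine verification of the ideal and prime properties from the valuation axioms, with the substance concentrated in showing $\mathfrak p \ne \{0\}$ via surjectivity of $v$ together with the fact that $K$ is the fraction field of $R$. The only cosmetic difference is that you argue nonzeroness directly by producing an element of positive valuation in $R$, whereas the paper phrases it as a contradiction (if $\mathfrak p = \{0\}$ then $v$ vanishes on $R$, hence on all of $K^\times$, contradicting surjectivity); these are the same observation read in opposite directions.
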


\begin{proof}
The properties of $v$ imply that $\mathfrak p$ is a prime ideal of~$R$.
If $\mathfrak p$ is the zero ideal, then $v$ would be identically zero on $K^\times$ 
since $K$ is the field of fractions of~$R$. This contradicts the surjectivity of 
$v \colon K^\times \to \bZ$.
\end{proof}

\begin{lemma}
Suppose $R$ is a Dedekind domain with fraction field~$K$. 
Suppose that~$v \colon K^\times \to \bZ$ 
is a valuation with~$v(r) \ge 0$ for all $r \in R$.
Let~$\mathfrak p$ be the associated nonzero prime ideal~$\left\{ a \in R \mid v(a) > 0 \right\}$.
Then 
$$
v = v_{\mathfrak p}.
$$
\end{lemma}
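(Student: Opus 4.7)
The plan is to identify the valuation ring $\mathcal{O}_v$ with the DVR $R_{\mathfrak{p}}$, and then invoke Exercise~\ref{ex3} (distinct discrete valuations on $K$ determine distinct valuation rings) to conclude $v = v_{\mathfrak{p}}$. Note that $R_{\mathfrak{p}}$ really is a DVR by Theorem~\ref{thm58}, since $R$ is Dedekind and $\mathfrak{p}$ is a nonzero maximal ideal; and the associated valuation ring of $v_{\mathfrak{p}}$ is $R_{\mathfrak{p}}$ itself.

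First I would show that $R_{\mathfrak{p}} \subseteq \mathcal{O}_v$. For any $s \in R \smallsetminus \mathfrak{p}$, the hypothesis $v(s) \ge 0$ combined with the definition of $\mathfrak{p}$ (which forces $v(s) \le 0$ for $s \notin \mathfrak{p}$) gives $v(s) = 0$. Thus every $s \in R \smallsetminus \mathfrak{p}$ is a unit in $\mathcal{O}_v$, so any element $a/s$ of $R_{\mathfrak{p}}$ satisfies $v(a/s) = v(a) - v(s) = v(a) \ge 0$, placing it in $\mathcal{O}_v$.

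Next I would invoke Proposition~\ref{prop15}: since $R_{\mathfrak{p}}$ is a DVR with fraction field $K$ (both $R_{\mathfrak{p}}$ and $\mathcal{O}_v$ have $K$ as fraction field, as both contain $R$), the only subring of $K$ strictly containing $R_{\mathfrak{p}}$ is $K$ itself. So either $\mathcal{O}_v = R_{\mathfrak{p}}$ or $\mathcal{O}_v = K$. The latter is impossible: surjectivity of $v$ onto $\bZ$ guarantees some $x \in K^\times$ with $v(x) = -1$, and this $x$ does not lie in $\mathcal{O}_v$. Hence $\mathcal{O}_v = R_{\mathfrak{p}} = \mathcal{O}_{v_{\mathfrak{p}}}$, and Exercise~\ref{ex3} delivers $v = v_{\mathfrak{p}}$.

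I do not anticipate a real obstacle here; the only slightly delicate point is realizing that the hypothesis ``$v$ is nonnegative on $R$'' automatically promotes to ``$v$ is nonnegative on $R_{\mathfrak{p}}$'' via the observation that complement-of-$\mathfrak{p}$ elements are forced to have valuation exactly $0$. Once that is in hand, everything else is packaged by appealing to the maximal-subring property of DVRs (Proposition~\ref{prop15}) and the bijection between valuations and their valuation rings (Exercise~\ref{ex3}). An alternative route, which I would mention as a sanity check, is to pick a uniformizer $\pi$ of $v_{\mathfrak{p}}$, verify $v(\pi) = 1$ (it must be positive since $\pi \in \mathfrak{p}$, and cannot exceed $1$ or else the corollary following Proposition~\ref{prop1}-style reasoning would obstruct surjectivity after matching unit valuations), and then use the factorization $x = u\pi^k$ from Proposition~\ref{prop24}-style uniqueness to compute $v(x) = k = v_{\mathfrak{p}}(x)$ on all of $K^\times$.
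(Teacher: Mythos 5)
Your argument is correct and follows the same route as the paper's proof: show $R_{\mathfrak p} \subseteq \mathcal O_v$, invoke Proposition~\ref{prop15} to force equality of the two DVRs, then appeal to Exercise~\ref{ex3}. You supply a couple of details the paper leaves implicit (that $v(s)=0$ for $s\in R\smallsetminus\mathfrak p$, and that $\mathcal O_v\neq K$), but the structure and key ingredients are identical.
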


\begin{proof}
Observe that $v(r/s) \ge 0$ for all $r/s \in R_{\mathfrak p}$ where $r\in R$ and $s\in R\smallsetminus\mathfrak p$.
In particular, $R_{\mathfrak p} \subseteq \mathcal O_v$.
By Proposition~\ref{prop15} we have $R_{\mathfrak p} = \mathcal O_v$
since~$R_{\mathfrak p}$ is a DVR.
Now use Exercise~\ref{ex3}.
\end{proof}

This lemma allows us to conclude the following:

\begin{theorem}
Suppose $R$ is a Dedekind domain with fraction field~$K$. Then the map $\mathfrak p \mapsto v_{\mathfrak p}$
is a bijection from the set of nonzero prime ideals of $R$ to the set of discrete valuations of~$K$
that are nonnegative on~$R$.
\end{theorem}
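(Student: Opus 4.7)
The plan is to observe that almost all of the work has already been done by the preceding proposition, corollary, and two lemmas, so the proof should mainly consist of assembling them and verifying the one subtle point: that the map actually lands in the set of valuations nonnegative on $R$, and that it hits every such valuation exactly once.

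First I would note that the map is well-defined as a map into the specified target: by the proposition just above, $v_{\mathfrak p}(a) \ge 0$ for every $a \in R$, so $v_{\mathfrak p}$ really is nonnegative on $R$. Next, injectivity is immediate from the corollary above (which was the same map, viewed as landing in the larger set of all discrete valuations of $K$).

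The remaining content is surjectivity, and this is where the two lemmas do the work. Given a discrete valuation $v \colon K^\times \to \bZ$ that is nonnegative on $R$, the first lemma produces a nonzero prime ideal $\mathfrak p = \{a \in R \mid v(a) > 0\}$. The second lemma then shows $v = v_{\mathfrak p}$, so $v$ is in the image of our map. Hence the map is surjective onto the set of discrete valuations of $K$ nonnegative on $R$.

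I do not anticipate a main obstacle here, since the substantive content (the prime-producing lemma and the identification $v = v_{\mathfrak p}$ via $R_{\mathfrak p} = \mathcal O_v$ and Exercise~\ref{ex3}) is already in hand. The only thing worth double-checking is that the $\mathfrak p$ constructed in the first lemma from $v = v_{\mathfrak q}$ (for some nonzero prime $\mathfrak q$) really recovers $\mathfrak q$, which is the content of the preceding proposition: $v_{\mathfrak q}(a) > 0$ if and only if $a \in \mathfrak q$. This makes the correspondence inverse on both sides and completes the bijection.
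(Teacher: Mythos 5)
Your proposal is correct and matches the paper's intent exactly: the paper introduces the theorem with ``This lemma allows us to conclude the following:'' and offers no separate proof, precisely because well-definedness and injectivity come from the earlier proposition and corollary, while surjectivity is the conjunction of the two lemmas, just as you assemble them. Your closing remark about verifying that $\{a \in R \mid v_{\mathfrak q}(a) > 0\}$ recovers $\mathfrak q$ is a nice sanity check but is not logically needed once injectivity and surjectivity are each established.
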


For any prime $p$ in $\bZ$, write $v_p$ for $v_{\mathfrak p}$ with $\mathfrak p = p \bZ$.

\begin{corollary}
The map $p \mapsto v_p$
is a bijection from the set of (positive) primes of~$\bZ$ to the set of discrete valuations of~$\bQ$.
\end{corollary}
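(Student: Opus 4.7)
The plan is to deduce this directly from the preceding theorem applied to $R = \bZ$ and $K = \bQ$, together with one small additional observation: that every discrete valuation of $\bQ$ is automatically nonnegative on $\bZ$, so the ``nonnegative on $R$'' clause becomes vacuous.

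First I would recall that $\bZ$ is a Dedekind domain (it is a PID, and every PID is a Dedekind domain by the corollary earlier in the paper), and its fraction field is $\bQ$. The nonzero prime ideals of $\bZ$ are exactly the ideals $p\bZ$ for $p$ a positive prime integer, so the preceding theorem already supplies a bijection between positive primes and the subset of discrete valuations of $\bQ$ that are nonnegative on $\bZ$. It remains only to show that this subset is all discrete valuations of $\bQ$.

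So the key step is: given any discrete valuation $v \colon \bQ^\times \to \bZ$, show $v(n) \ge 0$ for every nonzero $n \in \bZ$. Since $v(1) = 0$ and $v(-x) = v(x)$ by the lemma near the start of Chapter~2, it suffices to treat $n \ge 1$. I would induct on $n$, using the ultrametric inequality:
$$
v(n+1) \;\ge\; \min\{v(n), v(1)\} \;=\; \min\{v(n),\,0\} \;\ge\; 0,
$$
where the last step uses the inductive hypothesis $v(n) \ge 0$.

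The main (minor) obstacle is just this nonnegativity verification, which is a short induction using the ultrametric inequality and the fact that $v(1) = 0$. Once this is in hand, the previous theorem applies without restriction, and the bijection $p \mapsto v_p = v_{p\bZ}$ follows immediately.
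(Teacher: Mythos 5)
Your proof is correct and takes exactly the same approach as the paper: the paper's entire proof is the single sentence ``Observe that every discrete valuation must be nonnegative on $\bZ$,'' deferring to the preceding theorem, and you have simply supplied the short induction (using $v(1)=0$, $v(-x)=v(x)$, and the ultrametric inequality) that justifies that observation. No gap; you just filled in a detail the paper leaves to the reader.
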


\begin{proof}
Observe that every discrete valuation must be nonnegative on~$\bZ$.
\end{proof}

One of the main theorems for Dedekind domains, perhaps the main theorem,
is that ideals factor uniquely into prime ideals. I felt free to assumed this result as background
for the proof of Theorem~\ref{thmC} in Section~\ref{ch2}, since this document is in some sense a part two in the theory of 
Dedekind domains. 
We later saw an alternate proof of Theorem~\ref{thmC} (see Theorem~\ref{thmC2}) which
did not use the unique factorization theorem. 
So
from a logical perspective the results
we have proved up to now do not depend on this unique factorization theorem. Now
that we know that the fractional ideals of a Dedekind domains form a group,
we can give a brief proof of at least the existence of a prime factorization, with uniqueness
coming later.
This will be important in showing that, for example, $v_{\mathfrak p}(a) = 0$ for all but
a finite number of valuations of a Dedekind domain. 

\begin{theorem}\label{thm75}
Let $R$ be a Dedekind domain. 
Every nonzero ideal can be written as the product of nonzero prime ideals.
 (We adopt the convention that the empty product is the identity ideal~$R$). 
\end{theorem}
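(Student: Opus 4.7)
The plan is to use the Noetherian property together with invertibility of ideals (Corollary~\ref{cor41} / Theorem~\ref{thm64}) to argue by contradiction, essentially building up the factorization one prime at a time from below. The empty product handles $I = R$, so it suffices to treat proper nonzero ideals.

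I would let $\mathcal{S}$ denote the collection of proper nonzero ideals of $R$ that cannot be written as a product of nonzero prime ideals, and assume for contradiction that $\mathcal{S}$ is nonempty. Since $R$ is Noetherian (as a Dedekind domain), the second background proposition on Noetherian rings guarantees that $\mathcal{S}$ has a maximal element~$I$. Since $I$ is a proper ideal, it is contained in some maximal ideal $\mathfrak{p}$ of~$R$, and because $R$ is a Dedekind domain the nonzero maximal ideal $\mathfrak{p}$ is prime. Note that $I \ne \mathfrak{p}$, since $\mathfrak{p}$ itself is a one-factor product of primes and hence not in $\mathcal{S}$.

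Next I would consider the fractional ideal $\mathfrak{p}^{-1} I$. From $I \subseteq \mathfrak{p}$ we obtain $\mathfrak{p}^{-1} I \subseteq \mathfrak{p}^{-1} \mathfrak{p} = R$, so $\mathfrak{p}^{-1} I$ is an integral ideal; it is nonzero since both $I$ and $\mathfrak{p}^{-1}$ are nonzero. The key step (and the main obstacle, since it is what forces a genuine ascent in the ideal lattice) is to show the strict inclusion
$$I \subsetneq \mathfrak{p}^{-1} I.$$
The inclusion is clear from $R \subseteq \mathfrak{p}^{-1}$. If equality held, multiplying both sides by $\mathfrak{p}$ would give $\mathfrak{p} I = I$, and since $\mathfrak{p}$ is invertible (being a nonzero ideal in a Dedekind domain) cancellation (Exercise~\ref{ex15}) would force $\mathfrak{p} = R$, contradicting $\mathfrak{p}$ being maximal.

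By maximality of $I$ in $\mathcal{S}$, the strictly larger ideal $\mathfrak{p}^{-1} I$ lies outside $\mathcal{S}$, so either $\mathfrak{p}^{-1} I = R$ or it is a product of nonzero prime ideals $\mathfrak{p}_1 \cdots \mathfrak{p}_k$. In either case, multiplying by $\mathfrak{p}$ yields $I = \mathfrak{p}\, \mathfrak{p}_1 \cdots \mathfrak{p}_k$ (with the product reducing to just $\mathfrak{p}$ in the first case), exhibiting $I$ as a product of nonzero prime ideals and contradicting $I \in \mathcal{S}$. Hence $\mathcal{S}$ is empty, and every nonzero ideal of~$R$ is a product of nonzero prime ideals, as required.
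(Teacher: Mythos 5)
Your proof is correct and follows essentially the same line as the paper's: take a maximal counterexample $I$ by the Noetherian property, pick a maximal ideal $\mathfrak p \supseteq I$, show $I \subsetneq \mathfrak p^{-1} I \subseteq R$ using invertibility of $\mathfrak p$, and pull back a prime factorization of $\mathfrak p^{-1} I$. The only difference is that you spell out the strict inclusion via the cancellation law of Exercise~\ref{ex15}, whereas the paper simply cites the group structure of fractional ideals; both justifications are sound.
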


\begin{proof}
The following proof is based on the fact that the set of fractional ideals of~$R$ forms a groups
under the product operation. So we freely use properties of groups. 

Suppose otherwise that there is a nonzero 
ideal that is not such a product. By the Noetherian property there is a maximal such ideal $I$.
Note that $I$ must be a proper ideal.
Let $\mathfrak p$ be a maximal ideal containing~$I$. Then $I \ne I \frak p^{-1}$ since $\mathfrak p \ne R$, and
$$
I \subsetneq I \frak p^{-1} \subseteq \frak p \frak p^{-1} = R.
$$
So
$$
I \frak p^{-1} = \frak p_1 \cdots \frak p_k
$$
for some $k \ge 0$ and nonzero prime ideals $\frak p_i$. Thus
$$
I  = \frak p_1 \cdots \frak p_k \frak p.
$$
\end{proof}

\begin{exercise}
Use properties of prime ideals to
show that the decomposition of the above theorem is unique up to order of the prime factors.
(We will see another justification for uniqueness later using valuations).
\end{exercise}

\begin{exercise}
Prove Theorem~\ref{thm75} using the following chain argument.
Start with any nonzero ideal $I$. Show that if $\mathfrak p$ is a prime ideal 
containing $I$ then $I = \mathfrak p I'$ for some ideal $I'$. Show that $I \subsetneq I'$.
Iterate this process to find an ascending chain and use the ascending chain condition.
\end{exercise}

\begin{corollary}
Let $R$ be a Dedekind domain. Every fractional ideal $I$ of $R$ can written in the form
$$
I = \frak p_1^{n_1} \cdots \frak p_k^{n_k}
$$
where $k \ge 0$, where $\frak p_1, \ldots, \frak p_k$ are distinct prime ideals of~$R$,
and where $n_i \in \bZ$. (Here we defined $\frak p_i^0$ to be~$R$, so we can freely drop
or add any factor with $n_i = 0$). In particular, the prime ideals generate the group 
of fractional ideals.
\end{corollary}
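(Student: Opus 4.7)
The plan is to combine Corollary \ref{cor41} (every fractional ideal is a ratio of integral ideals, and the fractional ideals form an abelian group) with Theorem \ref{thm75} (every nonzero integral ideal factors into prime ideals). So the work has essentially already been done; what remains is bookkeeping.

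First I would invoke Corollary \ref{cor41}: given a fractional ideal $I$, there exists a nonzero $a \in R$ such that $aI$ is an integral ideal; call it $J$. Thus $I = J \cdot (aR)^{-1}$. Since $aR$ is itself a nonzero integral ideal, $I$ is expressed as $J \cdot J'^{-1}$ for two nonzero integral ideals $J, J'$. Next, by Theorem \ref{thm75}, write
\[
J = \mathfrak{q}_1 \cdots \mathfrak{q}_s, \qquad J' = \mathfrak{r}_1 \cdots \mathfrak{r}_t,
\]
as (possibly empty) products of nonzero prime ideals of $R$. Because the fractional ideals form an abelian group, inverses distribute over products, giving
\[
I = \mathfrak{q}_1 \cdots \mathfrak{q}_s \cdot \mathfrak{r}_1^{-1} \cdots \mathfrak{r}_t^{-1}.
\]

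Now I would collect repeated factors: list the distinct primes appearing among the $\mathfrak{q}_i$ and $\mathfrak{r}_j$ as $\mathfrak{p}_1, \ldots, \mathfrak{p}_k$, and set $n_i$ to be the number of times $\mathfrak{p}_i$ occurs in $J$ minus the number of times it occurs in $J'$. Using commutativity of the group of fractional ideals and the convention $\mathfrak{p}^0 = R$, this yields $I = \mathfrak{p}_1^{n_1} \cdots \mathfrak{p}_k^{n_k}$ with $n_i \in \mathbb{Z}$, as required.

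The final sentence, that the nonzero prime ideals generate the group of fractional ideals, is then immediate: we have written an arbitrary element of the group as a product of integer powers of primes. There is no substantive obstacle here — the proof is purely a matter of chaining together Corollary \ref{cor41} and Theorem \ref{thm75} — so uniqueness is deferred (per the earlier exercise) and no further argument is needed for the existence statement claimed.
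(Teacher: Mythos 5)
Your proposal is correct and takes the same approach the paper implicitly intends: the proof of Corollary~\ref{cor41} already shows how to write any fractional ideal as $I(aR)^{-1}$ with $I$ and $aR$ integral, and then Theorem~\ref{thm75} factors both into primes, leaving only the bookkeeping you describe. The paper gives no proof for this corollary, treating it as a straightforward consequence, and your reasoning fills that gap exactly as expected.
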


Next we will define discrete valutions of fractional ideals.
First we start with fractional ideals of $R_{\mathfrak p}$ where $R$ is a Dedekind
domain and where~$\mathfrak p$ is a nonzero prime ideal.
Recall that the map 
$$k \mapsto \left( \mathfrak p R_{\mathfrak p}\right)^k$$
defines an isomorphism between the additive group~$\bZ$
and the multiplicative group~$\mathcal I (R_{\mathfrak p})$ of fractional
ideals of $R_{\mathfrak p}$
(See Proposition~\ref{prop24}).
We define the valuation isomorphism to be the inverse of this isomorphism.
We call this map $v_{\mathfrak p}$ (when there is no risk of confusion with the
valuation $v_{\mathfrak p}$ on elements):
$$
v_{\mathfrak p}\colon \mathcal I (R_{\mathfrak p}) \to \bZ,\qquad 
\left( \mathfrak p R_{\mathfrak p}\right)^k \mapsto k.
$$
Now recall that localization map $I \mapsto I R_{\mathfrak p}$  is a surjective group homomorphism
from the group $\mathcal I (R)$ of fractional ideals of $R$ to $\mathcal I (R_{\mathfrak p})$.
The valuation on $\mathcal I (R)$ is defined as the composition
$$
\mathcal I (R) \to \mathcal I (R_{\mathfrak p}) \to \bZ.
$$
We also call this map $v_{\mathfrak p}$. Warning: we are now using the symbol $v_{\mathfrak p}$
for three functions:
$$
K^{\times} \to \bZ, 
\qquad 
\mathcal I (R_{\mathfrak p}) \to \bZ, 
\qquad
\mathcal I (R)  \to \bZ
$$
where $K$ is the fraction field of~$R$. Context will dictate which is meant in any
given use. We now summarize the definition of $v_{\mathfrak p} \colon \mathcal I (R)  \to \bZ$:

\begin{definition}
Let $R$ be a Dedekind domain and let $\mathcal I (R)$ be the group of fractional ideals of~$R$.
Let $\mathfrak p$ be a nonzero prime ideal.
Then we define the valuation map $$v_{\mathfrak p} \colon \mathcal I (R) \to \bZ$$
as follows: if $I \in\mathcal I (R)$ then $v_{\mathfrak p}(I)$
is the unique $k\in \bZ$ such that $I R_{\mathfrak p} = \left( p R_{\mathfrak p}\right)^k$.
\end{definition}

From the definition, and the discussion proceeding it, we get the following:

\begin{proposition}
Let $R$ be a Dedekind domain  and let $\mathcal I (R)$ be the group of fractional ideals of~$R$.
Let $\mathfrak p$ be a nonzero prime ideal.
Then the valuation map~$\mathcal I (R) \to \bZ$ is a surjective homomorphism
from the multiplicative group $\mathcal I (R)$ to the additive group~$\bZ$.
\end{proposition}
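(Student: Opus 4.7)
The plan is to argue that $v_{\mathfrak p} \colon \mathcal I(R) \to \bZ$ is, by its very construction, a composition of two maps each of which is already known to be a surjective group homomorphism, so there is essentially nothing new to verify beyond assembling prior results.

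First I would unpack the definition: $v_{\mathfrak p}$ was defined as the composite
$$
\mathcal I(R) \;\xrightarrow{\;\phi\;}\; \mathcal I(R_{\mathfrak p}) \;\xrightarrow{\;\psi\;}\; \bZ,
$$
where $\phi(I) = I R_{\mathfrak p}$ is the localization map, and $\psi$ is the inverse of the isomorphism $k \mapsto (\mathfrak p R_{\mathfrak p})^k$ supplied by Proposition~\ref{prop24}. Thus $\psi$ is a group isomorphism from the multiplicative group $\mathcal I(R_{\mathfrak p})$ to the additive group $\bZ$. So it suffices to show that $\phi$ is a surjective group homomorphism.

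Next I would invoke the earlier proposition in Section~\ref{ch5} stating that $I \mapsto S^{-1} I$ is a surjective monoid homomorphism from the monoid of fractional ideals of $R$ to the monoid of fractional ideals of $S^{-1}R$, and that if the domain is a group then so is the image and the map is a group homomorphism. Taking $S = R \smallsetminus \mathfrak p$, and using Corollary~\ref{cor41} which tells us $\mathcal I(R)$ is a group (here is where the Dedekind hypothesis enters), we conclude that $\phi$ is a surjective group homomorphism onto $\mathcal I(R_{\mathfrak p})$.

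Composing $\phi$ with the isomorphism $\psi$ yields that $v_{\mathfrak p}$ is a surjective group homomorphism from $\mathcal I(R)$ to $\bZ$, as claimed. The main (and only) subtle point to flag is that the ``group homomorphism'' promotion of $\phi$ requires $\mathcal I(R)$ to be a group, which is precisely Corollary~\ref{cor41}; without the Dedekind hypothesis we would only get a monoid homomorphism. Surjectivity at the $\bZ$ level is automatic because $\mathfrak p R_{\mathfrak p}$ is in the image of $\phi$ and maps to $1 \in \bZ$ under $\psi$, and $1$ generates $\bZ$.
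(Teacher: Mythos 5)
Your proof is correct and takes essentially the same approach as the paper: the paper's proof simply cites ``the definition and the discussion preceding it,'' and that discussion is precisely the decomposition of $v_{\mathfrak p}$ as the localization homomorphism $\mathcal I(R) \to \mathcal I(R_{\mathfrak p})$ (a surjective group homomorphism once $\mathcal I(R)$ is known to be a group, which is Corollary~\ref{cor41}) followed by the isomorphism $\mathcal I(R_{\mathfrak p}) \cong \bZ$ from Proposition~\ref{prop24}. You have merely unpacked explicitly what the paper leaves implicit.
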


Next we show that this new valuation is compatible with the valuation of elements.

\begin{proposition}
Let $R$ be a Dedekind domain with fractions field~$K$ and
let $\mathfrak p$ be a nonzero prime ideal. Then for all $x \in K^{\times}$,
$$v_{\mathfrak p}(x R) = v_{\mathfrak p}(x).$$
\end{proposition}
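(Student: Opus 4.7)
The plan is to chase through the definitions, using the fact that localization converts the principal ideal $xR$ into the principal $R_{\mathfrak p}$-submodule $xR_{\mathfrak p}$, and then exploit the tight description of ideals in the DVR $R_{\mathfrak p}$.

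First I would choose a uniformizer $\pi \in R_{\mathfrak p}$. Since the maximal ideal of $R_{\mathfrak p}$ is $\mathfrak p R_{\mathfrak p}$, we have $\pi R_{\mathfrak p} = \mathfrak p R_{\mathfrak p}$. By the proposition showing every $x \in K^\times$ can be written uniquely as $u\pi^k$ with $u \in R_{\mathfrak p}^\times$ and $k \in \bZ$, the element valuation $v_{\mathfrak p}(x)$ on $K^\times$ coming from the DVR $R_{\mathfrak p}$ is precisely this exponent $k$.

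Next I would compute $(xR) R_{\mathfrak p}$. By the proposition on localization of principal submodules, $(xR)R_{\mathfrak p} = x R_{\mathfrak p}$. Writing $x = u\pi^k$ with $u$ a unit in $R_{\mathfrak p}$, we get
\[
(xR)R_{\mathfrak p} \;=\; x R_{\mathfrak p} \;=\; u \pi^k R_{\mathfrak p} \;=\; \pi^k R_{\mathfrak p} \;=\; (\pi R_{\mathfrak p})^k \;=\; (\mathfrak p R_{\mathfrak p})^k.
\]
By the definition of $v_{\mathfrak p}$ on fractional ideals, this $k$ is exactly $v_{\mathfrak p}(xR)$. Comparing with the identification of $k$ as $v_{\mathfrak p}(x)$ gives the desired equality.

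The only real obstacle is notational bookkeeping: the symbol $v_{\mathfrak p}$ plays three distinct roles (on $K^\times$, on $\mathcal I(R_{\mathfrak p})$, and on $\mathcal I(R)$), so I would be careful to spell out which map is being applied at each step. Beyond that, the proof is a one-line computation once the uniformizer is introduced, and requires no new ideas.
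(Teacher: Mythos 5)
Your proof is correct and takes essentially the same approach as the paper: choose a uniformizer $\pi$ of $R_{\mathfrak p}$, write $x = u\pi^k$, observe that $xR$ localizes to $xR_{\mathfrak p} = (\mathfrak p R_{\mathfrak p})^k$, and read off both valuations as $k$.
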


\begin{proof}
Write $x$ as $u \pi^k$ where $\pi \in R_{\mathfrak p}$
is a uniformizer and $u$ is a unit of $R_{\mathfrak p}$.
Under localization, $x R$ maps to $x R_{\mathfrak p}$, which
is $\pi^k R_{\mathfrak p} = \left( \mathfrak p R_{\mathfrak p}\right)^k$.
Thus~$v_{\mathfrak p}(x R)  = k$. But also
$v_{\mathfrak p}(x) = v_{\mathfrak p}(u \pi^k) = k$.
\end{proof}

There is another characterization of this valuation map:

\begin{proposition}\label{prop73}
Let $R$ be a Dedekind domain with fraction field~$K$ and let $\mathfrak p$ be a nonzero prime ideal.
If $I$ is a fractional ideal of~$R$ then
$$
v_{\mathfrak p}(I)  = \min_{x \in I} \left\{ v_{\mathfrak p}(x)\right\}.
$$
\end{proposition}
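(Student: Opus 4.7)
The plan is to unwind the definition of $v_{\mathfrak p}(I)$ and translate everything into the DVR $R_{\mathfrak p}$, where the answer is visible because we understand fractional ideals of DVRs very explicitly (Proposition~\ref{prop24}).

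First I would set $k = v_{\mathfrak p}(I)$, so by definition $I R_{\mathfrak p} = (\mathfrak p R_{\mathfrak p})^k$. Letting $\pi$ be a uniformizer of $R_{\mathfrak p}$, we have $(\mathfrak p R_{\mathfrak p})^k = \pi^k R_{\mathfrak p}$, and by the description of ideals of a DVR in terms of the valuation, this equals $\{y \in K \mid v_{\mathfrak p}(y) \ge k\}$ (together with $0$).

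For the lower bound, any $x \in I$ embeds into $I R_{\mathfrak p}$ via $x = x/1$, so $v_{\mathfrak p}(x) \ge k$. This immediately gives
$$\min_{x \in I}\{v_{\mathfrak p}(x)\} \ge k = v_{\mathfrak p}(I).$$

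The step I expect to require a touch of care is showing the minimum is \emph{attained}, not merely bounded below. For this, observe that $\pi^k \in \pi^k R_{\mathfrak p} = I R_{\mathfrak p}$, and recall that every element of $I R_{\mathfrak p}$ has the form $a/s$ with $a \in I$ and $s \in R \smallsetminus \mathfrak p$. Writing $\pi^k = a/s$ gives $a = s\pi^k \in I$, and since $s \notin \mathfrak p$ we have $v_{\mathfrak p}(s) = 0$, so
$$v_{\mathfrak p}(a) = v_{\mathfrak p}(s) + v_{\mathfrak p}(\pi^k) = 0 + k = k.$$
Thus an $x \in I$ with $v_{\mathfrak p}(x) = k$ exists, completing the proof. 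The only real obstacle is the clean bookkeeping between the three overloaded meanings of $v_{\mathfrak p}$ (on $K^\times$, on $\mathcal I(R_{\mathfrak p})$, and on $\mathcal I(R)$), but the definition was set up precisely so that these are compatible, as recorded in the proposition preceding this one.
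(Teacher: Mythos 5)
Your proof is correct and follows essentially the same route as the paper's: localize, observe that $I R_{\mathfrak p} = (\mathfrak p R_{\mathfrak p})^k$ forces $v_{\mathfrak p}(x) \ge k$ for $x \in I$, then pull back an element of valuation exactly $k$ from $I R_{\mathfrak p}$ to an element of $I$, using $v_{\mathfrak p}(s)=0$ for the denominator $s \notin \mathfrak p$. The only cosmetic difference is that you specifically pull back $\pi^k$, whereas the paper pulls back an arbitrary element of $(\mathfrak p R_{\mathfrak p})^k \smallsetminus (\mathfrak p R_{\mathfrak p})^{k+1}$; this is the same idea.
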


\begin{proof}
If $k = v_{\mathfrak p}(I)$ then
$I R_{\mathfrak p} = \left( \mathfrak p R_{\mathfrak p}\right)^k$.
But~$\left( \mathfrak p R_{\mathfrak p}\right)^k$ is just the set of elements $x$ 
of~$K^\times$ with $v_{\mathfrak p}(x) \ge k$.
So if $x\in I$ then $x \in I R_{\mathfrak p} = \left( \mathfrak p R_{\mathfrak p}\right)^k$ and 
so~$v_{\mathfrak p}(x) \ge k$.

We just need to show that our minimum is $k$ by showing that 
there is an element~$x \in I$ with $v_{\mathfrak p}(x) = k$. To that end,
start with an element $x/s$ where $x \in I$ and $s \in R \smallsetminus \mathfrak p$
such that $x/s \in I R_{\mathfrak p} = \left( p R_{\mathfrak p}\right)^k$
but not in~$\left( p R_{\mathfrak p}\right)^{k+1}$.
When we localize, $s$ becomes a unit in $R_{\mathfrak p}$
so
$$
k = v_{\mathfrak p}(x/s)  = v_{\mathfrak p}(x) - v_{\mathfrak p}(s) = v_{\mathfrak p}(x) - 0 = v_{\mathfrak p}(x).
$$
\end{proof}

\begin{remark}
In the above we can actually take the minimum of $v_{\mathfrak p}(x)$ for $x$ in a generating set of~$I$.
\end{remark}

\begin{lemma}
Let $R$ be a Dedekind domain and let $\mathfrak p, \mathfrak q$ be distinct nonzero prime ideals of~$R$.
Then $\mathfrak q R_{\mathfrak p} = R_{\mathfrak p}$. 
Thus~$v_{\mathfrak p}(\mathfrak q) = 0$.
\end{lemma}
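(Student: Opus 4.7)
The plan is to use the fact that in a Dedekind domain every nonzero prime ideal is maximal, so that distinct nonzero primes are incomparable. First I would note that since $\mathfrak p$ and $\mathfrak q$ are both maximal and $\mathfrak q \ne \mathfrak p$, neither contains the other; in particular $\mathfrak q \not\subseteq \mathfrak p$. So I can pick an element $a \in \mathfrak q \smallsetminus \mathfrak p$.

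Next, since $a \in R \smallsetminus \mathfrak p$, the element $a$ lies in the multiplicative system $S$ used to form $R_{\mathfrak p}$, and hence $a$ is a unit in $R_{\mathfrak p}$. On the other hand $a \in \mathfrak q$, so $a \in \mathfrak q R_{\mathfrak p}$. Thus the ideal $\mathfrak q R_{\mathfrak p}$ of $R_{\mathfrak p}$ contains a unit and must equal the whole ring: $\mathfrak q R_{\mathfrak p} = R_{\mathfrak p}$.

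For the valuation statement, I would simply appeal to the definition just given: $v_{\mathfrak p}(\mathfrak q)$ is the unique $k \in \bZ$ with $\mathfrak q R_{\mathfrak p} = (\mathfrak p R_{\mathfrak p})^k$. Since $\mathfrak q R_{\mathfrak p} = R_{\mathfrak p} = (\mathfrak p R_{\mathfrak p})^0$, this forces $v_{\mathfrak p}(\mathfrak q) = 0$. There is no real obstacle here; the only thing to be careful about is using the Dedekind hypothesis correctly (to know that $\mathfrak q$ is maximal and hence not contained in the distinct prime $\mathfrak p$), rather than invoking unique factorization of ideals, which is only being proved in this section.
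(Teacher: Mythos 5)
Your proof is correct, and since the paper omits the proof as one of the ``straightforward'' ones left to the reader, there is nothing to compare it against; your argument is exactly the expected one. Picking $a \in \mathfrak q \smallsetminus \mathfrak p$ (available because distinct maximal ideals are incomparable) and observing that $a$ is a unit in $R_{\mathfrak p}$ yet lies in $\mathfrak q R_{\mathfrak p}$ settles the first claim, and the valuation statement then follows directly from the paper's definition of $v_{\mathfrak p}$ on fractional ideals. You could equivalently invoke the paper's earlier observation that an integral ideal meeting the multiplicative system $S = R \smallsetminus \mathfrak p$ localizes to the unit ideal, but that is the same argument. Your closing remark about not invoking unique factorization is apt: the paper deliberately places this lemma before establishing uniqueness, so it must rest only on maximality of nonzero primes.
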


\begin{proposition}\label{unique_prop}
Let $I$ be a fractional ideal of a Dedekind domain $R$.
Suppose
$$
I = \frak p_1^{n_1} \cdots \frak p_k^{n_k}
$$
where $k \ge 0$, where $\frak p_1, \ldots, \frak p_k$ are distinct nonzero prime ideals of~$R$,
and where each $n_i \in \bZ$.
Then~\text{$n_i = v_{\frak p_i} (I)$}.
If a nonzero prime ideal $\mathfrak p$ is not any of these $\frak p_i$ then~$v_{\frak p} (I) = 0$.
\end{proposition}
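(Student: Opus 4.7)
The plan is to exploit the fact (established just above the proposition) that $v_{\mathfrak{p}}\colon \mathcal{I}(R) \to \bZ$ is a group homomorphism, combined with the preceding lemma which computes $v_{\mathfrak{p}}$ on prime ideals. So the whole proof is really just ``evaluate the homomorphism on a product of generators.''

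First I would record the values of $v_{\mathfrak{p}}$ on a nonzero prime ideal $\mathfrak{q}$. If $\mathfrak{q} = \mathfrak{p}$, then $\mathfrak{p} R_{\mathfrak{p}}$ is exactly $(\mathfrak{p} R_{\mathfrak{p}})^1$, so $v_{\mathfrak{p}}(\mathfrak{p}) = 1$. If $\mathfrak{q} \ne \mathfrak{p}$, the preceding lemma gives $\mathfrak{q} R_{\mathfrak{p}} = R_{\mathfrak{p}} = (\mathfrak{p} R_{\mathfrak{p}})^0$, so $v_{\mathfrak{p}}(\mathfrak{q}) = 0$.

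Next I would apply the homomorphism property to the factorization $I = \mathfrak{p}_1^{n_1} \cdots \mathfrak{p}_k^{n_k}$. Since $v_{\mathfrak{p}_i}$ sends products to sums and $n$th powers to $n$ times the value,
$$
v_{\mathfrak{p}_i}(I) \;=\; \sum_{j=1}^{k} n_j \, v_{\mathfrak{p}_i}(\mathfrak{p}_j) \;=\; n_i,
$$
where the last equality uses that $v_{\mathfrak{p}_i}(\mathfrak{p}_j) = \delta_{ij}$ by the opening step. Applying the same computation with a prime $\mathfrak{p} \notin \{\mathfrak{p}_1, \ldots, \mathfrak{p}_k\}$ gives $v_{\mathfrak{p}}(\mathfrak{p}_j) = 0$ for every $j$, hence $v_{\mathfrak{p}}(I) = 0$.

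Honestly, I do not see a real obstacle here: the machinery (the group $\mathcal{I}(R)$, the homomorphism property of $v_{\mathfrak{p}}$, and the lemma giving $v_{\mathfrak{p}}(\mathfrak{q}) = 0$ for $\mathfrak{q} \ne \mathfrak{p}$) has all been assembled in the preceding pages. The only minor point to be careful about is not to confuse the three different maps all denoted $v_{\mathfrak{p}}$; but since we work throughout with the one on $\mathcal{I}(R)$, this is just a bookkeeping remark and not an obstacle.
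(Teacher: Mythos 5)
Your proof is correct and is essentially the same argument the paper has in mind; the paper's proof is simply the one-line hint ``Consider the localization homomorphism $I \mapsto I R_{\mathfrak p}$,'' and your write-up just makes explicit what that hint unwinds to, namely that $v_{\mathfrak p}$ is a homomorphism on $\mathcal I(R)$ and that $v_{\mathfrak p}(\mathfrak q) = \delta_{\mathfrak p \mathfrak q}$ for nonzero primes $\mathfrak q$ by the preceding lemma.
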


\begin{proof}
Consider the localization homomorphism $I \mapsto I R_{\mathfrak p}$.
\end{proof}

\begin{remark}
This gives the uniqueness of the prime factorization of a fractional ideal.
So between this result and existence of prime factorizations (Theorem~\ref{thm75}) we 
can conclude
that the group of fractional ideals $\mathcal I$ of a Dedekind domain $R$ is
a the free Abelian group generated by nonzero prime ideals. 

Let $\mathcal P$ be the subgroup of principal fractional ideals. 
The quotient group $\mathcal I / \mathcal P$ is
called the \emph{class group of $R$} and is one of the most important
invariants of $R$. In some sense it is a measure on how much an Dedekind domain fails 
to be a PID.
\end{remark}

We can summarize our results as follows: 

\begin{theorem}
Let $I$ be a fractional ideal of a Dedekind domain $R$.
Then~$v_{\frak p} (I)$ is $0$ for all but a finite number of nonzero prime ideals $\mathfrak p$.
Suppose $\frak p_1, \ldots, \frak p_k$ are distinct prime ideals of~$R$ containing at least all
nonzero prime ideals $\mathfrak p$ with $v_{\frak p} (I) \ne 0$.
Then
$$
I = \frak p_1^{v_{\frak p_i}(I)} \cdots \frak  p_k^{v_{\frak p_k}(I)}.
$$
\end{theorem}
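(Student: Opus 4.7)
The plan is to reduce to the integral-ideal case, where Theorem~\ref{thm75} gives existence of a prime factorization, and then invoke Proposition~\ref{unique_prop} to read off the exponents as valuations.

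First I would express $I$ in the form $(aR)^{-1} J$ with $a \in R$ nonzero and $J$ a nonzero integral ideal. Since $I$ is a fractional ideal, there is a nonzero $a \in R$ with $aI \subseteq R$; setting $J = aI$ gives a nonzero integral ideal, and in the group of fractional ideals (which exists by Corollary~\ref{cor41}) we have $I = (aR)^{-1} J$, using that $aR$ is invertible by Proposition~\ref{easy_invert_prop}.

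Next, by Theorem~\ref{thm75} both $J$ and $aR$ factor as finite products of nonzero prime ideals. Combining these factorizations inside the group of fractional ideals and collecting equal primes yields a representation
$$ I \;=\; \mathfrak{q}_1^{m_1} \cdots \mathfrak{q}_r^{m_r} $$
with distinct nonzero primes $\mathfrak{q}_i$ and integer exponents $m_i$ (possibly negative). Applying Proposition~\ref{unique_prop} to this representation, each $m_i$ must equal $v_{\mathfrak{q}_i}(I)$, and for every nonzero prime $\mathfrak{p}$ not among the $\mathfrak{q}_i$ we get $v_{\mathfrak{p}}(I) = 0$. This immediately establishes the finite-support claim.

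Finally, given any list $\mathfrak{p}_1, \ldots, \mathfrak{p}_k$ of distinct nonzero primes containing $\{\mathfrak{q}_1, \ldots, \mathfrak{q}_r\}$, I would enlarge the factorization by inserting a factor $\mathfrak{p}_j^0 = R$ for each extra $\mathfrak{p}_j$; rewriting every exponent as $v_{\mathfrak{p}_j}(I)$ (including the zero exponents) yields the stated formula. The main conceptual work has been done in Theorem~\ref{thm75} and Proposition~\ref{unique_prop}; the only genuine step here is the passage from integral to fractional ideals via inversion of a principal fractional ideal, and I do not anticipate any real obstacle.
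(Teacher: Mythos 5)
Your proof is correct and follows the route the paper intends: the theorem is stated as a summary of the preceding results, and the expected justification is exactly to combine the existence of a prime factorization (Theorem~\ref{thm75} and its corollary for fractional ideals) with the exponent-identification in Proposition~\ref{unique_prop}. The only minor redundancy is that your steps of writing $I = (aR)^{-1}J$ and merging the factorizations of $J$ and $aR$ re-derive the corollary that immediately follows Theorem~\ref{thm75}; you could simply cite that corollary directly, but the argument is the same.
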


\begin{corollary}
Let $I, J$ be fractional ideals of a Dedekind domain $R$.
Then $I = J$ if and only if $v_{\mathfrak p} (I) = v_{\mathfrak p}(J)$
for all nonzero prime ideals~$\mathfrak p$ of~$R$.
\end{corollary}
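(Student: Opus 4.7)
The forward direction is immediate: $v_{\mathfrak p}$ is a well-defined function on $\mathcal I(R)$ for each nonzero prime $\mathfrak p$, so $I = J$ forces $v_{\mathfrak p}(I) = v_{\mathfrak p}(J)$ for every $\mathfrak p$. All the work is in the converse, and I see two essentially equivalent routes.

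My preferred approach is to reduce to the previous theorem. Since $v_{\mathfrak p}(I) = v_{\mathfrak p}(J)$ for every nonzero prime $\mathfrak p$, the set
\[
S \;\defeq\; \{\mathfrak p : v_{\mathfrak p}(I) \ne 0\} \;\cup\; \{\mathfrak p : v_{\mathfrak p}(J) \ne 0\}
\]
is finite by the theorem (applied to both $I$ and $J$). Enumerate $S$ as $\mathfrak p_1, \ldots, \mathfrak p_k$. Then the theorem yields
\[
I \;=\; \mathfrak p_1^{v_{\mathfrak p_1}(I)} \cdots \mathfrak p_k^{v_{\mathfrak p_k}(I)} \;=\; \mathfrak p_1^{v_{\mathfrak p_1}(J)} \cdots \mathfrak p_k^{v_{\mathfrak p_k}(J)} \;=\; J,
\]
where the middle equality uses the hypothesis term by term, and the outer two equalities come from the theorem applied to $I$ and to $J$ respectively (noting that $S$ certainly contains all primes where either valuation is nonzero). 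This gives $I = J$.

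An alternative route, which I would mention as a remark, is to appeal directly to Corollary~\ref{equality_test_cor}: $I = J$ if and only if $IR_{\mathfrak m} = JR_{\mathfrak m}$ for every maximal ideal $\mathfrak m$ of $R$. In a Dedekind domain the nonzero maximal ideals coincide with the nonzero primes, and by the very definition of $v_{\mathfrak p}$ we have $IR_{\mathfrak p} = (\mathfrak p R_{\mathfrak p})^{v_{\mathfrak p}(I)}$ and $JR_{\mathfrak p} = (\mathfrak p R_{\mathfrak p})^{v_{\mathfrak p}(J)}$. The powers of $\mathfrak p R_{\mathfrak p}$ are distinct (Proposition~\ref{prop24}), so equality of exponents is equivalent to equality of these localizations; the hypothesis then yields $IR_{\mathfrak p} = JR_{\mathfrak p}$ at every nonzero prime, and the localization at the zero ideal gives the fraction field on both sides trivially.

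There is no real obstacle here: both arguments are one-line consequences of results already in hand, so the only choice is stylistic (do we package this via the factorization theorem, or via the local-to-global equality test?). I would lead with the factorization version since it is the more memorable statement, and note the localization argument as an aside.
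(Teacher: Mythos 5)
Your primary argument is correct and is exactly the route the paper intends: the corollary sits immediately after the theorem giving $I = \mathfrak p_1^{v_{\mathfrak p_1}(I)}\cdots\mathfrak p_k^{v_{\mathfrak p_k}(I)}$, and matching exponents term by term is the whole proof. Your alternative via Corollary~\ref{equality_test_cor} is also correct and is a nice observation, but it is essentially how the theorem itself was established, so it adds nothing beyond the stylistic point you already make.
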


\begin{lemma}
Let $R$ be a Dedekind domain and let $\mathfrak p$ be a nonzero prime ideal of~$R$.
If~$k \le l$ then $\left( p R_{\mathfrak p}\right)^l\subseteq \left( p R_{\mathfrak p}\right)^k$ and so
$$
\left( p R_{\mathfrak p}\right)^k + \left( p R_{\mathfrak p}\right)^l = \left( p R_{\mathfrak p}\right)^k
\quad\text{and}\quad
\left( p R_{\mathfrak p}\right)^k \cap \left( p R_{\mathfrak p}\right)^l = \left( p R_{\mathfrak p}\right)^l.
$$
\end{lemma}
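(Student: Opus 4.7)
The plan is to reduce everything to the fact that $R_{\mathfrak p}$ is a DVR with uniformizer generating the maximal ideal $\mathfrak p R_{\mathfrak p}$, and then to apply the explicit description of powers of a principal ideal in a DVR that was established earlier.

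First I would invoke Theorem~\ref{thm58} to note that $R_{\mathfrak p}$ is a DVR. Let $v$ denote the associated discrete valuation on $K^\times$, so that $\mathfrak p R_{\mathfrak p}$ is the maximal ideal of $R_{\mathfrak p}$, and if $\pi$ is a uniformizer then $\pi R_{\mathfrak p} = \mathfrak p R_{\mathfrak p}$. Applying Proposition~\ref{prop24} (or its integral-ideal counterpart earlier in Chapter~\ref{ch2}), for any integer $n \in \bZ$ we have the explicit description
$$
\left(\mathfrak p R_{\mathfrak p}\right)^n \;=\; \pi^n R_{\mathfrak p} \;=\; \{\, x \in K \mid v(x) \ge n\,\}.
$$

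Given this description, the containment $\left(\mathfrak p R_{\mathfrak p}\right)^l \subseteq \left(\mathfrak p R_{\mathfrak p}\right)^k$ when $k \le l$ is immediate: any $x$ with $v(x) \ge l$ automatically satisfies $v(x) \ge k$. This is the only substantive step, and it is not really an obstacle at all since it is a direct translation of the order on $\bZ$.

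Finally, the sum and intersection identities are pure set theory once we know one side contains the other. Whenever $I \subseteq J$ are $R$-submodules of $K$, we have $I + J = J$ and $I \cap J = I$. Applying this with $I = \left(\mathfrak p R_{\mathfrak p}\right)^l$ and $J = \left(\mathfrak p R_{\mathfrak p}\right)^k$ yields the two displayed equalities. There is no real obstacle here; the whole lemma is essentially a bookkeeping statement about powers of the maximal ideal in a DVR, phrased after localization.
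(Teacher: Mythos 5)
Your proof is correct, and the paper itself gives no proof for this lemma (it is one of the statements the author leaves as a straightforward exercise). Your route via the valuation description $\left(\mathfrak p R_{\mathfrak p}\right)^n = \{x \in K \mid v(x) \ge n\}$ for $n \in \bZ$, followed by the trivial set-theoretic observation that a containment $I \subseteq J$ forces $I+J=J$ and $I\cap J=I$, is exactly the kind of argument the text has set up (via Proposition~\ref{prop24} and the earlier DVR ideal classification) and expects the reader to supply. One small note: the containment can also be seen without valuations, since $\left(\mathfrak p R_{\mathfrak p}\right)^{l-k} \subseteq R_{\mathfrak p}$ gives $\left(\mathfrak p R_{\mathfrak p}\right)^{l} = \left(\mathfrak p R_{\mathfrak p}\right)^{k}\left(\mathfrak p R_{\mathfrak p}\right)^{l-k} \subseteq \left(\mathfrak p R_{\mathfrak p}\right)^{k}$, but your valuation-based reading is equally clean and dovetails with the surrounding material.
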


\begin{proposition}\label{prop79}
Let $R$ be a Dedekind domain and let $\mathfrak p$ be a nonzero prime ideal of~$R$.
If $I, J$ are fractional ideals of $R$, then
$$
v_{\mathfrak p}(I J ) = v_{\mathfrak p}(I )  + v_{\mathfrak p}(J ),
$$
$$
v_{\mathfrak p}(I +J ) = \min (v_{\mathfrak p}(I ), v_{\mathfrak p}(J ) ),
$$
$$
v_{\mathfrak p}(I \cap J ) = \max (v_{\mathfrak p}(I ), v_{\mathfrak p}(J ) ).
$$
\end{proposition}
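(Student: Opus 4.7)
The plan is to reduce all three identities to the corresponding statements about fractional ideals of the local ring $R_{\mathfrak p}$, using the fact that $v_{\mathfrak p}$ on $\mathcal I(R)$ was defined as the composition of the localization map $I \mapsto I R_{\mathfrak p}$ with the isomorphism $\mathcal I(R_{\mathfrak p}) \to \bZ$ sending $(\mathfrak p R_{\mathfrak p})^k$ to $k$. In each case, the point is that localization is compatible with the three operations involved.

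For the first identity, the localization map $\mathcal I(R) \to \mathcal I(R_{\mathfrak p})$ is a group homomorphism (established in Section~\ref{ch5}), so localizing $IJ$ gives $(IR_{\mathfrak p})(JR_{\mathfrak p})$, and the isomorphism to $\bZ$ is likewise a group homomorphism. Composing, $v_{\mathfrak p}(IJ) = v_{\mathfrak p}(I) + v_{\mathfrak p}(J)$. This amounts to the statement already noted: $v_{\mathfrak p}\colon \mathcal I(R) \to \bZ$ is a homomorphism of multiplicative to additive groups.

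For the second and third identities, I would first invoke the fact from Section~\ref{ch5} that localization commutes with sums and intersections of $R$-submodules of $K$: $(I+J)R_{\mathfrak p} = IR_{\mathfrak p} + JR_{\mathfrak p}$ and $(I \cap J)R_{\mathfrak p} = IR_{\mathfrak p} \cap JR_{\mathfrak p}$. Setting $k = v_{\mathfrak p}(I)$ and $l = v_{\mathfrak p}(J)$, we have $IR_{\mathfrak p} = (\mathfrak p R_{\mathfrak p})^k$ and $JR_{\mathfrak p} = (\mathfrak p R_{\mathfrak p})^l$. Assuming WLOG $k \le l$, the lemma immediately preceding the proposition gives $(\mathfrak p R_{\mathfrak p})^k + (\mathfrak p R_{\mathfrak p})^l = (\mathfrak p R_{\mathfrak p})^k$ and $(\mathfrak p R_{\mathfrak p})^k \cap (\mathfrak p R_{\mathfrak p})^l = (\mathfrak p R_{\mathfrak p})^l$, so the valuations are $k = \min(k,l)$ and $l = \max(k,l)$ respectively.

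There is no real obstacle here; the content of the proposition has been packaged into the preceding lemma plus the definition of $v_{\mathfrak p}$ via localization. The only thing to be slightly careful about is confirming that the localization formulas for sums and intersections, which were stated in Section~\ref{ch5} for $R$-submodules of $K$, apply to fractional ideals specifically, which is immediate since fractional ideals are a particular kind of $R$-submodule of $K$.
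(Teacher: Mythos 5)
Your proof is correct and matches the paper's approach exactly: the paper dispatches the first identity as "the homomorphism property" and the other two with "use the above lemma," which is precisely your reduction via the localization formulas for sums and intersections from Section~\ref{ch5} combined with the lemma preceding the proposition. You have simply spelled out the details the paper leaves implicit.
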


\begin{proof}
The first is just a statement of the homomorphism property.
For the other two, use the above lemma.
\end{proof}

Also, I should mentioned the following straightforward consequence of the above results:

\begin{proposition}
Let $R$ be a Dedekind domain and let $I$ be a fractional ideal of~$R$.
Then $I$ is an integral ideal if and only if $v_{\mathfrak p}(I) \ge 0$
for all nonzero prime ideals $\mathfrak p$.
\end{proposition}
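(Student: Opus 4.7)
The plan is to reduce the biconditional to the local-to-global containment test (Corollary~\ref{equality_test_cor}), using the fact that on a DVR, containment in the DVR is exactly nonnegativity of valuation. Both sides of the biconditional are statements about containment in~$R$: being an integral ideal says $I \subseteq R$ globally, while $v_{\mathfrak p}(I) \ge 0$ says $I R_{\mathfrak p} \subseteq R_{\mathfrak p}$ locally at each nonzero prime~$\mathfrak p$.

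First I would dispose of the degenerate case where $R$ is a field: there are no nonzero primes, so the valuation condition is vacuous, and every fractional ideal equals $R$ anyway, so the claim holds trivially. Assume from here on that $R$ is not a field, so that (by the definition of Dedekind domain) the set of nonzero prime ideals of~$R$ coincides with the set of maximal ideals of~$R$.

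For the forward direction, suppose $I \subseteq R$. Localizing at an arbitrary nonzero prime~$\mathfrak p$ gives $I R_{\mathfrak p} \subseteq R_{\mathfrak p}$. By the definition of $v_{\mathfrak p}$ we have $I R_{\mathfrak p} = (\mathfrak p R_{\mathfrak p})^{v_{\mathfrak p}(I)}$, and by the description of fractional ideals of a DVR (Proposition~\ref{prop24}), such a power lies in~$R_{\mathfrak p}$ if and only if the exponent is nonnegative, so $v_{\mathfrak p}(I) \ge 0$. For the reverse direction, suppose $v_{\mathfrak p}(I) \ge 0$ for every nonzero prime~$\mathfrak p$. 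Then $I R_{\mathfrak p} = (\mathfrak p R_{\mathfrak p})^{v_{\mathfrak p}(I)} \subseteq R_{\mathfrak p}$ for each such~$\mathfrak p$. Since the nonzero primes are exactly the maximal ideals, this gives $I R_{\mathfrak m} \subseteq R R_{\mathfrak m}$ for every maximal ideal~$\mathfrak m$, so Corollary~\ref{equality_test_cor} yields $I \subseteq R$.

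There is really no substantive obstacle: the content is just assembling the definition of~$v_{\mathfrak p}$ on fractional ideals with the local-to-global inclusion test. The only minor subtlety worth flagging is to make sure the index set in Corollary~\ref{equality_test_cor} (maximal ideals) matches the index set in the hypothesis (nonzero primes), which is why we isolate the field case and then invoke the defining property of a Dedekind domain that every nonzero prime is maximal.
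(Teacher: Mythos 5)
The paper gives no proof here, labelling the proposition a ``straightforward consequence of the above results''; what it has in mind is almost certainly the factorization machinery just established: if $I\subseteq R$, Theorem~\ref{thm75} writes $I$ as a product of prime ideals with nonnegative exponents, and Proposition~\ref{unique_prop} identifies those exponents with the $v_{\mathfrak p}(I)$; conversely, if all $v_{\mathfrak p}(I)\ge 0$ then $I=\prod \mathfrak p^{v_{\mathfrak p}(I)}$ is a product of integral ideals and hence integral. Your proof is correct but takes a genuinely different route, bypassing the factorization theorem entirely and instead invoking the local-to-global inclusion test (Corollary~\ref{equality_test_cor}) together with the definition $I R_{\mathfrak p} = (\mathfrak p R_{\mathfrak p})^{v_{\mathfrak p}(I)}$. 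The trade-off is that your argument is more elementary in the sense of not needing the existence of prime factorizations, and it is closer in spirit to the local methods the paper advertises; the factorization argument is shorter given that Theorem~\ref{thm75} and Proposition~\ref{unique_prop} are already in hand. Your care in handling the field case and in matching the index set of primes to the index set of maximal ideals in Corollary~\ref{equality_test_cor} is appropriate and is indeed the only small subtlety.
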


\begin{exercise}
Let $I_1, I_2, J$ be fractional ideals of a Dedekind domain.
Show that 
$$ (I_1 + I_2) \cap J = I_1 \cap J + I_2 \cap J$$
and
$$ (I_1 \cap I_2) + J = (I_1 + J) \cap (I_2 + J).$$
Which inclusions are valid for general integral domains?
\end{exercise}

\begin{exercise}
Let $I_1, I_2, J$ be fractional ideals of a Dedekind domain.
Show that 
$$ (I_1 \cap I_2) J = I_1 J \cap I_2 J.$$
Which inclusion is valid for general integral domains?
\end{exercise}


\chapter{Divisibility for nonzero ideals}

We now discuss divisibility for ideals in a Dedekind domain.
Recall that for ideal $I \mid J$ means that there is an ideal $I'$ such that~$J = I I'$.
In this case we also say that $J$ is a \emph{multiple} of~$I$.
We define \emph{common divisors} and \emph{common multiples} in the usual way.

\begin{proposition}
Divisibility in an integral domain $R$ satisfies the following:
\begin{enumerate}
\item
Let $I_1, I_2, I_3$ be ideals of $R$. If $I_1 \mid I_2$ and $I_2 \mid I_3$
then $I_1 \mid I_3$.  

\item
Let $I, J$ be ideals of $R$. If $I \mid J$ then $J \subseteq I$.

\item
Let $I, J$ be  ideals of~$R$. If $I \mid J$ and $J \mid I$ then $I  = J$.

\item
Let $I$ be an ideal of~$R$. Then $I \mid I$ and $R \mid I$.

\end{enumerate}
\end{proposition}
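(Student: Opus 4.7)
The plan is to verify each of the four parts directly from the definition: $I \mid J$ means $J = I I'$ for some ideal $I'$. All four parts are essentially definition-chasing together with elementary facts about the product of ideals (which the paper has already set up).

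First I would dispose of (1) and (4) together, since they are just manipulation of the defining equation. For (1), suppose $I_2 = I_1 A$ and $I_3 = I_2 B$ for ideals $A$ and $B$. Substituting gives $I_3 = I_1(AB)$, and since the product of two ideals is an ideal, this exhibits $I_3$ as a multiple of $I_1$, so $I_1 \mid I_3$. For (4), the identity $I = IR$ shows $I \mid I$ (take $I' = R$), and $I = RI$ shows $R \mid I$; here I am using that $R$ is the identity element for the monoid of ideals under multiplication, as noted earlier in the essay.

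Next I would prove (2), which is the only step with any content at all. Suppose $J = I I'$. Since $I'$ is an ideal of $R$, we have $I' \subseteq R$, and using the monotonicity of ideal multiplication (also noted earlier, extended to submodules of $K$) we obtain
\[
J = I I' \subseteq I R = I,
\]
giving $J \subseteq I$.

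Finally, (3) follows immediately by applying (2) twice: $I \mid J$ gives $J \subseteq I$, and $J \mid I$ gives $I \subseteq J$, so $I = J$. The main ``obstacle'' is really no obstacle at all; everything here rests on the definition of ideal multiplication and on the fact that ideals are contained in $R$. In keeping with the essay's stated philosophy that straightforward proofs are best left to the reader, I expect the author may well omit the proof altogether.
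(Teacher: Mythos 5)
Your proof is correct and is exactly the routine definition-chasing the paper has in mind; the paper itself just writes ``This is straightforward. Note: the second claim gives the third,'' which is precisely your reduction of (3) to (2), and you correctly anticipated that the author would leave the rest to the reader.
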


\begin{proof}
This is straightforward. Note: the second claim gives the third.
\end{proof}

Using factorization into prime ideals in a Dedekind domain, we get the following: 

\begin{proposition}\label{prop82}
Let $I, J$ be nonzero ideals of a Dedekind domain $R$. Then $I \mid J$
if and only if $v_{\mathfrak p} (I) \le v_{\mathfrak p}(J)$ for all nonzero prime ideals~$\mathfrak p$ of~$R$.
\end{proposition}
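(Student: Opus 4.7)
The plan is to prove both directions by passing through the valuations $v_{\mathfrak p}$, using the fact that fractional ideals of a Dedekind domain form a group under multiplication (Corollary~\ref{cor41}) and that integral ideals are exactly those with nonnegative valuation at every nonzero prime (the proposition immediately preceding).

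For the forward direction, I would start with $I \mid J$, so there is an ideal $I'$ with $J = I I'$. Applying $v_{\mathfrak p}$ and using the homomorphism property from Proposition~\ref{prop79}, we get $v_{\mathfrak p}(J) = v_{\mathfrak p}(I) + v_{\mathfrak p}(I')$ at every nonzero prime $\mathfrak p$. Since $I'$ is an integral ideal, the previous proposition gives $v_{\mathfrak p}(I') \ge 0$, and hence $v_{\mathfrak p}(J) \ge v_{\mathfrak p}(I)$. This direction is essentially immediate.

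For the backward direction, assume $v_{\mathfrak p}(I) \le v_{\mathfrak p}(J)$ for all nonzero primes $\mathfrak p$. Since $R$ is a Dedekind domain, $I$ is invertible, so I can form the fractional ideal $I' \defeq I^{-1} J$, which satisfies $I I' = J$ by construction. The key point is to show $I'$ is an \emph{integral} ideal, not just a fractional one. For this I would compute $v_{\mathfrak p}(I') = v_{\mathfrak p}(J) - v_{\mathfrak p}(I) \ge 0$ by hypothesis, again using Proposition~\ref{prop79}. Then the previous proposition (characterizing integral ideals as those with nonnegative valuation at every nonzero prime) gives $I' \subseteq R$, so $I' $ is an integral ideal and $I \mid J$ by definition.

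The main obstacle is really just having the right machinery in place, which the earlier sections already provide: invertibility of every fractional ideal in a Dedekind domain, the homomorphism property of $v_{\mathfrak p}$ on the group of fractional ideals, and the characterization of integral ideals by nonnegative valuations. Given these, the argument reduces to a one-line manipulation in each direction, and there is no real difficulty beyond checking that $I^{-1} J$ makes sense and is integral.
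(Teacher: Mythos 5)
Your proof is correct and is essentially the intended argument: the paper states the proposition without a proof (signaling it as straightforward) immediately after the sentence ``Using factorization into prime ideals in a Dedekind domain, we get the following,'' and comparing exponents in the prime factorization is exactly the same computation as comparing valuations $v_{\mathfrak p}$, which is what you do. Your use of $I' \defeq I^{-1}J$ together with the valuation-nonnegativity criterion for integrality mirrors the technique the paper itself deploys explicitly in the proof of Theorem~\ref{thm89}, so the route is the same in substance, merely phrased through the valuation formalism rather than through raw prime factorizations.
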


\begin{theorem}\label{thm89}
Let $I, J$ be nonzero ideals of a Dedekind domain $R$. Then
$$\text{$I \mid J$\quad
if and only if \quad $J \subseteq I$.}
$$
\end{theorem}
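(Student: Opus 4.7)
The plan is to prove the two implications separately, leveraging the invertibility of ideals in a Dedekind domain.

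One direction, namely $I \mid J \Rightarrow J \subseteq I$, is already recorded in the previous proposition (the second clause concerning divisibility in any integral domain), so I would simply cite that and move on.

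For the substantive direction, $J \subseteq I \Rightarrow I \mid J$, the cleanest approach is to exploit the fact that every nonzero ideal of a Dedekind domain is invertible (Corollary~\ref{cor41}). Given a nonzero ideal $I$, the fractional ideal $I^{-1}$ exists with $I I^{-1} = R$. I would then set
$$
I' \defeq I^{-1} J,
$$
and verify (i) that $I'$ is an integral ideal of $R$, and (ii) that $I I' = J$. For (i), the hypothesis $J \subseteq I$ yields $I^{-1} J \subseteq I^{-1} I = R$, so $I'$ is indeed contained in $R$; it is nonzero since $J$ is nonzero and $I^{-1}$ is nonzero. For (ii), associativity of the product of $R$-submodules of $K$ gives $I I' = I (I^{-1} J) = (I I^{-1}) J = R J = J$. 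Hence $I \mid J$ by definition.

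An alternative route (worth mentioning as a sanity check) would use Proposition~\ref{prop82} together with Proposition~\ref{prop73}: if $J \subseteq I$, then for every nonzero prime $\mathfrak p$ the set $\{ v_{\mathfrak p}(x) : x \in J\}$ is a subset of $\{ v_{\mathfrak p}(x) : x \in I\}$, so its minimum $v_{\mathfrak p}(J)$ is at least $v_{\mathfrak p}(I)$, and Proposition~\ref{prop82} delivers $I \mid J$.

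I do not anticipate a real obstacle here: the only point requiring care is recognizing that the candidate quotient $I^{-1} J$ lands inside $R$, but this is immediate from the inclusion hypothesis together with $I I^{-1} = R$. The whole proof is a one-line calculation once invertibility of $I$ is invoked.
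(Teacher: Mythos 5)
Your proof is correct and takes essentially the same route as the paper: both set $I' \defeq I^{-1} J$, observe $I' \subseteq I^{-1} I \subseteq R$ so $I'$ is an integral ideal, and conclude $I I' = J$. Your added alternative via $v_{\mathfrak p}$-valuations is also valid but is not the argument the paper uses here.
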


\begin{proof}
Suppose $J \subseteq I$.
Let $I'$ be the fractional ideal $I^{-1} J$. Observe that $II' = J$.
Observe that $I'$ is actually an integral ideal since
$$I' = I^{-1} J \subseteq  I^{-1} I \subseteq R.$$
\end{proof}

\begin{remark}
The above translates into a popular phrase for Dedekind domains:
$$\text{``To contain is to divide''.}$$ It is also interesting to note that
this property characterizes 
Dedekind domains. (See Section~\ref{ch12}).
\end{remark}

\begin{corollary}
Let $I, J$ be nonzero ideals of a Dedekind domain $R$. Then $I \cap J$
is the least common multiple of~$I$ and $J$.
\end{corollary}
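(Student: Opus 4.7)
The plan is to leverage Theorem~\ref{thm89} (``to contain is to divide'') to reduce everything about divisibility in a Dedekind domain to the trivial set-theoretic behavior of intersection. First I would verify that $I \cap J$ is a common multiple of $I$ and $J$: the containments $I \cap J \subseteq I$ and $I \cap J \subseteq J$ are immediate from the definition of intersection, so by Theorem~\ref{thm89} we have $I \mid I \cap J$ and $J \mid I \cap J$.

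Next I would verify the ``least'' clause. Suppose $K$ is any nonzero ideal that is a common multiple of $I$ and $J$, so that $I \mid K$ and $J \mid K$. Applying Theorem~\ref{thm89} in the easy direction gives $K \subseteq I$ and $K \subseteq J$, and therefore $K \subseteq I \cap J$. Applying Theorem~\ref{thm89} once more (now in the nontrivial direction, which is where the Dedekind hypothesis is actually used) gives $I \cap J \mid K$, which is exactly what is needed.

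There is essentially no obstacle here beyond Theorem~\ref{thm89}; the whole point of the slogan ``to contain is to divide'' is that intersections automatically become least common multiples and sums become greatest common divisors. As a sanity check, or as an alternative proof, one could instead argue via Proposition~\ref{prop79}, which gives $v_{\mathfrak p}(I \cap J) = \max(v_{\mathfrak p}(I), v_{\mathfrak p}(J))$ for every nonzero prime $\mathfrak p$. Combined with Proposition~\ref{prop82}, which characterizes divisibility by $v_{\mathfrak p}(A) \le v_{\mathfrak p}(B)$, this directly identifies $I \cap J$ with the fractional ideal whose valuation at each $\mathfrak p$ is the maximum of those of $I$ and $J$ --- precisely the valuation-theoretic description of a least common multiple.
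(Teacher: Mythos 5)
Your proof is correct and is exactly the argument the paper intends: the corollary is stated immediately after Theorem~\ref{thm89} with no proof because both the ``common multiple'' claim and the ``least'' claim drop out of ``to contain is to divide'' applied in both directions. Your alternative via Propositions~\ref{prop79} and~\ref{prop82} is also valid and mirrors the argument the paper actually writes out for the companion GCD proposition.
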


\begin{remark} 
Here ``least'' means the minimum according to the divisibility relation, not the inclusion relation
which reverses the partial order.
\end{remark}

Another consequence of Theorem~\ref{thm89} is that this notion of divisibility 
is compatible with divisibility of elements:

\begin{proposition}
Suppose $R$ is a Dedekind domain and that $a, b \in R$ are nonzero. Then~$a\mid b$
if and only if $a R \mid b R$.
\end{proposition}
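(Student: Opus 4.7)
The plan is to unwind both directions directly from the definitions and invoke the earlier results relating ideal containment to divisibility.

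For the forward direction, suppose $a \mid b$, so $b = ac$ for some $c \in R$. Then I would simply observe $bR = acR = (aR)(cR)$, which exhibits $cR$ as a witness that $aR \mid bR$. This step uses no hypothesis on $R$ beyond being an integral domain (in fact, a commutative ring suffices).

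For the reverse direction, suppose $aR \mid bR$, meaning there is an ideal $I$ with $(aR) I = bR$. Here I would apply the general result proved earlier (Proposition~\ref{prop82}'s analogue for arbitrary integral domains) that $I \mid J$ implies $J \subseteq I$: so $bR \subseteq aR$, giving $b \in aR$, hence $b = ac$ for some $c \in R$, which is exactly $a \mid b$. Alternatively, since $R$ is a Dedekind domain, one could cite Theorem~\ref{thm89} (``to contain is to divide''), but this is overkill — the containment $bR \subseteq aR$ from $aR \mid bR$ is the easy half of divisibility that holds universally.

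There is no real obstacle here: the statement is essentially a definitional unpacking, and the only substantive ingredient is the universally valid implication ``divides implies contains'' for ideals. The Dedekind hypothesis in the statement is not logically needed but is natural given the context of the chapter; the result situates element divisibility inside the richer ideal-theoretic framework developed for Dedekind domains, where it also interfaces cleanly with the valuation description $v_{\mathfrak p}(aR) = v_{\mathfrak p}(a)$ proved in the previous chapter.
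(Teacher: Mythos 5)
Your proof is correct, and in fact you take a cleaner and more elementary route than the paper itself signals. The paper places this proposition after the sentence ``Another consequence of Theorem~\ref{thm89}\ldots'', inviting the argument
\[
a \mid b \iff b \in aR \iff bR \subseteq aR \iff aR \mid bR,
\]
where the last equivalence is the ``to contain is to divide'' theorem, whose forward direction (contains implies divides) genuinely uses the Dedekind structure. You instead prove the forward implication of the proposition directly: from $b = ac$ you exhibit $bR = (aR)(cR)$ with witness $cR$, which sidesteps Theorem~\ref{thm89} entirely. Combined with the universal ``divides implies contains'' for the converse, your argument works in any integral domain, which the paper's suggested route does not make visible. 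That is a real gain in clarity about where the Dedekind hypothesis is and is not doing work.

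One small citation slip: you point to ``Proposition~\ref{prop82}'s analogue'' for the fact that $I \mid J$ implies $J \subseteq I$, but Proposition~\ref{prop82} is the valuation characterization of divisibility in a Dedekind domain. The fact you actually want is the unnumbered proposition in Section~\ref{ch2} (``Let $I$ and $J$ be ideals of an integral domain. If $I \mid J$ then $J \subseteq I$''), restated as item~(2) of the first proposition in the chapter on divisibility. The mathematics is fine; just correct the pointer.
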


Note also the following.

\begin{proposition}
Let $R$ be a Dedekind domain. If $a \in R$ is nonzero, and $I$ is a nonzero ideal,
then $a \in I$ if and only if $I \mid aR$.
\end{proposition}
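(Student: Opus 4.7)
The plan is to reduce this statement directly to Theorem~\ref{thm89} (``to contain is to divide''). The bridge between element membership and ideal containment is the principal ideal $aR$, so the first step is to reformulate $a \in I$ as a statement about $aR$ and $I$.

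First I would observe the elementary equivalence $a \in I \iff aR \subseteq I$. The forward direction holds because $I$ is an ideal: if $a \in I$ then $ar \in I$ for every $r \in R$, so $aR \subseteq I$. The reverse direction is immediate since $a = a \cdot 1 \in aR \subseteq I$. This reformulation turns the membership question into a containment question about two nonzero ideals (note $aR$ is nonzero because $a \ne 0$, and $I$ is nonzero by hypothesis), which is exactly the setting where Theorem~\ref{thm89} applies.

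Then I would apply Theorem~\ref{thm89} to conclude $aR \subseteq I \iff I \mid aR$. Chaining the two equivalences gives $a \in I \iff aR \subseteq I \iff I \mid aR$, which is the desired statement. There is no real obstacle here; the only substantive content has already been established in Theorem~\ref{thm89}, and this proposition is essentially a restatement of that theorem at the level of elements rather than ideals. If desired, one could also verify the conclusion via valuations using Proposition~\ref{prop82}: $I \mid aR$ iff $v_{\mathfrak p}(I) \le v_{\mathfrak p}(aR) = v_{\mathfrak p}(a)$ for every nonzero prime $\mathfrak p$, and by Proposition~\ref{prop73} the latter condition is equivalent to $a$ lying in $IR_{\mathfrak p}$ for every $\mathfrak p$, hence in $I$ by Corollary~\ref{intersection_cor} applied to the submodule $I$. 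But the one-line argument via Theorem~\ref{thm89} is cleaner and is the intended proof.
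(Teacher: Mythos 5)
Your proposal is correct, and it is precisely the argument the paper intends: the paper omits the proof as straightforward, but the proposition is clearly stated as an immediate consequence of Theorem~\ref{thm89} via the elementary equivalence $a \in I \iff aR \subseteq I$, which is exactly what you do.
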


\begin{proposition}
If  $I, J$ are nonzero ideals of a Dedekind domain then $I+J$ is the greatest common divisor of $I$
and $J$.
\end{proposition}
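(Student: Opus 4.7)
The plan is to use the slogan ``to contain is to divide'' (Theorem~\ref{thm89}) to convert the problem from multiplicative language to the much simpler language of set inclusion, where the sum $I+J$ has an obvious universal property. The argument then splits into two routine steps: verifying that $I+J$ is a common divisor, and then verifying that any common divisor of $I$ and $J$ divides it.

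First I would check that $I+J$ is a common divisor. Since $I \subseteq I+J$ and $J \subseteq I+J$, Theorem~\ref{thm89} gives immediately that $I+J \mid I$ and $I+J \mid J$. (Note $I+J$ is a nonzero ideal of $R$, so Theorem~\ref{thm89} applies.)

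Next I would show that $I+J$ is ``greatest'' with respect to divisibility. Let $D$ be any nonzero ideal with $D \mid I$ and $D \mid J$. By Theorem~\ref{thm89} this means $I \subseteq D$ and $J \subseteq D$. Since $D$ is closed under addition, $I + J \subseteq D$, and applying Theorem~\ref{thm89} once more yields $D \mid I+J$. Thus $I+J$ is divisible by every common divisor, so it is the greatest common divisor in the divisibility order.

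There is no real obstacle here; the only subtlety is recalling that in the divisibility poset, ``greatest common divisor'' means largest in the divisibility order (which corresponds to smallest in the inclusion order, as already flagged in the remark after the least common multiple corollary). An alternative proof would use valuations: by Proposition~\ref{prop79} we have $v_{\mathfrak p}(I+J) = \min\{v_{\mathfrak p}(I), v_{\mathfrak p}(J)\}$ for every nonzero prime $\mathfrak p$, and by Proposition~\ref{prop82} divisibility is characterized by the pointwise inequality of valuations, from which the GCD property is immediate. I would favor the first approach since it is shorter and highlights the ``contains = divides'' principle.
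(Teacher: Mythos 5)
Your proof is correct, and both of your arguments actually appear in the paper: the paper's primary proof is the valuation computation via Propositions~\ref{prop79} and~\ref{prop82}, with the ``to contain is to divide'' argument via Theorem~\ref{thm89} mentioned parenthetically as an alternative, whereas you lead with the Theorem~\ref{thm89} argument and relegate the valuation computation to an aside. Either ordering is fine; your preferred route has the small advantage of not invoking the full unique-factorization/valuation machinery, relying only on ``contains = divides'' and the fact that an ideal is closed under addition.
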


\begin{proof}
By Proposition~\ref{prop79}, for each prime ideal~$\mathfrak p$, 
$$
v_{\mathfrak p}(I +J ) = \min (v_{\mathfrak p}(I ), v_{\mathfrak p}(J ) ).
$$
Now use Proposition~\ref{prop82}. (Or you can argue from Theorem~\ref{thm89}).
\end{proof}

\begin{remark} 
Here ``greatest'' means the maximum according to the divisibility relation, not the inclusion relation.
\end{remark}

\begin{definition}
Let $I$ and $J$ be nonzero ideals of an Dedekind domain~$R$. Then we say that $I$ and $J$
are \emph{relatively prime} if the only common divisor of $I$ and $J$ is~$R$. In other words, $I+J = R$,
or equivalently $\min (v_{\frak p}(I), v_{\frak p}(J)) = 0$ for all nonzero prime ideals~$\mathfrak p$ of $R$.
\end{definition}

\begin{remark}
This is equivalent, of course, to having prime factorizations that share no common prime ideals.
\end{remark}

Prime ideals behave as primes with the divisibility relation.

\begin{proposition}
Let $\mathfrak p$ be a proper nonzero ideal of a Dedekind domain $R$. Then~$\mathfrak p$
is a prime ideal if and only if the following holds:
For all ideals $I, J$ of $R$, if $\mathfrak p | I J$ then $\mathfrak p | I$
or $\mathfrak p \mid J$.
\end{proposition}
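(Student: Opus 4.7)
The plan is to prove both directions by routing everything through the slogan \emph{to contain is to divide} (Theorem~\ref{thm89}), which in a Dedekind domain turns the divisibility statement into a familiar statement about containments, and then use the element-level definition of a prime ideal.

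First I would handle the forward direction. Assume $\mathfrak p$ is a nonzero prime ideal and that $\mathfrak p \mid IJ$. By Theorem~\ref{thm89} this is equivalent to $IJ \subseteq \mathfrak p$. Suppose $\mathfrak p \nmid I$; again by Theorem~\ref{thm89}, $I \not\subseteq \mathfrak p$, so we can pick $a \in I \smallsetminus \mathfrak p$. For every $b \in J$, the product $ab$ lies in $IJ \subseteq \mathfrak p$, and primality of $\mathfrak p$ forces $b \in \mathfrak p$. Hence $J \subseteq \mathfrak p$, i.e.\ $\mathfrak p \mid J$.

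Next I would do the converse. Assume the divisibility condition holds, and take $a,b \in R$ with $ab \in \mathfrak p$. Then $(aR)(bR) = (ab)R \subseteq \mathfrak p$, so by Theorem~\ref{thm89} we have $\mathfrak p \mid (aR)(bR)$. The hypothesis gives $\mathfrak p \mid aR$ or $\mathfrak p \mid bR$; translating back through Theorem~\ref{thm89} again, this means $a \in \mathfrak p$ or $b \in \mathfrak p$. Thus $\mathfrak p$ is prime.

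There is no real obstacle here: the only substantive ingredient is Theorem~\ref{thm89}, and once applied twice (once to convert $\mathfrak p \mid IJ$ into $IJ\subseteq \mathfrak p$, and once to convert $\mathfrak p \mid aR$ back into $a\in \mathfrak p$) the argument reduces to the standard proof that an ideal is prime iff it absorbs products of elements. The only mild care needed is that we are given $\mathfrak p$ is proper and nonzero, so the primality characterization is honestly equivalent to the ``if $ab\in\mathfrak p$ then $a\in\mathfrak p$ or $b\in\mathfrak p$'' formulation without degenerate cases.
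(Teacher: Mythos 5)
Your proof is correct. The paper does not supply an argument for this proposition (it is one of the statements the author deliberately leaves to the reader), so there is no ``official'' proof to compare against; but routing everything through Theorem~\ref{thm89} (``to contain is to divide'') is exactly the natural route given what is developed in that section, and your two applications of it --- converting $\mathfrak p \mid IJ$ into $IJ \subseteq \mathfrak p$, and converting $\mathfrak p \mid aR$ back into $a \in \mathfrak p$ --- reduce the claim cleanly to the element-level definition of a prime ideal. One small thing you do not spell out: Theorem~\ref{thm89} is stated only for \emph{nonzero} ideals, so in the converse you should dismiss the degenerate case $a = 0$ or $b = 0$ before forming $(aR)(bR)$, and in the forward direction you should dismiss $I = 0$ or $J = 0$ before invoking it. Both cases are immediate, but they deserve a sentence. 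As an aside, the machinery in Section~\ref{dedekind_vals_ch} gives a genuinely different route: $\mathfrak p \mid IJ$ is equivalent to $v_{\mathfrak p}(I)+v_{\mathfrak p}(J) \ge 1$ by Proposition~\ref{prop82} and Proposition~\ref{prop79}, and the forward direction falls out since a sum of nonnegative integers is positive only if a summand is; the converse then follows exactly as you wrote it. Your containment-based argument is more elementary, needing only Theorem~\ref{thm89}; the valuation argument makes the ``prime elements of the divisibility monoid'' picture more transparent.
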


Finally, we can characterize powers of primes using valuations:

\begin{proposition} \label{prop90}
Let $\frak p$ be a nonzero prime in a Dedekind domain, and let $n \in \bN$.
Then 
$$
\frak p^n = \{ a \in R \mid v_{\frak p} (a) \ge n\}.
$$
\end{proposition}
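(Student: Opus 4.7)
The plan is to prove the two inclusions separately, leveraging the valuation machinery developed in the previous sections. Throughout, I will use the three-way interpretation of $v_{\mathfrak p}$: as a map on $K^\times$, on fractional ideals, and its compatibility $v_{\mathfrak p}(xR)=v_{\mathfrak p}(x)$.

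For the inclusion $\mathfrak p^n\subseteq\{a\in R\mid v_{\mathfrak p}(a)\ge n\}$, I would simply invoke Proposition~\ref{prop73} applied to the fractional ideal $\mathfrak p^n$. Since $v_{\mathfrak p}$ is a group homomorphism on $\mathcal I(R)$ with $v_{\mathfrak p}(\mathfrak p)=1$, we have $v_{\mathfrak p}(\mathfrak p^n)=n$. By Proposition~\ref{prop73} this equals $\min_{x\in\mathfrak p^n}v_{\mathfrak p}(x)$, so every $a\in\mathfrak p^n$ satisfies $v_{\mathfrak p}(a)\ge n$.

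For the reverse inclusion, suppose $a\in R$ with $v_{\mathfrak p}(a)\ge n$. The case $a=0$ is trivial since $0\in\mathfrak p^n$, so assume $a\ne 0$ and consider the principal integral ideal $aR$. My plan is to show $\mathfrak p^n\mid aR$ using the prime-by-prime divisibility criterion of Proposition~\ref{prop82}. For the prime $\mathfrak p$ itself, $v_{\mathfrak p}(aR)=v_{\mathfrak p}(a)\ge n=v_{\mathfrak p}(\mathfrak p^n)$. For any other nonzero prime $\mathfrak q\ne\mathfrak p$, the preceding lemma gives $v_{\mathfrak q}(\mathfrak p)=0$, hence $v_{\mathfrak q}(\mathfrak p^n)=0\le v_{\mathfrak q}(aR)$, where the last inequality holds because $aR\subseteq R$ is an integral ideal. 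Thus $\mathfrak p^n\mid aR$, which by Theorem~\ref{thm89} means $aR\subseteq\mathfrak p^n$, i.e., $a\in\mathfrak p^n$.

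There is no real obstacle here: the machinery of valuations on fractional ideals already encodes exactly this kind of membership question, and the result is essentially a translation between the ideal-theoretic and the valuation-theoretic points of view. The only minor subtlety is remembering that $v_{\mathfrak p}$ on fractional ideals is a homomorphism on all of $\mathcal I(R)$, so $v_{\mathfrak p}(\mathfrak p^n)=n$ for all $n\in\mathbb Z$, not just $n\ge 0$ — though here we only need $n\in\mathbb N$.
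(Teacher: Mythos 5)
Your proof is correct and follows essentially the same route as the paper: both establish the two inclusions separately using the valuation machinery on fractional ideals, with the reverse inclusion handled identically via the prime-by-prime divisibility criterion and ``to contain is to divide.'' The only cosmetic difference is in the forward inclusion, where you invoke Proposition~\ref{prop73} (the minimum characterization of $v_{\mathfrak p}(I)$) directly, while the paper instead argues $aR\subseteq\mathfrak p^n\Rightarrow\mathfrak p^n\mid aR\Rightarrow n\le v_{\mathfrak p}(aR)=v_{\mathfrak p}(a)$; both are one-line applications of results already in hand.
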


\begin{proof}
If $a \in \frak p^n$ then $a R \subseteq \frak p^n$, so $\frak p^n \mid a R$.
Thus $n = v_{\frak p} (\frak p^n) \le v_{\frak p} (a R) = v_{\frak p}(a)$.

Conversely, if $v_{\frak p} (a) \ge n$ then 
$v_{\frak p} (\frak p^n) \le v_{\frak p} (a R)$.
For any other nonzero prime ideal $\frak q$, we have $v_{\frak q} (\frak p^n) = n v_{\frak q} (\frak p) = 0$,
so $v_{\frak q} (\frak p^n) \le v_{\frak q} (a R)$.
Thus $\frak p^n \mid a R$, and so~$a \in \frak p^n$.
\end{proof}

\begin{exercise}\label{ex28}
Let $I, J$ be ideals of a Dedekind domain.
Show that 
$$ (I + J) (I \cap J)=I J.$$
Which inclusion is valid for general integral domains?

Note this says that the least common multiple of two ideals times the greatest common divisor
is just the product. This generalizes a basic identity of integers.
\end{exercise}

\begin{exercise}
Let $I$ and $J$ be nonzero ideals in a Dedekind domain~$R$.
Show that~$I J \subseteq I \cap J$ with equality if and only if $I+J=R$.
Can you generalize at least part of this to ideals in an integral domain?
(Hint: see the proof of the Chinese remainder theorem below.) 
\end{exercise}

\begin{exercise}
Let $I$ be a nonzero ideal of a Dedekind domain $R$. Show that $I$ is a~$k\ge 0$
power if and only if $v_{\mathfrak p}(I)$ is a multiple of $k$ for all 
nonzero prime ideals~$\mathfrak p$. 
Does this hold for fractional ideals? Does this hold for elements? What if $R$ is a~PID?
What if $R$ is $\bZ$.
\end{exercise}

\begin{exercise}
Show that if $I^n = J^n$ for fractional ideals $I, J$ in a Dedekind domain, then $I = J$.
Show that if $I^{n} \mid J^m$  where $m \le n$ then $I \mid J$. (Here $m, n$ are positive integers.)
\end{exercise}

\begin{exercise}
Suppose $I_1 I_2 = J^k$ for nonzero ideals $I_1, I_2, J$ in a Dedekind domain, and where $k\ge 1$.
Show that if $I_1$ and $I_2$ are relatively prime then 
$I_1 = J_1^k$ and~$I_2 = J_2^k$ for unique nonzero ideals $J_1, J_2$. Show also that $J_1 J_2 = J$.
\end{exercise}


\chapter{Approximation Theorems}\label{approximation_theorem_ch}

Given elements $a$ and $b$ in an integral domain with ideal $I$, we write
$$
a \equiv b \mod I
$$
to mean $a-b \in I$. 
If $\mathfrak p$ is a nonzero prime ideal in a Dedekind domain, then
we think of $a \equiv b \mod \mathfrak p^k$
as indicating that $a$ approximates $b$ from the point of view of $\mathfrak p$ (or the valuation $v_{\mathfrak p}$).
The larger the exponent $k$, the better the approximation. We can also state this
approximation as $v_{\mathfrak p} (a - b) \ge k$.

We will be simultaneously approximating with respect to multiple nonzero prime ideals.
There is a sense in which different nonzero prime ideals are independent.
More generally, if $I + J = R$ where $R$ is an integral domain, then the ideals $I$ and $J$ are independent
in some sense. This is expressed by the
Chinese remainder theorem (a straightforward generalization of the Chinese remainder theorem
of elementary number theory).

\begin{theorem} [Chinese remainder theorem]
Suppose $R$ is a commutative ring and suppose $I, J$ are ideals of $R$ where $I+J = R$. Then
the natural homomorphism 
$$
R / IJ \to (R/I) \times (R/J), \qquad [a] \mapsto ([a], [a])
$$
is a ring isomorphism.
\end{theorem}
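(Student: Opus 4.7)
The plan is to verify the map is a well-defined ring homomorphism, compute its kernel, show the kernel equals $IJ$, prove surjectivity, and then invoke the first isomorphism theorem.

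First I would check that the natural map $\varphi\colon R \to (R/I) \times (R/J)$ given by $a \mapsto ([a]_I, [a]_J)$ is a ring homomorphism; this is immediate from the componentwise ring structure on the product together with the fact that the quotient maps $R \to R/I$ and $R \to R/J$ are each ring homomorphisms. The kernel of $\varphi$ is visibly $I \cap J$, so the first isomorphism theorem would give $R/(I\cap J) \cong \mathrm{image}(\varphi)$. It then remains to identify $I \cap J$ with $IJ$ and to prove that $\varphi$ is surjective.

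Next I would establish $I \cap J = IJ$ under the assumption $I + J = R$. The inclusion $IJ \subseteq I \cap J$ is automatic. For the reverse, pick $i \in I$ and $j \in J$ with $i + j = 1$, which exist precisely because $I + J = R$. Then for any $x \in I \cap J$, we have $x = x\cdot 1 = xi + xj$, and $xi \in JI = IJ$ since $x \in J$ and $i \in I$, while $xj \in IJ$ since $x \in I$ and $j \in J$. Hence $x \in IJ$. Consequently the induced map $R/IJ \to (R/I) \times (R/J)$ is a well-defined injective ring homomorphism.

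For surjectivity, I would use the same decomposition $1 = i + j$ with $i \in I$, $j \in J$. Given an arbitrary target $([a]_I, [b]_J)$, consider the element $z = aj + bi \in R$. Since $i \equiv 0 \pmod I$ and $j \equiv 1 \pmod I$, we get $z \equiv a \pmod I$; symmetrically $z \equiv b \pmod J$. So $\varphi(z) = ([a]_I, [b]_J)$ as required. Combining injectivity with surjectivity yields the isomorphism.

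The only step that requires any real care is the equality $I \cap J = IJ$ under the coprimality hypothesis, and the surjectivity trick $z = aj + bi$; both hinge on the single identity $1 = i + j$ with $i \in I$, $j \in J$, so in practice there is no serious obstacle. I expect the whole argument to be short and entirely formal once that decomposition of $1$ is in hand.
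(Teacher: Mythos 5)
Your proof is correct and takes essentially the same approach as the paper: both arguments hinge on writing $1 = i + j$ with $i \in I$, $j \in J$, then using this decomposition to prove surjectivity of $R \to (R/I) \times (R/J)$ and to show $I \cap J \subseteq IJ$, finishing with the first isomorphism theorem.
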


\begin{proof}
Start with the natural homomorphism 
$$
R \to (R/I) \times (R/J), \qquad a \mapsto ([a], [a]).
$$
Since $I + J = R$, we can find $e_2 \in I$ and $e_1 \in J$ such that $e_2 + e_1 = 1$.
Observe that~$e_1$ maps to $([1], [0])$ and $e_2$ maps to $([0], [1])$.
So, given $([a], [b]) \in (R/I) \times (R/J)$, we can find an element of $R$ mapping to it,
namely $a e_1 + b e_2$. So surjectivity is established.

Then kernel of $R \to (R/I) \times (R/J)$ is $I \cap J$. Clearly $IJ \subseteq I \cap J$.
Suppose that~$x \in I \cap J$. Let $e_1, e_2$ be as before. Then
$$
x = (e_2 + e_1) x = e_2 x + e_1 x \in I J.
$$
Thus $IJ = I \cap J$, and the kernel of $R \to (R/I) \times (R/J)$ is $IJ$.
\end{proof}

\begin{remark}
Above we used the Cartesian product of two rings. This is a ring where addition
and multiplication are defined componentwise.
(Actually for the results in this section, we really only need to consider the product as a group under addition).
\end{remark}

\begin{remark}
We could have written this isomorphism with $I\cap J$ instead of $I J$:
$$R / I\cap J \to (R/I) \times (R/J), \qquad [a] \mapsto ([a], [a]).$$
Often all we need is the surjection:
$$R \to (R/I) \times (R/J), \qquad a\mapsto ([a], [a]).$$
\end{remark}

This allows us to solve systems of congruences involving distinct prime ideals.

\begin{proposition} 
Let $R$ be a Dedekind domain, let~$\mathfrak p_1, \ldots, \mathfrak p_k$
be distinct nonzero prime ideals of~$R$, and let $n_1, \ldots, n_k$ be nonnegative integers.
Then the natural homomorphism 
$$
R \to (R/\mathfrak {p}_1^{n_1}) \times \cdots \times (R/{\mathfrak p}_k^{n_k})
$$
is a surjection. In other words, given $b_1, \ldots, b_k \in R$ we can find an $a \in R$ such that
$$
a \equiv b_i \mod \mathfrak p_i^{n_i}
$$
for all $i$.
\end{proposition}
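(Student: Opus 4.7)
The plan is to reduce this to the two-ideal Chinese remainder theorem already proved, via induction on $k$ after first establishing that the ideals $\mathfrak{p}_i^{n_i}$ are pairwise comaximal.

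First I would show that $\mathfrak{p}_i^{n_i} + \mathfrak{p}_j^{n_j} = R$ whenever $i \ne j$. The cleanest way to see this is via valuations. By Proposition~\ref{prop79}, for any nonzero prime ideal $\mathfrak{q}$ we have
$$v_{\mathfrak{q}}(\mathfrak{p}_i^{n_i} + \mathfrak{p}_j^{n_j}) = \min\{v_{\mathfrak{q}}(\mathfrak{p}_i^{n_i}),\, v_{\mathfrak{q}}(\mathfrak{p}_j^{n_j})\}.$$
If $\mathfrak{q} = \mathfrak{p}_i$, the second term is $0$; if $\mathfrak{q} = \mathfrak{p}_j$, the first is $0$; and for any other $\mathfrak{q}$, both terms are $0$. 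Thus $v_{\mathfrak{q}}(\mathfrak{p}_i^{n_i} + \mathfrak{p}_j^{n_j}) = 0$ for every nonzero prime $\mathfrak{q}$, so the sum has trivial prime factorization and must equal $R$. (One could also just note that $\mathfrak{p}_i, \mathfrak{p}_j$ are distinct maximal ideals, hence comaximal, and then raise to powers using the standard argument $R = (\mathfrak{p}_i+\mathfrak{p}_j)^{n_i+n_j} \subseteq \mathfrak{p}_i^{n_i}+\mathfrak{p}_j^{n_j}$.)

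Next I would induct on $k$. The case $k=1$ is trivial. For the inductive step, I want to apply the two-ideal Chinese remainder theorem to the pair $I = \mathfrak{p}_1^{n_1}$ and $J = \mathfrak{p}_2^{n_2}\cdots\mathfrak{p}_k^{n_k}$. The required comaximality $I + J = R$ follows from the pairwise comaximality above, using the identity
$$R \,=\, (\mathfrak{p}_1^{n_1}+\mathfrak{p}_2^{n_2})\cdots(\mathfrak{p}_1^{n_1}+\mathfrak{p}_k^{n_k}) \,\subseteq\, \mathfrak{p}_1^{n_1} + \mathfrak{p}_2^{n_2}\cdots\mathfrak{p}_k^{n_k},$$
obtained by expanding the product and noting that every term except $\mathfrak{p}_2^{n_2}\cdots\mathfrak{p}_k^{n_k}$ contains a factor of $\mathfrak{p}_1^{n_1}$. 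The two-ideal CRT then gives a surjection $R \to (R/\mathfrak{p}_1^{n_1}) \times (R/J)$, and by the inductive hypothesis $R \to (R/\mathfrak{p}_2^{n_2})\times\cdots\times(R/\mathfrak{p}_k^{n_k})$ is surjective, so the composition (after identifying the second factor) gives the desired surjection.

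The main obstacle is really just bookkeeping: choosing the right decomposition and verifying the comaximality of a single $\mathfrak{p}_i^{n_i}$ with a product of the others. Neither step is hard, but the valuation-based argument in the first paragraph is the most elegant way to handle it, since it sidesteps any messy binomial-expansion arguments. Once these are in hand, the second formulation ``given $b_1,\ldots,b_k$, find $a$ with $a \equiv b_i \bmod \mathfrak{p}_i^{n_i}$'' is just the definition of surjectivity of the product map.
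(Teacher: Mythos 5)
Your proof is correct and follows essentially the same approach the paper intends: induction on $k$ together with the two-ideal Chinese remainder theorem applied to $\mathfrak{p}_1^{n_1}$ and the product of the remaining prime powers. The paper's one-line sketch leaves the required comaximality checks implicit; you supply them explicitly (via Proposition~\ref{prop79}, with the elementary binomial-expansion route noted as an alternative), which is exactly the gap the reader is expected to fill.
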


\begin{proof}
We use induction and the Chinese remainder theorem to first get an isomorphism
involving $R/I$ where $I = \mathfrak {p}_1^{n_1} \cdots {\mathfrak p}_k^{n_k}$.
From the isomorphism produce the surjection using $R\to R/I$.
\end{proof}

\begin{theorem} [Approximation theorem]
Let $R$ be a Dedekind domain with fraction field~$K$, let~$\mathfrak p_1, \ldots, \mathfrak p_k$
be distinct nonzero prime ideals of~$R$, let $x_1, \ldots, x_k \in K$,
and let~$n_1, \ldots, n_k$ be integers.
Then there is an $x \in K$ such that 
$$v_{\mathfrak p_i}(x - x_i) \ge n_i$$
for each $\mathfrak p_i$, and such that 
$v_{\mathfrak p}(x) \ge 0$
for any other nonzero prime ideal $\mathfrak p$.
\end{theorem}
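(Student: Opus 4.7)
The plan is to reduce this to the Chinese Remainder Theorem (the proposition immediately preceding) applied in $R$. The difficulty is twofold: each $x_i\in K$ may have negative valuation at primes outside the given list, and any common denominator $d\in R$ used to make the $dx_i$ integral will itself carry unwanted valuation at primes we have not yet accounted for. The strategy is to enlarge the prime list so that it absorbs all poles of the $x_i$, then incorporate the ``bad part'' of $d$ as an extra coprime condition in the CRT system. The main obstacle is precisely this bad part, since without compensating for it the answer $y/d$ would acquire fresh denominators at primes outside our list; the payoff is that everything can then be solved by one application of CRT.

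Step one is to enlarge the list. Because each $x_iR$ is a fractional ideal, only finitely many primes $\mathfrak{p}$ satisfy $v_\mathfrak{p}(x_i)<0$ for some $i$; adjoin all such primes to $\{\mathfrak{p}_1,\ldots,\mathfrak{p}_k\}$ and relabel the enlarged list as $\mathfrak{p}_1,\ldots,\mathfrak{p}_N$, setting $x_i:=0$ and $n_i:=0$ for each added index $i>k$. I may also replace any negative $n_i$ by $0$, since decreasing $n_i$ only weakens the required inequality. After this reduction $v_\mathfrak{p}(x_i)\ge 0$ holds for every $i$ and every $\mathfrak{p}\notin\{\mathfrak{p}_1,\ldots,\mathfrak{p}_N\}$, and the desired ``other primes'' condition at each added $\mathfrak{p}_i$ coincides with the approximation condition $v_{\mathfrak{p}_i}(x-0)\ge 0$ there, so the single conclusion $v_{\mathfrak{p}_i}(x-x_i)\ge n_i$ for $i=1,\ldots,N$ now implies everything we need.

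Step two is to choose a denominator and set up the CRT system. Let $e_i=\max\{0,-\min_j v_{\mathfrak{p}_i}(x_j)\}$ and pick any nonzero $d\in\prod_{i=1}^N\mathfrak{p}_i^{e_i}$; then $dx_j\in R$ for every $j$. Put $f_i=v_{\mathfrak{p}_i}(d)\ge e_i$ and factor the ideal $dR=IJ$ where $I=\prod_i\mathfrak{p}_i^{f_i}$ is supported on our list and $J$ is the integral ideal supported at primes outside it; in particular $J$ is coprime to every $\mathfrak{p}_i$. The preceding CRT proposition, applied to the pairwise coprime prime powers comprising $\mathfrak{p}_1^{n_1+f_1},\ldots,\mathfrak{p}_N^{n_N+f_N}$ together with the prime-power factors of $J$, yields $y\in R$ satisfying
$$y\equiv dx_i\pmod{\mathfrak{p}_i^{\,n_i+f_i}}\quad(i=1,\ldots,N)\qquad\text{and}\qquad y\in J.$$
Setting $x:=y/d\in K$ then gives $v_{\mathfrak{p}_i}(x-x_i)=v_{\mathfrak{p}_i}(y-dx_i)-f_i\ge n_i$ at each $\mathfrak{p}_i$, and for any $\mathfrak{p}\notin\{\mathfrak{p}_1,\ldots,\mathfrak{p}_N\}$ the inclusion $y\in J$ forces $v_\mathfrak{p}(y)\ge v_\mathfrak{p}(d)$, whence $v_\mathfrak{p}(x)\ge 0$, completing the verification.
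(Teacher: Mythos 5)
Your proof is correct and follows essentially the same route as the paper's: reduce to $n_i\ge 0$, clear denominators with a common $d\in R$, apply CRT in $R$ to find a numerator, and divide by $d$. The paper phrases the handling of the ``other primes'' more tersely---it simply demands $v_{\mathfrak p}(a)\ge v_{\mathfrak p}(d)$ at any prime with $v_{\mathfrak p}(d)>0$ and says ``appeal to the first case,'' which implicitly enlarges the prime list to include the primes dividing~$d$. You make this enlargement explicit (adjoining all poles of the $x_i$ with $x_i:=0$, $n_i:=0$) and spell out the factorization $dR = IJ$ with $J$ coprime to the list, folding $y\in J$ into the CRT system. The two proofs are the same in substance; your write-up is more detailed about why the denominator does not introduce new poles outside the enlarged list, which is precisely the point the paper's wording ``for any other nonzero prime ideal with $v_{\mathfrak p}(d) > 0$'' is compressing.
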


\begin{proof}
Observe that it is enough to prove this for $n_i \ge 0$, so we assume 
nonnegative~$n_i$.
If~$x_i \in R$ then we just apply the previous proposition
with~\text{$b_i = x_i$}.

In the general case let $d\in R$ be a common denominator for the $x_i$, and
write each $x_i$ as~$b_i/d$ with $b_i \in R$.
We then wish to find an $a \in R$ such that 
$$v_{\mathfrak p_i}(a - b_i) \ge n_i + v_{\mathfrak p_i}(d).$$
We also want $v_{\mathfrak p}(a) \ge v_{\mathfrak p}(d)$ for any other nonzero
prime ideal with $v_{\mathfrak p}(d) > 0$. We can find such an $a\in R$
by appealing to the first case. Now consider $x = a/d$.
\end{proof}

\begin{theorem} [Second Approximation theorem]
Let $R$ be a Dedekind domain with fraction field~$K$, let~$\mathfrak p_1, \ldots, \mathfrak p_k$
be nonzero prime ideals of~$R$, and let $n_1, \ldots, n_k$ be integers.
Then there is an element $x\in K$ such that
$v_{{\mathfrak p}_i} (x) = n_i$ for each~${\mathfrak p}_i$
and such that $v_{{\mathfrak p}} (x) \ge 0$ for any other nonzero prime ideal~${\mathfrak p}$.
\end{theorem}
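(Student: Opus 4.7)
The plan is to reduce the Second Approximation theorem to the (first) Approximation theorem combined with the strict-inequality rule for valuations (the proposition from Chapter 2 stating that if $v(y) > v(z)$ then $v(y + z) = v(z)$). First I would use surjectivity of $v_{\mathfrak{p}_i}\colon K^\times \to \bZ$ to pick, for each $i$, an element $x_i \in K^\times$ with $v_{\mathfrak{p}_i}(x_i) = n_i$ exactly. Concretely, choose $\pi_i \in \mathfrak{p}_i \smallsetminus \mathfrak{p}_i^2$ and set $x_i \defeq \pi_i^{n_i}$; Proposition~\ref{unique_prop} (or just the definition of $v_{\mathfrak{p}_i}$) guarantees that $v_{\mathfrak{p}_i}(x_i) = n_i$.

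Next I would apply the Approximation theorem to the same primes $\mathfrak{p}_1, \ldots, \mathfrak{p}_k$, to the targets $x_1, \ldots, x_k$, and to the tolerances $n_i + 1$ (one better than the desired order of vanishing). This produces an $x \in K$ such that
$$v_{\mathfrak{p}_i}(x - x_i) \;\ge\; n_i + 1 \qquad \text{for each } i,$$
and $v_{\mathfrak{p}}(x) \ge 0$ for every nonzero prime $\mathfrak{p} \notin \{\mathfrak{p}_1, \ldots, \mathfrak{p}_k\}$. The latter is exactly the "other primes" requirement in the statement, so no further work is needed there.

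To verify the equalities $v_{\mathfrak{p}_i}(x) = n_i$, write $x = x_i + (x - x_i)$. By construction $v_{\mathfrak{p}_i}(x - x_i) \ge n_i + 1 > n_i = v_{\mathfrak{p}_i}(x_i)$, so the strict inequality form of the ultrametric gives
$$v_{\mathfrak{p}_i}(x) \;=\; \min\{v_{\mathfrak{p}_i}(x_i),\, v_{\mathfrak{p}_i}(x - x_i)\} \;=\; n_i.$$
Combined with $v_{\mathfrak{p}}(x) \ge 0$ on the remaining primes, this completes the argument.

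The main obstacle is really just recognizing the correct sharpening: the (first) Approximation theorem delivers only inequalities at the listed primes, so naively setting the tolerance to $n_i$ would not force $v_{\mathfrak{p}_i}(x) = n_i$ (it could be strictly larger). The trick is to demand one extra order of approximation (tolerance $n_i + 1$) beyond the valuation one is trying to prescribe, so that the triangle-inequality becomes an equality and pins $v_{\mathfrak{p}_i}(x)$ down to $v_{\mathfrak{p}_i}(x_i) = n_i$. Everything else is bundled into the first Approximation theorem.
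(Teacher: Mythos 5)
Your proposal is correct and follows essentially the same approach as the paper: both reduce to the first Approximation theorem applied with the sharpened tolerance $n_i + 1$, then invoke the strict-inequality rule for valuations on $x = x_i + (x - x_i)$. The only cosmetic difference is the choice of $x_i$ (you take $\pi_i^{n_i}$ for a uniformizer $\pi_i$, the paper takes $x_i \in \mathfrak{p}_i^{n_i}\smallsetminus\mathfrak{p}_i^{n_i+1}$), but these are equivalent and your phrasing handles negative $n_i$ with slightly less fuss.
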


\begin{proof}
For each such prime $\mathfrak p_i$,
let $x_i \in \mathfrak {p}_i^{n_i}\smallsetminus \mathfrak {p}_i^{n_i+i}$.
Use the approximation theorem to find an $x \in K$ such that
$v_{\mathfrak p_i}(x - x_i) \ge n_i + 1$
for each $\mathfrak p_i$, and such that 
$v_{\mathfrak p}(x) \ge 0$
for any other nonzero prime ideal $\mathfrak p$.

Observe that $v_{{\mathfrak p}_i}(x_i) = n_i$ (Proposition~\ref{prop90}).
Since $x = x_i + (x - x_i)$,
$$v_{{\mathfrak p}_i}(x) = 
\min \{ v_{{\mathfrak p}_i}(x_i), v_{{\mathfrak p}_i}(x-x_i) \}
=
v_{{\mathfrak p}_i}(x_i) = n_i.
$$
\end{proof}

\begin{theorem}
Let $R$ be a Dedekind domain with a finite number of prime ideals. Then $R$ is a PID.
\end{theorem}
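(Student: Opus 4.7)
The plan is to use the Second Approximation Theorem, which sits immediately before the statement and is tailor-made for producing an element with prescribed valuations. The key observation is that in a Dedekind domain with only finitely many nonzero prime ideals $\mathfrak{p}_1, \ldots, \mathfrak{p}_k$, any nonzero ideal is determined (up to equality) by a finite tuple of nonnegative integers $(n_1, \ldots, n_k)$ via the prime factorization $I = \mathfrak{p}_1^{n_1}\cdots\mathfrak{p}_k^{n_k}$, and we can realize any such tuple as the valuation data of a single element of $R$.

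First, I would dispose of the trivial case: if $R$ has no nonzero prime ideals at all, then $R$ is a field, hence a PID. Otherwise, list the finitely many nonzero primes as $\mathfrak{p}_1, \ldots, \mathfrak{p}_k$. Let $I$ be any nonzero ideal. By Theorem~\ref{thm75} (and Proposition~\ref{unique_prop} for the relevant exponents), I may write
$$
I = \mathfrak{p}_1^{n_1}\cdots\mathfrak{p}_k^{n_k}
$$
with $n_i = v_{\mathfrak{p}_i}(I) \ge 0$. By Proposition~\ref{prop90} (or more directly by the characterization of integral ideals via valuations), the nonnegativity of each $n_i$ reflects the fact that $I$ is integral.

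Next, I would apply the Second Approximation Theorem to the data $(\mathfrak{p}_1, \ldots, \mathfrak{p}_k)$ and integers $(n_1, \ldots, n_k)$. This produces an element $x \in K$ with $v_{\mathfrak{p}_i}(x) = n_i$ for each $i$, and $v_{\mathfrak{p}}(x) \ge 0$ for every other nonzero prime ideal $\mathfrak{p}$ of $R$. Since there \emph{are} no other nonzero primes, and since $n_i \ge 0$ for every $i$, the element $x$ satisfies $v_{\mathfrak{p}}(x) \ge 0$ at every nonzero prime of $R$; by the characterization of integral ideals via valuations, this forces $x \in R$. Then the principal ideal $xR$ has $v_{\mathfrak{p}_i}(xR) = v_{\mathfrak{p}_i}(x) = n_i = v_{\mathfrak{p}_i}(I)$ for all $i$, and both $xR$ and $I$ have valuation $0$ at any other prime (vacuously, since there are none). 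By the corollary to Proposition~\ref{prop79} stating that fractional ideals with equal valuations at every nonzero prime are equal, we conclude $xR = I$.

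There is essentially no serious obstacle here: once the Second Approximation Theorem is in hand, the proof is a direct application, with the finiteness hypothesis used only to ensure that the ``other primes'' clause in that theorem is vacuous. The only thing to be slightly careful about is confirming that the $x$ produced lies in $R$ rather than merely in $K$, which follows immediately from the finiteness assumption together with the sign of the $n_i$.
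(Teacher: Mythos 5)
Your proof is correct and follows essentially the same route as the paper: apply the Second Approximation Theorem with target valuations $v_{\mathfrak p_i}(I)$ at the finitely many nonzero primes, and conclude $xR = I$. The one small place where you add value is in explicitly justifying that $x \in R$ (via nonnegativity of all $v_{\mathfrak p}(x)$ and the characterization of integrality by valuations), a point the paper's proof passes over silently by simply writing ``an element $a \in R$.''
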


\begin{proof}
Let $I$ be a nonzero ideal of $R$.
Use the second approximation theorem to find an element $a\in R$
such that $v_{\frak p}(a)$ is equal to $v_{\frak p}(I)$
for all nonzero prime ideals~$\frak p$ of~$R$. Thus $v_{\frak p}(a R) =  v_{\frak p}(a)  = v_{\frak p}(I)$ for all 
such $\mathfrak p$. So~$a R = I$.
\end{proof}

\begin{theorem}
Let $R$ be a Dedekind domain. Then every nonzero ideal $I$ of $R$ can be generated by one or two elements.
\end{theorem}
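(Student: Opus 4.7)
The plan is to start with any nonzero $a\in I$ and then produce a second generator $b\in I$ using the Second Approximation theorem, chosen so that the ideal $aR+bR$ has exactly the same prime-factorization as $I$. Once we have matched the valuations at every nonzero prime, Proposition~\ref{prop79} (or the corollary that ideals in a Dedekind domain are determined by their $v_{\mathfrak p}$) will finish the job.

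First I would dispose of the trivial cases: if $R$ is a field or $I=0$ then $I$ is already generated by a single element. Otherwise, fix a nonzero $a\in I$. Because $aR\subseteq I$, Theorem~\ref{thm89} gives $I\mid aR$, so every nonzero prime $\mathfrak p$ with $v_{\mathfrak p}(I)>0$ also satisfies $v_{\mathfrak p}(aR)>0$. In particular only finitely many primes $\mathfrak p_1,\dots,\mathfrak p_k$ have $v_{\mathfrak p_i}(I)>0$, and for each of these we have $0\le n_i\defeq v_{\mathfrak p_i}(I)\le v_{\mathfrak p_i}(aR)=m_i$.

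Next I would apply the Second Approximation theorem to the primes $\mathfrak p_1,\dots,\mathfrak p_k$ and the integers $n_1,\dots,n_k$ to produce $b\in K$ with $v_{\mathfrak p_i}(b)=n_i$ for each $i$ and $v_{\mathfrak p}(b)\ge 0$ for every other nonzero prime. Since $v_{\mathfrak p}(b)\ge 0$ at every nonzero prime, we get $b\in R$. Moreover $v_{\mathfrak p}(b)\ge v_{\mathfrak p}(I)$ at every nonzero prime (trivially at the $\mathfrak p_i$, and at every other prime because $v_{\mathfrak p}(I)=0$ there), so $b\in I$.

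Finally I would compute the valuations of $aR+bR$. By Proposition~\ref{prop79},
\[
v_{\mathfrak p_i}(aR+bR)=\min\{m_i,n_i\}=n_i=v_{\mathfrak p_i}(I),
\]
and at any other nonzero prime $\mathfrak p$ we have $v_{\mathfrak p}(aR+bR)=\min\{v_{\mathfrak p}(aR),v_{\mathfrak p}(b)\}=0=v_{\mathfrak p}(I)$. Hence $aR+bR=I$, so $I$ is generated by the two elements $a,b$. The only real obstacle is making sure the Second Approximation theorem can meet the equality $v_{\mathfrak p_i}(b)=n_i$ (not just $\ge n_i$) at the finitely many troublesome primes while remaining nonnegative elsewhere; but that is precisely what that theorem delivers, and no further cleverness is needed.
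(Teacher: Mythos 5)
There is a genuine gap in your final valuation computation. At a nonzero prime $\mathfrak p$ outside $\{\mathfrak p_1,\dots,\mathfrak p_k\}$ you claim $\min\{v_{\mathfrak p}(aR),\,v_{\mathfrak p}(b)\}=0$, but neither term is known to be zero. You chose $a$ arbitrarily in $I$, so $v_{\mathfrak p}(aR)$ may well be positive at primes that do not divide $I$; and the Second Approximation theorem, applied only to the primes $\mathfrak p_1,\dots,\mathfrak p_k$ that divide $I$, guarantees merely $v_{\mathfrak p}(b)\ge 0$ at every other prime, not $v_{\mathfrak p}(b)=0$. So at a prime $\mathfrak p$ dividing $aR$ but not $I$, both valuations could be positive, and then $v_{\mathfrak p}(aR+bR)>0=v_{\mathfrak p}(I)$. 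A concrete instance: with $R=\bZ$, $I=2\bZ$, $a=6$, the theorem asks only that $v_2(b)=1$ and $v_p(b)\ge 0$ elsewhere, which is satisfied by $b=6$; then $aR+bR=6\bZ\ne I$.

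The repair is to feed the Second Approximation theorem the full finite set of primes dividing $aR$ (not just those dividing $I$), prescribing $v_{\mathfrak p}(b)=v_{\mathfrak p}(I)$ at each of them. Then at every prime dividing $aR$ one has $v_{\mathfrak p}(bR)=v_{\mathfrak p}(I)\le v_{\mathfrak p}(aR)$, and at every prime not dividing $aR$ one has $v_{\mathfrak p}(aR)=0=v_{\mathfrak p}(I)$, so the minimum equals $v_{\mathfrak p}(I)$ everywhere. This is exactly how the paper's proof proceeds; the paper also uses a first application of the theorem to manufacture $a$ with $v_{\mathfrak p}(aR)=v_{\mathfrak p}(I)$ at all primes dividing $I$, which gives $I\mid aR$ directly, but your observation that any nonzero $a\in I$ already gives $I\mid aR$ via Theorem~\ref{thm89} is a perfectly good replacement for that first application. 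Once you also enlarge the prime list in the second application, your argument goes through.
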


\begin{proof}
Let $I$ be a nonzero ideal of $R$.
By the second approximation theorem there is an element $a\in R$
such that $v_{\frak p}(a) = v_{\frak p}(aR)$ is equal to $v_{\frak p}(I)$
for all nonzero prime ideals~$\frak p$ of~$R$ dividing~$I$. Thus $I \mid a R$.

By the second approximation theorem a second time  there is an element $b\in R$
such that $v_{\frak p}(b) = v_{\frak p}(bR)$ is equal to $v_{\frak p}(I)$
for all nonzero prime ideals~$\frak p$ of~$R$ that 
divide $a R$
(including those dividing~$I$).

Observe that $a R + bR$ has the same valuation as $I$ for all valuations $v_{\frak p}$
associated to nonzero prime ideals of $R$.
\end{proof}

\begin{exercise}
Show that every \emph{fractional} ideal of a Dedekind domain can be generated by one or two elements.
\end{exercise}

\begin{exercise}
Show that for every nonzero ideal $I$ in a Dedekind domain, there is a 
nonzero principal ideal relatively prime to $I$.
\end{exercise}

\begin{exercise} 
Let $\mathfrak p$ be a nonzero prime ideal and let $I$ be a fractional ideal in
a Dedekind domain~$R$. Then the quotient $I / \mathfrak p I$ can be thought of as an $R$-module, as
can~$R /\mathfrak p$.
Show that $R /\mathfrak p$ is isomorphic as an $R$-module to~$I / \mathfrak p I$. 
Furthermore,
show that there is an isomorphism of the form $[r] \mapsto [r x]$ for each $x \in I$ not in~$\mathfrak p I$.

Hint: start with the composition $R \to I \to I / \mathfrak p I$ and show that the kernel is a 
proper ideal containing~$\mathfrak p$. For surjectivity, observe that 
any $R$-submodule of~$I / \mathfrak p I$ corresponds to a fractional
ideal containing 
$\mathfrak p I$ and contained in~$I$. Now use unique factorization of fractional ideals
(or multiply by a nonzero $d$ such that $d I$ is an integral ideal).
\end{exercise}


\chapter{Gauss's lemma in Dedekind domains}\label{ch_gauss_lemma}

There are other important results for general Dedekind domains including (1) Gauss's lemma
for polynomials with coefficients in a Dedekind domain, (2) results about modules, especially finitely generated modules,
over a Dedekind domain, and (3) results concerning the relationship between 
two Dedekind domains $R_1 \subseteq R_2$,
especially when the fraction field of $R_2$ is a finite extension of the fraction field of~$R_1$.
We won't treat (2) and (3) in this document, but we will touch on (1) in this section.
We will start with the case of a discrete valuation ring.

Let $R$ be a DVR with maximal ideal~$\mathfrak m$,  fraction field~$K$, and valuation $v$. 
It is possible to extend $v$, in a natural way, to a function (also called $v$)
$$v\colon K[X]\to \bZ \cup \{\infty\}$$
which we call the \emph{valuation map on polynomials}. We define $v(0)$ to be~$\infty$.
For any nonzero $f \in K[X]$, let $v(f)$ be the minimum of $v(a)$ among coefficients of $f$.
So, of course if $f = a$ is a constant polynomial, then $v(f)$ (using the valuation map on polynomials) agrees with $v(a)$
(using the original valuation map).

\begin{proposition}
Let $R$ be a DVR with fraction field~$K$. Let $v$ be the valuation map on polynomials
and let $f, g \in K[X]$. Then
$$
v(f + g) \ge \min \{v(f), v(g)\}.
$$
\end{proposition}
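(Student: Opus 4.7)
The plan is to unwind the definition of $v$ on polynomials and reduce everything to the additive law for the valuation on $K$ applied coefficient by coefficient. The extended definition says $v(h)$ is the minimum of $v(c)$ as $c$ ranges over the coefficients of $h$, with the convention $v(0) = \infty$. So the inequality to be proved is essentially a minimum-of-minima statement.

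First I would dispose of the degenerate cases. If $f = 0$ then $v(f) = \infty$ and $f+g = g$, so the claimed inequality becomes $v(g) \ge \min\{\infty, v(g)\} = v(g)$, which holds trivially; similarly if $g = 0$ or if $f + g = 0$. So assume $f, g, f+g$ are all nonzero.

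Next, write $f = \sum_{i} a_i X^i$ and $g = \sum_{i} b_i X^i$ (using a common index range, padding with zero coefficients if needed), so that $f + g = \sum_i (a_i + b_i) X^i$. For each index $i$ I can invoke the additive law on $K$ (extended to $0$ via $v(0) = \infty$ as in the remark following the definition of discrete valuation) to get
$$
v(a_i + b_i) \;\ge\; \min\{v(a_i), v(b_i)\} \;\ge\; \min\Bigl\{\min_j v(a_j),\, \min_j v(b_j)\Bigr\} \;=\; \min\{v(f), v(g)\}.
$$
Now take the minimum over $i$ on the left: $v(f+g) = \min_i v(a_i + b_i)$, and each term in this minimum is at least $\min\{v(f), v(g)\}$, so the conclusion follows.

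There isn't really a hard step here; the only thing worth double-checking is that the convention $v(0) = \infty$ behaves correctly when some of the $a_i$ or $b_i$ vanish (so that padding to a common index set is legitimate), and that one allows $v(a_i+b_i) = \infty$ in the per-index bound when $a_i + b_i = 0$. Both are handled by the convention already adopted in the remark following the definition of a discrete valuation. This makes the proof essentially a one-liner once the bookkeeping is set up.
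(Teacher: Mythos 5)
Your proof is correct and is the straightforward coefficient-by-coefficient argument; the paper itself omits the proof (signaling, per its stated philosophy, that the reader should supply exactly this kind of routine verification), and yours is the natural argument it expects.
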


\begin{proposition}
Let $R$ be a DVR with fraction field~$K$. Let $v$ be the valuation map on polynomials
and let $f \in K[X]$. Then $f \in R[X]$ if and only if $v(f) \ge 0$.
\end{proposition}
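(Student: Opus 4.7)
The plan is to simply unpack the definition of $v$ on polynomials and use the characterization $\mathcal O_v = \{x \in K \mid v(x) \ge 0\}$ of the valuation ring from Proposition~\ref{prop1}. This is really a definition-chase rather than a substantive argument.

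First I would handle the trivial case $f = 0$: by convention $v(0) = \infty \ge 0$, and $0$ lies in $R[X]$, so both conditions hold. Then for nonzero $f$, I would write $f = a_0 + a_1 X + \ldots + a_n X^n$ with $a_i \in K$, and recall from the definition that $v(f) = \min_i \{v(a_i)\}$. By definition, $f \in R[X]$ iff every coefficient $a_i$ lies in $R = \mathcal O_v$, which by Proposition~\ref{prop1} is equivalent to $v(a_i) \ge 0$ for every $i$. Taking the minimum, this is equivalent to $v(f) \ge 0$, which gives the forward and reverse implications simultaneously.

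I do not expect any genuine obstacle here; the statement is essentially a reformulation of the definition of the polynomial valuation together with the fact that $R$ consists exactly of the elements of $K$ with nonnegative valuation. The only point to be careful about is stating that the minimum of finitely many values is $\ge 0$ precisely when each of them is $\ge 0$, which is immediate.
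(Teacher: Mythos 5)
Your proof is correct, and since the paper gives no argument for this proposition (signaling it is a straightforward definition-chase), your unpacking of $v(f)=\min_i v(a_i)$ together with $R=\mathcal O_v=\{x\in K\mid v(x)\ge 0\}$ is exactly the intended route. Nothing to add.
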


Polynomials $f$ is such that $v(f) =0$ are called \emph{primitive polynomials}.
These include all monic polynomials in $R[X]$.
Recall that there is a natural surjective ring homomorphism
$$
R[X] \to (R/\mathfrak m)[X]
$$
that acts by replacing each coefficient with its equivalence class.
Primitive polynomials are exactly the polynomials in $R[X]$ that have nonzero image.

\begin{proposition}
Let $R$ be a DVR with maximal ideal~$\mathfrak m$. Let $v$ be the valuation map on polynomials
and let $f \in R[X]$. Then $v(f) = 0$ if and only if the image of~$f$ in $(R/\mathfrak m) [X]$ is nonzero.
\end{proposition}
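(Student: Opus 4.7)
The plan is to unfold both sides of the ``if and only if'' in terms of the coefficients of $f$, and then match them using the characterization $\mathfrak{m} = \{x \in R \mid v(x) \geq 1\}$ from Proposition~\ref{prop1}.

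First I would write $f = \sum a_i X^i$ with each $a_i \in R$. The reduction map $R[X] \to (R/\mathfrak{m})[X]$ sends $f$ to $\sum [a_i] X^i$, so the image is nonzero if and only if at least one $a_i$ lies outside $\mathfrak{m}$.

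Next, by the definition of the valuation map on polynomials, $v(f) = \min_i v(a_i)$. Since each $a_i$ is in $R$, we have $v(a_i) \geq 0$ for every $i$, so the minimum is zero precisely when $v(a_i) = 0$ for at least one index $i$.

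Finally, I would combine the two observations using the fact that for $a \in R$, the condition $v(a) = 0$ is equivalent to $a \in \mathcal{O}_v^\times$, which in turn is equivalent to $a \notin \mathfrak{m}$ (Proposition~\ref{prop1}). Thus $v(f) = 0$ iff some coefficient $a_i$ satisfies $v(a_i) = 0$ iff some $a_i$ is not in $\mathfrak{m}$ iff the image of $f$ in $(R/\mathfrak{m})[X]$ is nonzero. There is no real obstacle here; the statement is essentially a restatement of the characterization of $\mathfrak{m}$ via $v$, applied coefficientwise.
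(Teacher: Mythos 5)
Your proof is correct and is exactly the straightforward argument the paper expects the reader to supply; the paper gives no proof of its own for this proposition, leaving it as an instance of the author's stated policy of omitting routine verifications. The one cosmetic point worth noting is the degenerate case $f = 0$: then $v(f) = \infty \neq 0$ and the image in $(R/\mathfrak m)[X]$ is zero, so the equivalence still holds, but your phrase ``$v(f) = \min_i v(a_i)$'' implicitly presumes $f \neq 0$ (the paper's definition of the polynomial valuation treats the zero polynomial separately). Mentioning that trivial case in passing would make the argument airtight.
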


We wish to show that the valuation map on polynomials is multiplicative. We start with an easy case:

\begin{lemma}
Let $R$ be a DVR with fraction field~$K$. Let $v$ be the valuation map on polynomials, let~$a \in K^\times$,
and let $f \in K[X]$. Then $v(a f) = v(a) + v(f)$.
\end{lemma}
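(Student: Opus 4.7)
The plan is to reduce this to the multiplicativity of the original valuation $v\colon K^\times\to\bZ$ applied coefficient by coefficient. First I would handle the trivial case $f=0$: then $af=0$ and both sides equal $\infty$, using the conventions $v(0)=\infty$ and $v(a)+\infty=\infty$. So assume $f\ne 0$ and write $f=\sum_{i=0}^n c_i X^i$ with $c_i\in K$, not all zero. Then $af=\sum_{i=0}^n (ac_i) X^i$, and the coefficients of $af$ are precisely $ac_0,\ldots,ac_n$.

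Next I would apply the multiplicative law $v(ac_i)=v(a)+v(c_i)$ of the original discrete valuation to each coefficient (using the extended convention that $v(a)+\infty=\infty$, to cover any zero coefficients $c_i$). By definition of the valuation map on polynomials,
\[
v(af)\;=\;\min_{0\le i\le n}\, v(ac_i)\;=\;\min_{0\le i\le n}\bigl(v(a)+v(c_i)\bigr).
\]
Since $v(a)$ is a fixed integer independent of $i$, it can be pulled out of the minimum:
\[
\min_{0\le i\le n}\bigl(v(a)+v(c_i)\bigr)\;=\;v(a)+\min_{0\le i\le n} v(c_i)\;=\;v(a)+v(f).
\]

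I do not anticipate any real obstacle here; the only minor bookkeeping point is ensuring the extended arithmetic on $\bZ\cup\{\infty\}$ behaves as expected when some (but not all) coefficients of $f$ vanish, which is handled by the standing convention $v(0)=\infty$ together with $n+\infty=\infty$ for $n\in\bZ$. This makes the chain of equalities above valid even when some $c_i=0$, and completes the proof.
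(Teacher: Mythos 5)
Your proof is correct, and it is precisely the straightforward coefficient-by-coefficient argument the paper intends (the lemma is stated without proof, following the essay's convention of leaving routine verifications to the reader). Handling the $f=0$ case separately and using the convention $v(0)=\infty$ to cover vanishing coefficients is exactly the right bookkeeping.
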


\begin{proposition} [Gauss's lemma for DVRs]\label{gauss_lemma_DVR_thm}
Let $R$ be a DVR with fraction field~$K$. Let $v$ be the valuation map on polynomials and
let $f, g \in K[X]$. Then $$v(f g) = v(f) + v(g).$$
\end{proposition}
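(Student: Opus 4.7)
The plan is to reduce to the case of primitive polynomials and then exploit the ring homomorphism $R[X] \to (R/\mathfrak{m})[X]$ introduced just above the statement. The inequality $v(fg) \ge v(f) + v(g)$ is the easy half: every coefficient of $fg$ is a sum of products $a_i b_j$ with $a_i$ a coefficient of $f$ and $b_j$ a coefficient of $g$, and each such product has valuation $v(a_i)+v(b_j) \ge v(f)+v(g)$, so the minimum coefficient of $fg$ has valuation at least $v(f)+v(g)$ by the additive law for $v$. So the work is all in the reverse inequality.

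First I would dispose of the trivial cases by assuming $f, g \ne 0$. Pick a uniformizer $\pi$ of $v$. Since every nonzero element of $K^\times$ can be written as $u\pi^k$ with $u$ a unit of $\mathcal{O}_v$, I can write $f = \pi^{v(f)} f_0$ and $g = \pi^{v(g)} g_0$ where $f_0, g_0 \in K[X]$. By the preceding lemma $v(\pi^k h) = k + v(h)$, which forces $v(f_0) = 0$ and $v(g_0) = 0$; that is, $f_0$ and $g_0$ are primitive, and in particular lie in $R[X]$. Applying the lemma again,
\[
v(fg) = v(\pi^{v(f)+v(g)} f_0 g_0) = v(f) + v(g) + v(f_0 g_0),
\]
so it suffices to prove that the product of two primitive polynomials is primitive, i.e.\ that $v(f_0 g_0) = 0$.

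This is where I would use the reduction homomorphism $\varphi\colon R[X] \to (R/\mathfrak{m})[X]$ that replaces each coefficient by its class mod~$\mathfrak{m}$. By the immediately preceding proposition, a polynomial in $R[X]$ is primitive precisely when its image under $\varphi$ is nonzero. Thus $\varphi(f_0) \ne 0$ and $\varphi(g_0) \ne 0$. Since $R/\mathfrak{m}$ is a field (as $\mathfrak{m}$ is maximal), the polynomial ring $(R/\mathfrak{m})[X]$ is an integral domain, so $\varphi(f_0 g_0) = \varphi(f_0)\varphi(g_0) \ne 0$. Hence $f_0 g_0$ is primitive, meaning $v(f_0 g_0) = 0$, which completes the proof.

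The only conceptually subtle step is the appeal to $(R/\mathfrak{m})[X]$ being an integral domain; everything else is bookkeeping with the valuation and the identity $v(af) = v(a) + v(f)$. I do not expect any obstacles beyond making sure the reduction to the primitive case is done cleanly.
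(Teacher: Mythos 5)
Your proof is correct and follows essentially the same approach as the paper: factor out the uniformizer to reduce to primitive polynomials, then apply the reduction homomorphism $R[X]\to(R/\mathfrak m)[X]$ and use that $(R/\mathfrak m)[X]$ is an integral domain. The only small difference is that your opening paragraph proving the easy inequality $v(fg)\ge v(f)+v(g)$ is superfluous, since the primitive-polynomial reduction already yields equality directly.
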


\begin{proof}
The zero case is straightforward, so we assume $f$ and $g$ are nonzero.
If~$\pi$ is a uniformizer, then write~$f = \pi^k f_0$ and~$g = \pi^l g_0$ 
where~$k$ is~$v(f)$, where~$l$ is~$v(g)$,
and where~$f_0, g_0 \in K[X]$.
Observe that~$v(f_0) = v(g_0) = 0$. In particular,~$f_0$ and~$g_0$ are in~$R[X]$. Also, the images of $f_0$ and $g_0$
in $(R/\mathfrak m) [X]$ are nonzero where~$\mathfrak m$ is the maximal ideal of~$R$.
This means that the product~$f_0 g_0$ has nonzero image as well since $R [X] \to (R/\mathfrak m) [X]$ is a homomorphism
and $(R/\mathfrak m) [X]$ is an integral domain.
Thus $v(f_0 g_0) = 0$. Since~$f g = \pi^{k+l} f_0 g_0$, we have $v(fg) = k + l + 0 = v(f) + v(g)$.
\end{proof}

We can extend the valuation map on polynomials further to a valuation~$v\colon K(X)^\times \to \bZ$
where $K(X)$ is the fraction field of~$K[X]$. 
We call this the \emph{the valuation of $K(X)$ induced
by the valuation $v$ of~$K$}. We use the same symbol $v$ for the valuation of $K(X)$ using context to distinguish
the various meanings of~$v$.
This valuation is define as follow for $f, g \in K[X]$ both nonzero:
$$
v(f/g) \; \defeq \; v(f) - v(g).
$$
Here we are ignoring $0$, but we set $v(0) = \infty$ if needed.

\begin{lemma}
The above function is well-defined, and extends the valuation map on polynomials.
\end{lemma}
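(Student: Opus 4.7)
The plan is to verify the two assertions in order: first that the formula $v(f/g) \defeq v(f) - v(g)$ does not depend on the choice of representative $f/g$, and second that the restriction to $K[X]$ agrees with the valuation map on polynomials already defined.

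For well-definedness, I would suppose $f, f', g, g' \in K[X]$ with $g, g' \ne 0$ and $f/g = f'/g'$ in $K(X)$. This is exactly the polynomial identity $f g' = f' g$ in $K[X]$. Applying Gauss's lemma for DVRs (Proposition~\ref{gauss_lemma_DVR_thm}) to both sides gives
$$
v(f) + v(g') \;=\; v(fg') \;=\; v(f'g) \;=\; v(f') + v(g),
$$
and rearranging yields $v(f) - v(g) = v(f') - v(g')$. So the value of the formula depends only on the element $f/g \in K(X)^\times$, not on the particular representation.

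For the extension claim, any $f \in K[X]$ can be written in $K(X)$ as $f/1$. Since the valuation map on polynomials sends the constant polynomial $1$ to $v(1) = 0$ (because $1 \in K^\times$ has valuation zero), the extended formula gives $v(f/1) = v(f) - v(1) = v(f)$, in agreement with the original value.

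The only real obstacle is well-definedness, and it is entirely handled by Gauss's lemma: without the multiplicativity statement $v(fg) = v(f) + v(g)$ of Proposition~\ref{gauss_lemma_DVR_thm}, one would only have an inequality relating $v(fg')$ and $v(f'g)$, and equality of representatives would fail to yield equality of valuations. Given that lemma, the rest is essentially bookkeeping. I would also note in passing that the convention $v(0) = \infty$ carries over consistently, since $f/g$ equals $0$ in $K(X)$ precisely when $f = 0$ in $K[X]$.
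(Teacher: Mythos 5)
Your proof is correct and is exactly the argument the paper intends: the lemma is stated without proof (signaling a straightforward verification), and the only non-trivial ingredient is the multiplicativity $v(fg) = v(f) + v(g)$ from Proposition~\ref{gauss_lemma_DVR_thm}, which you correctly identify as the crux of well-definedness. Your reduction of $f/g = f'/g'$ to $fg' = f'g$ and the check that $v(f/1) = v(f)$ via $v(1) = 0$ are both exactly right.
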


\begin{proposition}
Let $R$ be a DVR with fraction field~$K$. Let $v$ be the induced valuation on~$K(X)$
and let $f, g \in K(X)$. Then
$$
v(f + g) \ge \min \{v(f), v(g)\}.
$$
\end{proposition}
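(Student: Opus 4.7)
The plan is to reduce the inequality on $K(X)$ to the already-established inequality on $K[X]$ (the first proposition of this section), using Gauss's lemma (Proposition~\ref{gauss_lemma_DVR_thm}) to keep valuations of products under control when clearing denominators.

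First I would dispose of the trivial cases: if $f = 0$, $g = 0$, or $f + g = 0$ the inequality holds (using the convention $v(0) = \infty$), so assume $f, g, f+g$ are all nonzero. Write $f = a/b$ and $g = c/d$ with $a, b, c, d \in K[X]$ and $b, d$ nonzero polynomials. Then
\[
f + g \;=\; \frac{ad + bc}{bd},
\]
with $ad + bc \ne 0$ (else $f + g = 0$). By definition of the extended valuation on $K(X)$,
\[
v(f+g) \;=\; v(ad + bc) - v(bd).
\]

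Next, applying the polynomial additive inequality to $ad$ and $bc$ gives $v(ad + bc) \ge \min\{v(ad), v(bc)\}$, so
\[
v(f+g) \;\ge\; \min\{v(ad), v(bc)\} - v(bd).
\]
By Gauss's lemma for DVRs we have $v(ad) = v(a) + v(d)$, $v(bc) = v(b) + v(c)$, and $v(bd) = v(b) + v(d)$. Substituting yields
\[
v(f+g) \;\ge\; \min\{v(a) - v(b),\; v(c) - v(d)\} \;=\; \min\{v(f), v(g)\},
\]
which is what we wanted.

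No step here looks genuinely difficult; the only mild obstacle is making sure the ``well-definedness'' aspect is not needed again — but that is already packaged into the preceding lemma, so the argument is essentially the standard one-line manipulation, leaning on Gauss's lemma to convert valuations of products into sums of valuations.
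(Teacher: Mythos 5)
Your proof is correct. It differs from the paper's suggested route in one small but instructive way: the paper hints at taking $f$ and $g$ over a \emph{common denominator} $f=a/h$, $g=c/h$, which makes the computation trivial and avoids any further appeal to Gauss's lemma, since
$$
v(f+g)=v(a+c)-v(h)\ge\min\{v(a),v(c)\}-v(h)=\min\{v(f),v(g)\}.
$$
You instead kept separate denominators $b$ and $d$, which forces you to invoke Gauss's lemma (Proposition~\ref{gauss_lemma_DVR_thm}) to split $v(ad)$, $v(bc)$, and $v(bd)$ into sums. That is perfectly legitimate here — Gauss's lemma has already been proved and is in any case baked into the well-definedness lemma — but the common-denominator version is tighter and illustrates the cleaner reduction the author has in mind. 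Both approaches give a complete, correct argument; yours just repeats a little work.
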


\begin{proof}
This is straightforward when $f$ and $g$ are written as fractions with a common denominator
\end{proof}

\begin{proposition}
Let $R$ be a DVR with fraction field~$K$. Let $v$ be the induced valuation on~$K(X)$
and let $f, g \in K(X)$. Then
$$
v(f g) = v(f) + v(g).
$$
\end{proposition}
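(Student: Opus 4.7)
The plan is to reduce the claim to Gauss's lemma for polynomials (Proposition~\ref{gauss_lemma_DVR_thm}), which has already been established. Since the induced valuation on $K(X)$ is defined by $v(f/g) = v(f) - v(g)$ for nonzero $f, g \in K[X]$, the multiplicative law for rational functions should come essentially for free once we know it for polynomials.

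First I would dispose of the degenerate case where $f$ or $g$ is $0$: then $fg = 0$, and with the convention $v(0) = \infty$ both sides equal $\infty$ (using the natural rules for $\infty$ in $\bZ \cup \{\infty\}$). So from here on assume $f, g \in K(X)^\times$. Next I would write $f = f_1/f_2$ and $g = g_1/g_2$ with $f_1, f_2, g_1, g_2 \in K[X]$ all nonzero. Then $fg = (f_1 g_1)/(f_2 g_2)$, and by definition of the induced valuation together with Gauss's lemma for DVRs,
$$
v(fg) = v(f_1 g_1) - v(f_2 g_2) = \bigl(v(f_1)+v(g_1)\bigr) - \bigl(v(f_2)+v(g_2)\bigr),
$$
which rearranges to $\bigl(v(f_1)-v(f_2)\bigr) + \bigl(v(g_1)-v(g_2)\bigr) = v(f) + v(g)$, as desired.

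The main obstacle is essentially nonexistent: the only thing to be careful about is the well-definedness invoked in writing $v(fg)$ in terms of the chosen numerator and denominator $f_1 g_1$ and $f_2 g_2$. That is guaranteed by the preceding lemma which states that the function on $K(X)^\times$ given by $f_1/f_2 \mapsto v(f_1) - v(f_2)$ does not depend on the choice of representation. With that in hand, the computation above is just a rearrangement using Proposition~\ref{gauss_lemma_DVR_thm} twice.
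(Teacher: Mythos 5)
Your proof is correct and takes exactly the intended route: the paper omits the argument as straightforward, but the expected reduction is precisely to write $f = f_1/f_2$, $g = g_1/g_2$, invoke the well-definedness lemma, and apply Gauss's lemma for DVRs (Proposition~\ref{gauss_lemma_DVR_thm}) to the numerators and denominators. Your handling of the $v(0)=\infty$ convention and the well-definedness point is appropriate and complete.
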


\begin{proposition}
Let $R$ be a DVR with fraction field~$K$. Then the induced valuation is a 
valuation map $K(X)^\times \to \bZ$.
The valuation ring consists of elements of the form $r f/g$ where $f, g \in K[X]$ are primitive
polynomials and $r\in R$. The maximal ideal consists of elements of this form $r f/g$ where $r$
is in the maximal ideal of~$R$. 
If $\pi$ is a uniformizer of $R$ and if $f, g$ are primitive polynomials, then $v(\pi^k f/g) = k$,
so $\pi$ is a uniformizer for the valuation ring associated to $K(X)^\times \to \bZ$.
\end{proposition}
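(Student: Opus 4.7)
The plan is to leverage the two preceding propositions (the triangle inequality and multiplicativity of the induced $v$ on $K(X)$) so that the bulk of the work becomes bookkeeping. First I would observe that the two properties already proved say exactly that $v\colon K(X)^\times \to \bZ$ is a homomorphism of multiplicative groups into the additive group $\bZ$ satisfying the ultrametric inequality, so the only thing missing from the definition of a discrete valuation is surjectivity. But since $v$ extends the original valuation on $K^\times$, which is surjective onto $\bZ$, surjectivity on $K(X)^\times$ is automatic.

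Next I would establish the canonical form for an arbitrary nonzero element of $K(X)$. Given $h \in K(X)^\times$, write $h = f/g$ with $f, g \in K[X]$ nonzero. Let $\pi$ be a uniformizer of $R$, and set $k = v(f)$, $\ell = v(g)$. Using the lemma that $v(af) = v(a) + v(f)$ for $a \in K^\times$, I can write $f = \pi^k f_0$ and $g = \pi^\ell g_0$ with $f_0, g_0 \in K[X]$ of valuation $0$, i.e.\ primitive polynomials in $R[X]$. Then
\[
h = \pi^{k-\ell} \, f_0/g_0,
\]
so $v(h) = k - \ell$ by multiplicativity. This immediately proves the uniformizer claim: $v(\pi^k f/g) = k + v(f) - v(g) = k$ when $f, g$ are primitive.

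To identify the valuation ring, note that $h \in \mathcal O_v$ iff $v(h) \ge 0$. Writing $h$ in the canonical form $\pi^m f_0/g_0$ with $f_0, g_0$ primitive, $v(h) \ge 0$ is equivalent to $m \ge 0$, which is in turn equivalent to $\pi^m \in R$, i.e.\ to being of the form $rf_0/g_0$ with $r \in R$ and $f_0, g_0$ primitive. Conversely, any element of this form has valuation $v(r) + 0 - 0 = v(r) \ge 0$, so sits in $\mathcal O_v$. The same calculation gives the description of the maximal ideal: $v(rf/g) = v(r) \ge 1$ iff $r \in \mathfrak m$.

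I do not foresee a serious obstacle here; the potentially subtle point is checking that multiplying a primitive polynomial by a unit of $R$ preserves primitivity (so that absorbing a unit factor from $r$ into $f_0$ or $g_0$ is legitimate), but this is immediate from $v(uf_0) = v(u) + v(f_0) = 0 + 0 = 0$. The only real content of the argument is already packaged into Gauss's lemma for DVRs (Proposition~\ref{gauss_lemma_DVR_thm}) and its extension to $K(X)$; once those are in hand, the present statement is essentially the assertion that every element of $K(X)^\times$ admits a unique factorization as a power of $\pi$ times a ratio of primitive polynomials modulo units, which is precisely what the canonical form above provides.
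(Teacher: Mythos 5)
Your argument is correct and follows the route the paper clearly intends: once the ultrametric inequality and multiplicativity for the induced $v$ on $K(X)^\times$ are in hand, and once Gauss's lemma for DVRs has been used to see that $v(af) = v(a) + v(f)$, the canonical decomposition $h = \pi^{m} f_0/g_0$ with $f_0, g_0$ primitive does all the work. The paper omits the proof (treating it as one of the ``straightforward'' verifications the reader should supply), but the one you give is the natural one, and all the steps check out: surjectivity is inherited from $v$ on $K^\times$; the valuation of $\pi^m f_0/g_0$ is $m$; membership in $\mathcal O_v$ is equivalent to $m \ge 0$, hence to $\pi^m \in R$; and membership in the maximal ideal is equivalent to $m \ge 1$, hence to $\pi^m \in \mathfrak m$. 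Your remark about absorbing unit factors is sound but, as you note, not actually needed since you take $r = \pi^m$ directly.

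One tiny bookkeeping point worth making explicit if you write this up: the element $0$ also lies in the valuation ring, and it is represented in the form $rf/g$ by taking $r = 0$; the equivalence $v(rf/g) = v(r)$ should be read as holding with the usual convention $v(0) = \infty$.
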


\bigskip

Now we shift to a general Dedekind domain $R$. For every nonzero prime $\mathfrak p$ of~$R$
we have the valuation $v_{\mathfrak p}$ of $R_{\mathfrak p}$.
We extend $v_{\mathfrak p}$ to $K(X)^\times$ as above. (We also have the extension of $v$ to fractional ideals
of $R$ which we will also need).

\begin{proposition}
Let $f \in K(X)^\times$ where $K$ is the fraction field of a Dedekind domain~$R$. Then
$v_{\mathfrak p} (f) \ne 0$ for only a finite number of nonzero prime ideals~$\mathfrak p$ of~$R$.
\end{proposition}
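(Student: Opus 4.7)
The plan is to reduce this from a statement about rational functions over $K$ to a statement about individual elements of $K^\times$, where the desired finiteness is already established for fractional ideals.

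First I would write $f = g/h$ with $g, h \in K[X]$ both nonzero. By the definition of the induced valuation on $K(X)$, we have $v_{\mathfrak p}(f) = v_{\mathfrak p}(g) - v_{\mathfrak p}(h)$ for every nonzero prime ideal $\mathfrak p$, so it suffices to prove the claim separately for $g$ and $h$; that is, it suffices to show that for any nonzero polynomial $g \in K[X]$ we have $v_{\mathfrak p}(g) = 0$ for all but finitely many $\mathfrak p$.

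Next I would unwind the definition of $v_{\mathfrak p}$ on polynomials: writing $g = a_0 + a_1 X + \ldots + a_d X^d$, we have $v_{\mathfrak p}(g) = \min_i v_{\mathfrak p}(a_i)$, where the minimum is taken over those indices with $a_i \ne 0$. The earlier theorem on Dedekind domains (the one stating that $v_{\mathfrak p}(I) = 0$ for all but finitely many $\mathfrak p$ when $I$ is a fractional ideal) applies to the principal fractional ideal $a_i R$ for each nonzero coefficient $a_i \in K^\times$, and since $v_{\mathfrak p}(a_i R) = v_{\mathfrak p}(a_i)$, each nonzero coefficient contributes only a finite set of ``bad'' primes. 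Taking the finite union over all nonzero coefficients of $g$ produces a finite set $\Sigma_g$ of nonzero primes such that, for $\mathfrak p \notin \Sigma_g$, every nonzero coefficient $a_i$ satisfies $v_{\mathfrak p}(a_i) = 0$; in particular the minimum is $0$, so $v_{\mathfrak p}(g) = 0$.

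Finally, applying the same argument to $h$ produces a finite set $\Sigma_h$, and for $\mathfrak p$ outside $\Sigma_g \cup \Sigma_h$ we obtain $v_{\mathfrak p}(f) = v_{\mathfrak p}(g) - v_{\mathfrak p}(h) = 0 - 0 = 0$, as required. There is no real obstacle here; the only thing to be careful about is ensuring that the minimum really equals $0$ (rather than being positive), which is why one needs the existence of at least one coefficient of valuation exactly zero — but since $g \ne 0$ there is at least one nonzero coefficient $a_i$, and outside the finite exceptional set that coefficient has $v_{\mathfrak p}(a_i) = 0$, so the minimum is $\le 0$, while it is also $\ge 0$ since every nonzero coefficient has valuation $\ge 0$ outside the exceptional set.
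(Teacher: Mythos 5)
Your proof is correct, and since the paper leaves this proposition without proof (its stated convention for straightforward results), there is nothing to compare against; your argument is precisely the intended one. You reduce $f = g/h$ to the polynomial case via $v_{\mathfrak p}(f) = v_{\mathfrak p}(g) - v_{\mathfrak p}(h)$, then for each nonzero coefficient $a_i$ invoke $v_{\mathfrak p}(a_i) = v_{\mathfrak p}(a_i R)$ together with the theorem that $v_{\mathfrak p}(I) = 0$ for all but finitely many $\mathfrak p$, and take a finite union of exceptional sets — exactly the tools Section~\ref{dedekind_vals_ch} makes available.
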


It turns out that these extended valuations are closely connected to the concept of the \emph{content}
of a polynomial.

\begin{definition}
Let $R$ be an integral domain with fraction field~$K$.
If $$f = a_n X^n + \ldots + a_1 X + a_0 \in K[X]$$ 
then the \emph{content} of $f$ is defined as the following fractional ideal:
$$
\text{content} (f) \; \defeq \; a_n R + \ldots + a_1 R + a_0 R.
$$
\end{definition}

\begin{proposition}
Let $R$ be an integral domain with fraction field~$K$, then
$$
\mathrm{content} (fg ) \subseteq \mathrm{content} (f) \, \mathrm{content} (g).
$$
for any polynomials $f, g \in K[X]$.
\end{proposition}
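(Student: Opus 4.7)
The plan is to unwind both sides in terms of the coefficients and apply the definitions of content and of the product of fractional ideals. Write $f = \sum_{i} a_i X^i$ and $g = \sum_j b_j X^j$ with $a_i, b_j \in K$, and let $fg = \sum_k c_k X^k$, so that by the usual product formula $c_k = \sum_{i+j=k} a_i b_j$.

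The right-hand side $\mathrm{content}(f)\,\mathrm{content}(g)$ is, by definition of the product of $R$-submodules of $K$, the $R$-submodule of $K$ generated by all products $a_i b_j$; equivalently, it consists of finite $R$-linear combinations of such products. Thus each individual product $a_i b_j$ lies in $\mathrm{content}(f)\,\mathrm{content}(g)$, and since this product is an $R$-submodule, the finite sum $c_k = \sum_{i+j=k} a_i b_j$ also lies in $\mathrm{content}(f)\,\mathrm{content}(g)$.

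Finally, $\mathrm{content}(fg) = \sum_k c_k R$ is by definition the smallest $R$-submodule of $K$ containing all the $c_k$, so from $c_k \in \mathrm{content}(f)\,\mathrm{content}(g)$ for every $k$ we conclude $\mathrm{content}(fg) \subseteq \mathrm{content}(f)\,\mathrm{content}(g)$.

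There is no real obstacle here: the statement is essentially a bookkeeping exercise that follows directly from the definitions of content and of the product of two $R$-submodules of $K$ given in Section~\ref{ch2.5}. The only mild subtlety to keep in mind is that the sums defining $f$, $g$, $fg$, and each $c_k$ are finite, so we are never outside the scope of the defined product operation, and the zero polynomial case is handled by the convention that an empty sum equals $0$ (giving $\mathrm{content}(0) = \{0\}$, which is trivially contained in any $R$-submodule).
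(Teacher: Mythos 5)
Your proof is correct and is exactly the direct, coefficient-by-coefficient argument the paper expects; the paper leaves this proposition without proof, signalling a straightforward verification, and expanding $c_k=\sum_{i+j=k}a_ib_j$ and invoking the definition of the submodule product and of content (as the module generated by the coefficients) is the natural way to do it. Your remark about the zero-polynomial convention is a reasonable side note, though strictly the paper's definition of content is phrased for fractional ideals, which are nonzero, so the intended scope is really nonzero $f,g$; in any case your handling of that case does no harm.
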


Now we consider the case where $R$ is a Dedekind domain.

\begin{proposition}
Let $R$ be a Dedekind domain with fraction field~$K$.
If $f \in K[X]$  is nonzero then
$$
\mathrm{content} (f) \; = \; \prod_{\mathfrak p} \mathfrak p^{v_{\mathfrak p} (f)}
$$
where $\mathfrak p$ varies among any given finite set of nonzero prime ideals of $R$ 
containing at least all primes $\mathfrak p$ with $v_{\mathfrak p} (f) \ne 0$.
\end{proposition}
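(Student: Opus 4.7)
The plan is to reduce everything to a valuation-by-valuation check. By the corollary stating that two fractional ideals are equal if and only if they agree under $v_{\mathfrak q}$ for every nonzero prime $\mathfrak q$, it suffices to verify
$$v_{\mathfrak q}\bigl(\mathrm{content}(f)\bigr) \;=\; v_{\mathfrak q}\!\left(\prod_{\mathfrak p} \mathfrak p^{v_{\mathfrak p}(f)}\right)$$
for each nonzero prime ideal $\mathfrak q$ of $R$, where the product on the right is over a fixed finite set $\mathcal S$ of nonzero primes containing all $\mathfrak p$ with $v_{\mathfrak p}(f)\neq 0$.

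For the right side, I would apply the homomorphism property of $v_{\mathfrak q}$ (Proposition~\ref{prop79}) to get $\sum_{\mathfrak p \in \mathcal S} v_{\mathfrak p}(f)\, v_{\mathfrak q}(\mathfrak p)$, then invoke the lemma that $v_{\mathfrak q}(\mathfrak p)$ equals $1$ if $\mathfrak p = \mathfrak q$ and $0$ otherwise. This collapses the sum to $v_{\mathfrak q}(f)$ when $\mathfrak q \in \mathcal S$; and when $\mathfrak q \notin \mathcal S$, the sum is $0$ but also $v_{\mathfrak q}(f)=0$ by the hypothesis on $\mathcal S$, so either way the right side equals $v_{\mathfrak q}(f)$.

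For the left side, write $f = a_n X^n + \ldots + a_0$, so $\mathrm{content}(f) = a_n R + \ldots + a_0 R$. Iterating the sum formula $v_{\mathfrak q}(I_1+I_2) = \min\{v_{\mathfrak q}(I_1), v_{\mathfrak q}(I_2)\}$ from Proposition~\ref{prop79}, together with the compatibility $v_{\mathfrak q}(aR) = v_{\mathfrak q}(a)$ for $a \in K^\times$ proved earlier, I get
$$v_{\mathfrak q}\bigl(\mathrm{content}(f)\bigr) \;=\; \min_{i}\{v_{\mathfrak q}(a_i)\}.$$
But this minimum is exactly the definition of $v_{\mathfrak q}(f)$ as the polynomial valuation (induced from the DVR $R_{\mathfrak q}$). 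So both sides equal $v_{\mathfrak q}(f)$, and we are done.

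There is no real obstacle here; the only thing to be careful about is keeping track of the overloaded notation $v_{\mathfrak p}$, which now simultaneously denotes the valuation on $K^\times$, on $\mathcal I(R)$, and (via the extension of the previous section) on $K(X)^\times$. One should also silently handle the constant-term entries $a_i = 0$, which contribute $\infty$ to the minimum and hence do not affect it, matching the convention used to define the polynomial valuation.
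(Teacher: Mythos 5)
Your proof is correct, and the underlying strategy is the same as the paper's: reduce to a prime-by-prime check. The difference is in the bookkeeping. The paper uses Corollary~\ref{equality_test_cor} (equality of fractional ideals can be tested by localizing at each maximal ideal) and then works directly inside the DVR $R_{\mathfrak p}$, identifying $\mathrm{content}(f)\,R_{\mathfrak p}$ with $\mathfrak p^{v_{\mathfrak p}(f)} R_{\mathfrak p}$ via the classification of fractional ideals of a DVR. You instead invoke the valuation-based equality criterion ($I=J$ iff $v_{\mathfrak q}(I)=v_{\mathfrak q}(J)$ for all $\mathfrak q$) and then compute $v_{\mathfrak q}$ of both sides using the homomorphism and $\min$-formulas of Proposition~\ref{prop79} together with $v_{\mathfrak q}(aR)=v_{\mathfrak q}(a)$. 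These are two packagings of the same local information, and both rely on the same underlying facts about localization; your version avoids ever writing down the localized ideals explicitly, which makes the left-hand-side computation read more like the definition of the polynomial valuation, while the paper's version makes the classification of DVR ideals do that work. Your parenthetical about the $a_i = 0$ entries contributing $\infty$ to the $\min$ handles the same edge case the paper handles by temporarily setting $\mathfrak p^{v_{\mathfrak p}(0)} R_{\mathfrak p} = 0$.
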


\begin{proof} 
By Corollary~\ref{equality_test_cor} it is enough to show equality locally for each nonzero prime 
ideal~${\mathfrak p}$.  If the coefficients of $f$ are $a_0, \ldots, a_n \in K$ then 
$$
\mathrm{content} (f) \, R_{\mathfrak p}=
a_0 R_{\mathfrak p} + \ldots + a_n R_{\mathfrak p}.
$$
Since $R_{\mathfrak p}$ is a DVR, we can use the classification of fractional ideals in DVRs to simplify
(where we temporarily define $\mathfrak p^{v_{\mathfrak p} (0)} R_{\mathfrak p} = 0$)
$$
\mathrm{content} (f) \, R_{\mathfrak p}= 
\mathfrak p^{v_{\mathfrak p} (a_0)} R_{\mathfrak p} + \ldots + \mathfrak p^{v_{\mathfrak p} (a_n)}  R_{\mathfrak p} 
=  \mathfrak p^{v_{\mathfrak p} (a_i)}  R_{\mathfrak p}
$$
where $v_{\mathfrak p} (a_i)$ is the minimum of $\{ v_{\mathfrak p} (a_1), \ldots, v_{\mathfrak p} (a_n)\}$.
By definition, $v_{\mathfrak p} (a_i)$ is just~$v_{\mathfrak p} (f) $.
Thus
$$
\mathrm{content} (f) \, R_{\mathfrak p}= \mathfrak p^{v_{\mathfrak p} (f)}  R_{\mathfrak p}.
$$
This is the desired local equality. The result now follows from Corollary~\ref{equality_test_cor}. 
\end{proof}

The above proposition allows us to extend the definition of content to elements of~$K(X)^\times$
(when $R$ is a Dedekind domain):

\begin{definition}
Let $f \in K(X)^\times$ where $K$ is the fraction field of a Dedekind domain~$R$. 
Then the \emph{content} of $f$ is defined as the following fractional ideal:
$$
\text{content} (f) \; \defeq \; \prod_{\mathfrak p} \mathfrak p^{v_{\mathfrak p} (f)}
$$
where $\mathfrak p$ varies among any given finite set of nonzero prime ideals of $R$ 
containing at least all primes $\mathfrak p$ with $v_{\mathfrak p} (f) \ne 0$.
\end{definition}

\begin{proposition}
Let $f \in K(X)^\times$ where $K$ is the fraction field of a Dedekind domain~$R$.
For every nonzero prime ideal $\mathfrak p$
$$
v_{\mathfrak p} (f) = v_{\mathfrak p} (\mathrm{content} (f) ).
$$
\end{proposition}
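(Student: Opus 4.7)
The plan is that this proposition is essentially an immediate consequence of the definition of $\mathrm{content}(f)$ for $f \in K(X)^\times$ combined with Proposition~\ref{unique_prop} (which recovers the exponents of a prime factorization via the valuation maps on fractional ideals). So very little needs to be done beyond unpacking the definition.

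First I would invoke the proposition just above (the one stating that $v_{\mathfrak p}(f) \ne 0$ for only finitely many nonzero primes $\mathfrak p$) to make sense of the construction. Fix any finite set $\{\mathfrak p_1, \ldots, \mathfrak p_k\}$ of distinct nonzero prime ideals of $R$ that contains every nonzero prime $\mathfrak q$ with $v_{\mathfrak q}(f) \ne 0$. By definition,
$$\mathrm{content}(f) \;=\; \mathfrak p_1^{v_{\mathfrak p_1}(f)} \cdots \mathfrak p_k^{v_{\mathfrak p_k}(f)}.$$
This is an honest fractional ideal of $R$ written as a product of integer powers of distinct nonzero prime ideals.

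Next, apply Proposition~\ref{unique_prop} to this factorization. That proposition says exactly that if a fractional ideal is written as $\prod \mathfrak p_i^{n_i}$ with distinct nonzero prime ideals $\mathfrak p_i$, then $n_i = v_{\mathfrak p_i}(\text{of the ideal})$, and that the $v_{\mathfrak q}$-valuation of the ideal is zero for any other nonzero prime $\mathfrak q$. Applying this, we get $v_{\mathfrak p_i}(\mathrm{content}(f)) = v_{\mathfrak p_i}(f)$ for each $i$, and $v_{\mathfrak q}(\mathrm{content}(f)) = 0$ for every other nonzero prime $\mathfrak q$. But for such $\mathfrak q$ we also have $v_{\mathfrak q}(f) = 0$ by our choice of the finite set, so the equality $v_{\mathfrak p}(\mathrm{content}(f)) = v_{\mathfrak p}(f)$ holds at every nonzero prime ideal $\mathfrak p$.

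There is essentially no hard step here; the only thing to be careful about is the notational collision where $v_{\mathfrak p}$ is being used both for the valuation on $K(X)^\times$ (appearing inside the exponents in the definition of content) and for the valuation map on $\mathcal I(R)$ (appearing on the right-hand side of the conclusion). The point of the proof is precisely to check that these two uses are compatible via the content construction, and Proposition~\ref{unique_prop} does exactly this identification in one line.
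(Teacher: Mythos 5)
Your proof is correct. The paper omits a proof of this proposition entirely (consistent with the author's policy of leaving straightforward verifications to the reader), and your argument is exactly the expected unwinding: the definition of $\mathrm{content}(f)$ expresses it as a product $\prod_i \mathfrak p_i^{v_{\mathfrak p_i}(f)}$ over a sufficiently large finite set of primes, and Proposition~\ref{unique_prop} identifies the exponent at $\mathfrak p_i$ with $v_{\mathfrak p_i}(\mathrm{content}(f))$ while forcing $v_{\mathfrak q}(\mathrm{content}(f)) = 0$ for all primes outside the chosen set, where $v_{\mathfrak q}(f) = 0$ as well by hypothesis. Your remark about the deliberate overloading of the symbol $v_{\mathfrak p}$ (valuation on $K(X)^\times$ versus valuation on $\mathcal I(R)$) is apt; that compatibility is precisely what the proposition asserts, and Proposition~\ref{unique_prop} settles it in one stroke.
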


\begin{theorem}[Gauss's lemma for Dedekind domains]
Let $f, g \in K(X)^\times$ where~$K$ is the fraction field of a Dedekind domain~$R$. Then 
$$
\mathrm{content} (fg) = \mathrm{content} (f) \; \mathrm{content} (g)
$$
\end{theorem}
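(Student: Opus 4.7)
The plan is to prove the equality of fractional ideals by checking it locally, one nonzero prime at a time, converting everything to statements about the extended valuations on $K(X)^\times$ where the DVR case is already available.

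First I would reduce to showing $v_{\mathfrak{p}}(\mathrm{content}(fg)) = v_{\mathfrak{p}}(\mathrm{content}(f)) + v_{\mathfrak{p}}(\mathrm{content}(g))$ for every nonzero prime $\mathfrak{p}$ of $R$. This reduction is justified by the corollary that two fractional ideals are equal iff they agree under $v_{\mathfrak{p}}$ at every nonzero prime (combined with the fact that $v_{\mathfrak{p}}$ turns products of fractional ideals into sums). Since the preceding proposition tells us $v_{\mathfrak{p}}(h) = v_{\mathfrak{p}}(\mathrm{content}(h))$ for any nonzero $h \in K(X)^\times$, the desired identity reduces further to showing $v_{\mathfrak{p}}(fg) = v_{\mathfrak{p}}(f) + v_{\mathfrak{p}}(g)$, where $v_{\mathfrak{p}}$ now denotes the induced valuation on $K(X)^\times$.

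Next I would invoke Gauss's lemma for DVRs (Proposition~\ref{gauss_lemma_DVR_thm}) applied to the DVR $R_{\mathfrak{p}}$ with its valuation $v_{\mathfrak{p}}$ on $K$. The induced valuation on $K(X)^\times$ from that DVR is exactly the $v_{\mathfrak{p}}$ we are working with (defined as minimum of $v_{\mathfrak{p}}$ over coefficients, extended to quotients by subtraction), so the DVR version of Gauss's lemma gives precisely $v_{\mathfrak{p}}(fg) = v_{\mathfrak{p}}(f) + v_{\mathfrak{p}}(g)$. Combining with the content-valuation identity on both sides finishes the argument.

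The only potential obstacle is a bookkeeping one: making sure the ``induced valuation on $K(X)^\times$'' coming from the global Dedekind picture really coincides with the induced valuation coming from localizing at $\mathfrak{p}$ first and then extending. But this is immediate from the definitions, since in both cases one takes the minimum of the $\mathfrak{p}$-adic valuation over the coefficients of a polynomial and extends multiplicatively to quotients. Once that identification is in hand, the proof is essentially a one-line application of Gauss's lemma for DVRs, localized prime by prime.
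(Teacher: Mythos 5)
Your proof is correct and follows essentially the same route as the paper: reduce to the local statement prime-by-prime using the valuation/local-global machinery, then invoke Gauss's lemma for DVRs to conclude $v_{\mathfrak p}(fg) = v_{\mathfrak p}(f) + v_{\mathfrak p}(g)$. The only cosmetic difference is that you phrase the reduction via the corollary ``fractional ideals are equal iff their $v_{\mathfrak p}$-values agree for all $\mathfrak p$'' together with additivity of $v_{\mathfrak p}$ on the ideal group, whereas the paper invokes the localization equality test (Corollary~\ref{equality_test_cor}) directly; these are equivalent formulations of the same local-global principle, so the arguments are the same in substance.
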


\begin{proof} 
By Corollary~\ref{equality_test_cor} it is enough to show equality in $R_{\mathfrak p}$ for each nonzero prime 
ideal~${\mathfrak p}$. This amounts to showing
$$
{\mathfrak p}^{v_{\mathfrak p} (fg)} R_{\mathfrak p} = 
\left( \mathfrak p^{v_{\mathfrak p} (f)}  R_{\mathfrak p} \right) \left( p^{v_{\mathfrak p} (g)} R_{\mathfrak p} \right)
$$
However $v_{\mathfrak p} (fg) = v_{\mathfrak p} (f) + v_{\mathfrak p} (g)$ by Theorem~\ref{gauss_lemma_DVR_thm}.
\end{proof}

\begin{remark}
Gauss's lemma gives a trick for finding a generating set for the product of two fractional ideals $IJ$.
Suppose $I = a_1 R+ \ldots + a_m R$, then let $f$ be the following ``basis polynomial'':
$$f \; \defeq \; a_1 + a_2 X   + \ldots + a_m X^{m-1}.$$
Observe that $I = \mathrm{content} (f)$. Let $J = \mathrm{content} (g)$ for
a similarly constructed polynomial. Then $I J = \mathrm{content} (f g)$.
Thus the coefficients of the polynomial $fg$ provide a generating set for $IJ$.

Observe, that if $I$ has $m$ generators and $J$ has $n$ generators, then we get $m + n - 1$
generators for $I J$ this way. Compare this to the $m n$ generators you get if instead
you use $a_i b_j$ as your generating set where $(a_i)$ are~$m$ generators for $I$ and $(b_i)$ are~$n$ generators for $J$.

For example, in algebraic number theory it is known that $R= \bZ[\sqrt{-5}\, ]$ is a Dedekind domain.
Since $\left(3 X + (1+ 2\sqrt{-5})\right) \left(3 X + (1- 2\sqrt{-5})\right)  = 9 X^2 + 6 x + 21$ we conclude that
$$
\left(3 R + (1+ 2\sqrt{-5}) R\right) \, 
\left(3 R + (1- 2\sqrt{-5}) R\right) = 9 R + 6 R + 21 R = 3 R
$$
which gives a factorization of the ideal $3 R$.
\end{remark}

\begin{exercise}
Suppose that $R$ is an integral domain with fraction field~$K$.
\begin{enumerate}

\item
Show that if $f = a$ is a nonzero constant polynomial,
then $\mathrm{content} (f) = a R$. 

\item
Let~$f \in K[X]$ be nonzero. Show that $f \in R[X]$ if and only if the content of $f$
is an integral ideal. 

\item
A \emph{primitive polynomial} is defined to be a polynomial in~$K[X]$ with content 
equal to the identity ideal~$R$. Show that every monic polynomial in $R[X]$ is primitive.
\end{enumerate}
\end{exercise}

\begin{exercise}
Suppose that $R$ is a Dedekind domain and that~$f, g, h \in K[X]$ are monic.
Show that if $f = g h$ and $f \in R[X]$ then $g, h \in R[X]$ as well.
\end{exercise}

\begin{exercise}
Show that if $R$ is a PID with fraction field~$K$, and if $f \in K[X]$ is nonzero, then $\mathrm{content} (f) = a R$ if and only
$f = a f_0$ where $f_0$ is a primitive polynomial. Make and justify a similar statement for $f \in K(X)^\times$.

In this case we sometimes say that $a$ is the content. 
The content considered as an element in $K^\times$ is only defined up to 
multiplication by a unit of~$R$.
\end{exercise}

\begin{exercise}
Suppose $f, g \in R[X]$ are nonzero with nonzero sum $f+ g$, where $R$ is a Dedekind
domain. Show that 
$$
 \mathrm{content} (f) + \mathrm{content} (g) 
 \mid
 \mathrm{content} (f + g)
$$
(where the sum is the sum of ideals).
\end{exercise}

\bigskip

If you weaken the assumption that $R$ is a Dedekind domain to just that $R$ is 
integrally closed, you still get some interesting results about polynomials.
In the remaining exercises in this section we consider integral domains that are not always 
Dedekind domains, but are at least integrally closed. 

\begin{exercise} \label{ex38}
Let $R$ be an integral domain that is integrally closed in its fraction field~$K$.
Show that if $r$ is a root of a nonzero $f \in R[X]$, and if $a \in R$ is the leading
coefficient of~$f$, then $a r \in R$. Hint: multiply $f$ by $a^{d-1}$ where $d$ is the
degree of~$f$ in order to form a monic polynomial with root $ar$.
\end{exercise}

\begin{exercise} \label{ex32}
Let $R$ be an integral domain that is integrally closed in its fraction field~$K$.
Show that if $r\in K$ is a root of a nonzero polynomial $f \in R[X]$ then
we have~$f(X)= q(X) (X-r)$ where $q$ is in~$R[X]$.
Hint: use (strong) induction on the degree of~$f$ and use the previous exercise 
in the context of the division algorithm in~$K[X]$. Pay special attention to the leading coefficient of~$f$.
\end{exercise}

\begin{exercise}\label{ex33}
The above exercise leads to an interesting characterization of the property of being integrally closed.
Let $R$ be an integral domain with fraction field~$K$.
Show that \emph{$R$ is integrally closed if and only if, 
for each nonzero $f \in R[X]$ and each root $r \in K$ of $f$, we can factor $f$ as 
 $q(X) (X-r)$ where $q$ is in~$R[X]$}.
 
Hint: consider the special case where $f$ is monic and look at the two terms of highest power of
$f(x)$ and the product~$q(X) (X-r)$.
\end{exercise}

\begin{exercise}
Use Gauss's lemma to give a simpler proof of 
the result of Exercise~\ref{ex32} when the hypothesis is replaced by the hypothesis 
that $R$ is a Dedekind domain. 
\end{exercise}

\begin{exercise} (Assumes some field theory background).
Let $R$ be an integral domain that is integrally closed in its fraction field~$K$.
Show that if $f \in R[X]$ is a nonzero polynomial that factors as $f = gh$
where $g, h \in K[X]$ and where $h$ is monic, then~$g \in R[X]$. 

Hint: Extend to the splitting field of~$h$,
and use Exercise~\ref{ex33} repeatedly $d$ times where~$d = \deg h$.
\end{exercise}

\begin{exercise}
Use the previous exercise to show the following
when $R$ is an integral domain that is integrally closed in its fraction field~$K$.
\emph{If $f \in R[X]$ is a monic polynomial that factors as $f = gh$
where $g, h \in K[X]$ are monic, then~$g, h \in R[X]$.}

Does the converse hold? In other words, does this property imply that $R$ is integrally closed?
\end{exercise}


\chapter{Divisibility domains and cancellation domains} \label{ch12}

Now we explore integral domains that behave like Dedekind domains in certain ways.
If an integral domain behaves like a Dedekind domain by having the property
that~$J \subseteq I \implies I \mid J$, we will call it a \emph{divisibility domain}.
If an integral domain behaves like a Dedekind domain by having a cancellation law
for nonzero ideals, we will call it a \emph{cancellation domain}.
The terms \emph{divisibility domain} and \emph{cancellation domain} should be
regarded as temporary classifications since we will show such rings
can be characterized in terms of other types of rings (see Theorem~\ref{thm98} 
and Theorem~\ref{cancellation_almost_dedekind_thm}).

\begin{definition}
A \emph{divisibility domain} is an integral domain with the property that if~$J \subseteq I$,
where $I$ and $J$ are nonzero ideals, then $I \mid J$.
\end{definition}

\begin{example}
As we have seen, every Dedekind domain is a divisibility domain.
\end{example}

\begin{example}
By Theorem~\ref{divisibility_DVR_thm}, any divisibility domain with a unique
nonzero maximal ideal is a DVR.
\end{example}

\begin{lemma}
Suppose $R$ is a divisibility domain. Then every nonzero ideal is invertible.
\end{lemma}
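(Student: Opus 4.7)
The plan is to reduce invertibility of an arbitrary nonzero ideal to invertibility of a principal ideal, using the divisibility hypothesis to bridge the two. Let $I$ be any nonzero ideal of $R$. Pick any nonzero $a \in I$; then $aR \subseteq I$, so the divisibility-domain hypothesis immediately gives an ideal $I'$ with $I I' = aR$.

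Now I would invoke Proposition~\ref{easy_invert_prop}. The principal fractional ideal $aR$ is invertible with inverse $a^{-1}R$. The second half of that proposition says that if a product $IJ$ equals a principal ideal $xR$ with $x \in K^\times$, then both factors are invertible; equivalently, if $II'$ is invertible then so is $I$ (and $I'$). Applying this to $II' = aR$ concludes that $I$ is invertible, which is what we want.

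There is really no main obstacle here: once one notices that $aR \subseteq I$ is the right inclusion to exploit, the divisibility hypothesis hands us a factorization whose product is a principal, hence invertible, ideal, and the preservation of invertibility under factors (Proposition~\ref{easy_invert_prop}) finishes the argument in one line. The only point that might be worth emphasizing in the writeup is that one does not need to identify the inverse explicitly; the abstract criterion that $I$ divides some invertible ideal is enough to conclude $I$ itself is invertible.
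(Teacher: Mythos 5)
Your proof is correct and takes essentially the same approach as the paper: take any nonzero $a\in I$, use $aR \subseteq I$ together with the divisibility hypothesis to get $II' = aR$, then conclude invertibility. The only difference is cosmetic: you invoke Proposition~\ref{easy_invert_prop} to finish, while the paper simply exhibits $a^{-1}I'$ as the explicit inverse.
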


\begin{proof}
Let $I$ be a nonzero ideal and let $a \in I$ be a nonzero element.
Then $aR \subseteq I$. Hence $I \mid aR$.
Thus $I J = aR$ for some nonzero ideal $J$ of~$R$.
Observe that $a^{-1} J$ is an inverse for $I$.
\end{proof}

\begin{theorem}\label{thm98}
Let $R$ be an integral domain. Then $R$ is a 
divisibility domain if and only if it is a Dedekind domain.
\end{theorem}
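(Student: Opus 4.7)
The plan is to observe that both directions are essentially already packaged by earlier results, so the proof should be a short citation-based argument rather than new work.

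For the forward direction (Dedekind implies divisibility), I would simply invoke Theorem~\ref{thm89}: in a Dedekind domain, containment is divisibility. So if $J \subseteq I$ with $I, J$ nonzero, then $I \mid J$, which is exactly the definition of a divisibility domain.

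For the reverse direction (divisibility implies Dedekind), the key input is the preceding lemma, which already establishes that every nonzero ideal in a divisibility domain is invertible. That lemma is proved exactly the way one would expect: pick a nonzero $a \in I$, use $aR \subseteq I$ to get $IJ = aR$ for some $J$, and then $a^{-1}J$ is the inverse. Once we have invertibility of every nonzero ideal, Theorem~\ref{thm64} (Noether's characterization: $R$ is a Dedekind domain iff every nonzero ideal is invertible) finishes the job in one stroke.

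So the entire proof boils down to: \emph{one direction is Theorem~\ref{thm89}; the other direction combines the lemma above with Theorem~\ref{thm64}}. There is no real obstacle here since all the substance has been front-loaded into the earlier invertibility theory and the local-to-global machinery of Chapter~\ref{ch7}. The only thing to be careful about is making the citations point to the right results and noting that the lemma's proof (which the author has already supplied) uses nothing beyond the definition of divisibility and basic manipulation of principal ideals. No new computation or structural argument is needed at this stage.
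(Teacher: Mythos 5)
Your proposal is correct and matches the paper's proof exactly: one direction is Theorem~\ref{thm89}, and the other follows from the lemma that every nonzero ideal of a divisibility domain is invertible together with Theorem~\ref{thm64}. Your summary of the lemma's argument is also accurate.
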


\begin{proof}
We have already seen that every Dedekind domain is a divisibility domain
(Theorem~\ref{thm89}).
By the previous lemma and Theorem~\ref{thm64}, every divisibility domain
is a Dedekind domain since every nonzero ideal is invertible.
\end{proof}

Now we consider integral domains that are similar to Dedekind domains
by possessing a cancellation law for nonzero ideals.

\begin{definition}\label{cancellation_domain_def}
A \emph{cancellation domain} is an integral domain with the property that if~$I_1 J = I_2 J$,
where $I_1, I_2$ and $J$ are nonzero ideals, then $I_1 = I_2$.
\end{definition}

\begin{lemma} \label{cancel_lemma1}
Suppose  $I_1, I_2$, and $J$ are 
fractional ideals of a cancellation domain.
If $I_1 J = I_2 J$ then  $I_1 = I_2$.
\end{lemma}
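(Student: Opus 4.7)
The plan is to bootstrap from the nonzero integral ideal cancellation in Definition~\ref{cancellation_domain_def} to the fractional ideal case by clearing denominators. Since each of $I_1, I_2, J$ is a fractional ideal, there exist nonzero $d_1, d_2, d_3 \in R$ with $d_1 I_1, d_2 I_2, d_3 J \subseteq R$. Setting $d \defeq d_1 d_2 d_3$, the $R$-submodules $dI_1, dI_2, dJ$ are all integral ideals of $R$, and each is nonzero because fractional ideals are nonzero by definition.

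Next I would multiply the hypothesis $I_1 J = I_2 J$ through by $d^2$. Using the identity $(xR)I = xI$ and associativity/commutativity of the product on $R$-submodules of $K$, I get
$$
(dI_1)(dJ) \;=\; d^2 (I_1 J) \;=\; d^2 (I_2 J) \;=\; (dI_2)(dJ).
$$
These are now three nonzero integral ideals related by the equation $(dI_1)(dJ) = (dI_2)(dJ)$, so Definition~\ref{cancellation_domain_def} applies and yields $dI_1 = dI_2$.

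Finally, multiplying both sides by $d^{-1} \in K^\times$ (i.e.\ applying the operation $x \mapsto d^{-1} x$ inside $K$), I conclude $I_1 = I_2$, as required. The only subtlety is that $d^{-1}$ is not in $R$, but the scaling operation $J \mapsto d^{-1}J$ is a well-defined bijection on $R$-submodules of $K$ with inverse $J \mapsto dJ$, so the cancellation step at the end is completely routine. I do not expect any real obstacle here; the entire content of the lemma is the passage from integral to fractional, which is handled cleanly by the common-denominator trick.
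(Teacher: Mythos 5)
Your proof is correct: clearing denominators by $d = d_1 d_2 d_3$ reduces the fractional-ideal cancellation cleanly to the integral-ideal cancellation stipulated in Definition~\ref{cancellation_domain_def}, with the nonzero hypotheses all satisfied since fractional ideals are nonzero by definition. The paper leaves this lemma unproved (signaling it is routine), and your common-denominator argument is exactly the intended one.
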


\begin{lemma} \label{cancel_lemma2}
Suppose  $I_1, I_2$, and $J$ are 
fractional ideals of a cancellation domain.
If $I_1 J \subseteq I_2 J$ then  $I_1 \subseteq I_2$.
\end{lemma}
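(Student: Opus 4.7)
The plan is to reduce the inclusion $I_1 J \subseteq I_2 J$ to the equality situation already handled by Lemma~\ref{cancel_lemma1}, using the standard sum trick. Recall from the earlier section on submodules of the fraction field that if $I_1$ and $I_2$ are fractional ideals, then so is $I_1 + I_2$, and that the product distributes over the sum: $(I_1 + I_2) J = I_1 J + I_2 J$. So the fractional ideal $I_1 + I_2$ is a legitimate object to feed into the cancellation law.

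First I would form $(I_1 + I_2) J$ and rewrite it using distributivity as $I_1 J + I_2 J$. The hypothesis $I_1 J \subseteq I_2 J$ implies that $I_1 J + I_2 J = I_2 J$ (the sum of two submodules of $K$ equals the larger one whenever one is contained in the other). Hence $(I_1 + I_2) J = I_2 J$.

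Next I would apply Lemma~\ref{cancel_lemma1} to the fractional ideals $I_1 + I_2$, $I_2$, and $J$ (noting that $J$ is nonzero by assumption, and $I_1 + I_2$ is nonzero since $I_1, I_2$ are). This yields $I_1 + I_2 = I_2$, and therefore $I_1 \subseteq I_1 + I_2 = I_2$, which is the desired conclusion.

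There is no real obstacle here: the only thing to verify is that $I_1 + I_2$ qualifies as a fractional ideal so that Lemma~\ref{cancel_lemma1} applies, and this was established earlier. The argument is essentially a one-line application of the ``add the two ideals and cancel'' trick, which is a standard way to upgrade a cancellation law from equalities to inclusions.
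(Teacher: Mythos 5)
Your proof is correct and follows exactly the same route as the paper: form $(I_1 + I_2)J$, use distributivity and the hypothesis to equate it with $I_2 J$, cancel $J$ via Lemma~\ref{cancel_lemma1}, and conclude $I_1 \subseteq I_1 + I_2 = I_2$. No substantive differences.
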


\begin{proof}
If $I_1 J \subseteq I_2 J$ then $(I_1 + I_2) J = I_1 J + I_2 J = I_2 J$.
Since $R$ is a cancellation domain, $I_1 + I_2 = I_2$. Thus $I_1 \subseteq I_2$.
\end{proof}

Here is a simple, but tricky lemma:

\begin{lemma}
Suppose $x \in K^\times$ where $K$ is the fraction field of a cancellation domain~$R$.
Then $xR \subseteq  x^2 R + R$.
\end{lemma}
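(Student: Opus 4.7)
The plan is to reduce the desired inclusion $xR \subseteq x^2R + R$ to a trivial inclusion by multiplying both sides by a cleverly chosen nonzero fractional ideal $J$ and then invoking the containment form of cancellation (Lemma~\ref{cancel_lemma2}). Concretely, I look for a $J$ such that $xR \cdot J$ is visibly a subset of $(x^2 R + R) \cdot J$.

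The choice that works is $J \defeq xR + R$. First I would note that $J$ is indeed a (nonzero) fractional ideal: if $d \in R$ is a nonzero common denominator for $x$ (so $dx \in R$), then $dJ \subseteq R$. Then I would expand both products using distributivity of the ideal product:
$$ xR \cdot (xR + R) \;=\; x^2 R + xR, $$
$$ (x^2 R + R)(xR + R) \;=\; x^3 R + x^2 R + xR + R. $$
The first is a subsum of the second, so $xR \cdot J \subseteq (x^2 R + R) \cdot J$. Applying Lemma~\ref{cancel_lemma2} to cancel $J$ gives $xR \subseteq x^2 R + R$, as required.

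The only nonobvious step is spotting $J = xR + R$ as the right multiplier. The reason it works is that $J$ already contains the unit $1$, which forces the product $(x^2 R + R) \cdot J$ to pick up an $xR$ summand (coming from $R \cdot xR$) and an $x^2 R$ summand (coming from $x^2 R \cdot R$), automatically absorbing everything on the left-hand side. Once this is seen, no computation is needed beyond the two distributive expansions, which is presumably why the preceding text flags the lemma as simple but tricky.
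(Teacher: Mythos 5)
Your proof is correct and follows exactly the same route as the paper's: you choose $J = xR + R$, observe $xR \cdot J \subseteq (x^2R + R) \cdot J$, and cancel via Lemma~\ref{cancel_lemma2}. The only difference is that you spell out the distributive expansions and the fractional-ideal check that the paper leaves implicit.
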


\begin{proof}
Let $I_1 = x R$ and $I_2 = x^2 R + R$. Let $J = x R + R$. Then $I_1 J \subseteq I_2 J$.
Thus~$I_1 \subseteq I_2$.
\end{proof}

\begin{lemma}\label{lemma107}
Suppose $x \in K^\times$ where $K$ is the fraction field of a cancellation domain~$R$.
Then $a x^2 + x + b = 0$ for some $a, b \in R$.
\end{lemma}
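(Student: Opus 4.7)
The plan is to deduce this almost immediately from the previous lemma, which states $xR \subseteq x^2 R + R$. First I would observe that $x \in xR$, so the inclusion yields $x \in x^2 R + R$. By the definition of the sum of submodules of $K$, any element of $x^2 R + R$ has the form $c x^2 + d$ for some $c, d \in R$. Hence there exist $c, d \in R$ with $x = c x^2 + d$.

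Rearranging, $(-c) x^2 + x + (-d) = 0$, so setting $a = -c$ and $b = -d$ (both elements of $R$) gives the desired identity $a x^2 + x + b = 0$.

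The only potential subtlety is making sure we are allowed to invoke the previous lemma for arbitrary $x \in K^\times$ (not just $x \in R$), but that lemma is stated exactly at this level of generality, so there is no obstacle. There is essentially no difficulty here: the lemma is a direct corollary of the previous one, and the real work was done in proving $xR \subseteq x^2 R + R$ via the cancellation trick with $J = xR + R$.
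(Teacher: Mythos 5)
Your proof is correct and takes exactly the route the paper intends; the paper simply says the lemma ``is a consequence of the previous lemma,'' and you have filled in the one-line deduction ($x \in xR \subseteq x^2R + R$ gives $x = cx^2 + d$, then rearrange). Nothing to add.
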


\begin{proof}
This is a consequence of the previous lemma.
\end{proof}

\begin{lemma}
Every cancellation domain $R$  is integrally closed.
\end{lemma}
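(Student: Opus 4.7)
The plan is to prove the stronger statement that $\mathcal{R}(I) = R$ for every fractional ideal $I$ of a cancellation domain $R$, and then invoke Exercise~\ref{ex8_new} to conclude that $R$ is integrally closed. The only tool needed is Lemma~\ref{cancel_lemma2}, which extends the defining cancellation property to the statement: $I_1 J \subseteq I_2 J$ implies $I_1 \subseteq I_2$ for fractional ideals $I_1, I_2, J$.

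First I would fix a fractional ideal $I$ and a nonzero element $x \in \mathcal{R}(I)$, so that $xI \subseteq I$. Since $x \ne 0$, both $xR$ and $R$ are (principal) fractional ideals, and we may rewrite the inclusion as $(xR)\, I \subseteq R \cdot I$. Applying Lemma~\ref{cancel_lemma2} with $I_1 = xR$, $I_2 = R$, $J = I$ gives $xR \subseteq R$, hence $x \in R$. Since $0 \in R$ trivially and $R \subseteq \mathcal{R}(I)$ is automatic, this proves $\mathcal{R}(I) = R$. By Exercise~\ref{ex8_new}, $R$ is integrally closed. Alternatively, one can argue directly without appealing to that exercise: if $x \in K$ is integral over $R$, then $R[x]$ is a finitely generated $R$-module (hence a fractional ideal, as it contains~$1$), and $xR[x] \subseteq R[x]$ forces $x \in \mathcal{R}(R[x]) = R$.

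I do not anticipate a real obstacle here: the argument is essentially a one-line application of Lemma~\ref{cancel_lemma2} once one identifies $\mathcal{R}(I)$ as the right object to control. It is worth noting that Lemma~\ref{lemma107} and the inclusion $xR \subseteq x^2 R + R$ preceding it do not appear to be needed for this proof; they seem to be recorded as stepping stones toward other structural results in the section rather than toward this particular lemma.
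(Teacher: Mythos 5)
Your proof is correct, and it takes a genuinely different (if closely related) route from the paper's. The paper's own proof is shorter and more direct: given $x$ integral over $R$, set $I = R[x]$, observe that $I$ is finitely generated (hence a fractional ideal) and that $I\,I = I$ because $I$ is a subring containing $1$; then Lemma~\ref{cancel_lemma1} applied to $I\,I = R\,I$ gives $I = R$, so $x \in R$. Your argument instead establishes the a priori stronger statement that $\mathcal{R}(I) = R$ for every fractional ideal $I$ --- i.e., that a cancellation domain is \emph{completely} integrally closed --- via Lemma~\ref{cancel_lemma2} applied to $(xR)\,I \subseteq R\,I$, and then lands back on integrally closed through Exercise~\ref{ex8_new}. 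The extra generality costs you a detour through the $\mathcal{R}(I)$ machinery and an appeal to an exercise (though your parenthetical alternative does unwind that exercise in the one case needed, which makes the proof self-contained). Both proofs ultimately rest on the same cancellation law; the paper's gets away with the equality version, while yours uses the inclusion version. Your observation that Lemma~\ref{lemma107} and the inclusion $xR \subseteq x^2R + R$ are not needed here is accurate --- in the paper those feed into Lemma~\ref{cancel_lemma6}, not this one.
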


\begin{proof}
Suppose $x$ is in the fraction field $K$ of~$R$ and is integral over $R$.
Then the~$R$-submodule $I = R[x]$ of~$K$
must be a finitely generated as an $R$-module. Observe that $I I = I$, so by cancellation~$I = R$.
Thus $x \in R$ as desired.
\end{proof}

\begin{lemma} \label{cancel_lemma6}
Let $R$ be a cancellation domain~$R$ with fraction field~$K$.
Let  $\mathfrak p$ be a nonzero prime ideal of~$R$.
Then for every $x \in K^\times$ either $x \in R_{\mathfrak p}$ or $x^{-1} \in R_{\mathfrak p}$.
\end{lemma}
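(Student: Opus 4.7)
The plan is to exploit Lemma~\ref{lemma107} (which supplies $a, b \in R$ with $ax^2 + x + b = 0$) together with the fact that every cancellation domain is integrally closed, established in the lemma immediately preceding this one. The strategy is to extract from the relation two \emph{honest} elements of $R$, namely $ax$ and $bx^{-1}$, whose sum is $-1$, and then use that $-1 \notin \mathfrak p$ to force one of them to be a unit in $R_{\mathfrak p}$.

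First I would scale the relation $ax^2 + x + b = 0$ by $a$ to obtain $(ax)^2 + (ax) + ab = 0$, exhibiting $ax$ as a root of a monic polynomial in $R[T]$ and hence integral over~$R$. Since $R$ is integrally closed, this will give $ax \in R$. By the same token, dividing the original equation by $x^2$ yields $bx^{-2} + x^{-1} + a = 0$, and scaling by $b$ produces $(bx^{-1})^2 + (bx^{-1}) + ab = 0$, so $bx^{-1}$ is integral and therefore also in $R$.

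Next I would divide the original equation through by $x$ to obtain the key identity
$$ (ax) + (bx^{-1}) = -1. $$
Since $\mathfrak p$ is a proper ideal, $-1 \notin \mathfrak p$, so at least one of the two summands lies in $R \smallsetminus \mathfrak p$ and is therefore a unit in $R_{\mathfrak p}$. If $ax \notin \mathfrak p$ then in particular $a \neq 0$ and $x^{-1} = a\cdot(ax)^{-1} \in R_{\mathfrak p}$; if instead $bx^{-1} \notin \mathfrak p$ then $b \neq 0$ and $x = b\cdot(bx^{-1})^{-1} \in R_{\mathfrak p}$. Either way the required alternative holds, and the degenerate cases $a = 0$ or $b = 0$ are absorbed automatically, because then the other summand of the identity is itself $-1$.

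The main obstacle will be spotting that scaling by the leading coefficient $a$ promotes the quadratic $aT^2 + T + b$ into a monic polynomial satisfied by $ax$; this is a standard move for proving ``leading coefficient times the root lies in the integral closure,'' but it is the one non-routine idea in the argument. Once that is in hand, everything else is formal manipulation together with integral closure of $R$ and the triviality that $-1$ is a unit mod any proper ideal.
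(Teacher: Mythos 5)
Your proposal is correct and takes essentially the same approach as the paper: both start from Lemma~\ref{lemma107}, scale by $a$ to show $ax$ is integral hence in $R$, and then use the local structure of $R_{\mathfrak p}$ to split into the two cases. The only cosmetic difference is that where you separately prove $bx^{-1}$ is integral and use the identity $(ax)+(bx^{-1})=-1$, the paper notes $(1+ax)x=-b$ so that $1+ax\in R$ comes for free from $ax\in R$, and then uses that $ax$ and $1+ax$ cannot both lie in the maximal ideal since they differ by $1$; these two observations are the same, as $bx^{-1}=-(1+ax)$.
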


\begin{proof}
By Lemma~\ref{lemma107} we have that $a x^2 + x + b = 0$ for some $a, b \in R$.
This gives us~$(a x)^2 + (a x) + ab = 0$. Since $R$ is integrally closed, $a x \in R$.
Observe that~$(1 + a x ) x = -b$ so $(1+ax) x \in R$. Now $R_{\mathfrak p}$ is a local ring
so either $ax$ or $1+ax$ is a unit in~$R_{\mathfrak p}$. 
If $ax$ is a unit, then $1/x \in R_{\mathfrak p}$. If $1 + ax$ is a unit, then
$x \in R_{\mathfrak p}$.
\end{proof}

\begin{proposition}
Let $R$ be a Noetherian cancellation domain~$R$.
Then $R_{\mathfrak p}$ is a~DVR for every nonzero prime ideal of~$R$.
\end{proposition}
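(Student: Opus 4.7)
The plan is to apply Proposition~\ref{propI}, which says that a Noetherian domain (which is not a field) whose fraction field $K$ satisfies ``$x \in R$ or $x^{-1} \in R$ for each $x \in K^\times$'' must be a DVR. Essentially all the hard work has already been done in Lemma~\ref{cancel_lemma6}, which gives exactly the $x$ or $x^{-1}$ condition for $R_{\mathfrak p}$. So the proof really amounts to assembling three ingredients.

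First, I would verify that $R_{\mathfrak p}$ is Noetherian: this is immediate from Corollary~\ref{noeth_cor} applied to the multiplicative system $S = R \smallsetminus \mathfrak p$, using the standing assumption that $R$ itself is Noetherian. Second, I would note that $R_{\mathfrak p}$ is not a field, because its unique maximal ideal $\mathfrak p R_{\mathfrak p}$ is nonzero (since $\mathfrak p$ is a nonzero prime of $R$, and localization takes $\mathfrak p$ to a nonzero prime by the prime correspondence, Proposition~\ref{prime_correspondence_prop}). Third, I would recall the standard fact that the fraction field of $R_{\mathfrak p}$ coincides with $K$, the fraction field of~$R$, so that the hypothesis of Lemma~\ref{cancel_lemma6} gives us precisely the ``valuation ring'' dichotomy inside $K^\times$ for the ring $R_{\mathfrak p}$.

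With these three facts in hand, applying Proposition~\ref{propI} to $R_{\mathfrak p}$ yields that $R_{\mathfrak p}$ is a DVR, which is exactly the claim. There is no real obstacle here since the heavy lifting, namely deriving the integral equation $ax^2 + x + b = 0$ and exploiting the local structure to conclude the valuation-ring dichotomy, was carried out in Lemmas~\ref{lemma107} and~\ref{cancel_lemma6}; the only thing to be careful about is not forgetting the ``not a field'' hypothesis required by Proposition~\ref{propI}, which is where the assumption that $\mathfrak p$ is \emph{nonzero} gets used.
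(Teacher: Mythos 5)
Your proof is correct and follows essentially the same route as the paper: localize to get a Noetherian domain via Corollary~\ref{noeth_cor}, invoke Lemma~\ref{cancel_lemma6} for the valuation-ring dichotomy, and conclude by Proposition~\ref{propI}. You are a bit more careful than the paper's (quite terse) proof in explicitly verifying the ``not a field'' hypothesis of Proposition~\ref{propI} and in recalling that $R_{\mathfrak p}$ has the same fraction field as~$R$, both of which are genuinely needed; this is good practice even if the paper leaves it implicit.
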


\begin{proof}
Since $R_{\mathfrak p}$ must be Noetherian, it must be a DVR
by the previous lemma and Proposition~\ref{propI}.
\end{proof}

\begin{theorem} \label{thm111}
Let $R$ be an integral domain. Then $R$ is a Dedekind domain if
and only if it is a Noetherian cancellation domain.
\end{theorem}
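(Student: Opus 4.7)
The plan is to handle the two directions separately, with almost all of the work already bundled into earlier results. For the forward direction, I would assume $R$ is a Dedekind domain. Then $R$ is Noetherian by definition, and by Corollary~\ref{cor41} (or Theorem~\ref{thm64}) every nonzero ideal of $R$ is invertible. Given $I_1 J = I_2 J$ with $J$ a nonzero ideal, I would simply multiply both sides by $J^{-1}$ (a fractional ideal of $R$) to conclude $I_1 = I_2$, matching Definition~\ref{cancellation_domain_def}. So $R$ is a Noetherian cancellation domain.

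For the reverse direction, suppose $R$ is a Noetherian cancellation domain. The immediately preceding proposition (the unnumbered one right before this theorem) gives that $R_{\mathfrak p}$ is a DVR for every nonzero prime ideal $\mathfrak p$ of $R$. In particular, $R_{\mathfrak m}$ is a DVR for every nonzero maximal ideal $\mathfrak m$ of $R$. Since $R$ is also Noetherian by hypothesis, Theorem~\ref{thm58} applies and yields that $R$ is a Dedekind domain.

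There is essentially no obstacle here, since the heavy lifting has been distributed across Lemmas~\ref{cancel_lemma1}--\ref{cancel_lemma6} and the proposition on DVR localizations; the only point worth a moment's thought is making sure the forward direction really just needs invertibility of $J$ (which it does, since $J^{-1}$ exists as a fractional ideal even though $I_1, I_2$ are only assumed to be ideals). The entire argument is thus a clean application of Theorem~\ref{thm58} together with the preceding proposition on one side, and of Corollary~\ref{cor41} on the other.
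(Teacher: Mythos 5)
Your proposal follows the paper's argument exactly: the forward direction invokes invertibility of nonzero ideals (Theorem~\ref{thm64}) to cancel, and the reverse direction combines the preceding proposition on DVR localizations with Theorem~\ref{thm58}. You merely spell out the multiplication by $J^{-1}$ that the paper leaves implicit; otherwise the two proofs coincide.
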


\begin{proof}
If $R$ is a Dedekind domain, it is Noetherian by definition and
every nonzero ideal is invertible (Theorem~\ref{thm64}), which makes it a cancellation domain.

If $R$ is a Noetherian cancellation domain, then 
$R_{\mathfrak p}$ is a DVR for every nonzero maximal ideal $\mathfrak p$ of~$R$ by the above proposition,
which makes it a Dedekind Domain (see Theorem~\ref{thm58}).
\end{proof}

In Appendix E we will consider cancellation domains that are not necessarily Noetherian.\footnote{We
will see that an integral domain is a cancellation domain if and only if it is an almost Dedekind domain.}
The following exercises applies to such general cancellation domains, and will be used in Appendix E.

\begin{exercise}\label{cancel_ex1}
Show that if $R$ is a cancellation domain and if $a\in R$ is not a unit then
$$
\bigcap_{k=1}^\infty a^k R = \{0\}.
$$
Hint: call this intersection $I$ and show $(a R) I = I$.
\end{exercise}

\begin{exercise} \label{cancel_ex2a}
Suppose that $R$ is an integral domain with maximal ideal~$\mathfrak m$.
Derive the identity
$$\left( {\mathfrak m}^k R_{\mathfrak m} \right) \cap R = {\mathfrak m}^k.$$

Hint: For each $a \in \left( {\mathfrak m}^k R_{\mathfrak m} \right) \cap R$, form the ideal 
$I_a = \{r \in R \mid ra \in  {\mathfrak m}^k\}$. Show that $I_a$
contains the ideal $ {\mathfrak m}^k + sR$ for some $s \in R\smallsetminus \mathfrak m$.
Conclude that $I_a  = R$.
\end{exercise}

\begin{exercise} \label{cancel_ex2}
Let $\mathfrak m$ be a nonzero maximal ideal in a cancellation domain~$R$.
Show that $\mathfrak m^2 \subsetneq \mathfrak m$.
Use the previous exercise to show that this strict inclusion continues to hold in~$R_{\mathfrak m}$:
$$
{\mathfrak m}^2 R_{\mathfrak m} \subsetneq {\mathfrak m} R_{\mathfrak m}.
$$
\end{exercise}

\begin{exercise} \label{cancel_ex3}
Let $\mathfrak m$ be a nonzero maximal ideal in a cancellation domain~$R$.
Show that $R_{\mathfrak m}$ is a valuation ring whose
maximal ideal is principal. Hint: see Exercise~\ref{valuation_ring_ex}
for the definition of valuation ring, and Exercise~\ref{valuation_ring_ex2}
to help show the maximal ideal is principal. Also, use the previous exercise.
\end{exercise}


\chapter{Characterizing Dedekind domains}

Here we summarize results, mainly from Sections \ref{ch7} and~\ref{ch12}, concerning what
properties are necessary and sufficient for an integral domain to be a Dedekind domain.

Before doing so, we derive a few more such characteristic properties. These are properties that have been
established or can easily be established for Dedekind domains, but, as we will see, any Integral domain with
these properties must be a Dedekind domain.

\begin{theorem}\label{thm101}
Suppose that $R$ is an integral domain such that every nonzero proper ideal is the product
of maximal ideals. Then $R$ is a Dedekind domain.
\end{theorem}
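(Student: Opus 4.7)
The plan is to reduce to Theorem~\ref{thm64} and show that every nonzero ideal of $R$ is invertible. Since every nonzero proper ideal is assumed to be a product of maximal ideals, and products of invertible ideals are invertible by Proposition~\ref{easy_invert_prop}, it will suffice to prove that every nonzero maximal ideal of $R$ is invertible.

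First I would dispose of the trivial case. If $R$ is a field there is nothing to prove, so assume $R$ has a nonzero maximal ideal $\mathfrak m$. The central step is to show $\mathfrak m$ is invertible. Here is how I would do it: pick a nonzero $a \in \mathfrak m$. Since $a \in \mathfrak m$ we have that $a$ is a nonunit, so $aR$ is a nonzero proper ideal, and by hypothesis
\[
aR = \mathfrak m_1 \mathfrak m_2 \cdots \mathfrak m_k
\]
for some maximal ideals $\mathfrak m_1, \ldots, \mathfrak m_k$ (with $k \ge 1$, since the empty product is $R$). Because $aR$ is a nonzero principal ideal it is invertible (Proposition~\ref{easy_invert_prop}), and by the same proposition applied inductively, each factor $\mathfrak m_i$ is invertible as well. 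Now $\mathfrak m_1 \cdots \mathfrak m_k = aR \subseteq \mathfrak m$, and since $\mathfrak m$ is a prime ideal, at least one $\mathfrak m_i$ is contained in $\mathfrak m$; by maximality of $\mathfrak m_i$ this forces $\mathfrak m_i = \mathfrak m$. Hence $\mathfrak m$ is one of the invertible factors and is itself invertible.

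With every nonzero maximal ideal invertible in hand, every nonzero proper ideal is invertible as a product of invertibles, and $R$ itself is trivially invertible. Theorem~\ref{thm64} then yields at once that $R$ is a Dedekind domain. I do not expect any serious obstacle: the only delicate point is the application of the prime-ideal property to pick out $\mathfrak m$ among the factors $\mathfrak m_i$, but this is immediate because each $\mathfrak m_i$ is maximal, so the containment $\mathfrak m_i \subseteq \mathfrak m$ must be equality. The field case and the case where $\mathfrak m$ might be the zero ideal are handled in one line at the outset.
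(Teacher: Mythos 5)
Your proof is correct and follows essentially the same route as the paper: factor a nonzero principal ideal $aR \subseteq \mathfrak m$ into maximal ideals, use Proposition~\ref{easy_invert_prop} to see each factor is invertible, use primeness and maximality to identify $\mathfrak m$ among the factors, and then invoke Theorem~\ref{thm64}. The only additions (disposing of the field case, spelling out why $\mathfrak m_i \subseteq \mathfrak m$ forces equality) are harmless elaborations of steps the paper leaves implicit.
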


\begin{proof}
Let $\mathfrak m$ be a nonzero maximal ideal of~$R$ and let $a \in \mathfrak m$
be nonzero.
Then by assumption $a R$ is the product
of maximal ideal: $a R = \mathfrak m_1 \cdots \mathfrak m_k$. 
By Proposition~\ref{easy_invert_prop},
each~$\mathfrak m_i$ is invertible. Next observe that 
$\mathfrak m_1 \cdots \mathfrak m_k \subseteq \mathfrak m$.
This implies that~$\mathfrak m = \mathfrak m_i$ for some~$i$.
Thus $\mathfrak m$ is invertible.

Since every nonzero maximal ideal is invertible, any product of such ideals is invertible.
By assumption every nonzero proper ideal is the product of (nonzero) maximal ideals.
Thus every nonzero ideal is invertible and,
by Theorem~\ref{thm64}, $R$ is a Dedekind domain.
\end{proof}

\begin{theorem}\label{thm_principal_product}
Suppose that $R$ is an integral domain. Then $R$ is a Dedekind domain
if and only if for every nonzero ideal $I$ there is a nonzero ideal $J$ such that~$IJ$ is principal.
\end{theorem}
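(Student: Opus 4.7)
The plan is to reduce both directions to Theorem~\ref{thm64}, which characterizes Dedekind domains as those integral domains in which every nonzero ideal is invertible. The key bridge for the converse direction will be Proposition~\ref{easy_invert_prop}: if $IJ = xR$ for some $x \in K^\times$, then both $I$ and $J$ are invertible as fractional ideals.

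For the forward direction, I would start with an arbitrary nonzero ideal $I$ of the Dedekind domain $R$ and pick any nonzero $a \in I$. From $aR \subseteq I$, Theorem~\ref{thm89} (``to contain is to divide'') produces an ideal $J$ with $IJ = aR$, and $J$ is automatically nonzero because $aR$ is. Since $aR$ is principal, this supplies the required partner $J$.

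For the converse direction, I would suppose that every nonzero ideal $I$ admits a nonzero ideal $J$ with $IJ$ principal, say $IJ = aR$ with $a \ne 0$. Since $a \in K^\times$, Proposition~\ref{easy_invert_prop} applies immediately and yields that $I$ is invertible. Hence every nonzero ideal of $R$ is invertible, and Theorem~\ref{thm64} concludes that $R$ is a Dedekind domain.

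The argument is quite short, and the main ``obstacle'' is really just recognizing which two prior results do all the work: ``to contain is to divide'' in the forward direction produces the partner ideal on demand, and Proposition~\ref{easy_invert_prop} is strong enough to extract invertibility of $I$ from the existence of a single principal multiple $IJ$. No localization, valuation theory, or prime factorization is needed.
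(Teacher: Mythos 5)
Your argument is correct and matches the paper's proof essentially step for step: the forward direction invokes ``to contain is to divide'' (Theorem~\ref{thm89}) applied to $aR \subseteq I$, and the converse extracts invertibility of $I$ from Proposition~\ref{easy_invert_prop} and then applies Theorem~\ref{thm64}. Nothing further to add.
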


\begin{proof}
Suppose $R$ is a Dedekind domain and $I$ is a nonzero ideal. Let $a\in I$
be nonzero. Thus $a R \subseteq I$ and $I \mid a R$ as desired.

By Proposition~\ref{easy_invert_prop}, if $I$ and $J$ are nonzero ideals such that
$IJ$ is principal, then~$I$ and $J$ are invertible. So if for 
every nonzero ideal $I$ there is a nonzero ideal~$J$ such that~$IJ$ is principal, then
every nonzero ideal $I$ must be invertible.
Now use Theorem~\ref{thm64}.
\end{proof}

Now we are ready for a list of characterizations of Dedekind domains.

\begin{theorem} \label{dedekind_summary_thm}
Let $R$ be an integral domain. Then  the following are equivalent:
\begin{enumerate}
\item
$R$ is a Dedekind domain. In other words, $R$ is an integrally closed Noetherian domain whose
nonzero prime ideals are maximal.
\item
Every nonzero ideal of $R$ is invertible. (see Theorem~\ref{thm64})
\item
The  fractional ideals of~$R$ form a group under multiplication. (see Cor.~\ref{cor41})
\item
Every nonzero proper ideal is the product of maximal ideals. (Theorem~\ref{thm101})
\item
$R$ is Noetherian and $R_{\mathfrak m}$ is a DVR for each nonzero maximal ideal~$\mathfrak m$.~(Th.~\ref{thm58})
\item
$R$ is Noetherian and $R_{\mathfrak p}$ is a DVR for each nonzero prime ideal~$\mathfrak p$.
\item
$R$ is a divisibility domain. In other words, $R$ has the property that if~$J \subseteq I$,
where $I$ and $J$ are nonzero ideals, then $I \mid J$.v(see Theorem~\ref{thm98})
\item
$R$ is a Noetherian cancellation domain. In other words, $R$ is a Noetherian domain
such that if $I_1 J = I_2 J$ then $I_1 = I_2$ for all nonzero ideals $I_1, I_2, J$. (Theorem~\ref{thm111}).
\item
For every nonzero ideal $I$ of~$R$ there is an nonzero ideal $J$ of~$R$ such that $IJ$ is principal. 
(Theorem~\ref{thm_principal_product})
\end{enumerate}
\end{theorem}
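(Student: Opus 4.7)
The plan is to observe that Theorem~\ref{dedekind_summary_thm} is essentially a consolidation statement: each of conditions (2)--(9) has already been proved equivalent to (1) elsewhere in the paper, so the proof should consist of organizing those citations into a clean chain of implications rather than producing any new mathematics. I would not attempt a cycle like (1)$\Rightarrow$(2)$\Rightarrow\cdots\Rightarrow$(9)$\Rightarrow$(1); instead I would prove each of (2)--(9) to be equivalent to (1) directly, since the supporting results are stated in that form.

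First I would dispatch the easy citations: (1)$\Leftrightarrow$(2) is exactly Theorem~\ref{thm64}; (1)$\Leftrightarrow$(5) is Theorem~\ref{thm58}; (1)$\Leftrightarrow$(7) is Theorem~\ref{thm98}; (1)$\Leftrightarrow$(8) is Theorem~\ref{thm111}; (1)$\Leftrightarrow$(9) is Theorem~\ref{thm_principal_product}; and (1)$\Rightarrow$(4) is already the direction proved in Theorem~\ref{thm101}, with the converse being immediate from Theorem~\ref{thm75} together with the Dedekind property that nonzero primes are maximal. For (1)$\Leftrightarrow$(3), the forward direction is Corollary~\ref{cor41}, and the backward direction follows because if the fractional ideals form a group then in particular every nonzero (integral) ideal has an inverse, reducing to (2).

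The only link requiring genuine thought is (1)$\Leftrightarrow$(6). The direction (5)$\Rightarrow$(6) is immediate: in a Dedekind domain every nonzero prime is maximal, so ``$R_{\mathfrak p}$ a DVR for all nonzero primes $\mathfrak p$'' is exactly the same hypothesis as in~(5). For (6)$\Rightarrow$(1), the implication is trivial in the other direction since maximal ideals are prime, and then we invoke Theorem~\ref{thm58}. So (6) adds nothing beyond (5) logically, and the equivalence is formal.

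The main obstacle, such as it is, is bookkeeping: I must take care that each cited result is being used in the direction in which it was proved, and that in assembling (3)$\Rightarrow$(2) I do not implicitly invoke invertibility of arbitrary fractional ideals when only integral ones are at hand (the fact that integral ideals lie inside the group of fractional ideals makes this harmless, but it needs to be noted). Since every bullet carries its own citation and the implications above each reduce to a one-line appeal, I would present the proof as a short list: ``(1)$\Leftrightarrow$(2) by Theorem~\ref{thm64}; (1)$\Leftrightarrow$(3) by Corollary~\ref{cor41} and the observation that a group of fractional ideals forces every nonzero integral ideal to be invertible; \ldots'' concluding with the remark that the reindexing (5)$\Leftrightarrow$(6) is automatic because nonzero primes in any Dedekind domain are maximal.
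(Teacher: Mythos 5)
Your approach matches the paper's exactly: the paper simply gathers the already-proved equivalences (Theorems~\ref{thm58}, \ref{thm64}, \ref{thm75}, \ref{thm98}, \ref{thm101}, \ref{thm111}, \ref{thm_principal_product}, Corollary~\ref{cor41}) and declares the remaining implications straightforward, which is what you do. The one slip is that you have the directions of $(1)\Leftrightarrow(4)$ reversed: Theorem~\ref{thm101} proves $(4)\Rightarrow(1)$ (an integral domain in which every nonzero proper ideal factors as a product of maximal ideals is a Dedekind domain), while $(1)\Rightarrow(4)$ comes from Theorem~\ref{thm75} combined with the fact that nonzero primes in a Dedekind domain are maximal; you attributed each to the other. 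The rest of your bookkeeping — reducing (3) to (2) by noting integral ideals sit inside the group of fractional ideals, and observing that (5) and (6) coincide once $R$ is known to be Dedekind — is exactly the kind of remaining implication the paper leaves to the reader.
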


\begin{proof}
From Section~\ref{ch7} we use Theorem~\ref{thm58}, 
Corollary~\ref{cor41}, and Theorem~\ref{thm64}. 
From Section ~\ref{dedekind_vals_ch} we use Theorem~\ref{thm75}.
From Section~\ref{ch12}
we use Theorem~\ref{thm98} and Theorem~\ref{thm111}.
From the current section we use Theorem~\ref{thm101} and Theorem~\ref{thm_principal_product}.
These results give most of the needed implications. The rest are straightforward.
\end{proof}

\begin{remark}
Observe that many of these characterizations are simpler than the traditional 
definition of Dedekind domain (Definition~\ref{dedekind_domain_def}). 
\emph{Why then is the traditional definition still
commonly used}\,?  Perhaps because it is the easiest to verify for rings such as the ring of integers in 
a number field.
\end{remark}

\begin{exercise}\label{ufd_ex}
Let $R$ be a Dedekind domain. Show that $R$ is a PID if and only if~$R$ is a UFD.
Generalize this to any integral domain $R$ with the property that every nonzero prime ideal is maximal.

(Recall that a UFD is an integral domain such that every nonzero non-unit element
$a \in R$ can be written uniquely as the product of irreducible elements. Uniqueness means
that if $a = p_1 \ldots p_k = q_1 \ldots q_l$ are two such products, then $k=l$,
and we can rearrange the order of the terms such that, for each $i$, the elements
$p_i$ and $q_i$ are associates ($p_i$ is a unit times $q_i$). An irreducible
element is an nonzero non-unit element that is not the product of two
nonzero non-unit elements.)

Hint for one direction: Suppose $R$ is a UFD. First show that if $\pi$ is irreducible, then $\pi R$ is a prime ideal.
Given a nonzero prime ideal $\mathfrak p$, let $a \in \mathfrak p$ be nonzero,
and factor $a$ into irreducibles. Use that factorization to show that $\mathfrak p$
is principal. Conclude that all ideals are principal. (To go from prime ideals principal to all ideas principal
is immediate in a Dedekind domain. For more general rings, given a nonzero idea $I$
factor $a \in I$ where $a$ is not zero, and work from there.)
\end{exercise}


\chapter*{Appendix A: The non-local approach to ideal factorization}

One of the main theorems for Dedekind domains, perhaps the main theorem,
is that ideals factor uniquely into prime ideals. 
Initially we assumed this result as background
since this document is in some sense a part two in the theory of 
Dedekind domains, and most introductory accounts give a (non-local) proof of the result

Although we did end up giving an independent proof of this result
built on the local approach (see Theorem~\ref{thm75}
together with Proposition~\ref{unique_prop}),
we now give, for the convenience of the reader,
a somewhat standard non-local proof so the reader can more easily compare the two approaches.\footnote{The
approach in this appendix
is largely similar to the standard approach given by~B. L. van der Waerden in his \emph{Algebra},
and has been adopted by many other textbooks. Van der Waerden attributes this approach to W.~Krull (1928).}
In particular, the proof in this appendix does not
use localization, local rings, or discrete valuation rings.
This proof does use fractional ideals,
and so depends on some of the material from 
Sections~\ref{ch2.5} to~\ref{ch4}.\footnote{Some authors, including Marcus,
prove this result using only ideals, not fractional ideals.
So the use of fractional ideals is not strictly necessary.
 All standard proofs, however,
require something like the result that every element of~$\mathcal R(I)$ is
integral, at least in the case of $I$ an ideal, which we covered in  Section~\ref{ch4} above.
Aside from that, the concepts related to fractional ideals 
presented in Sections~\ref{ch2.5} and~\ref{ch3} are fairly straightforward
and are central to the subject, so I feel comfortable requiring this of the reader
even in this basic non-local proof.}
So the reader should look over these sections, up to Corollary~\ref{new_criterion_cor},
before reading this appendix (skipping any parts of those sections dealing with
localizations or discrete valuation rings).

We will especially draw on Corollary~\ref{new_criterion_cor} from Section~\ref{ch4} 
which immediately implies the following
result which will label our first lemma:

\begin{lemma}\label{lemma101}
Let $\mathfrak p$ be a nonzero prime ideal in a Dedekind domain.
If there is a fractional ideal $I$ not contained in~$R$ such that $I \mathfrak p \subseteq R$,
then $\mathfrak p$ is invertible.
\end{lemma}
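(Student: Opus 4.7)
The plan is essentially to invoke Corollary~\ref{new_criterion_cor} directly, after checking that the hypotheses are met. Recall that a Dedekind domain is by definition an integrally closed Noetherian integral domain in which every nonzero prime ideal is maximal. So the ambient ring $R$ satisfies the ``integrally closed Noetherian domain'' hypothesis of that corollary, and the given nonzero prime $\mathfrak p$ is actually a maximal ideal.

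Next I would observe that the hypothesis ``$I$ not contained in $R$'' is exactly the hypothesis ``$I$ is not an integral ideal'' needed by Corollary~\ref{new_criterion_cor}: an integral ideal of $R$ is by definition a fractional ideal contained in $R$, so saying $I \not\subseteq R$ is the same as saying $I$ fails to be an integral ideal. Combined with $I\mathfrak p \subseteq R$, this is precisely the hypothesis pattern of the corollary with $\mathfrak m = \mathfrak p$.

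Applying Corollary~\ref{new_criterion_cor} then yields invertibility of $\mathfrak p$, completing the proof. There is no real obstacle here: the lemma is just a specialization of the earlier corollary to the Dedekind setting, renaming ``maximal ideal'' to ``nonzero prime ideal'' (which is legal precisely because in a Dedekind domain these coincide) and rephrasing ``not an integral ideal'' as ``not contained in $R$''. The only thing worth being careful about is noting explicitly that the Dedekind hypothesis delivers all three prerequisites (integrally closed, Noetherian, $\mathfrak p$ maximal) so the invocation of the corollary is legitimate.
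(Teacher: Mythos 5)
Your proof is correct and matches the paper's approach exactly: the paper states that Lemma~\ref{lemma101} follows immediately from Corollary~\ref{new_criterion_cor}, and your write-up just spells out the hypothesis-checking that the paper leaves implicit (Dedekind implies integrally closed, Noetherian, and nonzero prime = maximal; $I \not\subseteq R$ means $I$ is not an integral ideal).
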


Next we use the Noetherian property:

\begin{lemma}
Let $I$ be a nonzero proper ideal of a Noetherian domain~$R$.
Then there are nonzero prime
ideals~$\frak p_1, \ldots, \frak p_k$, where $k\ge 1$, such that
$$
\frak p_1 \cdots \frak p_k \subseteq I.
$$
\end{lemma}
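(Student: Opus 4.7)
The plan is to argue by Noetherian induction through a maximal counterexample. Let $\mathcal{S}$ denote the collection of nonzero proper ideals of $R$ that do \emph{not} contain any finite product of $k \ge 1$ nonzero prime ideals, and suppose toward a contradiction that $\mathcal{S}$ is nonempty. By the Noetherian maximal element proposition quoted in Chapter~\ref{ch1}, there is a maximal element $I_0 \in \mathcal{S}$.

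First I would observe that $I_0$ cannot itself be prime, for otherwise the single-factor product $\mathfrak{p}_1 = I_0$ exhibits $I_0 \notin \mathcal{S}$. So there exist $a, b \in R$ with $ab \in I_0$ but $a \notin I_0$ and $b \notin I_0$. Now form the two larger ideals $I_0 + aR$ and $I_0 + bR$. Both strictly contain $I_0$, and the key calculation
$$
(I_0 + aR)(I_0 + bR) \;\subseteq\; I_0 + I_0 a R + I_0 b R + ab R \;\subseteq\; I_0
$$
will drive the contradiction.

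The step requiring a little care, which I expect to be the main obstacle, is checking that both $I_0 + aR$ and $I_0 + bR$ are \emph{proper}, since the definition of $\mathcal{S}$ only covers proper ideals and maximality only excludes strictly larger proper ideals from $\mathcal{S}$. Suppose $I_0 + aR = R$. Then $1 = x + ar$ for some $x \in I_0$ and $r \in R$, hence $b = xb + (ab)r \in I_0$, contradicting $b \notin I_0$. The symmetric argument rules out $I_0 + bR = R$. Both ideals are therefore proper and nonzero (each contains the nonzero $I_0$), and strictly larger than $I_0$.

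With propriety secured, maximality of $I_0$ in $\mathcal{S}$ forces $I_0 + aR \notin \mathcal{S}$ and $I_0 + bR \notin \mathcal{S}$, so there are nonzero prime ideals $\mathfrak{p}_1, \ldots, \mathfrak{p}_m$ and $\mathfrak{q}_1, \ldots, \mathfrak{q}_n$ with $\mathfrak{p}_1 \cdots \mathfrak{p}_m \subseteq I_0 + aR$ and $\mathfrak{q}_1 \cdots \mathfrak{q}_n \subseteq I_0 + bR$. Combining via the displayed containment above yields
$$
\mathfrak{p}_1 \cdots \mathfrak{p}_m \, \mathfrak{q}_1 \cdots \mathfrak{q}_n \;\subseteq\; (I_0 + aR)(I_0 + bR) \;\subseteq\; I_0,
$$
a product of $m + n \ge 2$ nonzero prime ideals sitting inside $I_0$. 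This contradicts $I_0 \in \mathcal{S}$, so $\mathcal{S}$ must have been empty, completing the proof.
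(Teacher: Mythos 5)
Your proof is correct and follows the same maximal-counterexample strategy as the paper's, down to the key containment $(I_0 + aR)(I_0 + bR) \subseteq I_0$. The paper glosses over the verification that $I_0 + aR$ and $I_0 + bR$ are proper before invoking maximality, which you supply explicitly via $1 = x + ar \Rightarrow b = xb + (ab)r \in I_0$ — a genuine detail worth checking.
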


\begin{proof}
Suppose there are nonzero proper ideals where this fails.
Using the Noetherian property, we can find a maximal such ideal $I$.
Note that $I$ is not a prime ideal, so there are $x, y, \in R$ such that $xy \in I$
but $x, y$ are not in~$I$. We can assume the result for~$I+xR$ and $I + yR$.
Finally, observe that $(I + xR)(I + yR) \subseteq I$.
\end{proof}

Our goal will be to show that the inclusion in the above lemma is an equality
when the number of primes $k$ is minimized.

\begin{definition}
A \emph{prime bounding sequence}~$\frak p_1, \ldots, \frak p_k$ of an ideal~$I$
is a sequence of nonzero prime ideals such that
$
\frak p_1 \cdots \frak p_k \subseteq I.
$
If $k$ is as small as possible, then we call the sequence
a \emph{minimal prime bounding sequence} of~$I$.
If $I = R$ we consider the empty sequence to be the minimal prime bounding sequence.
\end{definition}

\begin{lemma} \label{lemma71}
Let $R$ be a Noetherian domain such that every nonzero prime ideal is maximal.
If $I$ is a nonzero ideal and if $\mathfrak p$ is a nonzero prime ideal containing~$I$
then~$\mathfrak p$ appears
in any prime bounding sequence of~$I$.
In particular, $I$ is contained in a finite number of prime ideals.
\end{lemma}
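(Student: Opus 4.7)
The plan is to use the standard prime-avoidance style argument: reduce to showing that some factor $\mathfrak p_i$ of a prime bounding sequence is contained in $\mathfrak p$, and then use maximality to upgrade that containment to equality. So suppose $\mathfrak p_1,\ldots,\mathfrak p_k$ is any prime bounding sequence of $I$, meaning $\mathfrak p_1\cdots\mathfrak p_k\subseteq I$. Since $I\subseteq \mathfrak p$, we have $\mathfrak p_1\cdots\mathfrak p_k\subseteq \mathfrak p$.

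The first step is the routine lemma that if a product of ideals is contained in a prime ideal $\mathfrak p$, then one of the factors is contained in $\mathfrak p$. This is a straightforward induction on $k$: the base case $k=1$ is trivial, and for the inductive step, if $\mathfrak p_1\cdots\mathfrak p_k\subseteq \mathfrak p$ but $\mathfrak p_k\not\subseteq \mathfrak p$, choose $x\in \mathfrak p_k\smallsetminus\mathfrak p$; then for every $y\in \mathfrak p_1\cdots\mathfrak p_{k-1}$ we have $yx\in \mathfrak p$ with $x\notin \mathfrak p$, so $y\in \mathfrak p$, giving $\mathfrak p_1\cdots\mathfrak p_{k-1}\subseteq \mathfrak p$, and we invoke the inductive hypothesis.

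Applying this to our situation gives $\mathfrak p_i\subseteq \mathfrak p$ for some $i$. Now I use the hypothesis that every nonzero prime ideal of $R$ is maximal. Since $\mathfrak p_i$ is a nonzero prime ideal, it is maximal; since $\mathfrak p$ is also a proper ideal (being prime), the inclusion $\mathfrak p_i\subseteq \mathfrak p$ forces $\mathfrak p_i=\mathfrak p$. So $\mathfrak p$ appears in the sequence.

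For the final sentence, fix any one prime bounding sequence $\mathfrak p_1,\ldots,\mathfrak p_k$ of $I$ (such a sequence exists by the previous lemma, assuming $I$ is proper; if $I=R$ there are no primes containing it properly and the statement is vacuous in the sense that zero primes contain $I$, which is finite). Every nonzero prime $\mathfrak p$ containing $I$ must equal one of the $\mathfrak p_i$ by what we just proved, so there are at most $k$ such primes. No step here looks like a real obstacle; the only subtlety is the induction that a product of ideals inside a prime forces some factor into the prime, and once that is in hand maximality closes the argument immediately.
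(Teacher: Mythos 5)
Your proof is correct and follows essentially the same route as the paper's: observe $\mathfrak p_1\cdots\mathfrak p_k\subseteq I\subseteq\mathfrak p$, conclude $\mathfrak p_i\subseteq\mathfrak p$ for some $i$ by primeness, and upgrade to equality using the hypothesis that nonzero primes are maximal. You merely spell out the routine ``product in a prime forces a factor in the prime'' induction and the edge case $I=R$, both of which the paper leaves implicit.
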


\begin{proof}
If 
$
\frak p_1 \cdots \frak p_k \subseteq I \subseteq \mathfrak p
$ where $\mathfrak p_i$ are nonzero prime ideals, 
then  $\mathfrak p_i \subseteq \mathfrak p$ for some~$i$. Hence $\mathfrak p_i = \mathfrak p$.
\end{proof}

Next we prove invertibility for prime ideals:

\begin{lemma}
Every nonzero prime ideal $\mathfrak p$ in a Dedekind domain $R$ is invertible.
\end{lemma}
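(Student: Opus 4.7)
The plan is to apply Lemma~\ref{lemma101} by exhibiting a fractional ideal $I \not\subseteq R$ with $I\mathfrak{p} \subseteq R$. To produce such an $I$, I would start with an arbitrary nonzero element $a \in \mathfrak{p}$ and study the principal ideal $aR$ via its minimal prime bounding sequence. Concretely, first I would invoke the lemma on prime bounding sequences to obtain nonzero prime ideals $\mathfrak{p}_1, \ldots, \mathfrak{p}_k$ with $\mathfrak{p}_1 \cdots \mathfrak{p}_k \subseteq aR$, chosen so that $k$ is minimal. Since $aR \subseteq \mathfrak{p}$, Lemma~\ref{lemma71} tells us that $\mathfrak{p}$ must equal one of the $\mathfrak{p}_i$; relabeling, we may take $\mathfrak{p} = \mathfrak{p}_1$.

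Next I would exploit the minimality of $k$. By minimality, the shorter product $\mathfrak{p}_2 \cdots \mathfrak{p}_k$ is not contained in $aR$ (interpreting the empty product as $R$ when $k=1$, which would mean $R \not\subseteq aR$, i.e., $a$ is a non-unit, which is automatic since $a \in \mathfrak{p}$ and $\mathfrak{p}$ is proper). Hence I can pick an element $b \in \mathfrak{p}_2 \cdots \mathfrak{p}_k$ with $b \notin aR$. Then by construction
$$ b\,\mathfrak{p} \;=\; b\,\mathfrak{p}_1 \;\subseteq\; \mathfrak{p}_1 \mathfrak{p}_2 \cdots \mathfrak{p}_k \;\subseteq\; aR, $$
so dividing through by $a$ gives $(b/a)\,\mathfrak{p} \subseteq R$. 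Set $I = (b/a)R$, a principal fractional ideal. The crucial point is that $b/a \notin R$ (since $b \notin aR$), so $I \not\subseteq R$, while $I\mathfrak{p} \subseteq R$. Lemma~\ref{lemma101} then yields that $\mathfrak{p}$ is invertible.

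The main obstacle, and the only nonobvious step, is the minimality argument that produces the element $b$: one must be careful to handle the edge case $k=1$ (which relies on $\mathfrak{p}$ being a proper ideal, hence $aR \neq R$) and one must know that $\mathfrak{p}$ itself appears in the minimal bounding sequence, which is exactly the content of Lemma~\ref{lemma71} and uses that every nonzero prime of $R$ is maximal. Everything else is bookkeeping: verifying that $I$ is a genuine fractional ideal, that $I \not\subseteq R$, and that $I\mathfrak{p} \subseteq R$, after which Lemma~\ref{lemma101} closes the argument.
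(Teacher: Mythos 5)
Your proof is correct and follows essentially the same approach as the paper's: both pick a nonzero $a \in \mathfrak{p}$, take a minimal prime bounding sequence for $aR$, use Lemma~\ref{lemma71} to identify $\mathfrak{p}$ as one of the terms, and exploit minimality to produce a fractional ideal not contained in $R$ whose product with $\mathfrak{p}$ lies in $R$, finishing via Lemma~\ref{lemma101}. The one cosmetic difference is that you take $I = (b/a)R$ for a single element $b$ of the truncated product not in $aR$, whereas the paper takes $I = a^{-1}\mathfrak{p}_1\cdots\mathfrak{p}_{k-1}$ (with $\mathfrak{p}$ moved to the end); your $I$ is a principal fractional ideal contained in the paper's, and both satisfy the hypotheses of Lemma~\ref{lemma101}, so the two are interchangeable.
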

 
\begin{proof}
Let~$a\in \mathfrak p$
be a nonzero element and let~$\frak p_1, \ldots, \frak p_k$ be a minimal prime bounding sequence
for $aR$.
By the previous lemma, $\mathfrak p$ is in the sequence. Permute the terms of the sequence
so that $\mathfrak p = \mathfrak p_k$.
By minimality, $\frak p_1 \cdots \frak p_{k-1}$ is not contained in $aR$, so
the fractional ideal
$I = a^{-1} \frak p_1 \cdots \frak p_{k-1}$ 
is not contained in $R$. (if $k=1$, let~$I = a^{-1} R$).
Also $I \mathfrak p \subseteq R$, so $\mathfrak p$ is invertible
by Lemma~\ref{lemma101}.
\end{proof}

\begin{lemma}
Every nonzero ideal~$I$ in a Dedekind domain~$R$ is the product 
of the prime ideals in its minimal prime bounding sequence.
\end{lemma}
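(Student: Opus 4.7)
The plan is to induct on the length $k$ of the minimal prime bounding sequence of~$I$. The base case $k=0$ is immediate from the convention that the empty product equals $R$ and that the empty sequence is a prime bounding sequence only when $I=R$. For $k\ge 1$ the ideal $I$ is proper, so I would pick a maximal ideal $\mathfrak m$ containing~$I$. By Lemma~\ref{lemma71}, $\mathfrak m$ must appear in the minimal prime bounding sequence, so after reindexing I may assume $\mathfrak m=\mathfrak p_k$.

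Next I would exploit that $\mathfrak m$ is invertible (by the previous lemma on invertibility of nonzero primes). Starting from
$$\mathfrak p_1\cdots\mathfrak p_{k-1}\,\mathfrak m \;\subseteq\; I \;\subseteq\; \mathfrak m,$$
multiplying through by $\mathfrak m^{-1}$ yields
$$\mathfrak p_1\cdots\mathfrak p_{k-1} \;\subseteq\; I\mathfrak m^{-1} \;\subseteq\; R.$$
So $I\mathfrak m^{-1}$ is an integral ideal admitting $\mathfrak p_1,\ldots,\mathfrak p_{k-1}$ as a prime bounding sequence. The key step is to argue this truncated sequence is \emph{minimal} for $I\mathfrak m^{-1}$: any strictly shorter prime bounding sequence $\mathfrak q_1,\ldots,\mathfrak q_j$ for $I\mathfrak m^{-1}$ would, after multiplying by $\mathfrak m$, give $\mathfrak q_1\cdots\mathfrak q_j\,\mathfrak m\subseteq I$, which is a prime bounding sequence for~$I$ of length $j+1<k$, contradicting minimality of the original sequence.

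With minimality of the shortened sequence established, the inductive hypothesis applies to $I\mathfrak m^{-1}$ (whose minimal length is $k-1$), giving $I\mathfrak m^{-1}=\mathfrak p_1\cdots\mathfrak p_{k-1}$. Multiplying back by $\mathfrak m=\mathfrak p_k$ then produces the desired factorization $I=\mathfrak p_1\cdots\mathfrak p_k$. The only real subtlety, and the step I expect to require the most care, is the minimality transfer argument, since it is precisely this bookkeeping that converts the \emph{containment} produced by the second lemma of this appendix into an \emph{equality}. Everything else is a clean application of the invertibility of primes and the matching-a-maximal-ideal trick from Lemma~\ref{lemma71}.
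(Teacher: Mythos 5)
Your proof is correct and follows essentially the same argument as the paper: induct on the length of the minimal prime bounding sequence, use Lemma~\ref{lemma71} to place a prime (maximal) ideal $\mathfrak m \supseteq I$ in the sequence, multiply by $\mathfrak m^{-1}$, and show the truncated sequence is minimal for $I\mathfrak m^{-1}$ by the observation that a shorter bounding sequence for $I\mathfrak m^{-1}$ would yield a shorter one for $I$. The minor differences (choosing a maximal rather than a generic prime ideal containing $I$, and indexing with $k$ versus $k+1$) are cosmetic, since in a Dedekind domain nonzero primes are maximal.
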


\begin{proof}
We prove this by induction on the size $k$ of the sequence. 
The base case~$k=0$ is the empty sequence with $I=R$.
Suppose now that $\frak p_1, \ldots, \frak p_{k+1}$ is a minimal prime bounding sequence
for~$I$.
Let $\mathfrak p$ be a prime ideal containing $I$ which is necessarily in the sequence.
Permute the sequence so that~$\mathfrak p = \mathfrak p_{k+1}$.  Thus
$$
\frak p_1 \cdots \frak p_{k} =  (\frak p_1 \cdots \frak p_{k+1}) \mathfrak p^{-1} 
\subseteq I \mathfrak p^{-1} \subseteq \mathfrak p \mathfrak p^{-1} = R.
$$
Note that $\frak p_1, \ldots, \frak p_{k}$ must be a minimal prime bounding sequence
of $I' = I \mathfrak p^{-1}$ (otherwise, we could form a smaller prime bounding sequence 
for $I = I' \mathfrak p$ than the given one).
So by the induction hypothesis
$$
 I \mathfrak p^{-1}_{k+1} = I' = \mathfrak p_1 \cdots \mathfrak p_{k},
\quad\text{and so} \quad
I =  \mathfrak p_1 \cdots  \mathfrak p_{k+1}.
$$
\end{proof}

We now just need uniqueness:

\begin{lemma}
Suppose that $I$ is a proper nonzero ideal  of a Dedekind domain with prime ideal factorizations:
$$
I =  \mathfrak p_1 \cdots \mathfrak p_k = \mathfrak q_1 \cdots \mathfrak q_l.
$$
Then $\mathfrak p_1, \ldots, \mathfrak p_k$
is a permutation of $ \mathfrak q_1, \ldots, \mathfrak q_l$
\end{lemma}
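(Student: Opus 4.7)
The plan is to prove uniqueness by induction on $k$, using the invertibility of prime ideals established in the previous lemma together with the standard primality argument. The base case $k=0$ forces $I=R$, and then $\mathfrak{q}_1 \cdots \mathfrak{q}_l = R$ implies $l=0$ as well (each $\mathfrak{q}_j$ is proper, so their product is contained in any one of them and cannot equal $R$ unless the product is empty).

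For the inductive step, I would start from the equality $\mathfrak{p}_1 \cdots \mathfrak{p}_k = \mathfrak{q}_1 \cdots \mathfrak{q}_l$ and observe that $\mathfrak{q}_1 \cdots \mathfrak{q}_l \subseteq \mathfrak{p}_1$. Because $\mathfrak{p}_1$ is a prime ideal, an easy induction (or a direct appeal to the primality property) shows $\mathfrak{q}_j \subseteq \mathfrak{p}_1$ for some index $j$. Since every nonzero prime ideal in a Dedekind domain is maximal, and $\mathfrak{q}_j$ is nonzero, the inclusion $\mathfrak{q}_j \subseteq \mathfrak{p}_1$ forces $\mathfrak{q}_j = \mathfrak{p}_1$. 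After permuting the $\mathfrak{q}$'s, we may assume $\mathfrak{q}_1 = \mathfrak{p}_1$.

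Now I would multiply both sides of the equation by the inverse $\mathfrak{p}_1^{-1}$, whose existence is guaranteed by the previous lemma. This yields
$$\mathfrak{p}_2 \cdots \mathfrak{p}_k = \mathfrak{q}_2 \cdots \mathfrak{q}_l,$$
to which the inductive hypothesis applies, giving $k-1 = l-1$ and a matching permutation of the remaining factors. Combined with $\mathfrak{p}_1 = \mathfrak{q}_1$, this produces the desired permutation.

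The main obstacle, and really the only nontrivial step, is legitimizing the cancellation $\mathfrak{p}_1^{-1}$: without the invertibility of prime ideals one cannot pass from $\mathfrak{p}_1 (\mathfrak{p}_2 \cdots \mathfrak{p}_k) = \mathfrak{p}_1 (\mathfrak{q}_2 \cdots \mathfrak{q}_l)$ to the cancelled identity. Fortunately invertibility of nonzero primes in a Dedekind domain has already been established, so this step is free for us. Everything else is just primality plus the fact that nonzero primes are maximal, which is built into the definition of a Dedekind domain.
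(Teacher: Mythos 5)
Your proof is correct and follows essentially the same approach as the paper: find a matching prime factor via the primality property plus maximality, cancel by multiplying by the inverse of that prime, and iterate. The paper is terser (phrasing the iteration as "continuing in this way" with a footnote gesturing at the formal induction), while you spell out the induction explicitly including the $k=0$ base case; this is a sound way to make the argument rigorous.
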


\begin{proof}
Using properties of prime ideals we can show that $\mathfrak p_k = \mathfrak q_i$
for some index~$i$. After permuting the factors, we can assume that $\mathfrak p_k = \mathfrak q_l$.
We multiply both factorizations by $\mathfrak p_k^{-1}$. Continuing in this way\footnote{
We can make this into a more formal induction by setting $n$ to be the minimum of $k$ and~$l$
and proceed by induction on~$n$. Another approach is to prove the following 
statement using induction on~$n$: for any two sequences of prime ideals whose products
are equal, if a prime $\mathfrak p$ occurs exactly~$n$ times in one sequence, it occurs
exactly $n$ times in the other.}
 we get uniqueness.
\end{proof}

\begin{theorem}
Every nonzero ideal in a Dedekind domain is uniquely the product of nonzero prime ideals.
(Uniqueness is up the order of the terms).
\end{theorem}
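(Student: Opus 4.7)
The plan is to assemble the theorem directly from the two preceding lemmas, since all the substantive work has already been done. For existence, I would split into cases based on whether $I$ is the whole ring or a proper nonzero ideal. If $I = R$, then $I$ is the empty product of primes, which matches the convention already adopted in the definition of minimal prime bounding sequence. If $I$ is a nonzero proper ideal, then the lemma that every nonzero proper ideal in a Noetherian domain admits a prime bounding sequence guarantees that a minimal such sequence $\mathfrak{p}_1, \ldots, \mathfrak{p}_k$ exists, and the immediately preceding lemma shows that $I = \mathfrak{p}_1 \cdots \mathfrak{p}_k$. So existence is immediate.

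For uniqueness, I would appeal to the previous lemma verbatim in the case where $I$ is proper and nonzero. If $I = R$, uniqueness amounts to noting that $R$ cannot equal a nonempty product $\mathfrak{q}_1 \cdots \mathfrak{q}_l$ of nonzero prime ideals, since such a product would be contained in $\mathfrak{q}_1 \subsetneq R$; hence the empty factorization is the only one. This completes both halves.

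There is essentially no main obstacle here, since the hard steps (establishing invertibility of prime ideals, producing the minimal prime bounding sequence, carrying out the cancellation argument for uniqueness) have been dispatched in the earlier lemmas of the appendix. If anything, the only small subtlety worth flagging explicitly is the boundary case $I = R$, which is sometimes overlooked because the preceding lemmas implicitly handle it through the empty-product convention. My proof would therefore amount to one or two lines of prose, citing the two preceding lemmas and observing the trivial $I = R$ case.
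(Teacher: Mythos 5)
Your proposal is correct and matches the paper's (implicit) argument: the theorem in Appendix~A carries no explicit proof precisely because it is meant to be read as the conjunction of the existence lemma (every nonzero ideal equals the product of the primes in its minimal prime bounding sequence, with the $I=R$ case handled by the empty-sequence convention) and the uniqueness lemma. The only slip is a minor referencing issue: when you invoke ``the immediately preceding lemma'' for existence, that slot is actually occupied by the uniqueness lemma; the existence lemma is one further back, though this does not affect the mathematics.
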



\chapter*{Appendix B: The singular case}

In algebraic number theory and algebraic geometry there are important situations where
rings arise that are like Dedekind domains except for not being integrally closed. 
For example, nonmaximal orders in algebraic number fields
have this property. In this appendix we consider such rings.

\begin{definition}
An \emph{integral domain of dimension~$1$}, or a \emph{1-domain} for short,
is an integral domain that is not a field and such that every nonzero prime ideal is maximal.\footnote{This 
notion of dimension~$1$ is based on the notion of Krull dimension.
The Krull dimension of a commutative ring is
is one less than the length of the longest proper chain of prime ideals. In our case,
since every nonzero
prime ideal is maximal, the
longest chain is of the form $0 \subsetneq \mathfrak p$. So the Krull dimension is one.}

If a Noetherian $1$-domain is not integrally closed, and so is not a Dedekind domain, 
then we say that it is \emph{singular}.
Any nonzero prime ideal $\mathfrak p$ of $R$ such that~$R_{\mathfrak p}$ 
fails to be a DVR is called a \emph{singular prime ideal}.
\end{definition}

By the results we have established, especially Theorem~\ref{dedekind_summary_thm}, we have the following
facts about any singular Noetherian 1-domain~$R$:
\begin{enumerate}
\item
The fractional ideals of $R$ do not form a
group: some nonzero ideals of $R$ are not invertible.
\item
There is at least one singular prime ideal~$\mathfrak p$.
\item
There are nonzero ideals $J \subseteq I$ where~$I \mid J$ fails.
\item
There is a nonzero proper ideal of $R$ that is not the product of prime ideals.
Since $R$ is Noetherian, every nonzero proper ideal is the product
of irreducible ideals (Exercise~\ref{exxx}), so this means that
there are irreducible ideals (in the sense of Exercise~\ref{exxx}) that are not
prime. Below we will see that such irreducible ideals must be primary ideals,
and will give examples.
\item
$R$ is not a PID or a UFD (since it is not integrally closed).
\item
Cancellation fails: there are nonzero ideals $I_1 \ne I_2$ and $J$ such that $I_1 J = I_2 J$.
\end{enumerate}

Let $\mathfrak p$ be a singular prime of a Noetherian 1-domain.
By the results developed above (especially in the last part of Section~\ref{ch2}) we have the
following:

\begin{enumerate}
\item The ring $R_{\frak p}$ has ideals that are not principal, including its maximal ideal.
All such non-principal ideals are not invertible.

\item The ring $R_{\frak p}$ is local, but not strongly local. 
In particular, there are nonmaximal ideals not contained in $\left( \mathfrak p R_{\mathfrak p} \right)^2$.
We will give examples below.

\item
If $\frak m = \frak p R_{\frak p}$ is the maximal
ideal of $R_{\frak p}$, then $\frak m/ \frak m^2$ has
dimension greater than one over the field $R/\frak p \cong R_{\frak p}/\frak p R_{\frak p}$.

\item The ring $R_{\frak p}$ is not integrally closed in $K$.
\item
There are elements $x \in K^\times$ such that neither $x$ and $x^{-1}$ are in~$R_{\frak p}$.

\item
There are irreducible ideals of $R_{\frak p}$ (in the sense of Exercise~\ref{exxx}) that are not
prime ideals because factorization into nonzero prime ideals fails.
We will give examples below.

\item
$R_{\mathfrak p}$ is not a UFD (since it is not integrally closed).

\item
There are nonzero ideals  $J \subseteq I$ where~$I \mid J$ fails.

\item
Cancellation fails: there are nonzero ideals $I_1 \ne I_2$ and $J$ such that $I_1 J = I_2 J$.
\end{enumerate}

In addition, the fact that $R_{\mathfrak p}$ is not integrally closed can be seen
using elements (or even generators) of the fractional ideal $(\mathfrak p R_{\mathfrak p})^{-1}$.

\begin{proposition}
Let $R$ be a Noetherian domain with unique nonzero prime ideal~$\mathfrak p$.
Suppose that $R$ is not integrally closed. Then there 
are integral elements~$x \not\in R$ in the fractional ideal~$\mathfrak p^{-1}$.
Moreover, we can find such an $x$ in any set of generators of the fractional
ideal~$\mathfrak p^{-1}$.
\end{proposition}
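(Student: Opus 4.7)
The plan is to combine three facts already established in the essay: (a) since $R$ is a local Noetherian domain whose unique nonzero maximal ideal is $\mathfrak{p}$ but $R$ is not integrally closed, the ideal $\mathfrak{p}$ cannot be invertible; (b) non-invertibility of $\mathfrak{p}$ forces $\mathfrak{p}^{-1}\subseteq\mathcal{R}(\mathfrak{p})$, and in a Noetherian domain every element of $\mathcal{R}(\mathfrak{p})$ is integral over $R$; (c) Lemma~\ref{lemma38} produces an element of $\mathfrak{p}^{-1}$ that is not in $R$.

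First I would check that $R$ is local with maximal ideal $\mathfrak{p}$ (since maximal ideals are prime, the only candidates are $\{0\}$ and $\mathfrak{p}$, and $\{0\}$ is maximal only when $R$ is a field, which it is not). If $\mathfrak{p}$ were invertible, Theorem~\ref{theoremI} would make $R$ a DVR, and then Proposition~\ref{every_dvr_is_integrally_closed_prop} would make $R$ integrally closed, contradicting the hypothesis. Hence $\mathfrak{p}$ is not invertible, and by Proposition~\ref{prop35} we have $\mathfrak{p}^{-1}\subseteq\mathcal{R}(\mathfrak{p})$. The proposition showing that elements of $\mathcal{R}(I)$ are integral over any Noetherian base (proved in Section~\ref{ch4}) then gives that every element of $\mathfrak{p}^{-1}$ is integral over $R$.

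Next I would show $\mathfrak{p}^{-1}\ne R$. This is where Lemma~\ref{lemma38} does the real work: it supplies some nonzero prime $\mathfrak{q}$ of $R$ and some $x\in K^{\times}\smallsetminus R$ with $x\mathfrak{q}\subseteq R$. Since $\mathfrak{p}$ is the \emph{only} nonzero prime of $R$, necessarily $\mathfrak{q}=\mathfrak{p}$, so $x\in\mathfrak{p}^{-1}\smallsetminus R$. Combined with the previous step, this $x$ is integral over $R$, lies in $\mathfrak{p}^{-1}$, and is not in $R$, proving the first assertion.

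For the \emph{moreover} statement, let $G$ be any set of generators of $\mathfrak{p}^{-1}$ as an $R$-module. If every element of $G$ lay in $R$, then the $R$-submodule generated by $G$ would lie in $R$, forcing $\mathfrak{p}^{-1}\subseteq R$, which contradicts the previous paragraph. Hence some $x\in G$ lies outside $R$, and by the earlier argument this $x$ is automatically integral over $R$. The only real obstacle is verifying $\mathfrak{p}^{-1}\ne R$, and Lemma~\ref{lemma38} is perfectly tailored to that purpose; the rest is a direct assembly of structural results already in hand.
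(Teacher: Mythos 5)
Your proof is correct and takes essentially the same approach as the paper: both rely on Lemma~\ref{lemma38} to produce $x\in K^\times\smallsetminus R$ with $x\mathfrak p\subseteq R$, combine non-invertibility of $\mathfrak p$ with Proposition~\ref{prop35} to place $\mathfrak p^{-1}$ inside $\mathcal R(\mathfrak p)$ (whose elements are integral over the Noetherian ring $R$), and then argue that a generating set with all generators in $R$ would force $\mathfrak p^{-1}\subseteq R$. The only minor difference is that you explicitly derive non-invertibility of $\mathfrak p$ via Theorem~\ref{theoremI} and Proposition~\ref{every_dvr_is_integrally_closed_prop}, whereas the paper cites it as already established in the surrounding appendix discussion; this makes your version slightly more self-contained, but it is the same chain of ideas.
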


\begin{proof}
Let $K$ be the fraction field of~$R$.
By Lemma~\ref{lemma38}, there are~$x \in K^\times$ not in~$R$
such that~$x \mathfrak p \subseteq R$.  Such $x$ is in~$\mathfrak p^{-1}$.
As mentioned above $\mathfrak p$ is not invertible.
So, by Proposition~\ref{prop35}, $\mathfrak p^{-1} \subseteq \mathcal R(\mathfrak p)$, and so 
every such $x$ is integral over~$R$.

Given a set of generators of $\mathfrak p^{-1}$, at least one is not in~$R$. Otherwise 
Lemma~\ref{lemma38} would fail.
\end{proof}

Even for singular Noetherian 1-domains, there is a factorization theorem.
To explain how it works, we begin with the theory of localization which contains
the result that $I \mapsto I R_{\frak p}$
is a surjection from the set of ideal of $R$ to the set of ideals of~$R_{\frak p}$.
In fact, if $J$ is an ideal of $R_{\frak p}$, then $J \cap R$ is an ideal of $R$ that will map to $J$
under this mapping:~$(J \cap R) R_{\frak p} = J$. 
In fact, $J \cap R$ is the maximum among ideals  of~$R$ that map
to $J$ under the mapping $I \mapsto I R_{\frak p}$.

Our factorization will make use of
 ideals of $R$ the form $J\cap R$ where $J$ is an ideal of $R_{\frak p}$.
 One desirable
property of such ideals is the property of being $\frak p$-primary:

\begin{definition}
Let $R$ be a 1-domain
and let $I$ be a nonzero ideal of $R$. We say that $I$ is \emph{primary}
if $I$ is contained in exactly one prime ideal.
If $I$ is a primary ideal contained in the prime ideal $\frak p$ then we say
that $I$ is \emph{$\frak p$-primary}.\footnote{The zero idea $0$ of an integral domain is
considered to be $0$-primary, but we are only interested in nonzero primary ideals here. 
There is a more elaborate definition of primary ideal and $\frak p$-primary ideal for general commutative rings
which we will not mention here. We just note that 
the general definitions of these concepts are equivalent to the definitions given here 
in the special case of~1-domains.}
\end{definition}

We now state  a convenient characterization of $\mathfrak p$-primary. It is a special
case of a well-know result from commutative algebra.\footnote{The result
states that the radical of an ideal $I$ is the intersection of prime ideals containing~$I$.}
We give a short
proof here for the convenience of the reader.

\begin{proposition}\label{criterion}
Let $R$ be a $1$-domain.
Suppose $I \subseteq \mathfrak p$ where $I$ is a nonzero ideal and 
where $\mathfrak p$ is a prime ideal. Then $I$ is $\frak p$-primary
if and only if for each $a \in \frak p$ there is a $k\in \bN$ such that $a^k \in I$.
\end{proposition}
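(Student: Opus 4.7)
The plan is to recognize this as the special case, for a $1$-domain, of the well-known characterization of the radical of an ideal as the intersection of the prime ideals containing it. Since the radical $\sqrt{I} = \{a \in R \mid a^k \in I \text{ for some } k \in \bN\}$ is exactly the set described in the statement, the content of the proposition is that $\mathfrak{p}$-primary (in the $1$-domain sense of being contained in exactly one prime ideal) is equivalent to $\mathfrak{p} \subseteq \sqrt{I}$.

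For the easier direction, I would assume the radical condition and show that $I$ lies in no prime other than $\mathfrak{p}$. If $\mathfrak{q}$ is any prime ideal containing $I$, then since $I$ is nonzero $\mathfrak{q}$ must be nonzero, hence maximal by the $1$-domain hypothesis. For every $a \in \mathfrak{p}$, some power $a^k$ lies in $I \subseteq \mathfrak{q}$, and primeness of $\mathfrak{q}$ forces $a \in \mathfrak{q}$. Thus $\mathfrak{p} \subseteq \mathfrak{q}$, and maximality of $\mathfrak{p}$ gives $\mathfrak{p} = \mathfrak{q}$.

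For the other direction, I would argue the contrapositive. Suppose there is some $a \in \mathfrak{p}$ with $a^k \notin I$ for all $k \ge 0$. Then $S = \{1, a, a^2, \ldots\}$ is a multiplicative system disjoint from $I$. The extension $S^{-1} I$ is a proper ideal of $S^{-1} R$, so it lies in some maximal ideal of $S^{-1} R$; under the prime correspondence (Proposition~\ref{prime_correspondence_prop}) this pulls back to a prime ideal $\mathfrak{q}$ of $R$ with $I \subseteq \mathfrak{q}$ and $\mathfrak{q} \cap S = \emptyset$, in particular $a \notin \mathfrak{q}$. Since $a \in \mathfrak{p}$ but $a \notin \mathfrak{q}$, we have $\mathfrak{q} \ne \mathfrak{p}$, so $I$ is contained in two distinct prime ideals and fails to be $\mathfrak{p}$-primary.

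The main obstacle is the second direction, specifically producing a second prime ideal containing $I$ but avoiding $a$. Without a Noetherian hypothesis we cannot simply take a maximal element of the relevant poset of ideals, so the cleanest route is via localization at $S$ (which needs every proper ideal to be contained in a maximal ideal — a fact assumed as background in Chapter~\ref{ch1}). Alternatively, one could bypass the localization language and invoke Zorn's lemma directly on ideals disjoint from $S$, proving that a maximal such ideal is prime; the two arguments are in essence the same.
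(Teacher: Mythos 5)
Your proof is correct and is essentially the same as the paper's. Both proofs use the identical localization argument (localize at $S=\{a^k\}$, pull a maximal ideal of $S^{-1}R$ back to a prime $\mathfrak q\ne\mathfrak p$ of $R$ containing $I$) for the ``primary $\Rightarrow$ radical condition'' direction, and the identical observation (primeness of a second prime $\mathfrak q$ forces $\mathfrak p\subseteq\mathfrak q$, hence equality by maximality) for the reverse; you have merely rearranged which direction is stated directly and which as a contrapositive.
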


\begin{proof}
Suppose that $I$ is $\frak p$-primary, and that $a \in \frak p$. We have what we want when~$a=0$,
so suppose $a$ is not zero. Consider the multiplicative system $S$ consisting of powers $a^k$ where
$k\ge 0$.

We start with the following claim: the ideal $S^{-1} I$ is all of $S^{-1} R$.
If not, then~$S^{-1} I$ would have to be contained in a maximal ideal, which, by the theory of localization,
has form~$S^{-1} \frak q$ where $\frak q$ is a prime ideal of~$R$ not intersecting~$S$.
In particular, $\frak q$ is not~$\frak p$.
Also
$$I \subseteq (S^{-1} \frak q) \cap R = \frak q,$$
contradicting the definition of $\frak p$-primary.
Thus the claim is established.

So $1 \in S^{-1} I$, which means $1 = b/a^k$ for some $b\in I$ and $k\ge 0$. Thus $a^k = b \in I$.

Suppose, conversely, that $I$ is not $\frak p$-primary. Then there is another prime ideal~$\frak q$
such that $I \subseteq \frak q$. Let $a \in \frak p \smallsetminus \frak q$. Then $a^k$ cannot
be in~$\frak q$, and so cannot be in~$I$.
\end{proof}

\begin{example}
Let $R$ be a 1-domain and let $\frak p$ be a nonzero prime ideal of~$R$. 
Then~$\frak p^k$ is a $\frak p$-primary
ideal for all~$k\ge 1$ by the above proposition. 
Also, any ideal $I$ with~$\frak p^k \subseteq I \subseteq \frak p$
is $\frak p$-primary.
\end{example}

\begin{exercise}
Let $R$ be a 1-domain and let $\frak p$ be a nonzero prime ideal of~$R$. 
Show that 
the product of two~$\frak p$-primary ideals is $\frak p$-primary.
\end{exercise}

\begin{proposition}
Let $R$ be a 1-domain and let $\frak p$ be a nonzero prime ideal of~$R$. 
If $J$ is a nonzero proper ideal of~$R_{\frak p}$ then~$J \cap R$ is a $\frak p$-primary ideal
of $R$.
\end{proposition}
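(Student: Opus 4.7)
The plan is to apply Proposition~\ref{criterion} to $J \cap R$ with respect to $\mathfrak p$, which requires first checking that $J \cap R$ is a nonzero ideal contained in $\mathfrak p$, and then verifying the power-containment criterion for every $a \in \mathfrak p$.

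First I would establish containment. Because $R_{\mathfrak p}$ is local with maximal ideal $\mathfrak p R_{\mathfrak p}$ and $J$ is a proper ideal, $J \subseteq \mathfrak p R_{\mathfrak p}$. Intersecting with $R$ and using the prime correspondence (Proposition~\ref{prime_correspondence_prop}),
$$
J \cap R \;\subseteq\; (\mathfrak p R_{\mathfrak p})\cap R \;=\; \mathfrak p.
$$
Next I would check nonzeroness. Pick any nonzero $x \in J$ and write $x = r/s$ with $r \in R$ and $s \in R \smallsetminus \mathfrak p$. Then $s \in R_{\mathfrak p}$, so $r = s x \in J$; and $r \in R$, so $r \in J \cap R$. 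Since $x$ and $s$ are nonzero elements of the fraction field $K$, we have $r \ne 0$. Thus $J \cap R$ is a nonzero ideal of $R$ contained in~$\mathfrak p$.

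For the criterion itself, the key observation is that $R_{\mathfrak p}$ is itself a $1$-domain: by the prime correspondence its only nonzero prime ideal is $\mathfrak p R_{\mathfrak p}$, which is maximal, and $R_{\mathfrak p}$ is not a field since $\mathfrak p \ne 0$ forces $\mathfrak p R_{\mathfrak p} \ne 0$. The ideal $J$ is a nonzero proper ideal of $R_{\mathfrak p}$ contained in the only available nonzero prime $\mathfrak p R_{\mathfrak p}$, so $J$ is $\mathfrak p R_{\mathfrak p}$-primary in $R_{\mathfrak p}$ by definition. Given $a \in \mathfrak p$, we have $a/1 \in \mathfrak p R_{\mathfrak p}$, and Proposition~\ref{criterion} applied inside $R_{\mathfrak p}$ yields a $k \in \bN$ with $a^k \in J$. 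Since $a^k$ is also in $R$, we get $a^k \in J \cap R$. Applying the reverse direction of Proposition~\ref{criterion}, now inside $R$, to the nonzero ideal $J \cap R \subseteq \mathfrak p$ finishes the proof that $J \cap R$ is $\mathfrak p$-primary.

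There is no real obstacle; the main thing to be careful about is the bookkeeping between membership in $\mathfrak p$ inside $R$ and membership in $J$ inside $R_{\mathfrak p}$, together with the (easy but crucial) observation that $R_{\mathfrak p}$ inherits the 1-domain hypothesis from $R$, which is what lets us bootstrap Proposition~\ref{criterion} from the local ring back to $R$.
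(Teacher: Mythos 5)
Your proof is correct and follows the same route as the paper: verify $J \cap R \subseteq (\mathfrak p R_{\mathfrak p}) \cap R = \mathfrak p$, then for each $a \in \mathfrak p$ extract a power $a^k \in J$ by applying Proposition~\ref{criterion} locally in $R_{\mathfrak p}$, and finally apply Proposition~\ref{criterion} in $R$ to $J\cap R$. Your write-up is somewhat more careful than the paper's terse version in that you explicitly check that $J\cap R$ is nonzero and that $R_{\mathfrak p}$ inherits the $1$-domain hypothesis (which the local application of the criterion actually requires); the paper leaves both of these as implicit.
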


\begin{proof}
We have $J \cap R \subseteq (\frak p R_{\frak p}) \cap R= \frak p$, so we can use the
above criterion (Proposition~\ref{criterion}). Suppose $a\in \frak p$, then $a \in \frak p R_{\frak p}$.
Clearly all proper nonzero ideals of $R_{\frak p}$ are~$\mathfrak p R_{\frak p}$-primary, so~$a^k \in J$
for some $k\in \bN$. Hence $a^k \in J \cap R$.
\end{proof}

\begin{proposition}
Let $R$ be a 1-domain and let $\frak p$ be a nonzero prime ideal of~$R$. 
If~$I$ is a $\frak p$-primary ideal of $R$ and if $\frak q$ is a prime ideal not equal to~$\frak p$
then $I R_{\frak q} = R_{\frak q}$.
\end{proposition}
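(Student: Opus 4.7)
The plan is to exploit the definition of $\mathfrak{p}$-primary directly: $I$ is contained in exactly one prime ideal, namely $\mathfrak{p}$. Since $\mathfrak{q} \neq \mathfrak{p}$, this immediately forces $I \not\subseteq \mathfrak{q}$. So there must exist some $a \in I$ with $a \notin \mathfrak{q}$.

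Next I would push this into the localization. Since $a \in R \smallsetminus \mathfrak{q}$, the element $a$ becomes a unit in $R_{\mathfrak{q}}$ by the standard description of units in $R_{\mathfrak{q}}$ (recalled in the section on localization). Because $a \in I$, we also have $a \in I R_{\mathfrak{q}}$. Thus $I R_{\mathfrak{q}}$ contains a unit of $R_{\mathfrak{q}}$, which (by a basic fact assumed in the background) forces $I R_{\mathfrak{q}} = R_{\mathfrak{q}}$.

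The only thing that really needs verification is that the primary hypothesis truly rules out $I \subseteq \mathfrak{q}$; one just has to remember that in the $1$-domain setting, "contained in exactly one prime" can be read literally for nonzero proper $I$ (the zero prime does not contain any nonzero ideal, and every proper ideal is contained in some maximal, i.e.\ nonzero prime). Apart from that, there is no obstacle, no delicate manipulation with Proposition~\ref{criterion}, and no need to use powers $a^k \in I$; the statement is essentially a one-line consequence of the definitions together with the description of units in a localization.
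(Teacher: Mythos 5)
Your proof is correct and takes essentially the same approach as the paper: observe $I \not\subseteq \mathfrak{q}$ directly from the definition of $\mathfrak{p}$-primary, pick $a \in I \smallsetminus \mathfrak{q}$, and note that $a$ is a unit in $R_{\mathfrak{q}}$. You are also right that Proposition~\ref{criterion} is not needed here.
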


\begin{proof}
Since $I$ is not contained in~$\frak q$, there is an element $a \in I$ not in~$\frak q$.
Observe that $a$ is a unit in~$R_{\frak q}$.
\end{proof}

\begin{proposition}\label{prop114}
Let $R$ be a 1-domain and let $\frak p$ be a nonzero prime ideal of~$R$. 
Then the map $I \mapsto I R_{\frak p}$ is an order preserving bijection from the set of $\frak p$-primary ideals of $R$
to the set of nonzero proper ideals of $R_{\frak p}$.
The inverse map is $J \to J\cap R$.
\end{proposition}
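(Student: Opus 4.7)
The plan is to verify the four things needed for the bijection: that the map $I \mapsto IR_{\mathfrak p}$ lands in the set of nonzero proper ideals of $R_{\mathfrak p}$, that $J \mapsto J\cap R$ lands in the set of $\mathfrak p$-primary ideals (this is the immediately preceding proposition), that the two maps are mutual inverses, and that both preserve inclusion. Most of these points are routine; the key content is the identity $(IR_{\mathfrak p}) \cap R = I$ for a $\mathfrak p$-primary ideal $I$.

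First I would handle the easy pieces. If $I$ is $\mathfrak p$-primary then $I$ is nonzero, so $IR_{\mathfrak p}$ is nonzero, and since $I \subseteq \mathfrak p$ we have $IR_{\mathfrak p} \subseteq \mathfrak p R_{\mathfrak p} \ne R_{\mathfrak p}$, so $IR_{\mathfrak p}$ is a nonzero proper ideal. The other composition direction, $(J \cap R)R_{\mathfrak p} = J$, is already taken as established from the general theory of localization of ideals. Order preservation in one direction is immediate, and in the other follows once we know the maps are inverses.

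The main step is to show $(IR_{\mathfrak p}) \cap R \subseteq I$ when $I$ is $\mathfrak p$-primary; the reverse inclusion is automatic. Here is the plan. Let $a \in (IR_{\mathfrak p}) \cap R$. Writing $a = b/s$ with $b \in I$ and $s \in R\smallsetminus\mathfrak p$ gives $sa \in I$. Now consider the ideal $I + sR$ of $R$. I claim it equals $R$: any maximal ideal of $R$ containing it would be a nonzero prime $\mathfrak q$ containing $I$, hence $\mathfrak q = \mathfrak p$ by the definition of $\mathfrak p$-primary (here I use that $R$ is a $1$-domain, so maximal ideals coincide with nonzero primes, and that every proper ideal sits inside some maximal ideal); but then $s \in \mathfrak q = \mathfrak p$, contradicting the choice of $s$. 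Hence $1 = i + sr$ for some $i \in I$ and $r \in R$, and then
\[
a = ai + r(sa) \in I,
\]
as desired.

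The hard part will be recognizing that the $\mathfrak p$-primary hypothesis is precisely what forces $I + sR = R$; without it, one only knows $I + sR \not\subseteq \mathfrak p$, which alone is insufficient. The rest is cleanup: combine the two identities $(IR_{\mathfrak p}) \cap R = I$ and $(J \cap R)R_{\mathfrak p} = J$ to conclude that the maps are mutual inverses, and then inclusion preservation follows since both $I \mapsto IR_{\mathfrak p}$ and $J \mapsto J \cap R$ are visibly monotonic.
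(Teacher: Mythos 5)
Your proof is correct, and it takes a genuinely different route on the key point. For injectivity the paper localizes the two candidate ideals $I, I'$ at \emph{every} maximal ideal of $R$: by the preceding proposition a $\mathfrak p$-primary ideal blows up to the unit ideal at any $\mathfrak q \neq \mathfrak p$, so the hypothesis $IR_{\mathfrak p} = I'R_{\mathfrak p}$ gives $IR_{\mathfrak q} = I'R_{\mathfrak q}$ for all $\mathfrak q$, and Corollary~\ref{equality_test_cor} (the local-to-global equality test) finishes it. You instead prove the contraction identity $(IR_{\mathfrak p})\cap R = I$ directly: from $a = b/s$ you get $sa \in I$, then argue $I + sR = R$ using only that a $\mathfrak p$-primary ideal lies in the unique prime $\mathfrak p$ while $s \notin \mathfrak p$, and conclude via $1 = i + sr$ that $a \in I$. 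This is essentially Exercise~\ref{cancel_ex2a} generalized from $\mathfrak m^k$ to an arbitrary $\mathfrak p$-primary ideal. Your argument is more elementary (no intersection formula, only a single localization) and it yields a sharper statement since it exhibits the inverse by an explicit identity rather than deducing injectivity abstractly; the paper's version, on the other hand, is shorter given the machinery already built and fits the local-to-global theme the text is emphasizing. Both are sound. One small point worth keeping in mind: your use of ``every proper ideal lies in a maximal ideal'' needs Zorn's lemma here, since the 1-domain in this proposition is not assumed Noetherian, but the paper explicitly grants this as background.
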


\begin{proof}
Given $J$ a nonzero proper ideal of $R_{\mathfrak p}$, we know that $J \cap R$
is a $\mathfrak p$-primary ideal of~$R$ that maps to $J$. So the map is surjective.

Suppose that~$I$ and $I'$ 
are $\frak p$-primary ideals of $R$ such that $I R_{\frak p} = I' R_{\frak p}$.
For any maximal ideal~$\frak q$ not equal to $\frak p$, we have 
$I R_{\frak q} = R_{\frak q} = I' R_{\frak q}$ by the previous proposition. 
Thus, by Corollary~\ref{equality_test_cor},
$I = I'$. So the map is injective.
\end{proof}

\begin{exercise}
Let $R$ be a Noetherian 1-domain and let $\frak p$ be a nonzero prime ideal of~$R$. 
Show that $I$ is $\frak p$-primary if and only if
there is a $k \ge 1$ such that 
$$
\frak p^k \subseteq I \subseteq \frak p.
$$
\end{exercise}

\begin{exercise}
Let $R$ be a 1-integral domain.
Let $I$ be a $\mathfrak p$-primary ideal where $\mathfrak p$ is a maximal ideal of~$R$.
Show that there is a ring isomorphism
$$
R/I \to R_{\mathfrak p} / I R_{\mathfrak p}.
$$
Hint: $(I R_{\mathfrak p}) \cap R = I$. If $s \in R\smallsetminus {\mathfrak p}$, what
is $s R + I$?
\end{exercise}

We are ready for the main factorization theorem.
Recall that if $R$ is a Noetherian~1-domain 
then the number of prime ideals containing any given nonzero ideal is finite
(see Lemma~\ref{lemma71}).

\begin{theorem}
Let $R$ be a Noetherian 1-domain.
Then every nonzero ideal $I$ can be written as the product of primary ideals.
More precisely, let $\frak p_1, \ldots, \frak p_k$ be the distinct prime ideals containing~$I$.
Then
$$
I = I_1 I_2 \cdots I_k
$$
where $I_i$ is $\frak p_i$-primary.
Moreover, this product is unique.
\end{theorem}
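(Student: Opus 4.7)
The plan is to use the localization-based approach developed throughout the essay, in conjunction with the order-preserving bijection of Proposition~\ref{prop114} between $\mathfrak{p}$-primary ideals of $R$ and nonzero proper ideals of $R_{\mathfrak{p}}$. Note first that the hypotheses guarantee the product in the statement is finite: by Lemma~\ref{lemma71} only finitely many nonzero primes of $R$ contain $I$, so $\mathfrak{p}_1,\ldots,\mathfrak{p}_k$ is a well-defined finite list.

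The natural candidate for the factors is $I_i \defeq (I R_{\mathfrak{p}_i}) \cap R$. Since $I \subseteq \mathfrak{p}_i$, the localization $I R_{\mathfrak{p}_i}$ is a nonzero proper ideal of $R_{\mathfrak{p}_i}$, and Proposition~\ref{prop114} tells us each $I_i$ is a genuine $\mathfrak{p}_i$-primary ideal satisfying $I_i R_{\mathfrak{p}_i} = I R_{\mathfrak{p}_i}$. To prove $I = I_1 \cdots I_k$ I would use Corollary~\ref{equality_test_cor} and verify the equality locally at each maximal ideal $\mathfrak{m}$ of $R$. If $\mathfrak{m}$ is not one of the $\mathfrak{p}_i$, then $I \not\subseteq \mathfrak{m}$ gives $IR_{\mathfrak{m}} = R_{\mathfrak{m}}$; moreover each $I_j$ is $\mathfrak{p}_j$-primary with $\mathfrak{p}_j \ne \mathfrak{m}$, so $I_j$ contains an element outside $\mathfrak{m}$, making $I_j R_{\mathfrak{m}} = R_{\mathfrak{m}}$, so both sides localize to $R_{\mathfrak{m}}$. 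If $\mathfrak{m} = \mathfrak{p}_i$, the same argument gives $I_j R_{\mathfrak{p}_i} = R_{\mathfrak{p}_i}$ for $j \ne i$, while $I_i R_{\mathfrak{p}_i} = I R_{\mathfrak{p}_i}$ by construction. Multiplying,
$$
(I_1 \cdots I_k) R_{\mathfrak{p}_i} \;=\; I_i R_{\mathfrak{p}_i} \;=\; I R_{\mathfrak{p}_i},
$$
establishing the required local equality at every maximal ideal.

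For uniqueness, suppose $I = J_1 \cdots J_k$ where each $J_j$ is $\mathfrak{p}_j$-primary. (I would first note that any prime containing $I = J_1 \cdots J_k$ must contain some $J_j$, hence equal $\mathfrak{p}_j$, so the list of primes appearing is forced to be exactly $\mathfrak{p}_1,\ldots,\mathfrak{p}_k$.) Localizing at $\mathfrak{p}_i$, the factors $J_j$ with $j \ne i$ become units for the same primary-ideal reason as above, so $J_j R_{\mathfrak{p}_i} = R_{\mathfrak{p}_i}$, and we obtain
$$
J_i R_{\mathfrak{p}_i} \;=\; I R_{\mathfrak{p}_i} \;=\; I_i R_{\mathfrak{p}_i}.
$$
The injectivity half of the bijection in Proposition~\ref{prop114} then forces $J_i = I_i$.

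The main obstacle I expect is purely a bookkeeping one: ensuring that whenever a $\mathfrak{p}_j$-primary ideal is localized at a different prime $\mathfrak{p}_i$ (or at any other maximal ideal), it expands to the whole local ring. This is essentially one line — any $\mathfrak{p}_j$-primary ideal is not contained in any other maximal ideal, so it contains an element that becomes a unit after localization — but it must be invoked cleanly in both the existence step and the uniqueness step, and in conjunction with Corollary~\ref{equality_test_cor}. Everything else reduces to the known correspondence between primary ideals upstairs and proper ideals downstairs.
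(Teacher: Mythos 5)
Your argument is correct and follows the same route as the paper: define $I_i = (I R_{\mathfrak{p}_i}) \cap R$, verify the product equals $I$ by localizing at each maximal ideal and invoking Corollary~\ref{equality_test_cor}, and obtain uniqueness from the injectivity half of Proposition~\ref{prop114}. The only extra content is your brief remark that a competing primary factorization is forced to use the same primes $\mathfrak{p}_1,\ldots,\mathfrak{p}_k$, which is a harmless (and correct) tightening of the statement.
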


\begin{proof}
For each $\frak p_i$ let $I_i = I R_{{\frak p}_i} \cap R$.
As we have seen, $I_i$ is ${\frak p}_i$-primary.
Now define~$I' = I_1 I_2 \cdots I_k$.
Observe that, for each $i$,
$$
I' R_{{\frak p}_i} = 
(I_1 R_{{\frak p}_i} )\cdots (I_k R_{{\frak p}_i})
=
I_i R_{{\frak p}_i} = I R_{{\frak p}_i}.
$$
For any other nonzero prime ideal $\frak q$ of $R$ we have
$$
I' R_{\frak q} = R_{\frak q}= I R_{\frak q}.
$$
Thus, by Corollary~\ref{equality_test_cor}, $I' = I$.
This establishes existence.

Now suppose $I = I'_1 I'_2 \cdots I'_k$ where $I_i'$ is ${\frak p}_i$-primary.
For each $i$
$$
I R_{{\frak p}_i} = (I_1 R_{{\frak p}_i} )\cdots (I_k R_{{\frak p}_i}) = I_i R_{{\frak p}_i} 
\qquad
I R_{{\frak p}_i} = (I'_1 R_{{\frak p}_i} )\cdots (I'_k R_{{\frak p}_i}) = I'_i R_{{\frak p}_i}.
$$
By the  injectivity of the ideal map on the set of $\frak p_i$-primary ideals, we
deduce from~$ I_i R_{{\frak p}_i} = I'_i R_{{\frak p}_i}$ the desired conclusion: $I_i = I'_i$.
\end{proof}

\begin{exercise}
Let $R$ be a Noetherian 1-domain.
Show that every irreducible ideal must be a primary ideal.
(An irreducible ideal is a nonzero proper ideal that is not the product of two proper ideals.)

Let $I$ be a $\mathfrak p$-primary ideal.
Show  $I$ is an irreducible ideal of~$R$
if and only if~$I R_{\mathfrak p}$ is an irreducible ideal of~$R_{\mathfrak p}$.
Conclude a $\mathfrak p$-primary ideal $I$ of~$R$ is an irreducible non-prime ideal if and only if 
$I R_{\mathfrak p}$ is an irreducible non-prime ideal.
\end{exercise}

Let $R$ be a singular Noetherian 1-domain.
Then there must be at least one irreducible non-prime ideal $I$.
This is because factorization into nonzero prime ideals fails for some nonzero ideals, but
factorization into irreducible ideals holds (Exercise~\ref{exxx}).
The above exercise shows that such an irreducible $I$ must be a $\mathfrak p$-primary ideal
for some prime ideal $\mathfrak p$.
Also, $I R_{\mathfrak p}$ must be an irreducible non-prime ideal by the above exercise,
and so $R_{\mathfrak p}$ is not a DVR. In other words, 
$\mathfrak p$ is singular.
So any  irreducible non-prime ideal is $\mathfrak p$-primary for
some singular prime ideal~$\mathfrak p$.

Conversely, if ${\mathfrak p}$ is singular, $R_{\mathfrak p}$ must have an irreducible
non-prime ideal $J$. There is a unique $\mathfrak p$-primary ideal $I$
of $R$ with $I R_{\mathfrak p} = J$, and by the above exercise this $I$ an irreducible
non-prime ideal.

So when we look for irreducible non-prime ideals,  we can focus on
the local situation at a singular prime. So from now on we will limit our attention to
singular  local 
Noetherian~$1$-domains, i.e., Noetherian domains with exactly one prime ideal that are not DVRs. The next results shows that we can find irreducible
non-prime ideals among the principal ideals:

\begin{proposition}
Let $R$ be a singular local Noetherian $1$-domain.
Let $a \in R$ be a nonzero non-unit element.
Then $a R$ is the product of irreducible non-prime ideals.
\end{proposition}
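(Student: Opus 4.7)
The plan is to combine three facts: (1) factorization into irreducible ideals exists for every nonzero proper ideal in a Noetherian domain, (2) in a 1-domain every irreducible ideal is primary, and hence, because $\mathfrak{p}$ is the unique nonzero prime of our local $R$, is $\mathfrak{p}$-primary, and (3) in our singular local situation $\mathfrak{p}$ itself cannot occur as a factor of the principal ideal $aR$, so every factor appearing is an irreducible ideal that is \emph{not} prime.

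First I would invoke Exercise~\ref{exxx}: since $R$ is Noetherian and $aR$ is a nonzero proper ideal (as $a$ is nonzero and a non-unit), we can write
$$aR = I_1 I_2 \cdots I_k$$
with each $I_j$ an irreducible ideal of $R$. By the exercise immediately preceding the proposition, each such $I_j$ is a primary ideal. Because $R$ is a local 1-domain with unique nonzero prime $\mathfrak{p}$, the only option is that each $I_j$ is $\mathfrak{p}$-primary.

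The main step is to rule out the possibility that some $I_j$ equals the prime ideal~$\mathfrak{p}$ itself. Here I would use invertibility. Since $aR$ is a nonzero principal ideal, it is invertible (Proposition~\ref{easy_invert_prop}). If $I_j = \mathfrak{p}$ for some index $j$, then writing $J = \prod_{i \ne j} I_i$ gives $\mathfrak{p} J = aR$, a principal nonzero ideal, which by the second part of Proposition~\ref{easy_invert_prop} forces $\mathfrak{p}$ to be invertible. But $R$ is a local Noetherian domain that is \emph{not} a DVR, so by Theorem~\ref{theoremI} its maximal ideal $\mathfrak{p}$ is not invertible, a contradiction.

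Thus no $I_j$ is prime (the only candidate among nonzero prime ideals of $R$ is $\mathfrak{p}$ itself, now excluded), and each $I_j$ is an irreducible non-prime ideal, giving the desired factorization. The only potential obstacle I see is the need to ensure the factors produced by Exercise~\ref{exxx} are genuinely proper and nonzero, but this is built into the definition of irreducible ideal used there, so no real obstacle arises.
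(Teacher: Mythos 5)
Your proof is correct and follows essentially the same route as the paper: factor $aR$ into irreducibles via Exercise~\ref{exxx}, then use the fact that a factor of the invertible principal ideal $aR$ must itself be invertible to rule out $\mathfrak{p}$, since the singular hypothesis makes $\mathfrak{p}$ non-invertible. The detour through primary ideals is harmless but unnecessary (in a local $1$-domain $\mathfrak{p}$ is already the only nonzero prime, so a prime factor could only be $\mathfrak{p}$), and you invoke Theorem~\ref{theoremI} where the paper uses Proposition~\ref{propInv} together with the non-principality of $\mathfrak{p}$ — equivalent facts packaged differently.
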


\begin{proof}
Factor $a R = I_1 \cdots I_k$ where each $I_i$ is irreducible (Exercise~\ref{exxx}).
Since $aR$ is invertible, each $I_i$ must also be invertible.
Thus each $I_i$ is principal (Proposition~\ref{propInv}) and can be written as $a_i R$
for some $a_i \in R$. Since $R$ is singular, its maximal ideal is not principal.
Thus each $a_i R$ is an irreducible non-prime ideal.
\end{proof}

We can identify another source of irreducible non-prime ideals (there may be overlap
between our two categories).

\begin{proposition}
Let $R$ be a singular local Noetherian $1$-domain with maximal ideal~$\mathfrak m$.
If $I$ is an ideal such that $\mathfrak m^2 \subsetneq I \subsetneq \mathfrak m$
then $I$ is an irreducible non-prime ideal.
\end{proposition}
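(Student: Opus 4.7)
The plan is to verify separately that $I$ is not prime and that $I$ is not a nontrivial product of proper ideals; both facts will follow almost immediately from the hypothesis $\mathfrak{m}^2 \subsetneq I \subsetneq \mathfrak{m}$ combined with the fact that $R$ is local.

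First I would observe that $I$ is a nonzero proper ideal: it is proper since $I \subsetneq \mathfrak{m} \subsetneq R$, and it is nonzero since $\mathfrak{m}^2 \subsetneq I$ and $\mathfrak{m}^2 \ne \{0\}$ (indeed, $R$ is a domain with $\mathfrak{m} \ne 0$, so for any nonzero $a \in \mathfrak{m}$ we have $0 \ne a^2 \in \mathfrak{m}^2$).

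Next I would argue $I$ is not prime. Because $R$ is a local $1$-domain, its only prime ideals are $\{0\}$ and the unique maximal ideal $\mathfrak{m}$. Since $I$ is nonzero and $I \ne \mathfrak{m}$ (it is strictly contained in $\mathfrak{m}$), $I$ is neither of the two prime ideals of $R$.

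For irreducibility, suppose toward a contradiction that $I = J_1 J_2$ for nonzero proper ideals $J_1, J_2$ of $R$. Since $R$ is local with maximal ideal $\mathfrak{m}$, every proper ideal is contained in $\mathfrak{m}$, so $J_1, J_2 \subseteq \mathfrak{m}$, whence $I = J_1 J_2 \subseteq \mathfrak{m}\cdot\mathfrak{m} = \mathfrak{m}^2$. This contradicts the strict inclusion $\mathfrak{m}^2 \subsetneq I$. Thus no such factorization exists and $I$ is irreducible. There is no real obstacle here; the whole argument is a direct unpacking of the definitions together with the locality hypothesis (the strict containments in the hypothesis are doing all the work).
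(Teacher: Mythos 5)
Your proof is correct and takes essentially the same route as the paper: the key step for irreducibility is exactly the observation that a product of two proper ideals in a local ring lands in $\mathfrak m^2$, contradicting $\mathfrak m^2 \subsetneq I$. The paper's proof only spells out the irreducibility step and leaves the non-primeness (only primes of a local $1$-domain are $\{0\}$ and $\mathfrak m$, neither equal to $I$) and the nonzero-proper checks implicit, whereas you state them explicitly; this is a difference in level of detail, not in approach.
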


\begin{proof}
Suppose such an $I$ factors as proper ideals: $I = J_1 J_2$. 
Since $J_i \subseteq \mathfrak m$, the product $I = J_1 J_2$
is contained in~$\mathfrak m^2$,
a contradiction.
\end{proof}

Are there ideals $I$ such that  $\mathfrak m^2 \subsetneq I \subsetneq \mathfrak m$? The answer is that there
are a lot of them when $R$ is singular. Assume $R$ is a singular local Noetherian 1-domain
with maximal ideal~$\mathfrak m$.
Consider $\mathfrak m / \mathfrak m^2$ which is a vector space over the
residue field~$R/\mathfrak m$ (Exercise~\ref{ex18}). Since $\mathfrak m$ is a finitely 
generated, the vector space $\mathfrak m / \mathfrak m^2$
is finite dimensional. Note that $\mathfrak m  \ne \mathfrak m^2$
by Exercise~\ref{exA}, so $\mathfrak m / \mathfrak m^2$ has positive dimension.
Since $R$ is singular, the dimension cannot be one (Exercise~\ref{dimension_ex}).
So~$\mathfrak m / \mathfrak m^2$
has dimension at least two. Such vector spaces have multiple proper nonzero subspaces.

There is a one-to-one correspondence between ideals $I$ with $\mathfrak m^2\subseteq I \subseteq  \mathfrak m$
and subspaces of the vector space $\mathfrak m / \mathfrak m^2$. (See the next appendix for
the relationship between ideals of $R$ containing $\mathfrak m^2$ and the ideals
of $R / \mathfrak m^2$. Note that every vector  subspace of $\mathfrak m / \mathfrak m^2$
is an ideal of $R / \mathfrak m^2$.) So we get the following:

\begin{proposition}
There are multiple ideals $I$ such that $\mathfrak m^2\subsetneq I \subsetneq  \mathfrak m$.
These are in one-to-one correspondence with the proper, nonzero subspaces of $\mathfrak m^2\subseteq I \subseteq  \mathfrak m$.
\end{proposition}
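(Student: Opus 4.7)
The plan is to invoke the lattice correspondence theorem for the quotient ring $R/\mathfrak{m}^2$. Under the canonical surjection $\pi\colon R \to R/\mathfrak{m}^2$, the ideals of $R$ that contain $\mathfrak{m}^2$ correspond bijectively, and in an inclusion-preserving fashion, to the ideals of $R/\mathfrak{m}^2$, via $I \mapsto I/\mathfrak{m}^2$. Under this correspondence $\mathfrak{m}$ maps to $\mathfrak{m}/\mathfrak{m}^2$, so the ideals $I$ of $R$ with $\mathfrak{m}^2 \subseteq I \subseteq \mathfrak{m}$ correspond precisely to the $R/\mathfrak{m}^2$-submodules of $\mathfrak{m}/\mathfrak{m}^2$.

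Next I would observe that the $R/\mathfrak{m}^2$-module structure on $\mathfrak{m}/\mathfrak{m}^2$ factors through $R/\mathfrak{m}$. Indeed, by the definition of $\mathfrak{m}^2$ the ideal $\mathfrak{m}/\mathfrak{m}^2$ of $R/\mathfrak{m}^2$ annihilates $\mathfrak{m}/\mathfrak{m}^2$, so scalar multiplication descends to a well-defined action of the field $R/\mathfrak{m}$. This is exactly the scalar multiplication set up in Exercise~\ref{ex18}. Hence the $R/\mathfrak{m}^2$-submodules of $\mathfrak{m}/\mathfrak{m}^2$ are precisely the $R/\mathfrak{m}$-subspaces of $\mathfrak{m}/\mathfrak{m}^2$.

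Composing the two correspondences gives an inclusion-preserving bijection between ideals $I$ with $\mathfrak{m}^2 \subseteq I \subseteq \mathfrak{m}$ and subspaces of the $R/\mathfrak{m}$-vector space $\mathfrak{m}/\mathfrak{m}^2$. Since strict containments of ideals correspond to strict containments of subspaces (inclusion is preserved in both directions), the ideals $I$ with $\mathfrak{m}^2 \subsetneq I \subsetneq \mathfrak{m}$ correspond exactly to the proper nonzero subspaces of $\mathfrak{m}/\mathfrak{m}^2$, proving the bijection part of the statement.

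For the claim that there are multiple such ideals, I would cite the discussion immediately preceding the proposition, which established that $\dim_{R/\mathfrak{m}}(\mathfrak{m}/\mathfrak{m}^2) \ge 2$ using Exercise~\ref{exA} (to see $\mathfrak{m} \ne \mathfrak{m}^2$) and Exercise~\ref{dimension_ex} (to rule out dimension one in the singular case). Any vector space of dimension at least two has multiple proper nonzero subspaces: picking linearly independent vectors $e_1, e_2$ in $\mathfrak{m}/\mathfrak{m}^2$, the lines $(R/\mathfrak{m})\,e_1$ and $(R/\mathfrak{m})\,e_2$ are two distinct proper nonzero subspaces, yielding the required multiplicity. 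There is no real obstacle here; the only points requiring a bit of care are verifying that the $R/\mathfrak{m}^2$-action really does descend to $R/\mathfrak{m}$ and that the correspondence theorem preserves strict inclusions, both of which are essentially bookkeeping.
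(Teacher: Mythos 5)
Your proposal is correct and follows essentially the same route as the paper: the Appendix~C correspondence between ideals of $R$ containing $\mathfrak{m}^2$ and ideals of $R/\mathfrak{m}^2$, the identification of ideals inside $\mathfrak{m}/\mathfrak{m}^2$ with $R/\mathfrak{m}$-subspaces, and the preceding dimension count $\dim_{R/\mathfrak{m}}(\mathfrak{m}/\mathfrak{m}^2) \ge 2$ via Exercises~\ref{exA} and~\ref{dimension_ex}. You are slightly more careful than the paper in one respect worth noting: the paper only remarks that every vector subspace is an ideal of $R/\mathfrak{m}^2$, whereas you make explicit the converse direction as well (ideals contained in $\mathfrak{m}/\mathfrak{m}^2$ are subspaces) by observing that the $R/\mathfrak{m}^2$-action on $\mathfrak{m}/\mathfrak{m}^2$ kills $\mathfrak{m}/\mathfrak{m}^2$ and hence factors through the residue field, which is exactly what the full bijection requires.
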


So this gives another source of irreducible non-prime ideals in the local singular case, and hence in the
general singular case.


\chapter*{Appendix C: A note on ideals in quotient groups}\label{Appendix_C}

We have frequently used the correspondence between ideals of 
an integral domain~$R$ and ideals of a localization $S^{-1} R$. Prime ideals are
well-behaved under this correspondence. This correspondence is also well-behaved
with respect to products (and other operations). In fact the map $I \mapsto S^{-1} I$ yields
a surjective homomorphism
from the monoid of ideals of $R$ to the monoid of ideals of $S^{-1} R$.
We extended this surjection to fractional ideals.

There is a similar correspondence between ideals of a ring $R$ and
ideals of a quotient ring~$R/I$. However, this situation is trickier since
this correspondence is really two levels of correspondence where each
level has properties that the other does not.
This discrepancy between the two levels actually gives us a method
of finding irreducible ideals that are not prime ideals, which we will use later.
For simplicity, we will stick to commutative rings, and
we will not attempt to extend the correspondence to fractional ideals.

The purpose of this appendix is
introduce the ideal correspondence for quotient rings, which is of key importance
in commutative algebra, and  
 to prepare the groundwork for the next appendix
on prime ideal factorization.

So let $R$ be a commutative ring and let $I$ be an ideal. As mentioned above,
there are two levels to the correspondence between ideals of $R$ and ideals of $R/I$. 
At first we will consider only ideals $J$ of $R$ that contain $I$. In this case, the 
Abelian group $J/I$ is actually a subset of $R/I$: for each $a\in J$, the coset $a + I$  as an element of~$J/I$
is also a coset appearing as an element of~$R/I$. We will often write this coset as~$[a]$.
Checking that $J/I$ is an ideal of $R/I$ is straightforward, as are most of the claims of the following:

\begin{proposition}\label{prop116}
Let $I$ be an ideal of a commutative ring~$R$. Then the 
natural map~$J \mapsto J/I$ is an inclusion preserving bijection
from the set of ideals containing~$I$ to the set of ideals of~$R/I$.
Restricting this bijection to prime ideals yields a bijection
from the set of prime ideals containing $I$ to the set of prime ideal of~$R/I$.

In particular, if $R$ is a local ring with maximal ideal~$\mathfrak m$, and if $I$
is a proper ideal of~$R$, then $R/I$ is a local ring with maximal ideal~$\mathfrak m/ I$.
\end{proposition}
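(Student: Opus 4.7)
The plan is to exhibit an explicit inverse to the map $J \mapsto J/I$ and then read off all the structural properties (inclusion, primeness, maximality, locality) from this bijection together with the third isomorphism theorem.

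First I would set up both directions. Given an ideal $J$ of $R$ containing $I$, the quotient group $J/I$ is the image of $J$ under the canonical surjection $\pi \colon R \to R/I$, so it is evidently closed under the addition of $R/I$; closure under scalar multiplication by $R/I$ follows from closure of $J$ under multiplication by $R$. In the reverse direction, given an ideal $\bar J$ of $R/I$, I would set $J = \pi^{-1}(\bar J)$ and observe that (i) $J$ is an ideal of $R$ because $\pi$ is a ring homomorphism, (ii) $J \supseteq \pi^{-1}(0) = I$, and (iii) $\pi(J) = \bar J$ since $\pi$ is surjective, so $J/I = \bar J$. For the other composition, if $J \supseteq I$ then $\pi^{-1}(J/I) = J$ because $a \in \pi^{-1}(J/I)$ means $a + I \in J/I$, i.e.\ $a \in J + I = J$. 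This establishes the bijection, and inclusion-preservation in both directions is immediate from the description of the inverse as a preimage.

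Next I would handle the prime ideal assertion via the third isomorphism theorem, which gives a ring isomorphism
$$
R/J \;\cong\; (R/I)\big/(J/I)
$$
whenever $I \subseteq J$. Since being a prime (respectively maximal) ideal is equivalent to the quotient being an integral domain (respectively a field), the bijection restricts to a bijection between prime ideals of $R$ containing $I$ and prime ideals of $R/I$, and likewise between maximal ideals. I would cite the reader's background on prime and maximal ideals for these equivalences rather than reprove them.

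Finally, for the local ring statement, I would note that any proper ideal $I$ of a local ring $R$ is contained in the unique maximal ideal $\mathfrak m$ (since every proper ideal lies in some maximal ideal, and here there is only one). The bijection on maximal ideals then shows that the maximal ideals of $R/I$ are in bijection with the maximal ideals of $R$ containing $I$, of which there is exactly one, namely $\mathfrak m$. Hence $R/I$ has the single maximal ideal $\mathfrak m/I$, so it is local. The main obstacle here is essentially bookkeeping rather than any substantive difficulty; the only point requiring a little care is verifying that $\pi^{-1}(J/I) = J$ when $I \subseteq J$, which is what makes the correspondence a genuine bijection rather than merely a surjection.
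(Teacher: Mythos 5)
Your proof is correct and follows essentially the same approach as the paper: the paper's terse proof sketch consists precisely of the preimage construction $J = \{a \in R \mid [a] \in \tilde J\}$ for surjectivity, which is exactly your $\pi^{-1}(\bar J)$, and leaves the remaining verifications (inclusion-preservation, primes, the local case) to the reader as straightforward. You have simply filled in those straightforward details, including the standard appeal to the third isomorphism theorem for the prime/maximal correspondence.
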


\begin{proof}
The claims are straightforward. For surjectivity, given $\tilde J$ an ideal of $R/I$
consider the following ideal of~$R$:
$$J = \left\{ a \in R \;\; \middle| \;\;  [a] \in \tilde J \;\right\}.$$
\end{proof}

This bijection is compatible with ideal operations:

\begin{proposition}\label{prop117}
Let $I$ be an ideal of a commutative ring~$R$. Then the 
natural bijection~$J \mapsto J/I$ respects the operations
of addition and intersection:
$$
(J_1 + J_2) / I = (J_1/I) + (J_2/I),\qquad
(J_1 \cap J_2) / I = (J_1/I) \cap (J_2/I).
$$
This bijection sends any principal ideal $a R$ containing $I$ to the principal ideal~$[a] (R/I)$.
Finally, if $J_1, J_2,$ and $J_1 J_2$ all contain $I$ then 
$$
(J_1 J_2) / I = (J_1/I)  (J_2/I).
$$
\end{proposition}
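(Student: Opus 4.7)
The plan is to verify each of the four claims directly from the definitions, observing throughout that elements of $J/I$ (for $I \subseteq J$) are exactly those cosets $[a] = a+I$ in $R/I$ with a representative $a \in J$, and that the ring operations satisfy $[a]+[b] = [a+b]$ and $[a][b] = [ab]$. Each of the identities will be proved by double inclusion.

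For the sum, if $a \in J_1$ and $b \in J_2$ then $[a+b] = [a] + [b]$ lies in $(J_1/I) + (J_2/I)$, giving $\subseteq$; conversely, any element of $(J_1/I) + (J_2/I)$ is of the form $[a]+[b] = [a+b]$ for such $a, b$, giving $\supseteq$. For the intersection, $\subseteq$ is immediate since $a \in J_1 \cap J_2$ gives $[a] \in J_1/I$ and $[a] \in J_2/I$. For the reverse inclusion, if $[a] = [a_1] = [a_2]$ with $a_1 \in J_1$ and $a_2 \in J_2$, I would write $a = a_1 + i_1$ with $i_1 \in I$; since $I \subseteq J_1$, this forces $a \in J_1$, and symmetrically $a \in J_2$, hence $[a]$ is represented by an element of $J_1 \cap J_2$. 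This step is the one place where the hypothesis $I \subseteq J_k$ is essential, and it is the subtlest point in the proposition.

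The principal case follows by observing that $aR$ contains $I$ by hypothesis, and $aR/I = \{[ar] \mid r \in R\} = \{[a][r] \mid r \in R\} = [a](R/I)$. For the product identity, I would unwind the definition of ideal product: a generic element of $J_1 J_2$ is a finite sum $\sum a_k b_k$ with $a_k \in J_1$, $b_k \in J_2$, so its class is $\sum [a_k][b_k] \in (J_1/I)(J_2/I)$, giving $\subseteq$. The reverse inclusion is symmetric; here the hypothesis that $J_1 J_2$ contains $I$ is needed merely so that the left-hand side is an honest ideal of $R/I$ in the sense of Proposition~\ref{prop116}, so that the equality even makes sense as stated.

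The main (and only) obstacle is the intersection direction, where one has to remember that $I$ sits inside both $J_1$ and $J_2$ in order to promote a coset equality $[a] = [a_1]$ to an actual element-wise containment $a \in J_1$. Everything else is a direct unwinding of definitions together with the basic compatibility of the quotient map with addition and multiplication.
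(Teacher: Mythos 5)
Your proof is correct, and it takes exactly the direct-verification route that the paper's one-line remark (``This is straightforward'') is inviting. The only point with any real content, as you correctly flag, is the $\supseteq$ inclusion for intersections, where the hypothesis $I \subseteq J_k$ is used to promote the coset equality $[a]=[a_1]$ to the element-wise membership $a \in J_1$; everything else is a clean unwinding of the definitions of the quotient operations and of ideal sum and product.
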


\begin{proof}
This is straightforward.
\end{proof}

A major limitation of the bijection $J \mapsto J/I$  is that it is not in general a monoid homomorphism
with respect to products for the simple reason that the domain is not in general closed under
products: just because~$J_1$ and $J_2$ contain $I$ does not guarantee that $J_1 J_2$ will contain~$I$.
(Addition fares better with respect to closure, but note that the additive identity $\{ 0 \}$ is not in general in the domain)

We can fix this problem by expanding this correspondence.
This second level of correspondence is based on the following idea:
Given a homomorphism~$\varphi: R_1 \to R_2$ between rings, if $J$ is an ideal of~$R_1$
we define $\varphi[J]$ to be the image of $J$ under this map.
If $\varphi$ is surjective, then $\varphi[ J ]$ is seen to be an ideal of $R_2$.

\begin{proposition}\label{prop118}
Let $I$ be an ideal of a commutative ring~$R$ and let $\varphi\colon R \to R/I$
be the canonical homomorphism $a \mapsto [a]$. Then the map~$J \mapsto \varphi[J]$
is an order preserving surjective function from the set of ideals of $R$ to the set of ideals of~$R/I$.
This map extends the bijection $J\to J/I$ defined above which was defined only when~$I \subseteq J$.

Given ideals $J_1, J_2$ of $R$, we have 
$$\varphi[J_1 J_2] = \varphi[J_1] \varphi[J_2], \quad \varphi[J_1+ J_2] = \varphi[J_1] + \varphi[J_2].$$
In fact,  $J \mapsto \varphi[J]$ is a surjective homomorphism from the monoid of ideals of $R$ under products
to the monoid of ideals of $R/I$ under products. Similarly for the monoids under addition.
 
In addition, this map sends any principal ideal $a R$ to the principal ideal~$[a] (R/I)$, and
yields a surjection from the monoid of principal ideals of $R$ (under multiplication)
to the monoid of principal ideals of $R/I$.
\end{proposition}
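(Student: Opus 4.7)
The plan is to verify each of the claims in turn, since the main content is that $\varphi[-]$ is a well-behaved surjective monoid homomorphism. First I would check that $\varphi[J]$ is an ideal of $R/I$ for every ideal $J$ of $R$: since $\varphi$ is a surjective ring homomorphism, the image of an ideal is closed under addition and absorbs multiplication by any $[r] = \varphi(r)$. Order preservation is then automatic from the definition of the set-theoretic image.

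Next I would handle the comparison with the earlier map from Proposition~\ref{prop116}. When $I \subseteq J$, the image $\varphi[J] = \{\, [a] \mid a \in J\,\}$ is literally the same subset of $R/I$ as $J/I$ (viewed via the cosets $a + I$ with $a \in J$), so $\varphi[-]$ extends $J \mapsto J/I$. For surjectivity of $\varphi[-]$ on all ideals of $R/I$, I would use the standard trick: given an ideal $\tilde J$ of $R/I$, set $J = \varphi^{-1}[\tilde J] = \{\, a \in R \mid [a] \in \tilde J\,\}$. This $J$ contains $I$, is an ideal by pullback, and $\varphi[J] = \tilde J$ because $\varphi$ is surjective. (In fact this gives even more: it recovers the bijection of Proposition~\ref{prop116} on ideals containing~$I$.)

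For the algebraic identities, $\varphi[J_1 + J_2] = \varphi[J_1] + \varphi[J_2]$ is immediate from additivity of $\varphi$. For the product identity, the key observation is that both $J_1 J_2$ and $\varphi[J_1]\,\varphi[J_2]$ are defined by finite sums of products, and $\varphi$ commutes with both sums and products: each element $\sum a_i b_i \in J_1 J_2$ maps to $\sum [a_i][b_i] \in \varphi[J_1]\varphi[J_2]$, giving $\varphi[J_1 J_2] \subseteq \varphi[J_1]\varphi[J_2]$, and conversely every generator $[a][b]$ of $\varphi[J_1]\varphi[J_2]$ lifts to $ab \in J_1 J_2$. Combined with the surjectivity already shown, this makes $J \mapsto \varphi[J]$ a surjective monoid homomorphism under both addition and multiplication.

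Finally, for the principal-ideal statement, I would just compute $\varphi[aR] = \{\, [ar] \mid r \in R\,\} = \{\, [a][r] \mid r \in R\,\} = [a](R/I)$, so principal ideals map to principal ideals; surjectivity onto principal ideals of $R/I$ follows since every such ideal has the form $[a](R/I)$ for some $a \in R$. I do not expect a substantial obstacle here: the only spot requiring a little care is the product identity, where one must remember that elements of $J_1 J_2$ are finite sums (not mere products) and exploit that $\varphi$ is a homomorphism on both operations.
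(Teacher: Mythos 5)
Your proof is correct and fills in exactly the details the paper declines to write out, since the paper's own proof of Proposition~\ref{prop118} is simply ``This is straightforward.'' The checks you perform (image of an ideal under a surjective homomorphism is an ideal, preimage gives surjectivity, $\varphi$ commutes with the sum and product constructions element-by-element, principal ideals map to principal ideals) are the standard ones the author expects the reader to supply.
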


\begin{proof}
This is straightforward. 
\end{proof}
Since the second level correspondence is not in general injective, we should address the kernel
and the issue of injectivity: 

\begin{proposition}\label{prop119}
Let $I$ be an ideal of a commutative ring~$R$ and let $\varphi\colon R \to R/I$
be the canonical homomorphism $a \mapsto [a]$.
Under the map $J \mapsto \varphi[J]$, an ideal $J$ 
maps to zero if and only if $J \subseteq I$.
Two ideals $J_1$ and $J_2$ map to the same ideal in~$R/I$ if and only if $J_1 + I = J_2 + I$.

If $J$ is an ideal of $R$,
then $I + J$ is the unique ideal $J'$ of $R$ containing $I$ such that $\varphi[J] = \varphi[J']$.
\end{proposition}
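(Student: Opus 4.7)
The plan is to reduce all three claims to the first-level bijection from Proposition~\ref{prop116} (on ideals containing $I$) by using the "saturation" operation $J \mapsto J + I$, which is the bridge between the second-level surjection of Proposition~\ref{prop118} and that first-level bijection.

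First I would handle the kernel characterization. By definition, $\varphi[J]$ is the image of $J$ under $\varphi$, so it is the zero ideal of $R/I$ exactly when every element of $J$ lies in $\ker \varphi$. Since $\ker \varphi = I$, this is the same as $J \subseteq I$. This is essentially just unwinding definitions.

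Next I would prove the "same image" criterion. The key observation is that $\varphi[I] = \{0\}$, so by the additive homomorphism property in Proposition~\ref{prop118}, we have $\varphi[J + I] = \varphi[J] + \varphi[I] = \varphi[J]$ for any ideal $J$. Therefore $\varphi[J_1] = \varphi[J_2]$ is equivalent to $\varphi[J_1 + I] = \varphi[J_2 + I]$. Now $J_1 + I$ and $J_2 + I$ both contain $I$, so they live in the domain of the first-level bijection of Proposition~\ref{prop116}; injectivity there gives $J_1 + I = J_2 + I$. Conversely, if $J_1 + I = J_2 + I$ then applying $\varphi$ and using $\varphi[J + I] = \varphi[J]$ gives $\varphi[J_1] = \varphi[J_2]$.

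Finally I would deduce the uniqueness statement. The ideal $J' = J + I$ contains $I$ and satisfies $\varphi[J'] = \varphi[J]$ by the identity just used, establishing existence. For uniqueness, suppose $J'$ is any ideal of $R$ containing $I$ with $\varphi[J'] = \varphi[J]$. The second claim gives $J + I = J' + I$, and since $I \subseteq J'$ we have $J' + I = J'$, hence $J' = J + I$. I do not anticipate a genuine obstacle here: the only slightly subtle point is recognizing that the second-level map factors through $J \mapsto J + I$ followed by the first-level bijection, and everything else is bookkeeping already justified by Propositions~\ref{prop116}, \ref{prop117}, and~\ref{prop118}.
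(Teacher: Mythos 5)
Your proof is correct and is exactly the ``straightforward'' argument the paper leaves to the reader (no proof is supplied in the text). The reduction to Proposition~\ref{prop116} via the saturation $J \mapsto J + I$, using $\varphi[I] = \{0\}$ and the additive homomorphism property of Proposition~\ref{prop118}, is the natural route, and your handling of the third claim by invoking the second is clean.
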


So there are two levels of the correspondence; the first is bijective and the second is only surjective.
For some purposes the first correspondence works better, for other purposes the second works better.
So both are of use.
For example, the first has the advantage that the correspondence is bijective and sends primes ideals
to prime ideals. The second has the advantage that it is a monoid homomorphism for ideal multiplication
and ideal addition. The second is a surjection for principal ideals, while the
first is an injection. The first works better for intersections of ideals. 

We will now illustrate how having this difference in properties, especially with respect to products, 
can be exploited to produce ideals that are irreducible
but not prime. (Recall that an ideal is irreducible if it is nonzero and proper, and it cannot be written as the product
of two proper ideals. In the previous appendix we gave examples of rings with nonprime
irreducible ideals, but these integral domains were not integrally closed. In this appendix we will
give examples that include integrally closed integral domains).

Recall that in a Dedekind domain every nonzero ideal is the product of prime ideals, so there
is no nonprime irreducible ideals. 
Since every PID is a Dedekind domain, this property also holds for PIDs as well.
Does this happen for most nice integral domains?
Consider the polynomial ring $R = F[X, Y]$ in two variables where $F$ is a field. 
We won't prove it here, but this ring is a UFD and is a fairly well-behaved ring. 
In particular it is integrally closed.
Surprisingly, this ring has nonprime irreducible ideals, and we
will prove this. It will be a bit
of work, but the construction nicely shows off a lot of the techniques we
are interested in.
The proof also works in more generality than this particular example, and will
work in the context of the rings in the following appendix.

Observe that in our example~$R = F[X, Y]$, the 
quotient~$R/\left< X \right>$  is isomorphic to~$F[Y]$ which is a~PID, so 
the ideal~$\left< X \right>$ is prime. 
These are key properties for our construction.
More generally, let $R$ be any integral domain with a nonzero prime ideal $\mathfrak p_1$
such that~$R/\mathfrak p_1$ is a PID, or even a Dedekind domain.
Assume that $\mathfrak p_2$ is a prime ideal such that $\mathfrak p_1 \subsetneq \mathfrak p_2$, 
but
that $\mathfrak p_1$ is not contained in ${\mathfrak p}_2^k$ for some~$k \ge 2$, and
assume $k$ is minimal with this property.
For example, if $R = F[X, Y]$ we could choose~$\mathfrak p_1 =X R$, $\mathfrak p_2 = X R + Y R$.
We check that $\mathfrak p_1$ is not contained in ${\mathfrak p}_2^2$ in this case.
In this example, $\mathfrak p_2$ is not only prime but is maximal
since $R/\mathfrak p_2$ is isomorphic to the field~$F$.

Given this general set-up,
consider the homomorphism $\varphi: R \to R/\mathfrak p_1$. 
Let~$\mathfrak p'_2$ be the nonzero prime ideal of $R/\mathfrak p_1$
corresponding to~$\mathfrak p_2$.
In other words $\mathfrak p'_2$ is $\varphi[\mathfrak p_2]$, or equivalently $\mathfrak p_2/\mathfrak p_1$.
Let $J'$ be $(\mathfrak p'_2)^k$ where~$k\ge 2$ is as above.
By Proposition~\ref{prop116} there is a unique ideal $J$ of $R$ containing~$\mathfrak p_1$
which maps to $J'$, and $J$ is a nonzero proper ideal of~$R$.
Because of the problems of the correspondence of Proposition~\ref{prop116}
mentioned above involving products of ideals, 
$J$ might not be $\mathfrak p_2^k$
as you might expect. On the contrary, $J$ will turn out to be irreducible.

\begin{lemma}
The ideal $J$ constructed above is an irreducible ideal in $R$, but is not a prime ideal.
\end{lemma}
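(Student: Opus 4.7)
The first step is to identify $J$ explicitly. By Proposition~\ref{prop118}, $(\mathfrak p'_2)^k = \varphi[\mathfrak p_2]^k = \varphi[\mathfrak p_2^k]$, and by Proposition~\ref{prop119} the unique ideal of $R$ that contains $\mathfrak p_1$ and maps to $(\mathfrak p'_2)^k$ is $\mathfrak p_1 + \mathfrak p_2^k$. Hence $J = \mathfrak p_1 + \mathfrak p_2^k$, and in particular $J$ is a nonzero proper ideal, since $J/\mathfrak p_1 = (\mathfrak p'_2)^k$ is nonzero and proper in $R/\mathfrak p_1$.

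For non-primality, in the Dedekind domain $R/\mathfrak p_1$ the ideal $(\mathfrak p'_2)^k$ with $k \ge 2$ is not prime: the factorization $\mathfrak p'_2 \cdot (\mathfrak p'_2)^{k-1}$ lies inside it while neither factor does, by the uniqueness of prime factorization. Since the prime correspondence of Proposition~\ref{prop116} is a bijection between primes of $R$ containing $\mathfrak p_1$ and primes of $R/\mathfrak p_1$, the ideal $J$ cannot be prime in $R$.

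For irreducibility, suppose for contradiction that $J = I_1 I_2$ with $I_1, I_2 \neq R$. Applying the monoid homomorphism $\varphi[\,\cdot\,]$ of Proposition~\ref{prop118} yields $(\mathfrak p'_2)^k = \varphi[I_1]\,\varphi[I_2]$, and since this product is nonzero, both factors are nonzero ideals of the Dedekind domain $R/\mathfrak p_1$. Unique factorization of ideals then forces $\varphi[I_i] = (\mathfrak p'_2)^{a_i}$ with $a_i \ge 0$ and $a_1 + a_2 = k$. The argument splits depending on whether some $a_i$ equals $0$.

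If $a_1 = 0$ (the case $a_2 = 0$ is symmetric), then $\varphi[I_1] = R/\mathfrak p_1$, so $I_1 + \mathfrak p_1 = R$ by Proposition~\ref{prop119}. But $\mathfrak p_1 \subseteq J = I_1 I_2 \subseteq I_1$, so $I_1 + \mathfrak p_1 = I_1$, forcing $I_1 = R$, a contradiction. The harder case, where the minimality of $k$ is used essentially, is $a_1, a_2 \ge 1$, so that $a_i \le k-1$. Minimality then gives $\mathfrak p_1 \subseteq \mathfrak p_2^{k-1} \subseteq \mathfrak p_2^{a_i}$, so $\mathfrak p_2^{a_i}$ contains $\mathfrak p_1$ and has image $(\mathfrak p'_2)^{a_i}$; the bijection of Proposition~\ref{prop116} then identifies $I_i + \mathfrak p_1$ with $\mathfrak p_2^{a_i}$, whence $I_i \subseteq \mathfrak p_2^{a_i}$. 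Multiplying, $J \subseteq \mathfrak p_2^k$, so $\mathfrak p_1 \subseteq J \subseteq \mathfrak p_2^k$, contradicting the minimality of $k$. This second case is the main obstacle and is precisely why $J$ must be built using a minimal such $k$ rather than an arbitrary one.
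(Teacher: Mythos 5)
Your proof is correct and follows essentially the same route as the paper's: lift the factorization $J = I_1 I_2$ to $R/\mathfrak p_1$ via the ideal correspondence, invoke unique factorization of ideals in that Dedekind domain, and use minimality of $k$ to pull the resulting powers of $\mathfrak p'_2$ back to powers of $\mathfrak p_2$ in $R$, arriving at the contradiction $\mathfrak p_1 \subseteq \mathfrak p_2^k$. The only (harmless) extra step is your separate treatment of the case $a_i = 0$, which in fact cannot occur, since $\mathfrak p_1 \subseteq J \subseteq I_i$ already makes $\varphi[I_i] = I_i/\mathfrak p_1$ a proper ideal of $R/\mathfrak p_1$, hence a strictly positive power of $\mathfrak p'_2$.
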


\begin{proof}
First observe that $J$ is not a prime ideal since $J'$ is not a prime ideal
in the Dedekind domain $R/\mathfrak p_1$ (see Proposition~\ref{prop116}).

Suppose $J$ is reducible in the sense that $J = I_1 I_2$ where $I_1, I_2$ are proper ideals. 
Observe $\mathfrak p_1 \subseteq J  = I_1 I_2 \subseteq I_i$.
Let $I'_1$ and $I'_2$ be the respective images in $R/\mathfrak p_1$.
By the homomorphism property (Proposition~\ref{prop118}), $I_1' I_2' = J' = ({\mathfrak p}'_2)^k$.
We are assuming that $R/\mathfrak p_1$ is a Dedekind domain,
so $I_1' =( {\mathfrak p}'_2)^{l_1}$ and $I_2' = ({\mathfrak p}'_2)^{l_2}$
where $l_1 + l_2 = k$ and $l_1, l_2 > 0$ (since $I'_1, I'_2$ are proper ideals).

By the injectivity of the correspondence (Proposition~\ref{prop116}), we have $I_1 = ( {\mathfrak p}_2)^{l_1}$
and $I_2 = ({\mathfrak p}_2)^{l_2}$. So~$J= ( {\mathfrak p}_2)^k$.
In particular, $\mathfrak p_1 \subseteq J = {\mathfrak p}_2^k$ contradicting the choice of~$k$.
\end{proof}

\begin{remark}
So $J$ is not $\mathfrak p_2^k$, but they map to the same ideal in $R/\mathfrak p_1$.
Thus, by Proposition~\ref{prop119} we have
$J = \mathfrak p_1 + \mathfrak p_2^k$ since $J$ contains~$\mathfrak p_1$.
\end{remark}

\begin{example}
In our original example with $R = F[X, Y]$, the irreducible
ideal produced in our construction is $J = X R + (X R + Y R)^2$
which can be written as~$X R + Y^2 R$ or $\left< X, Y^2 \right>$.
\end{example}

We summarize our construction:

\begin{theorem}\label{thm121}
Suppose $R$ is an integral domain with nonzero prime ideals $\mathfrak p_1 \subsetneq \mathfrak p_2$.
Suppose that $\mathfrak p_1$ is not contained in $\mathfrak p_2^k$ for some $k\ge 2$.
Finally, suppose $R/\mathfrak p_1$ is a Dedekind domain.
Then $R$ has an irreducible ideal that is not a prime ideal.
\end{theorem}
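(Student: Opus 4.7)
The plan is to follow the construction laid out immediately before the theorem, relying on the two-level ideal correspondence of Propositions~\ref{prop116}--\ref{prop119}. Given $\mathfrak p_1 \subsetneq \mathfrak p_2$ and some $k \ge 2$ with $\mathfrak p_1 \not\subseteq \mathfrak p_2^k$, I first pass to the smallest such $k$. Minimality then gives the crucial side benefit that $\mathfrak p_1 \subseteq \mathfrak p_2^{j}$ for every $1 \le j < k$, which will be the key leverage in the contradiction step.

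Next I would work in the Dedekind quotient $\bar R = R/\mathfrak p_1$. Writing $\bar{\mathfrak p}_2 = \mathfrak p_2/\mathfrak p_1$, I take $J' = \bar{\mathfrak p}_2^{k}$, and let $J$ be the unique ideal of $R$ containing $\mathfrak p_1$ with $J/\mathfrak p_1 = J'$ (Proposition~\ref{prop116}). That $J$ fails to be prime is immediate: under the order-preserving bijection, a prime $J$ would force $J'$ to be prime in $\bar R$, but $J' = \bar{\mathfrak p}_2^{k}$ with $k \ge 2$ is not prime in a Dedekind domain.

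For irreducibility, I would suppose $J = I_1 I_2$ with $I_1, I_2$ proper and derive a contradiction. Since $\mathfrak p_1 \subseteq J \subseteq I_i$, both $I_i$ live in the domain of the bijection of Proposition~\ref{prop116}. Mapping down and using that $J \mapsto J'$ under the homomorphism of Proposition~\ref{prop118}, we get $I_1' I_2' = \bar{\mathfrak p}_2^{k}$ in $\bar R$. Unique factorization in the Dedekind domain $\bar R$ forces $I_i' = \bar{\mathfrak p}_2^{l_i}$ with $l_1 + l_2 = k$ and $l_i \ge 1$ (as each $I_i'$ is proper).

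The main obstacle I expect is not in the algebra but in being careful enough to lift the equality $I_i' = \bar{\mathfrak p}_2^{l_i}$ back up to $I_i = \mathfrak p_2^{l_i}$ in $R$; this is where the minimality of $k$ is indispensable. Since each $l_i < k$, minimality implies $\mathfrak p_1 \subseteq \mathfrak p_2^{l_i}$, so $\mathfrak p_2^{l_i}$ lies in the domain of the injective correspondence, and then $\mathfrak p_2^{l_i}$ and $I_i$ both contain $\mathfrak p_1$ with the same image in $\bar R$, forcing $I_i = \mathfrak p_2^{l_i}$. Multiplying gives $J = \mathfrak p_2^{k}$, but then $\mathfrak p_1 \subseteq J = \mathfrak p_2^{k}$ contradicts the choice of $k$. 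This completes the proof that $J$ is irreducible but not prime.
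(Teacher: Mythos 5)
Your proposal is correct and follows the paper's own construction and argument essentially step for step: form $J$ as the unique ideal containing $\mathfrak p_1$ mapping to $\bar{\mathfrak p}_2^k$, observe it is not prime via the bijective correspondence, and derive a contradiction from any proper factorization by pushing down to $\bar R$ and using unique prime factorization there. You are a bit more explicit than the paper at the lifting step, spelling out that $l_i < k$ together with minimality of $k$ gives $\mathfrak p_1 \subseteq \mathfrak p_2^{l_i}$, which is exactly what makes the invocation of injectivity (Proposition~\ref{prop116}) legitimate; that is a welcome clarification of the paper's terse ``by injectivity'' phrasing.
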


\begin{corollary}
If $F$ is a field, then the polynomial ring $F[X, Y]$ has an irreducible ideal that is not a prime ideal.
\end{corollary}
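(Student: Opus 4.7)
The plan is to apply Theorem~\ref{thm121} with the concrete choice $\mathfrak p_1 = XR$ and $\mathfrak p_2 = XR + YR$ suggested by the earlier discussion. So I need to verify the three hypotheses: both $\mathfrak p_1$ and $\mathfrak p_2$ are nonzero prime ideals with $\mathfrak p_1 \subsetneq \mathfrak p_2$; $\mathfrak p_1 \not\subseteq \mathfrak p_2^k$ for some $k \ge 2$; and $R/\mathfrak p_1$ is a Dedekind domain.

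For the first hypothesis, I would use the standard evaluation homomorphisms. The map $F[X,Y] \to F[Y]$ sending $X \mapsto 0$ (and $Y \mapsto Y$) is surjective with kernel $XR$, so $R/\mathfrak p_1 \cong F[Y]$. Since $F[Y]$ is an integral domain, $\mathfrak p_1$ is prime; it is clearly nonzero. Similarly the evaluation $X, Y \mapsto 0$ gives $R/\mathfrak p_2 \cong F$, a field, so $\mathfrak p_2$ is a (nonzero) maximal ideal, hence prime. The inclusion $\mathfrak p_1 \subseteq \mathfrak p_2$ is clear, and it is strict because $Y \in \mathfrak p_2 \smallsetminus \mathfrak p_1$.

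For the second hypothesis, take $k = 2$. Then $\mathfrak p_2^2 = X^2 R + XY R + Y^2 R$, and every element of $\mathfrak p_2^2$ is a polynomial all of whose monomials have total degree at least $2$. Since $X$ has total degree $1$, we conclude $X \notin \mathfrak p_2^2$, and hence $\mathfrak p_1 = XR \not\subseteq \mathfrak p_2^2$. The third hypothesis is immediate from the isomorphism $R/\mathfrak p_1 \cong F[Y]$: the polynomial ring $F[Y]$ is a PID, and every PID is a Dedekind domain (proved in the main text).

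With all three hypotheses verified, Theorem~\ref{thm121} produces an irreducible ideal of $R$ that is not prime. (The remark after the theorem even identifies it explicitly as $\mathfrak p_1 + \mathfrak p_2^2 = XR + Y^2 R$.) I do not anticipate any real obstacle here; the whole construction has been set up so that this corollary is essentially an unpacking of Theorem~\ref{thm121} in the simplest two-variable polynomial ring. The only thing worth being careful about is confirming $\mathfrak p_1 \not\subseteq \mathfrak p_2^2$, which is transparent from a degree argument but is the single nontrivial content of the verification.
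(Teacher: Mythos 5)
Your proof is correct and follows the same route the paper takes: the choices $\mathfrak p_1 = XR$, $\mathfrak p_2 = XR + YR$ are exactly the running example used in the text leading up to Theorem~\ref{thm121}, and the corollary is simply that theorem applied to this example. You have usefully spelled out the verification of the hypotheses (primality via evaluation maps, $X \notin \mathfrak p_2^2$ by a total-degree argument, $R/\mathfrak p_1 \cong F[Y]$ a PID), which the paper leaves implicit.
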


\begin{exercise}
Show that the polynomial ring $F[X_1, \ldots, X_n]$ has an irreducible ideal that is not a prime ideal
where $n\ge 2$ and where $F$ is a field.
\end{exercise}

\begin{exercise} 
Suppose $R$ is a PID with ideal $I$.
Use Proposition~\ref{prop118} to show
that every ideal of~$R/I$ is principal.
\end{exercise}

\begin{exercise} 
Extend the above exercise to Dedekind domains. In other words,
show that if $I$ is a nonzero ideal of a Dedekind domain~$R$, then every ideal of~$R/I$ is principal.

Hint: Let $S$ consist of all $s \in R$ not in any prime ideal containing~$I$.
Is $S$ a multiplicative system? Why is $S^{-1} R$ a PID?
You can also take the following standard result as given:
If $S$ is a multiplicative system of~$R$ disjoint from
every maximal ideal containing~$I$, 
then we have a natural isomorphism
$$
R/I \, \cong \, S^{-1} R / S^{-1} I.
$$
\end{exercise}


\chapter*{Appendix D: A note on prime ideal factorization}

Above we proved that 
if $R$ is an integral domain such that every nonzero proper ideal is the product
of maximal ideals then $R$ is a Dedekind domain (Theorem~\ref{thm101}).
What if we have an integral domain with the weaker property that every nonzero proper ideal
is the product of prime ideals? It turns out that this is enough to give a Dedekind domain.
In other words, we can 
strengthen Theorem~\ref{thm101} by changing the hypothesis where we replace products of maximal ideals
with products of prime ideals, but it will take us some effort even using local methods.\footnote{Paulo Ribenboim attributes
this stronger version of Theorem~\ref{thm101} to Matusita. He mentions this in Section 7.1 of his
\emph{Classical Theory of Algebraic Numbers} (Springer 2001). In this section 
Ribenboim gives a short, but tricky
proof. It is a non-local proof that uses 
the quotient ring correspondence (our Proposition~\ref{prop116}).}
Fortunately some of the heavy lifting was done in a previous appendix in the proof of Theorem~\ref{thm121}.

To prove the stronger version of Theorem~\ref{thm101} we will switch between 
integral domains sharing the key property. So it will be useful to
label the property:

\begin{definition}
An integral domain is said to have the \emph{prime ideal factorization}~(PIF) property
if every nonzero proper ideal is the product of prime ideals.
\end{definition}

Our goal, then, is to show that any integral domain with the PIF property is 
a Dedekind domain. 
As we have seen, it is often easiest to work first with local integral domains, and leverage
the results to arbitrary integral domains. When we work locally in the context
of the PIF property, we get a UFD:

\begin{lemma} \label{UFD_condition}
Any local integral domain $R$ with the PIF property is a UFD.
\end{lemma}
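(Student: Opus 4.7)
The plan is to reduce the UFD claim to showing that every nonzero non-unit of $R$ is a unit times a product of prime elements; unique factorization into irreducibles then follows by a standard cancellation argument since prime elements are always irreducible and every irreducible factor must itself be prime.

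First I would take a nonzero non-unit $a \in R$ and apply the PIF property to the principal ideal $aR$ to obtain a factorization $aR = \mathfrak{p}_1 \cdots \mathfrak{p}_k$ into (necessarily nonzero) prime ideals of $R$. Since $aR$ is principal and nonzero, it is invertible, so the ``both invertible iff product invertible'' half of Proposition~\ref{easy_invert_prop} applied inductively forces each $\mathfrak{p}_i$ to be invertible. Because $R$ is local, Proposition~\ref{propInv} then upgrades ``invertible'' to ``principal,'' so we may write $\mathfrak{p}_i = \pi_i R$ for some nonzero non-unit $\pi_i \in R$.

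The key observation is that a nonzero principal ideal $\pi R$ is prime if and only if $\pi$ is a prime element; in particular each $\pi_i$ is a prime element, hence irreducible. Comparing $aR$ with $\pi_1 R \cdots \pi_k R = (\pi_1 \cdots \pi_k)R$ yields $a = u\pi_1 \cdots \pi_k$ for some unit $u$, establishing existence of a factorization of $a$ into prime elements.

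For uniqueness: given two factorizations $a = u \pi_1 \cdots \pi_k = v \rho_1 \cdots \rho_l$ where $u,v$ are units and the $\pi_i, \rho_j$ are irreducibles, I would use that each $\pi_i$ is actually prime to divide it successively into the right-hand side, conclude that $\pi_i$ is an associate of some $\rho_{j}$, cancel in the integral domain $R$, and induct. The only slightly delicate point is justifying that the $\rho_j$ appearing in an arbitrary factorization into irreducibles are themselves prime, but this follows by applying the existence half to $\rho_j$: a prime-element factorization of an irreducible must consist of a single prime factor (times a unit), so $\rho_j$ is prime. I do not expect any serious obstacle here; the substantive content of the lemma is really packaged inside Propositions~\ref{easy_invert_prop} and~\ref{propInv}, and everything else is the classical ``primes exist $+$ integral domain $\Rightarrow$ UFD'' routine.
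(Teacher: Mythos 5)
Your argument is correct and follows essentially the same route as the paper: factor $aR$ into prime ideals by PIF, use Proposition~\ref{easy_invert_prop} to get invertibility of each factor, use Proposition~\ref{propInv} to get principality (here is where locality enters), and conclude that $R$ is a UFD via the classical ``factorization into primes $\Rightarrow$ UFD'' argument. If anything you are slightly more careful than the paper: you explicitly track the unit in $a = u\pi_1\cdots\pi_k$ and you spell out why an arbitrary irreducible must itself be prime, two points the paper relegates to an ``easy exercise.''
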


\begin{proof}
Given a nonzero non-unit $a \in R$, the ideal $a R$ factors as the product
of prime ideals: $a R = \mathfrak p_1 \cdots \mathfrak p_k$.
By Proposition~\ref{easy_invert_prop}, each of these prime ideals
is invertible. By Proposition~\ref{propInv}, each of these invertible prime ideals
is actually principal.  This means that~$a = \pi_1 \cdots \pi_k$ where each $\pi_i$
is a prime element. (Define a prime element~$\pi$ to be a nonzero element
such that $\pi R$ is a prime ideal).

So  every nonzero non-unit element factors as the product of prime
elements. 
It is an easy exercise to show that (1) each prime element is irreducible,
and (2) if a nonzero non-unit element has such a prime factorization, then 
it has a unique factorization into irreducible elements.
We conclude that $R$ is a UFD.
(See Exercise~\ref{ufd_ex} for the definition of UFD, irreducible, and the meaning of unique here).
\end{proof}

Using what we know about localization and quotient rings, it is straightforward to prove the following.

\begin{lemma}\label{lemma116}
If $R$ is an integral domain with the PIF property, then so is $S^{-1} R$ for any multiplicative
system $S$ of~$R$.
\end{lemma}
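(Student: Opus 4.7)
The plan is to pull back a nonzero proper ideal of $S^{-1}R$ to an ideal of $R$, factor it there using the PIF property, and push the factorization back to $S^{-1}R$ using the homomorphism properties of localization already established in Section~\ref{ch5}.

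Let $J$ be a nonzero proper ideal of $S^{-1}R$. From earlier work (stated in the footnote before Corollary~\ref{noeth_cor}), the ideal $I \defeq J \cap R$ is an ideal of $R$ with $S^{-1}I = J$. First I would check that $I$ is both nonzero and proper: if $y \in J$ is nonzero, writing $y = r/s$ with $r \in R$ and $s \in S$ gives $r = sy \in J \cap R = I$ with $r \ne 0$ (since $s \ne 0$ in the integral domain $R$); and if $1 \in I$ then $J = S^{-1}R$, contradicting properness of $J$. So $I$ is a nonzero proper ideal of $R$.

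Next, by the PIF property of $R$, we can write $I = \mathfrak{p}_1 \cdots \mathfrak{p}_k$ for some (nonzero) prime ideals $\mathfrak{p}_i$ of $R$. Applying the localization homomorphism $\mathcal I(R) \to \mathcal I(S^{-1}R)$ from Section~\ref{ch5}, which respects products, yields
$$J = S^{-1}I = (S^{-1}\mathfrak{p}_1) \cdots (S^{-1}\mathfrak{p}_k).$$
By the prime correspondence (Proposition~\ref{prime_correspondence_prop}), each $S^{-1}\mathfrak{p}_i$ is either a prime ideal of $S^{-1}R$ (if $\mathfrak{p}_i \cap S = \emptyset$) or equals $S^{-1}R$ (if $\mathfrak{p}_i$ meets $S$, since then a unit of $S^{-1}R$ lies in it). Dropping the factors equal to the identity $S^{-1}R$ gives a product of prime ideals of $S^{-1}R$ equal to $J$. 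Since $J$ is proper, at least one factor must remain (or, if none remain, the convention that the empty product is $S^{-1}R$ again contradicts properness, so in fact at least one survives).

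I do not expect any serious obstacle; the proof is essentially a formal manipulation that uses only (i) surjectivity of $I \mapsto S^{-1}I$ for ideals, (ii) multiplicativity of this map, and (iii) the prime correspondence, all established earlier. The only minor bookkeeping is to verify that $I$ is nonzero and proper, and that the factorization on the $S^{-1}R$ side really does consist of prime ideals after discarding the trivial factors.
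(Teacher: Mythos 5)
The paper leaves this lemma unproved (it states that the result is ``straightforward'' given the material on localization), so there is no internal proof to compare against; your argument---pull back $J$ to $I = J \cap R$, factor $I$ by the PIF property, push the factorization forward through the multiplicative localization map and discard the factors $S^{-1}\mathfrak p_i$ equal to $S^{-1}R$---is exactly the natural route the paper intends, and it is correct, including the careful checks that $I$ is nonzero and proper.
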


We have a similar result for quotients $R/\mathfrak p$. 

\begin{lemma}\label{lemma125}
If $R$ is an integral domain with the PIF property, then so is $R/\mathfrak p$
for any prime ideal $\mathfrak p$ of $R$. 
\end{lemma}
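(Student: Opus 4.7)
The plan is to leverage the quotient-ring ideal correspondence from Appendix C (Propositions 116--118). Since $\mathfrak p$ is a prime ideal of $R$, the quotient $R/\mathfrak p$ is already an integral domain, so we only need to establish the factorization property. Let $\varphi\colon R \to R/\mathfrak p$ denote the canonical map.

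Given a nonzero proper ideal $\tilde J$ of $R/\mathfrak p$, by the bijection in Proposition~\ref{prop116} there is a unique ideal $J$ of $R$ containing $\mathfrak p$ with $J/\mathfrak p = \tilde J$. Since $\tilde J$ is nonzero and proper, we have $\mathfrak p \subsetneq J \subsetneq R$; in particular $J$ is a nonzero proper ideal of $R$. Apply the PIF property of $R$ to write
\[
J = \mathfrak p_1 \cdots \mathfrak p_k
\]
for some nonzero prime ideals $\mathfrak p_1, \ldots, \mathfrak p_k$ of $R$.

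The key observation is that each $\mathfrak p_i$ automatically contains $\mathfrak p$: since the product of ideals is contained in each factor, $J \subseteq \mathfrak p_i$, and $\mathfrak p \subseteq J$ gives $\mathfrak p \subseteq \mathfrak p_i$. Hence, by the prime-to-prime part of Proposition~\ref{prop116}, each $\mathfrak p_i/\mathfrak p$ is a prime ideal of $R/\mathfrak p$. Pushing through the monoid homomorphism property of Proposition~\ref{prop118},
\[
\tilde J \;=\; \varphi[J] \;=\; \varphi[\mathfrak p_1] \cdots \varphi[\mathfrak p_k] \;=\; (\mathfrak p_1/\mathfrak p) \cdots (\mathfrak p_k/\mathfrak p),
\]
exhibiting $\tilde J$ as a product of prime ideals of $R/\mathfrak p$. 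This establishes PIF for $R/\mathfrak p$.

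There is really no serious obstacle here; the only thing to watch is that one must use the \emph{first} level of the correspondence (the bijection on ideals containing $\mathfrak p$) in order to ensure that the factor primes $\mathfrak p_i$ of $J$ contain $\mathfrak p$, and then switch to the \emph{second} level (the surjection respecting products) to transport the factorization. Had we started from an arbitrary factorization of $J$ without first passing to an ideal containing $\mathfrak p$, the images $\varphi[\mathfrak p_i] = (\mathfrak p_i+\mathfrak p)/\mathfrak p$ need not be prime, which is exactly the subtle point exploited in the construction of Theorem~\ref{thm121}.
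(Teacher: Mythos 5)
Your proof is correct and follows essentially the same route as the paper's: pull $\tilde J$ back via the level-one bijection (Proposition~\ref{prop116}) to an ideal $J \supseteq \mathfrak p$, factor $J$ by PIF, note each prime factor contains $\mathfrak p$, and push the factorization through the level-two monoid homomorphism (Proposition~\ref{prop118}). The only cosmetic difference is that the paper observes $\mathfrak p \subsetneq \mathfrak p_i$ (so the images are nonzero primes), whereas you only note $\mathfrak p \subseteq \mathfrak p_i$; this costs nothing since a zero factor is incompatible with $\tilde J$ being nonzero.
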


\begin{proof}
Let $J'$ be a nonzero proper ideal of $R/\mathfrak p$.
Let $J$ be the ideal of $R$ containing~$\mathfrak p$
that maps to~$J'$ under the bijection in Proposition~\ref{prop116}.
Since $J'$ is nonzero, it follows that $\mathfrak p \subsetneq J$. Also $J$ must be 
a proper ideal. So use the PIF property to factor $J$ into prime ideals
of~$R$:
$$
J = \mathfrak p_1 \cdots \mathfrak p_k.
$$
Since $J \subseteq \mathfrak p_i$, it follows that $\mathfrak p \subsetneq \mathfrak p_i$.
This implies that $\mathfrak p_i'$ is a nonzero prime ideal of $R/\mathfrak p$
where $\mathfrak p_i'$ is the image (i.e., $\mathfrak p_i/\mathfrak p$) of $\mathfrak p_i$ under the canonical map
(see Proposition~\ref{prop116}).
By the homomorphism property (Proposition~\ref{prop118}),
$$
J' = \mathfrak p_1'
 \cdots \mathfrak p_k'.
$$
\end{proof}

\begin{lemma}\label{lemmaD_minimal}
If $R$ is a UFD then every nonzero principal prime ideal $\pi R$
is a minimal nonzero prime ideal of $R$.
\end{lemma}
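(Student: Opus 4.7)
The plan is to show that any nonzero prime ideal $\mathfrak{q}$ contained in $\pi R$ must in fact equal $\pi R$; the minimality statement then follows. So I would fix such a $\mathfrak{q}$ and pick some nonzero element $a \in \mathfrak{q}$. Since $\mathfrak{q} \subseteq \pi R$ and $\pi R$ is a proper ideal, $a$ is a nonzero non-unit, so the UFD hypothesis lets me factor $a$ as a unit times a product of irreducibles: $a = u\,\pi_1 \cdots \pi_n$.

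Next, because $\mathfrak{q}$ is prime and contains $a$, it must contain one of the irreducible factors, say $\pi_i$. This is the step that translates the abstract containment of ideals into a concrete divisibility statement about elements. From $\pi_i \in \mathfrak{q} \subseteq \pi R$ I get $\pi \mid \pi_i$, i.e., $\pi_i = \pi r$ for some $r \in R$.

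Now I would invoke irreducibility of $\pi_i$ to conclude that $\pi$ and $\pi_i$ are associates: in the factorization $\pi_i = \pi r$, one of $\pi$ or $r$ must be a unit, and $\pi$ cannot be a unit since $\pi R$ is a proper (prime) ideal, so $r$ is a unit. Hence $\pi_i R = \pi R$, and the chain $\pi R = \pi_i R \subseteq \mathfrak{q} \subseteq \pi R$ collapses to $\mathfrak{q} = \pi R$.

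There is no serious obstacle here; the main subtlety is just making sure the UFD hypothesis is used in the right place. In particular, one has to know that in a UFD the irreducible factors of an element lying in a prime ideal produce at least one factor in that ideal (which uses primeness of $\mathfrak{q}$, not any special property of UFDs), and that an irreducible element whose multiple $\pi r$ equals it forces $r$ to be a unit (which is the definition of irreducibility). The UFD hypothesis really only enters to guarantee the existence of the irreducible factorization of $a$ in the first place.
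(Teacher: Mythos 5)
Your proof is correct and follows essentially the same route as the paper's: pick a nonzero element $a$ in a prime ideal $\mathfrak{q} \subseteq \pi R$, factor it into irreducibles, use primeness of $\mathfrak{q}$ to capture one irreducible factor $\pi_i$, and then use irreducibility of $\pi_i$ together with $\pi_i \in \pi R$ to force $\pi_i R = \pi R$, collapsing the chain. The only difference is that you spell out the associate step a bit more explicitly; the substance is identical.
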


\begin{proof}
Let $\mathfrak p$ be any nonzero prime ideal contained in the prime ideal~$\pi R$, and let~$a\in \mathfrak p$
be a nonzero element. 
Write~$a = \pi_1 \cdots \pi_k$ where  $\pi_i$ are irreducible.
Since~$a \in \mathfrak p$, there is an $i$ such that~$\pi_i \in \mathfrak p$.
So
$$
\pi_i R \subseteq \mathfrak p \subseteq \pi R.
$$
Since $\pi_i$ is irreducible, $\pi_i$ and $\pi$ are associates, so $\pi_i R = \mathfrak p = \pi R$.
\end{proof}

Our intermediate goal is to show that a local integral domain
with the PIF property is a DVR. The next
lemma shows that we just need to show the maximal ideal is principal.

\begin{lemma}\label{lemma126}
Let $R$ be a local integral domain with the PIF property.
If the maximal ideal~of~$R$ is nonzero and principal then $R$ is a DVR.
\end{lemma}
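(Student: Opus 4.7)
The plan is to combine the recent lemmas so as to show $R$ has a unique nonzero prime ideal, and then appeal to the characterizations of DVRs established in Chapter~\ref{ch2}.

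First, by Lemma~\ref{UFD_condition}, the assumption that $R$ is a local integral domain with the PIF property forces $R$ to be a UFD. Write the nonzero maximal ideal as $\mathfrak m = \pi R$. Since $\mathfrak m$ is a maximal ideal, it is in particular a nonzero prime ideal, and since it is principal, $\pi$ is a prime element of $R$. So $\pi R$ is a nonzero principal prime ideal of the UFD $R$.

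Next, invoke Lemma~\ref{lemmaD_minimal} to conclude that $\pi R$ is a minimal nonzero prime ideal. But $\pi R$ is also the \emph{unique} maximal ideal since $R$ is local, so every nonzero prime $\mathfrak q$ satisfies $\mathfrak q \subseteq \pi R$; by the minimality just obtained, $\mathfrak q = \pi R$. Hence $\pi R$ is the only nonzero prime ideal of $R$.

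Now bring in the PIF property: every nonzero proper ideal of $R$ is a product of nonzero prime ideals, but the only such prime ideal available is $\pi R$. Therefore every nonzero proper ideal of $R$ equals $(\pi R)^k = \pi^k R$ for some $k \ge 1$. In particular every ideal of $R$ is principal, so $R$ is a PID (equivalently, $R$ is Noetherian). Finally, apply Theorem~\ref{thmB} (or equivalently Corollary~\ref{cor11} or Theorem~\ref{thmC}) to conclude that the local Noetherian domain $R$ with nonzero principal maximal ideal $\pi R$ is a DVR. There is no real obstacle here; the only step requiring care is the combination of Lemma~\ref{lemmaD_minimal} with the local hypothesis to pin down $\pi R$ as the unique nonzero prime, after which the PIF property immediately delivers a principal-ideal description of every ideal.
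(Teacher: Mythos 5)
Your proof is correct and takes essentially the same route as the paper: invoke Lemma~\ref{UFD_condition} to get a UFD, use Lemma~\ref{lemmaD_minimal} plus locality to conclude $\pi R$ is the unique nonzero prime, then use PIF to see every nonzero proper ideal is a power of $\pi R$, hence $R$ is a PID and a DVR. You spell out the minimality-plus-locality step a bit more explicitly than the paper, which is a helpful clarification but not a different argument.
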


\begin{proof}
Let $\pi R$ be the maximal ideal of $R$.
By Lemma~\ref{UFD_condition},~$R$ is a UFD.
By the above lemma (Lemma~\ref{lemmaD_minimal}),  $\pi R$ 
is the unique nonzero prime ideal of~$R$.
Since $R$ has the PIF property, every nonzero proper ideal is a power of $\pi R$. 
This means that every ideal of $R$ is principal. By Corollary~\ref{cor11}, $R$ is a DVR.
\end{proof}

\begin{proposition}\label{tricky_thm}
Let $R$ be a local integral domain with the PIF property that is not a field.
Then~$R$ is a DVR.
\end{proposition}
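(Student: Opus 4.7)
My plan is to invoke Lemma~\ref{lemma126}, for which it suffices to show that the nonzero maximal ideal~$\mathfrak m$ of~$R$ is principal. Since $R$ is a local PIF domain, Lemma~\ref{UFD_condition} gives that $R$ is a UFD, so $\mathfrak m$ contains a prime element $\pi$, and by Lemma~\ref{lemmaD_minimal} the ideal $\pi R$ is a minimal nonzero prime of~$R$. I will show $\mathfrak m = \pi R$ by contradiction, adapting the classical ``invertible primes are maximal'' trick to the local quotient $\bar R \defeq R/\pi R$.

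Suppose $\pi R \subsetneq \mathfrak m$ and pick $a \in \mathfrak m \smallsetminus \pi R$; primality of $\pi$ in $R$ also forces $a^2 \notin \pi R$. Apply PIF to factor the proper ideals
\[
\pi R + a R = \mathfrak p_1 \cdots \mathfrak p_n, \qquad \pi R + a^2 R = \mathfrak q_1 \cdots \mathfrak q_m,
\]
into nonzero primes. Each $\mathfrak p_i, \mathfrak q_j$ strictly contains $\pi R$, for equality with $\pi R$ would place $a$ or $a^2$ inside~$\pi R$. Now pass to $\bar R$, which is a local PIF integral domain by Lemma~\ref{lemma125} and hence a UFD by Lemma~\ref{UFD_condition}. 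In $\bar R$ we have $\prod_i \bar{\mathfrak p}_i = \bar a \bar R$ and $\prod_j \bar{\mathfrak q}_j = \bar{a}^{\,2}\bar R$, so in particular $\prod_i \bar{\mathfrak p}_i^{\,2} = \prod_j \bar{\mathfrak q}_j$. Each factor in these products is invertible (its cofactor is principal) and therefore principal by Proposition~\ref{propInv}.

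The key step is to match these two factorizations. Writing $\bar{\mathfrak p}_i = \bar{\pi_i}\bar R$ and $\bar{\mathfrak q}_j = \bar{\rho_j}\bar R$, the identity above becomes an equality of associates in the UFD $\bar R$, so unique factorization of elements forces the multisets of prime generators to agree. Hence $\{\bar{\mathfrak q}_j\}_j = \{\bar{\mathfrak p}_i, \bar{\mathfrak p}_i\}_i$ as multisets of ideals, and via the bijection of Proposition~\ref{prop116} between ideals of $R$ containing $\pi R$ and ideals of $\bar R$, this lifts to $\{\mathfrak q_j\}_j = \{\mathfrak p_i, \mathfrak p_i\}_i$ in $R$, yielding the identity $\pi R + a^2 R = (\pi R + aR)^2$. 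This matching of prime factorizations across the quotient is the main obstacle: PIF alone only supplies the two factorizations, and extracting the identity requires combining invertibility in the local quotient (Proposition~\ref{propInv}), unique factorization in the UFD $\bar R$, and the ideal correspondence of Proposition~\ref{prop116}.

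The endgame is then routine. Expanding $(\pi R + aR)^2 = \pi^2 R + \pi a R + a^2 R$ and using $\pi \in \pi R + a^2 R$ yields $\pi = \pi^2 r + \pi a s + a^2 t$ for some $r,s,t \in R$, i.e.\ $\pi(1 - \pi r - a s) = a^2 t$. Since $\pi r + a s \in \mathfrak m$ and $R$ is local, $1 - \pi r - a s$ is a unit, so $\pi \in a^2 R$. Irreducibility of $\pi$ in the UFD $R$, together with the fact that $a^2$ is a non-unit, then forces $\pi R = a^2 R$, contradicting $a^2 \notin \pi R$. Therefore $\mathfrak m = \pi R$ is principal and nonzero, and Lemma~\ref{lemma126} gives that $R$ is a DVR.
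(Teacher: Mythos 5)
Your proof is correct, and it follows a genuinely different route from the paper's. Let me compare the two.

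The paper attacks the assumption that $\mathfrak m$ is not principal by constructing a chain $\pi_1 R = \mathfrak p_1 \subsetneq \mathfrak p_2 = \pi_1 R + \pi_2 R \subseteq \mathfrak m$, localizing at $\mathfrak p_2$ to get $R'$, verifying that $R'/\mathfrak p_1'$ is a DVR and that $\mathfrak p_1' \not\subseteq (\mathfrak p_2')^2$, and then invoking the construction of Theorem~\ref{thm121} (Appendix~C) to produce in $R'$ an irreducible ideal that is not prime, contradicting PIF for $R'$. Its machinery is heavier: it relies on localization and on the comparatively elaborate Theorem~\ref{thm121}.

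Your argument stays in $R$ and the single quotient $\bar R = R/\pi R$, and reaches the contradiction by a direct computation. The core move — deriving $\pi R + a^2 R = (\pi R + aR)^2$ and then extracting $\pi \in a^2 R$ to contradict irreducibility of $\pi$ — is precisely the classical ``invertible prime ideal is maximal'' identity familiar from Kaplansky/Cohen-style proofs that rings with prime-ideal factorization are Dedekind. What makes it fit the present framework is the step you flagged as the main obstacle: you do not \emph{assume} $\pi R + a^2 R = (\pi R + aR)^2$; you produce it by taking PIF factorizations of both ideals, pushing them into $\bar R$ via the homomorphism of Proposition~\ref{prop118}, noting that the images are principal-prime factorizations of $\bar a \bar R$ and $\bar a^2 \bar R$ (each factor being invertible by Proposition~\ref{easy_invert_prop} and hence principal by Proposition~\ref{propInv}), matching them using unique factorization into prime elements in the UFD $\bar R$, and then lifting the match through the bijection of Proposition~\ref{prop116}. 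This avoids localization and Theorem~\ref{thm121} entirely, using nothing beyond Lemmas~\ref{UFD_condition}, \ref{lemma125}, \ref{lemma126} and the quotient-ring ideal correspondence of Appendix~C.

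A few points of bookkeeping that your write-up handles correctly but are worth highlighting: each $\mathfrak p_i$ and $\mathfrak q_j$ strictly contains $\pi R$ (else $a$ or $a^2$ would lie in $\pi R$), which is what guarantees the images in $\bar R$ are nonzero primes and makes the Proposition~\ref{prop116} bijection applicable; the unit argument $1 - \pi r - as \in R^\times$ is exactly where locality of $R$ enters; and the final contradiction from $\pi \in a^2 R$ uses only that $\pi$ is irreducible and $a$ is a non-unit (so $a^2$ cannot be a unit, forcing $a^2$ to be an associate of $\pi$, whence $a^2 \in \pi R$). The appeal to Lemma~\ref{lemmaD_minimal} is not actually needed for the logic — $\pi R \subseteq \mathfrak m$ is automatic from $\pi \in \mathfrak m$ — but it does no harm. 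Overall your proof is cleaner and more elementary than the paper's, trading Appendix~C's structural contradiction for a concrete ideal identity.
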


\begin{proof}
Let $\mathfrak m$ be the maximal ideal of~$R$. If $\mathfrak m$
is principal, we are done by the previous lemma. So we will
assume $\mathfrak m$ is not principal and derive a contradiction.

By Lemma~\ref{UFD_condition},  $R$ is a UFD, and in a UFD every irreducible element is 
a prime element.
Let $a \in \mathfrak m$ be nonzero and let $\pi_1$ be an irreducible factor of~$a$.
So $\pi_1 R$ is a nonzero prime idea, call it $\mathfrak p_1$. Observe that 
$\mathfrak p_1\subsetneq \mathfrak m$ since $\mathfrak m$ is not principal.

By Lemma~\ref{lemma125}, the quotient $R/\mathfrak p_1$ has the PIF property, 
and so is a UFD by Lemma~\ref{UFD_condition}.
The image $\tilde{ \mathfrak m}$ of~$\mathfrak m$ in $R/\mathfrak p_1$ is a nonzero prime ideal 
since~$\mathfrak p_1\subsetneq \mathfrak m$.
Let $[b] \in \tilde{ \mathfrak m}$ be nonzero where $b\in R$. 
Let $[\pi_2]$ be an irreducible factor of $[b]$, where~$\pi_2 \in R$.
So $[\pi_2] (R/\mathfrak p_1)$ is a nonzero prime ideal of~$R/\mathfrak p_1$.

The ideal $\pi_1 R + \pi_2 R$ maps to $[\pi_2] (R/\mathfrak p_1)$
under the mapping of Proposition~\ref{prop116}. So $\pi_1 R + \pi_2 R$ is a prime
ideal of $R$, call it $\mathfrak p_2$.

Let $R'$ be $R_{\mathfrak p_2}$ and let $\mathfrak p'_2$ be its maximal ideal.
So $\mathfrak p'_2 = \pi_1 R' + \pi_2 R'$.
Let $\mathfrak p'_1$ be~$\pi_1 R'$.
By the correspondence between prime ideals of $R$ contained in $\mathfrak p_2$ and prime ideals of~$R'$,
the ideal $\mathfrak p'_1$ is a prime ideal and
$\mathfrak p_1' \subsetneq \mathfrak p'_2$.
Also note that $R'$ has the PIF property by Lemma~\ref{lemma116}.

Our goal is to apply Theorem~\ref{thm121} to the ring~$R'$ to help derive a contradiction.
So we wish to show that $R'/\mathfrak p_1'$ is a Dedekind domain.
By Lemma~\ref{lemma125} this ring has the PIF property.
The image of $\mathfrak p'_2$ is its unique maximal ideal.
Observe $\mathfrak p'_2$ is principal with generator $[\pi_2]$
and is nonzero.
So, by Lemma~\ref{lemma126}, the ring $R'/\mathfrak p_1'$ is a DVR,
and so is a Dedekind domain.

To use Theorem~\ref{thm121}, we
also wish to show that $\mathfrak p_1'$ is not contained in $(\mathfrak p'_2)^2$.
Suppose otherwise. Then 
$$
\pi_1 R' \subseteq  (\pi_1 R' + \pi_2 R')^2
 = \pi_1^2 R' + \pi_1 \pi_2 R' + \pi_2^2 R',
$$
and so $\pi_1 = a \pi_1^2 + b \pi_1 \pi_2  + c \pi_2^2$ for some $a, b, c\in R'$.
This equation yields the equation~$[0] = [c] [\pi_2]^2$ in $R'/\mathfrak p_1'$.
But $[\pi_2]$ is nonzero, so $[c]$ is zero. Thus $c = d \pi_1$ for some $d \in R'$.
Dividing both sides of the equation by~$\pi_1$ 
yields~$1 = a \pi_1 + b  \pi_2  + d \pi_2^2$.
This shows $1 \in \mathfrak p'_2$, a contradiction.
Thus we have established that 
$\mathfrak p_1'$ is not contained in $(\mathfrak p'_2)^2$.

Now we can use Theorem~\ref{thm121}
to conclude that $R'$ has an irreducible ideal that is not a prime ideal.
This contradicts the fact that $R'$ has the PIF property.
\end{proof}

\begin{lemma}
If $R$ is an integral domain with the PIF property, then every nonzero prime of $R$ is maximal.
\end{lemma}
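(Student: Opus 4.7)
The plan is to leverage Proposition~\ref{tricky_thm} (local PIF domain that is not a field is a DVR) together with Lemma~\ref{lemma116} (PIF is preserved under localization) and the standard correspondence of prime ideals under localization (Proposition~\ref{prime_correspondence_prop}). The strategy is to localize at a maximal ideal sitting above the given nonzero prime and then exploit the fact that a DVR has exactly one nonzero prime ideal.

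Let $\mathfrak{p}$ be a nonzero prime ideal of $R$. Since every proper ideal is contained in a maximal ideal, there is a maximal ideal $\mathfrak{m}$ of $R$ with $\mathfrak{p} \subseteq \mathfrak{m}$. My plan is to show $\mathfrak{p} = \mathfrak{m}$. First, localize at $\mathfrak{m}$: by Lemma~\ref{lemma116}, $R_{\mathfrak{m}}$ still has the PIF property. Since $\mathfrak{m}$ is nonzero, $R_{\mathfrak{m}}$ is not a field, so Proposition~\ref{tricky_thm} applies and $R_{\mathfrak{m}}$ is a DVR. In particular $R_{\mathfrak{m}}$ has a unique nonzero prime ideal, namely $\mathfrak{m} R_{\mathfrak{m}}$.

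Next, I invoke the prime correspondence (Proposition~\ref{prime_correspondence_prop}): the prime ideals of $R_{\mathfrak{m}}$ correspond bijectively and in an inclusion preserving manner to the prime ideals of $R$ contained in $\mathfrak{m}$. Under this correspondence, $\mathfrak{p}$ and $\mathfrak{m}$ map to distinct nonzero prime ideals $\mathfrak{p} R_{\mathfrak{m}}$ and $\mathfrak{m} R_{\mathfrak{m}}$ whenever $\mathfrak{p} \ne \mathfrak{m}$. But $R_{\mathfrak{m}}$ admits only one nonzero prime ideal, forcing $\mathfrak{p} = \mathfrak{m}$ and hence $\mathfrak{p}$ is maximal.

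There is no serious obstacle: all the heavy lifting has been done in Proposition~\ref{tricky_thm}, and the remaining step is simply a bookkeeping application of the localization correspondence. The only thing to double-check is that $\mathfrak{p} R_{\mathfrak{m}}$ is actually nonzero, which follows from $\mathfrak{p} \subseteq \mathfrak{m}$ (so $\mathfrak{p}$ is disjoint from $R \smallsetminus \mathfrak{m}$) together with $\mathfrak{p} \ne 0$, via the prime correspondence.
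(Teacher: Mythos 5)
Your proof is correct and is essentially the same argument the paper gives: localize at a maximal ideal $\mathfrak m \supseteq \mathfrak p$, use Lemma~\ref{lemma116} and Proposition~\ref{tricky_thm} to see $R_{\mathfrak m}$ is a DVR, and then use the uniqueness of the nonzero prime in a DVR together with the prime correspondence to conclude $\mathfrak p = \mathfrak m$. You are slightly more explicit than the paper in verifying that $R_{\mathfrak m}$ is not a field and that $\mathfrak p R_{\mathfrak m}$ is nonzero, but this is only a matter of spelling out details the paper leaves implicit.
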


\begin{proof}
Let $\mathfrak p$ be a nonzero prime ideal, and let $\mathfrak m$ be a maximal ideal containing~$\mathfrak p$.
By Lemma~\ref{lemma116}, the ring $R_{\mathfrak m}$ also has the PIF property.
By Proposition~\ref{tricky_thm}, the ring $R_{\mathfrak m}$ is a DVR.
Now $\mathfrak p R_{\mathfrak m}$ is a nonzero prime ideal, hence
$
\mathfrak p R_{\mathfrak m} = \mathfrak m R_{\mathfrak m}
$
since DVRs have a unique nonzero prime ideal.
This implies $\mathfrak p = \mathfrak m$.
\end{proof}

We are now ready to state and prove the stronger version of~Theorem~\ref{thm101}.

\begin{theorem}
Suppose that $R$ is an integral domain  such that every nonzero proper ideal factors 
as the product of prime ideals. Then $R$ is a Dedekind domain.
\end{theorem}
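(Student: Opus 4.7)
The plan is to reduce the theorem immediately to Theorem~\ref{thm101} by upgrading ``product of prime ideals'' to ``product of maximal ideals''. The whole point of the preceding lemma (every nonzero prime of an integral domain with the PIF property is maximal) is to bridge exactly this gap, so the argument should be a one-line combination.

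Concretely, I would let $I$ be any nonzero proper ideal of $R$. By the PIF hypothesis, $I = \mathfrak{p}_1 \cdots \mathfrak{p}_k$ for some nonzero prime ideals $\mathfrak{p}_i$ of $R$ (nonzero because $I$ itself is nonzero and $R$ is a domain, so none of the factors can be the zero ideal). By the previous lemma, each $\mathfrak{p}_i$ is in fact a maximal ideal. Hence every nonzero proper ideal of $R$ is a product of maximal ideals, and Theorem~\ref{thm101} now applies to conclude that $R$ is a Dedekind domain.

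The ``hard part'' is entirely upstream: it lies in Proposition~\ref{tricky_thm} (every local PIF domain that is not a field is a DVR), which in turn depends on the rather delicate construction from Theorem~\ref{thm121} used to rule out the non-principal maximal ideal case. Once that local result is in hand, localizing at a maximal ideal containing a given nonzero prime $\mathfrak{p}$, transporting $\mathfrak{p}$ through the prime correspondence, and using that a DVR has a unique nonzero prime ideal gives the ``primes are maximal'' lemma for free, and the present theorem then follows at no further cost.
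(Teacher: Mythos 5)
Your proposal is correct and is essentially identical to the paper's proof: both combine the preceding lemma (every nonzero prime ideal of a PIF domain is maximal) with Theorem~\ref{thm101}. Your remark that the nonzero factors are automatically nonzero prime ideals, and your observation that the real work lies upstream in Proposition~\ref{tricky_thm} and Theorem~\ref{thm121}, are both accurate.
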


\begin{proof}
By the previous lemma, we see that every nonzero proper ideal factors 
as the product of maximal ideals. Then $R$ is a Dedekind domain by Theorem~\ref{thm101}.
\end{proof}


\chapter*{Appendix E: Almost Dedekind domains and Pr\"ufer domains}

Dedekind domains are both Noetherian and integrally closed.
In Appendix B we considered a generalization of Dedekind domains
that are not necessarily integrally closed, but are, however, Noetherian.
In this appendix we consider generalizations of Dedekind domains
that are not necessarily Noetherian, but are, however, integrally closed.

We begin with almost Dedekind domains which we introduced in Section~\ref{ch7}.

\begin{definition}
An \emph{almost Dedekind domain} is an integral domain $R$ with the property that 
$R_{\mathfrak m}$ is a DVR for all nonzero maximal ideals of~$R$.\footnote{I was
tempted to call these \emph{locally Dedekind domains}, but the term \emph{almost
Dedekind domain} is now standard.}
\end{definition}

\begin{example}
We won't construct any non-Noetherian almost Dedekind domains here, but 
they are known to exist. We cite two known examples.

Let $K$ be the algebraic number field generated by $p$th roots of unity for each prime~$p$.
Then the ring of integers in~$K$ is a non-Noetherian almost Dedekind domain.
This was the first non-Noetherian integral domain $R$ identified as having
the property of that $R_{\mathfrak m}$ is a DVR for all nonzero maximal ideals of~$R$
(N. Nakano~1953).

Let $K$ be be the algebraic number field generated by the square roots of $p$ for every prime $p$.
Then the ring of integers in~$K$ is a non-Noetherian almost Dedekind domain.
(C.~Hashbarger 2010)
\end{example}

\begin{remark}
The term
\emph{almost Dedekind domain} was coined by Robert~Gilmer (1964)
who pioneered the study of such rings. It is interesting that non-Noetherian
almost Dedekind domains exists since they are locally Noetherian but not themselves
Noetherian.
\end{remark}

\begin{proposition}
An almost Dedekind domain is integrally closed and has the property that every non-zero prime
ideal is maximal.
Thus an almost Dedekind domain is a Dedekind domain if and only if it is Noetherian.
\end{proposition}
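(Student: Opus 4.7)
The plan is to reduce both claims to local statements about the localizations $R_{\mathfrak m}$ at nonzero maximal ideals, and then invoke the local-to-global results of Section~\ref{ch7}. First I would dispense with the trivial case where $R$ is a field: in that case $R$ has no nonzero prime ideals, so the conditions are satisfied vacuously, and $R$ is itself a Dedekind domain. So from here on I may assume $R$ is not a field, hence every maximal ideal of $R$ is nonzero.

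Next I would exploit the hypothesis that $R_{\mathfrak m}$ is a DVR for every nonzero maximal ideal $\mathfrak m$. Each DVR is integrally closed (Proposition~\ref{every_dvr_is_integrally_closed_prop}) and has a unique nonzero prime ideal, which is its maximal ideal; in particular every nonzero prime in $R_{\mathfrak m}$ is maximal. Now apply Proposition~\ref{prop56}: since $R_{\mathfrak m}$ is integrally closed in the common fraction field $K$ for every maximal $\mathfrak m$, the ring $R$ itself is integrally closed in $K$. Similarly, Proposition~\ref{prop57} (using that each $R_{\mathfrak m}$ has the property that every nonzero prime is maximal) gives that every nonzero prime ideal of $R$ is maximal. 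This establishes the first sentence.

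For the second sentence, note that Definition~\ref{dedekind_domain_def} of Dedekind domain is exactly the conjunction of: (i) Noetherian, (ii) integrally closed in the fraction field, (iii) every nonzero prime ideal is maximal. We have just shown that an almost Dedekind domain automatically satisfies (ii) and (iii). So if it is additionally Noetherian it is a Dedekind domain. Conversely, if it is a Dedekind domain then it is Noetherian by definition (and in fact a Dedekind domain is automatically almost Dedekind by Theorem~\ref{thm58}, though we do not need that direction here).

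There is essentially no obstacle beyond organizing the bookkeeping: the substantive content is already packaged in Propositions~\ref{prop56} and~\ref{prop57}, and in the fact that DVRs are integrally closed one-dimensional local rings. The only mild care required is handling the field case separately so that the phrase ``all nonzero maximal ideals'' in the definition of almost Dedekind domain does not create a gap when applying Propositions~\ref{prop56} and~\ref{prop57}, which are stated in terms of all maximal ideals.
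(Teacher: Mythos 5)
Your proof is correct and takes essentially the same approach as the paper: reduce to the local statements that DVRs are integrally closed and one-dimensional, then cite Propositions~\ref{prop56} and~\ref{prop57}, and finish by matching with Definition~\ref{dedekind_domain_def}. The extra care you take with the field case is harmless but not strictly needed, since when $R$ is a field its only maximal ideal is zero, $R_{(0)}=K$ is integrally closed with no nonzero primes, and both propositions still apply vacuously.
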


\begin{proof}
Since DVRs are integrally closed and has the property that every non-zero prime
ideal is maximal, it follows that almost Dedekind domains must also have these properties.
See Proposition~\ref{prop56} and Proposition~\ref{prop57}. 
\end{proof}

We know that invertible maximal ideals are finitely generated (Section~\ref{ch3}). In Exercise~\ref{fg_invert_ex} we 
learned that the converse is true for almost Dedekind domains. So we have the following

\begin{proposition}\label{prop162}
A fractional ideal of an 
almost Dedekind domain is invertible if and only if it is finitely generated.
\end{proposition}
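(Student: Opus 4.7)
The plan is to prove the two directions separately, with the forward direction being essentially immediate from earlier results and the reverse direction being a straightforward application of local-to-global machinery.

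For the forward direction, I would simply invoke Proposition~\ref{prop_inv_fg}, which states that any invertible $R$-submodule of $K$ is finitely generated. This holds in any integral domain, so it certainly applies to almost Dedekind domains, giving that invertibility implies finite generation with no further work.

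For the reverse direction, suppose $I$ is a finitely generated fractional ideal of~$R$. The key tool is the theorem proved in Section~\ref{ch7} that says a finitely generated fractional ideal $I$ is invertible in~$R$ if and only if $I R_{\mathfrak m}$ is invertible in~$R_{\mathfrak m}$ for every maximal ideal $\mathfrak m$ of~$R$. So it suffices to verify this local invertibility at each maximal ideal. If $\mathfrak m = \{0\}$ then $R$ is a field and the result is trivial, so we assume $\mathfrak m$ is nonzero. Then by definition of almost Dedekind domain, $R_{\mathfrak m}$ is a DVR, and $I R_{\mathfrak m}$ is a nonzero fractional ideal of~$R_{\mathfrak m}$ (nonzero since $I$ is nonzero). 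By Proposition~\ref{prop24}, every nonzero fractional ideal of a DVR is of the form $\pi^k R_{\mathfrak m}$ and hence invertible. This gives the local invertibility at every maximal ideal, and the local-to-global theorem concludes the argument.

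There is no real obstacle here; the proposition is essentially a packaging of Exercise~\ref{fg_invert_ex} combined with Proposition~\ref{prop_inv_fg}. The only point that deserves a moment of care is making sure the local-to-global criterion is stated for finitely generated fractional ideals (which it is), since without finite generation the equivalence between invertibility and local invertibility can fail. Once this hypothesis is in hand, the proof reduces to the trivial observation that DVRs have all nonzero fractional ideals invertible.
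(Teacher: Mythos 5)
Your proof is correct and matches the paper's intended argument: the paper justifies this proposition by pointing to Proposition~\ref{prop_inv_fg} for the forward direction and Exercise~\ref{fg_invert_ex} for the converse, and your reverse direction is precisely the expected solution to that exercise (apply the local-to-global invertibility criterion for finitely generated fractional ideals, then note that nonzero fractional ideals of each $R_{\mathfrak m}$ are invertible since $R_{\mathfrak m}$ is a DVR). No gaps; the brief case split on $\mathfrak m = \{0\}$ is a reasonable extra precaution.
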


Recall that in Section~\ref{ch12} we defined a 
cancellation domain to be an integral domain with the property that if~$I_1 J = I_2 J$,
where $I_1, I_2$ and $J$ are nonzero ideals, then $I_1 = I_2$ 
(Definition~\ref{cancellation_domain_def}).

\begin{lemma}
Every almost Dedekind domain is a cancellation domain.
\end{lemma}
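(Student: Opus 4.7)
The plan is to use the local-to-global machinery of Section~\ref{ch7} to reduce cancellation in $R$ to cancellation in each DVR $R_{\mathfrak m}$, where it is immediate. Suppose $I_1 J = I_2 J$ with $I_1, I_2, J$ nonzero ideals of $R$. First I would dispatch the trivial case where $R$ is a field (cancellation holds vacuously since every nonzero ideal is $R$). Otherwise, every maximal ideal $\mathfrak m$ of $R$ is nonzero, so $R_{\mathfrak m}$ is a DVR by the definition of almost Dedekind domain.

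Next, I would localize the identity $I_1 J = I_2 J$ at an arbitrary maximal ideal $\mathfrak m$. The localization map is a monoid homomorphism on (fractional) ideals, so
$$
(I_1 R_{\mathfrak m})(J R_{\mathfrak m}) = (I_2 R_{\mathfrak m})(J R_{\mathfrak m}).
$$
Since $J$ contains a nonzero element of the integral domain $R$, the localization $J R_{\mathfrak m}$ is a nonzero fractional ideal of the DVR $R_{\mathfrak m}$. By Proposition~\ref{prop24} every nonzero fractional ideal of a DVR is invertible, so $J R_{\mathfrak m}$ has an inverse in $R_{\mathfrak m}$; multiplying by that inverse yields $I_1 R_{\mathfrak m} = I_2 R_{\mathfrak m}$.

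Since this holds for every maximal ideal $\mathfrak m$ of $R$, the equality test Corollary~\ref{equality_test_cor} gives $I_1 = I_2$, completing the proof. There is no real obstacle here; the only points that require a sentence of justification are the nonvanishing of $J R_{\mathfrak m}$ (so that it can be canceled) and the observation that invertibility in the DVR immediately supplies the cancellation needed to descend back to $R$.
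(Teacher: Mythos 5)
Your proof is correct and follows essentially the same route as the paper: localize the identity $I_1 J = I_2 J$ at each maximal ideal, cancel in the DVR $R_{\mathfrak m}$ where every nonzero fractional ideal is invertible, then descend via the local equality test (Corollary~\ref{equality_test_cor}). The only difference is that you explicitly dispatch the field case and spell out why $J R_{\mathfrak m} \ne 0$, which the paper leaves implicit.
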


\begin{proof}
Suppose $I_1 J = I_2 J$ where
$I_1, I_2, J$ are nonzero ideals of an almost Dedekind domain $R$.
For each maximal ideal $\mathfrak m$ we have 
$$
(I_1 R_{\mathfrak m}) (J R_{\mathfrak m}) = (I_2 R_{\mathfrak m}) (J R_{\mathfrak m}).
$$
Now every nonzero ideal in a DVR has an inverse. Thus
$(I_1 R_{\mathfrak m}) = (I_2 R_{\mathfrak m})$.
This holds for each maximal ideal $\mathfrak m$, so~$I_1 = I_2$ (Corollary~\ref{equality_test_cor}).
\end{proof}

The surprise here is that the converse of the above lemma holds:

\begin{lemma}
If ${\mathfrak m}$ is a nonzero maximal ideal of a cancellation domain~$R$
then~$R_{\mathfrak m}$ is a DVR.
\end{lemma}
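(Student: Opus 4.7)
By Exercise~\ref{cancel_ex3}, $R_{\mathfrak m}$ is a valuation ring whose maximal ideal is principal; choose $\pi\in\mathfrak m\setminus\mathfrak m^2$ (available by Exercise~\ref{cancel_ex2}) so that $\pi R_{\mathfrak m}=\mathfrak m R_{\mathfrak m}$. Since ideals in the valuation ring $R_{\mathfrak m}$ are totally ordered, a short check using only that $R_{\mathfrak m}$ is an integral domain with $v(\pi)$ the minimum positive value shows that every nonzero proper ideal is either $\pi^k R_{\mathfrak m}$ for some $k\ge 1$ or is contained in $J:=\bigcap_{k\ge 0}\pi^k R_{\mathfrak m}$. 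It therefore suffices to prove $J=\{0\}$: once this holds, every nonzero proper ideal of $R_{\mathfrak m}$ is a power of $\pi R_{\mathfrak m}$, so $R_{\mathfrak m}$ is a local PID, hence a DVR by Corollary~\ref{cor11}.

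To prove $J=\{0\}$ I would transfer to $R$ and use cancellation there. Put $L:=J\cap R$; by Exercise~\ref{cancel_ex2a} this equals $\bigcap_k \mathfrak m^k$, and $L R_{\mathfrak m}=J$, so it is enough to show $L=\{0\}$. Dividing by $\pi$ in the integral domain $R_{\mathfrak m}$ gives $\pi J=J$, which back in $R$ reads: for each $a\in L$ there exist $t\in R\setminus\mathfrak m$ and $b\in L$ with $at=\pi b$. If one could promote this ``local'' divisibility to the genuine equality $\pi L=L$ inside $R$, then Lemma~\ref{cancel_lemma2} applied to $(\pi R)L\subseteq R\cdot L$ (with $L\ne\{0\}$) would force $\pi R=R$, contradicting $\pi\in\mathfrak m$.

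The hard part is bridging this gap, since $L$ is typically not finitely generated as an $R$-ideal and Nakayama-style arguments (Exercise~\ref{ex8}) do not apply directly. My proposed route is element by element: given nonzero $a\in L$, Exercise~\ref{cancel_ex1} applied to the non-unit $\pi\in R$ gives $\bigcap_k \pi^k R=\{0\}$, so the descent $a\in\pi R,\pi^2 R,\ldots$ terminates at a maximal $k^{*}$, producing $a=\pi^{k^{*}}a'$ with $a'\notin \pi R$. A localization check (cancelling $\pi^{k^{*}}$ in $R_{\mathfrak m}$) shows $a'$ is still in $L$, so $a'/1\in J\subseteq \pi R_{\mathfrak m}$, which forces $a'\in\mathfrak m$. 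We are thus reduced to ruling out elements of $L\cap(\mathfrak m\setminus \pi R)$. The obstacle is that $\mathfrak m$ may strictly contain $\pi R$ inside $R$ (they agree only after localizing), so the desired contradiction cannot be read off directly; it must be engineered from the cancellation hypothesis of $R$, most plausibly by converting the relation $a't=\pi b$ into an ideal identity to which Lemma~\ref{cancel_lemma2} applies, forcing $t\in\pi R\subseteq\mathfrak m$ and contradicting $t\in R\setminus\mathfrak m$.
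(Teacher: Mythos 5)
Your reduction to showing $J=\bigcap_k\pi^k R_{\mathfrak m}=\{0\}$ is correct, as is the translation to $L=J\cap R=\bigcap_k\mathfrak m^k$ (via Exercise~\ref{cancel_ex2a}), the identity $LR_{\mathfrak m}=J$, and the observation $\pi J=J$. But the gap you flag at the end is genuine, and your proposed workaround will not close it. Having stripped the maximal power of $\pi$ from a nonzero $a\in L$, you get $a'\in L\smallsetminus\pi R$; no contradiction follows from $a'\in\mathfrak m\smallsetminus\pi R$ alone, because $\mathfrak m$ can genuinely be larger than $\pi R$ --- these two ideals agree only after localizing at $\mathfrak m$. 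The relation $a't=\pi b$ with $t\notin\mathfrak m$ cannot be massaged into ``$t\in\pi R$'': the ideal $\pi R+tR$ can sit inside a maximal ideal other than $\mathfrak m$, so you cannot conclude it is the unit ideal, and there is no factorization to which Lemma~\ref{cancel_lemma2} usefully applies. Note also that Exercise~\ref{cancel_ex1} gives $\bigcap_k\pi^k R=\{0\}$, whereas what you actually need is $\bigcap_k\mathfrak m^k=\{0\}$, and these differ precisely when $\pi R\subsetneq\mathfrak m$, which is the generic case; so the exercise does not directly do the job, and the element-by-element descent stalls exactly where you said it does.

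The idea that closes the gap is local-to-global rather than element-by-element. You already have everything needed to show $L\mathfrak m=L$ directly. At the prime $\mathfrak m$,
$$
(L\mathfrak m)R_{\mathfrak m}=(LR_{\mathfrak m})(\mathfrak m R_{\mathfrak m})=J\cdot\pi R_{\mathfrak m}=\pi J=J=LR_{\mathfrak m},
$$
while for any maximal ideal $\mathfrak m'\ne\mathfrak m$ one has $\mathfrak m R_{\mathfrak m'}=R_{\mathfrak m'}$ (since $\mathfrak m$ is maximal and $\mathfrak m\not\subseteq\mathfrak m'$), so $(L\mathfrak m)R_{\mathfrak m'}=LR_{\mathfrak m'}$ trivially. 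Corollary~\ref{equality_test_cor} now gives $L\mathfrak m=L$ in $R$. If $L\ne\{0\}$, cancellation applied to $\mathfrak m\cdot L=R\cdot L$ forces $\mathfrak m=R$, which is absurd; hence $L=\{0\}$, so $J=LR_{\mathfrak m}=\{0\}$, and your reduction then finishes the proof.
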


\begin{proof}
By Exercise~\ref{cancel_ex3}, $R_{\mathfrak m}$ is a valuation ring with principal maximal
ideal. Let $\pi$ be a generator of the maximal idea of $R_{\mathfrak m}$.
By clearing denominators if necessary we can choose~$\pi$ in~$R$.
By Exercise~\ref{cancel_ex1},
$$
\bigcap_{k=1}^\infty \pi^k R = \{0\}.
$$
We claim that, furthermore, that
$$
\bigcap_{k=1}^\infty \pi^k R_{\mathfrak m} = \{0\}.
$$
To see this let $a/s$ be in this intersection where $a\in R$ and $s \not\in \mathfrak m$.
From the identity~$\left( \pi^k  R_{\mathfrak m} \right) \cap R = \pi^k R$ (see Exercise~\ref{cancel_ex2a}), we get 
that $a \in \pi^k R$ for all $k\ge 1$. Thus $a$ is in the intersection so~$a = 0$.
Since $a/s = 0$, this establishes the claim.

This implies that every nonzero element of  $R_{\mathfrak m}$
is of the form $u \pi^k$ where $u$ is a unit of~$R_{\mathfrak m}$.
By Corollary~\ref{cor9}, $R_{\mathfrak m}$ is a DVR.
\end{proof}

Combining the previous two lemmas gives us the following;

\begin{theorem}\label{cancellation_almost_dedekind_thm}
Let $R$ be an integral domain. Then $R$ is a cancellation domain if and only
if $R$ is an almost Dedekind domain.
\end{theorem}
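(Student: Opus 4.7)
The plan is to prove the two directions separately as lemmas, both of which proceed by reducing to the local case via the DVR hypothesis at each nonzero maximal ideal and then gluing back with the local-to-global equality test (Corollary~\ref{equality_test_cor}).

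For the direction that every almost Dedekind domain $R$ is a cancellation domain, I would argue as follows. Suppose $I_1 J = I_2 J$ for nonzero ideals $I_1, I_2, J$. For each nonzero maximal ideal $\mathfrak m$ of $R$, localization is a monoid homomorphism on fractional ideals, so
\[
(I_1 R_{\mathfrak m})(J R_{\mathfrak m}) \;=\; (I_2 R_{\mathfrak m})(J R_{\mathfrak m}).
\]
Since $R_{\mathfrak m}$ is a DVR, the nonzero ideal $J R_{\mathfrak m}$ is invertible in $R_{\mathfrak m}$, so we may cancel to obtain $I_1 R_{\mathfrak m} = I_2 R_{\mathfrak m}$ for all maximal $\mathfrak m$. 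By Corollary~\ref{equality_test_cor} we conclude $I_1 = I_2$.

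For the converse, the plan is to show that if $R$ is a cancellation domain then $R_{\mathfrak m}$ is a DVR for every nonzero maximal ideal $\mathfrak m$. By Exercise~\ref{cancel_ex3}, $R_{\mathfrak m}$ is a valuation ring whose maximal ideal is principal, say generated by some element which, after clearing denominators, I may take to be $\pi \in R$ (multiplying a generator by a unit of $R_{\mathfrak m}$ of the form $s^{-1}$ with $s \in R\smallsetminus \mathfrak m$ does not change the ideal it generates). By Corollary~\ref{cor9} it then suffices to prove that every nonzero element of $R_{\mathfrak m}$ has the form $u \pi^k$ with $u$ a unit of $R_{\mathfrak m}$ and $k \ge 0$. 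In a valuation ring with principal maximal ideal $\pi R_{\mathfrak m}$, this will follow once we know $\bigcap_{k\ge 1} \pi^k R_{\mathfrak m} = \{0\}$: for any nonzero $x \in R_{\mathfrak m}$, since the principal ideals are totally ordered (Exercise~\ref{valuation_ring_ex}), either $x$ is a unit, or $x \in \pi R_{\mathfrak m}$; iterating and using the intersection property, this process terminates after finitely many steps, giving $x = u \pi^k$ with $u$ a unit.

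The hard part will be establishing $\bigcap_{k\ge 1} \pi^k R_{\mathfrak m} = \{0\}$. The strategy is to pull this back to $R$ using Exercises~\ref{cancel_ex1} and~\ref{cancel_ex2a}. Since $\pi$ is a non-unit of $R$ (because $\pi R \subseteq \mathfrak m$), Exercise~\ref{cancel_ex1} gives $\bigcap_{k\ge 1} \pi^k R = \{0\}$ in $R$. Now let $a/s \in \bigcap_{k\ge 1} \pi^k R_{\mathfrak m}$ with $a \in R$, $s \in R\smallsetminus\mathfrak m$; by Exercise~\ref{cancel_ex2a} applied to the maximal ideal $\mathfrak m$ (taking a principal refinement $\pi^k R_{\mathfrak m} \subseteq \mathfrak m^k R_{\mathfrak m}$, or equivalently working directly with the identity $(\pi^k R_{\mathfrak m}) \cap R = \pi^k R$, which is the analogous content of that exercise), we deduce $a \in \pi^k R$ for all $k\ge 1$, hence $a = 0$ and so $a/s = 0$. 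Combining the two directions yields the biconditional.
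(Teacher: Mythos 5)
The forward direction (almost Dedekind $\Rightarrow$ cancellation) is correct and matches the paper's first lemma exactly: localize, cancel the invertible ideal $J R_{\mathfrak m}$ in the DVR $R_{\mathfrak m}$, and glue with Corollary~\ref{equality_test_cor}. For the converse you follow the paper's route through Exercises~\ref{cancel_ex3}, \ref{cancel_ex1}, and~\ref{cancel_ex2a}, and the paper's own proof of this lemma does the same; but the contraction step does not go through as written.

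Exercise~\ref{cancel_ex2a} gives $(\mathfrak m^k R_{\mathfrak m}) \cap R = \mathfrak m^k$; since $\pi R_{\mathfrak m} = \mathfrak m R_{\mathfrak m}$, what it actually yields here is $(\pi^k R_{\mathfrak m}) \cap R = \mathfrak m^k$, \emph{not} $\pi^k R$. The identity $(\pi^k R_{\mathfrak m}) \cap R = \pi^k R$ that you invoke as the ``analogous content'' of that exercise is false in general: the hint's argument only produces $\pi^k R + sR \subseteq I_a$, and unlike $\mathfrak m^k + sR$, the ideal $\pi^k R + sR$ need not be all of $R$, because $\pi$ can lie in maximal ideals other than $\mathfrak m$. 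Already in the Dedekind domain $R = \bZ[\sqrt{-5}\,]$ with $\mathfrak m = 2R + (1+\sqrt{-5})R$ and $\pi = 1+\sqrt{-5}$, one has $\pi R_{\mathfrak m} = \mathfrak m R_{\mathfrak m}$, yet $(\pi^2 R_{\mathfrak m}) \cap R = \mathfrak m^2 = 2R$ strictly contains $\pi^2 R = 2(-2+\sqrt{-5})R$. So what you have actually established is $a \in \bigcap_k \mathfrak m^k$, and Exercise~\ref{cancel_ex1} does not supply $\bigcap_k \mathfrak m^k = \{0\}$, since $\mathfrak m^k$ can be strictly larger than $\pi^k R$. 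The lemma is true, but closing the gap needs a genuinely different argument; one that works within the paper's toolbox is: set $I' = \bigcap_k \pi^k R_{\mathfrak m}$, verify directly that $\pi I' = I'$, let $I = I' \cap R$ (so $I R_{\mathfrak m} = I'$), note that $\mathfrak m I$ and $I$ agree after localizing at every maximal ideal (trivially away from $\mathfrak m$, and by $\pi I' = I'$ at $\mathfrak m$), conclude $\mathfrak m I = I$ by Corollary~\ref{equality_test_cor}, and then cancel $I$ to force $I = 0$, hence $I' = 0$.
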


We address the issue of ideal factorization  in
an almost Dedekind domain in the next two exercises.

\begin{exercise}
Show that if a nonzero ideal of an almost Dedekind domain
factors  as the product of nonzero prime ideals
then that factorization is unique.
\end{exercise}

\begin{exercise}
Let $I$ be a nonzero proper ideal of an almost Dedekind domain.
Show that $I$ factors as the product of nonzero prime ideals if and only 
if $I$ is contained in only a finite number of maximal ideals. 
Hint: construct a plausible factorization
of $I$ based on images in DVRs and use Corollary~\ref{equality_test_cor}
to verify it.
\end{exercise}

\begin{exercise}
Let $R$ be an almost Dedekind domain. Show that $R$ is a Dedekind domain if and only if
every nonzero element of $R$ is contained in only a finite number of maximal ideals.

Hint: Observe that if $I$ is an ideal and if some element $a \in I$ is only contained in a finite number of
maximal ideals, then $I$ is contained in only a finite number of maximal ideals. Now use the previous exercise.
\end{exercise}

By Proposition~\ref{prop162} every nonzero finitely generated ideal of 
an almost Dedekind domain is invertible. As we saw in Section~\ref{ch3}, this is the best result we can
hope for in a non-Noetherian ring.
This helps motivate the next concept:

\begin{definition}
A \emph{Pr\"ufer domain} is an integral domain such that every nonzero finitely generated
ideal is invertible.
\end{definition}

\begin{remark}
Pr\"ufer domain are named for Heinz Pr\"ufer who introduced such rings in~1932.
\end{remark}

The following three propositions are straightforward given our earlier results.

\begin{proposition}
Every almost Dedekind domain is a Pr\"ufer domain.
\end{proposition}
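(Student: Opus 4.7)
The plan is to show that every nonzero finitely generated ideal of an almost Dedekind domain $R$ is invertible, which is the defining property of a Pr\"ufer domain. Since an integral ideal (when nonzero) is automatically a fractional ideal, and since this implication was already encapsulated in Proposition~\ref{prop162} (equivalently, Exercise~\ref{fg_invert_ex}), the argument should be short. I will still sketch the underlying local-to-global reasoning because it is the natural route.

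First I would take a nonzero finitely generated ideal $I$ of $R$ and regard it as a finitely generated fractional ideal. Then I would invoke the local invertibility criterion proved in Section~\ref{ch7}: a finitely generated fractional ideal $I$ of $R$ is invertible if and only if $I R_{\mathfrak m}$ is an invertible fractional ideal of $R_{\mathfrak m}$ for every maximal ideal $\mathfrak m$ of $R$. This reduces the problem to a local check.

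For each nonzero maximal ideal $\mathfrak m$ of $R$, the hypothesis that $R$ is almost Dedekind says precisely that $R_{\mathfrak m}$ is a DVR. By Proposition~\ref{prop24}, every nonzero fractional ideal of the DVR $R_{\mathfrak m}$ is of the form $\pi^{k} R_{\mathfrak m}$ for some $k \in \bZ$, and each such principal fractional ideal is invertible. Since $I$ contains a nonzero element (which persists in the localization), $I R_{\mathfrak m}$ is a nonzero fractional ideal of $R_{\mathfrak m}$, hence invertible. If $\mathfrak m = 0$, then $R$ is a field and there is nothing to prove. Thus local invertibility holds at every maximal ideal, so $I$ is invertible in $R$, and $R$ is a Pr\"ufer domain.

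There is no real obstacle here; the theorem is essentially a direct consequence of machinery already built up, namely the description of fractional ideals in a DVR and the local-global criterion for invertibility of finitely generated fractional ideals. The only mild subtlety is remembering that the local criterion requires finite generation of $I$, which is exactly the hypothesis that distinguishes Pr\"ufer domains from Dedekind domains in this non-Noetherian setting.
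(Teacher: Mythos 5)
Your proof is correct and follows essentially the same route as the paper, which simply invokes Proposition~\ref{prop162} (equivalently Exercise~\ref{fg_invert_ex}) to conclude that nonzero finitely generated ideals are invertible. Your additional unpacking via the local-global invertibility criterion and the classification of fractional ideals in a DVR is exactly the argument underlying that proposition.
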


\begin{proposition}
A Pr\"ufer domain is a Dedekind domain if and only if it is Noetherian.
\end{proposition}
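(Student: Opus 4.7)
The plan is to observe that this is essentially immediate from the characterization of Dedekind domains in terms of invertibility of ideals, specifically Theorem~\ref{thm64}. One direction requires nothing: every Dedekind domain is Noetherian by Definition~\ref{dedekind_domain_def}.

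For the substantive direction, I would assume $R$ is a Noetherian Pr\"ufer domain and show every nonzero ideal is invertible. First I would use the Noetherian hypothesis to conclude that every ideal of $R$ is finitely generated. Next, since $R$ is Pr\"ufer, every nonzero finitely generated ideal is invertible by definition. Combining these two facts, every nonzero ideal of $R$ is invertible. Then Theorem~\ref{thm64} immediately delivers that $R$ is a Dedekind domain.

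There is really no obstacle here; the proof is a one-line chain of implications, and the main work was already done in establishing Theorem~\ref{thm64} (Noether's characterization). The only thing to be careful about is invoking the correct definitions: \emph{Pr\"ufer} gives invertibility only for finitely generated fractional ideals, so the Noetherian assumption is exactly what bridges the gap between ``finitely generated invertible'' and ``all nonzero ideals invertible''. No further machinery (localization, integral closure, or prime factorization) needs to be used directly, since it is all hidden inside the proof of Theorem~\ref{thm64}.
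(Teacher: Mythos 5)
Your proof is correct and is exactly the kind of argument the paper has in mind; the paper itself leaves these three propositions to the reader as ``straightforward given our earlier results,'' and the chain Noetherian $\Rightarrow$ all ideals finitely generated, Pr\"ufer $\Rightarrow$ finitely generated nonzero ideals invertible, then Theorem~\ref{thm64}, is the intended route.
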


\begin{proposition}
A fractional ideal of a Pr\"ufer domain is invertible if and only if it is finitely generated.
\end{proposition}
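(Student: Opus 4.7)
The plan is to handle the two implications separately, leveraging earlier results in the excerpt so that almost no new work is required.

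For the forward direction (invertible implies finitely generated), I would simply cite Proposition~\ref{prop_inv_fg}, which shows that every invertible $R$-submodule of $K$ is finitely generated. This result was proved for an arbitrary integral domain and makes no use of the Pr\"ufer hypothesis, so it applies verbatim here.

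For the reverse direction, the strategy is to reduce a finitely generated fractional ideal to a finitely generated integral ideal, where the Pr\"ufer hypothesis directly applies. First I would use the definition of fractional ideal to pick a common denominator $d \in R^{\times}$ with $dI \subseteq R$. If $x_1,\ldots,x_n$ generate $I$ as an $R$-module, then $dx_1,\ldots,dx_n$ generate $dI$, so $dI$ is a nonzero finitely generated integral ideal of $R$. By the definition of Pr\"ufer domain, $dI$ is invertible: there exists an $R$-submodule $J$ of $K$ with $(dI)J = R$. Rewriting this as $I(dJ) = R$ exhibits $dJ$ as an inverse of $I$, so $I$ is invertible. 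Alternatively, one can note that $xR = dR$ is principal hence invertible, and invoke Proposition~\ref{easy_invert_prop} (products of invertibles are invertible, and the factors of an invertible product are invertible) applied to $I = (d^{-1}R)(dI)$.

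There is no real obstacle: both directions essentially repackage earlier material. The only thing requiring any care is the observation that multiplication by the nonzero scalar $d$ preserves finite generation and converts the fractional ideal $I$ into an honest ideal of $R$, which is where the Pr\"ufer hypothesis gets to act. Once that bridge is in place, the rest is formal.
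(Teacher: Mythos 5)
Your proof is correct. Note that the paper itself does not supply an argument here---it merely states that this proposition (with its two neighbors) is ``straightforward given our earlier results''---so there is no official proof to compare against, but your route is the natural one: cite Proposition~\ref{prop_inv_fg} for the forward direction, clear denominators and invoke the Pr\"ufer hypothesis for the reverse, and observe that multiplying by the nonzero scalar preserves both finite generation and invertibility. Your alternative via Proposition~\ref{easy_invert_prop} and the factorization $I = (d^{-1}R)(dI)$ is equally clean.

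One small notational slip worth fixing: you write $d \in R^{\times}$, but in this paper $R^{\times}$ denotes the unit group of $R$. A unit $d$ would give $I = d^{-1}(dI) \subseteq R$, so $I$ would already be an integral ideal and the clearing-denominators step would be vacuous (and, for a genuinely fractional $I$, no such $d$ exists). What you want is a nonzero $d \in R$, equivalently $d \in R \cap K^{\times}$, which is exactly what the definition of fractional ideal provides. With that corrected, the argument is complete.
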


Pr\"ufer domains have a cancellation law that is weaker than almost Dedekind domains.
This is expressed with the next concept.

\begin{definition}
A \emph{weak cancellation domain} is an integral domain with the property that
for all  nonzero ideals $I_1$ and $I_2$ and for all  finitely generated nonzero ideals $J$
 if $I_1 J = I_2 J$ then $I_1 = I_2$.
\end{definition}

From the definition of Pr\"ufer domain we have the following:

\begin{proposition}\label{prop169}
Every Pr\"ufer domain is a weak cancellation domain.
\end{proposition}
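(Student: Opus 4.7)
The plan is straightforward: cancellation in a Prüfer domain should follow immediately from the definition of invertibility, since the finitely generated ideal $J$ in the hypothesis is exactly the kind of ideal that is invertible in a Prüfer domain.

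Concretely, I would start with nonzero ideals $I_1, I_2$ and a finitely generated nonzero ideal $J$ satisfying $I_1 J = I_2 J$. By the definition of Prüfer domain, $J$ is invertible, so there is a fractional ideal $J^{-1}$ with $J J^{-1} = R$. Multiplying both sides of $I_1 J = I_2 J$ on the right by $J^{-1}$ (and using associativity of the product of $R$-submodules of the fraction field, which was established in Section~\ref{ch2.5}) gives $I_1 (J J^{-1}) = I_2 (J J^{-1})$, hence $I_1 R = I_2 R$, i.e., $I_1 = I_2$.

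There is no real obstacle here: the argument is just the usual ``multiply by the inverse'' trick, and the only point that needs care is that $I_1$ and $I_2$ are merely ideals (not necessarily finitely generated), so we are working in the monoid of $R$-submodules of $K$ rather than in a group. But since $J$ itself is invertible, the cancellation goes through inside this larger monoid without requiring $I_1$ or $I_2$ to be invertible. So the proof will be a single short paragraph.
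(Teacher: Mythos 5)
Your proof is correct and is precisely the argument the paper leaves implicit when it says the proposition follows ``from the definition of Pr\"ufer domain'': the finitely generated ideal $J$ is invertible, so multiplying $I_1 J = I_2 J$ by $J^{-1}$ gives $I_1 = I_2$. Nothing is missing, and the observation that $I_1, I_2$ need not themselves be invertible because cancellation is carried out entirely via $J$ is exactly the right point to note.
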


Several of the lemmas we proved for cancellation domains (Lemma~\ref{cancel_lemma1}
to Lemma~\ref{cancel_lemma6}) generalize almost immediately to the current setting:

\begin{lemma}
Suppose  $I_1, I_2$, and $J$ are 
fractional ideals of a weak cancellation domain where $J$ is finitely generated.
If $I_1 J = I_2 J$ then  $I_1 = I_2$.
\end{lemma}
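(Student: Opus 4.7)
The plan is to reduce the fractional ideal case to the integral ideal case already covered by the definition of weak cancellation domain. The key observation is that multiplication by a suitable element of $R$ converts fractional ideals to integral ideals while preserving finite generation and nonzeroness.

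First I would invoke the definition of fractional ideal to find nonzero elements $d_1, d_2, e \in R$ with $d_1 I_1 \subseteq R$, $d_2 I_2 \subseteq R$, and $e J \subseteq R$. Setting $d = d_1 d_2$, the products $dI_1$ and $dI_2$ become nonzero integral ideals of $R$, and $eJ$ is a nonzero integral ideal. Moreover, if $J$ is finitely generated, say by $x_1, \ldots, x_k$, then $eJ$ is generated by $e x_1, \ldots, e x_k$, so $eJ$ is a finitely generated nonzero integral ideal, fitting the hypothesis of Definition of weak cancellation domain.

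Next, from $I_1 J = I_2 J$ I would multiply both sides by the element $de$ (or equivalently by the principal fractional ideals $dR$ and $eR$) to obtain
\[
(dI_1)(eJ) \;=\; de\, I_1 J \;=\; de\, I_2 J \;=\; (dI_2)(eJ).
\]
Applying the weak cancellation hypothesis to the nonzero integral ideals $dI_1$, $dI_2$ and the finitely generated nonzero integral ideal $eJ$ yields $dI_1 = dI_2$.

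Finally, I would cancel the factor $d$ by noting that the principal fractional ideal $dR$ is invertible with inverse $d^{-1} R$ (Proposition~\ref{easy_invert_prop}); multiplying $dI_1 = dI_2$ by $d^{-1}R$ gives $I_1 = I_2$, as desired. I do not anticipate any real obstacle here: the entire argument is a standard ``clear denominators, apply the integral statement, then invert the denominator'' reduction, made legitimate by the fact that principal fractional ideals are always invertible and that multiplication by a nonzero element of $R$ preserves both finite generation and nonzeroness.
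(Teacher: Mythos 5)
Your proof is correct and fills in exactly the clear-denominators reduction that the paper leaves implicit when it states this lemma ``generalizes almost immediately'' from the cancellation-domain version (Lemma~\ref{cancel_lemma1}). You clear denominators to pass to integral ideals, apply the definition of weak cancellation domain, and invert the principal factor via Proposition~\ref{easy_invert_prop}, which is the intended argument.
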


\begin{lemma}
Suppose  $I_1, I_2$, and $J$ are 
fractional ideals of a weak cancellation domain where $J$ is finitely generated.
If $I_1 J \subseteq I_2 J$ then  $I_1 \subseteq I_2$.
\end{lemma}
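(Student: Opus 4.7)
The plan is to mimic the proof of Lemma~\ref{cancel_lemma2} verbatim, exploiting the fact that the finitely-generated hypothesis on $J$ is precisely what allows the weak cancellation law to kick in.

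First I would form the sum $I_1 + I_2$, which is again a fractional ideal by the closure results of Section~\ref{ch3}. From the hypothesis $I_1 J \subseteq I_2 J$, together with the trivial inclusion $I_2 J \subseteq I_2 J$, I get
$$(I_1 + I_2) J \;=\; I_1 J + I_2 J \;=\; I_2 J,$$
where the first equality uses the distributive law for products of $R$-submodules of $K$.

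Next I would invoke the previous lemma (the weak-cancellation-domain version of Lemma~\ref{cancel_lemma1}) to cancel $J$. This invocation is legitimate precisely because $J$ is assumed to be finitely generated, so the defining property of a weak cancellation domain applies. Cancellation yields $I_1 + I_2 = I_2$, which is equivalent to $I_1 \subseteq I_2$.

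There is really no obstacle here; the argument is a direct transcription of the cancellation-domain proof. The only thing one must be careful about is that the weak cancellation hypothesis demands $J$ to be finitely generated, so I should explicitly note that $J$ meets this requirement before cancelling. The fact that $I_1 + I_2$ need not be finitely generated is irrelevant, since the definition only places the finite-generation restriction on the ideal being cancelled.
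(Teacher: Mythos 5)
Your proof is correct and is exactly the argument the paper intends: the paper states that Lemmas~\ref{cancel_lemma1}--\ref{cancel_lemma6} ``generalize almost immediately'' to the weak-cancellation setting, and your transcription of the proof of Lemma~\ref{cancel_lemma2} (form $(I_1+I_2)J = I_1J + I_2J = I_2J$, then cancel the finitely generated $J$ by the preceding lemma to get $I_1 + I_2 = I_2$) is precisely that. Your closing remark --- that the finite-generation restriction falls only on the factor $J$ being cancelled, so $I_1 + I_2$ need not be finitely generated --- is the right point to be careful about and you address it correctly.
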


\begin{lemma}
Suppose $x \in K^\times$ where $K$ is the field of fractions of a weak cancellation domain~$R$.
Then $xR \subseteq  x^2 R + R$.
\end{lemma}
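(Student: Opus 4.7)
The plan is to imitate the proof of the analogous lemma for cancellation domains, but to be careful to use only weak cancellation, which requires that the ideal being cancelled be finitely generated. Conveniently, the auxiliary ideal used there, namely $J = xR + R$, is generated by the two elements $1$ and $x$, so it is finitely generated regardless of whether $R$ is Noetherian.

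First I would set $I_1 \defeq xR$, $I_2 \defeq x^2 R + R$, and $J \defeq xR + R$, noting that $I_1, I_2, J$ are fractional ideals of $R$ (each has a common denominator, e.g. one that clears $x$) and that $J$ is finitely generated. Next I would expand the products using the distributive law:
$$I_1 J = xR(xR + R) = x^2 R + xR,$$
$$I_2 J = (x^2 R + R)(xR + R) = x^3 R + x^2 R + xR + R.$$
The inclusion $I_1 J \subseteq I_2 J$ is then immediate from the explicit description of both sides. Applying the immediately preceding lemma (the weak-cancellation analogue stating that $I_1 J \subseteq I_2 J$ implies $I_1 \subseteq I_2$ when $J$ is finitely generated), I obtain $xR \subseteq x^2 R + R$, as desired.

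There is essentially no obstacle here: the only thing to watch is that the cancellation step is legitimate, and this is guaranteed because $J = xR + R$ is finitely generated by the two elements $1$ and $x$. This is precisely why the statement and proof carry over verbatim from the cancellation-domain case to the weak-cancellation-domain case.
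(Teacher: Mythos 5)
Your proof is correct and is exactly the argument the paper intends: it carries over the cancellation-domain proof (setting $I_1 = xR$, $I_2 = x^2R + R$, $J = xR + R$, checking $I_1 J \subseteq I_2 J$, and cancelling $J$), with the key additional observation that $J$ is finitely generated so the weak-cancellation version of the inclusion lemma applies. This matches the paper, which simply remarks that the cancellation-domain lemmas ``generalize almost immediately'' to the weak cancellation setting.
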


\begin{lemma}\label{lemma155}
Suppose $x \in K^\times$ where $K$ is the field of fractions of a weak cancellation domain~$R$.
Then $a x^2 + x + b = 0$ for some $a, b \in R$.
\end{lemma}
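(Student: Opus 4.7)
The plan is to read off the conclusion directly from the preceding lemma, which asserts the inclusion $xR \subseteq x^2 R + R$. Once one has this containment, the element $x$ itself lies in $x^2 R + R$, so I can write $x = \alpha x^2 + \beta$ for some $\alpha, \beta \in R$. Rearranging yields $(-\alpha) x^2 + x + (-\beta) = 0$, and setting $a = -\alpha$ and $b = -\beta$ delivers the desired identity $a x^2 + x + b = 0$.

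So the only step is: take the $R$-module containment provided by the previous lemma, pass to membership of the particular element $x$, and rename the coefficients. No use of the weak cancellation hypothesis is needed at this stage beyond what was already used to prove the preceding lemma (indeed this lemma is purely a syntactic reformulation of the containment $xR \subseteq x^2 R + R$ in equational form).

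There is really no obstacle here; the previous lemma was where the work happened (the trick was to set $I_1 = xR$, $I_2 = x^2 R + R$, $J = xR + R$ and observe $I_1 J \subseteq I_2 J$, so that weak cancellation forced $I_1 \subseteq I_2$). The present lemma is just packaging that inclusion so that it can be used in the next step of the argument (where the integrality of $ax$ over $R$ will be established en route to showing that $R_{\mathfrak p}$ is a valuation ring in Lemma~\ref{cancel_lemma6}).
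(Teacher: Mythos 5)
Your proof is correct and is precisely what the paper intends: the paper's own justification is simply ``this is a consequence of the previous lemma,'' and your unpacking (extract $x \in x^2R + R$ from the inclusion $xR \subseteq x^2R + R$, write $x = \alpha x^2 + \beta$, and rearrange) is the obvious elaboration of that one-line argument.
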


\begin{lemma}\label{lemma174}
Every weak cancellation domain $R$  is integrally closed.
\end{lemma}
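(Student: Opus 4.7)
The plan is to mimic the proof of the analogous result for cancellation domains (the unlabeled lemma that ``Every cancellation domain is integrally closed''), checking that weak cancellation suffices at the one place the original proof used full cancellation.

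First I would take $x$ in the fraction field $K$ of $R$ and assume $x$ is integral over $R$; the goal is to show $x \in R$. Set $I = R[x]$, viewed as an $R$-submodule of $K$. Since $x$ is integral over $R$, there is a monic polynomial of some degree $d$ satisfied by $x$, and the standard argument (already invoked in the required-background chapter) shows $I$ is generated as an $R$-module by $1, x, \ldots, x^{d-1}$. In particular, $I$ is a finitely generated $R$-submodule of $K$, hence a finitely generated fractional ideal of $R$.

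Next I would exploit the fact that $I$ is a ring: the product $I \cdot I$ (in the sense of the product of fractional ideals) is still contained in $R[x] = I$, and conversely $I \subseteq I \cdot I$ since $1 \in I$. Thus $I \cdot I = I$, which we can rewrite as $I \cdot I = R \cdot I$. Here the cancelled factor on the right is $I$ itself, which I have just shown to be finitely generated, so the hypothesis of the weak cancellation law applies. Cancelling $I$ gives $I = R$, and in particular $x \in R$, as desired.

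The only ``obstacle'' worth flagging is precisely the point where the earlier cancellation-domain proof used unrestricted cancellation: we must cancel the potentially-not-finitely-generated factor as the $I$ on the right, not as one on the left, so that the finitely generated ideal $I$ plays the role of the ``$J$'' in the definition of weak cancellation domain. Once this bookkeeping is right, the proof is essentially a copy of the previous one.
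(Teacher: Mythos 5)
Your proof is correct and takes exactly the approach the paper intends: the paper explicitly notes that Lemma~\ref{lemma174} ``generalizes almost immediately'' from the earlier unlabeled lemma for cancellation domains (whose proof sets $I = R[x]$, observes $I\cdot I = I$, and cancels), and you have correctly identified that the only thing to verify is that the cancelled factor $I$ is finitely generated, which it is because $x$ is integral over $R$. The ``left vs.\ right'' bookkeeping you flag is a non-issue here since both occurrences of the cancelled factor are the same finitely generated module $I$, but your instinct to check which factor plays the role of $J$ in the weak cancellation hypothesis is exactly the right care to take.
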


\begin{lemma}\label{lemma156}
Let $R$ be a weak cancellation domain~$R$ with field of fractions~$K$.
Let~$\mathfrak p$ be a prime ideal of~$R$.
Then for every $x \in K^\times$ either $x \in R_{\mathfrak p}$ or~$x^{-1} \in R_{\mathfrak p}$.
In other words, $R_{\mathfrak p}$ is a valuation ring.
\end{lemma}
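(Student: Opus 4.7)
The plan is to follow essentially the same argument used for cancellation domains in Lemma~\ref{cancel_lemma6}, since we now have all the analogous building blocks available for weak cancellation domains. Specifically, Lemma~\ref{lemma155} gives us the key quadratic relation, and Lemma~\ref{lemma174} supplies integral closedness, which together are exactly what that earlier proof used.

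First I would fix $x \in K^\times$ and invoke Lemma~\ref{lemma155} to obtain $a, b \in R$ with $ax^2 + x + b = 0$. Multiplying through by $a$ yields $(ax)^2 + (ax) + ab = 0$, exhibiting $ax$ as a root of a monic polynomial with coefficients in $R$. Since weak cancellation domains are integrally closed by Lemma~\ref{lemma174}, it follows that $ax \in R$. Next, rewriting the original equation as $(1 + ax)x = -b$ shows that $(1 + ax)x \in R$ as well.

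Now I would localize at $\mathfrak p$. Since $R_{\mathfrak p}$ is a local ring with maximal ideal $\mathfrak p R_{\mathfrak p}$, and since $ax + (1+ax) = 1$, at least one of $ax$ and $1 + ax$ must lie outside $\mathfrak p R_{\mathfrak p}$, hence must be a unit in $R_{\mathfrak p}$. If $1 + ax$ is a unit, then $x = -b/(1+ax) \in R_{\mathfrak p}$. If instead $ax$ is a unit, then $x^{-1} = a/(ax) \in R_{\mathfrak p}$ (noting $a \ne 0$, since otherwise the equation $ax^2 + x + b = 0$ forces $x = -b \in R \subseteq R_{\mathfrak p}$ and we are already done). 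In either case one of $x$ or $x^{-1}$ lies in $R_{\mathfrak p}$, so $R_{\mathfrak p}$ is a valuation ring.

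I do not expect any serious obstacle here: the argument is a direct transcription of the proof of Lemma~\ref{cancel_lemma6}, with the hypotheses weakened only in ways that the preceding lemmas (\ref{lemma155} and \ref{lemma174}) have already accommodated. The only minor wrinkle is handling the edge case $a = 0$, but as noted this is immediate.
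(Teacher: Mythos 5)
Your proof matches the paper's intended approach exactly: the paper explicitly states that Lemmas~\ref{lemma155}--\ref{lemma156} ``generalize almost immediately'' from the cancellation-domain versions, and you have simply transcribed the proof of Lemma~\ref{cancel_lemma6} using Lemmas~\ref{lemma155} and~\ref{lemma174} as the weak-cancellation analogues.

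One small arithmetic slip: you write ``$ax + (1+ax) = 1$,'' but this sum equals $1 + 2ax$, not $1$. The identity you want is $(1+ax) - ax = 1$ (or, equivalently, $(1+ax) + (-ax) = 1$): if both $ax$ and $1+ax$ lay in the maximal ideal $\mathfrak p R_{\mathfrak p}$, then so would their difference $1$, a contradiction. The conclusion and the rest of the argument are unaffected. Your handling of the $a = 0$ edge case, while harmless, is also redundant, since if $a = 0$ then $ax = 0$ is not a unit and that branch simply cannot occur.
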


So weak cancellation domains can differ from cancellation domains (i.e., almost Dedekind domains)
in that the local rings are valuation rings but not necessarily \emph{discrete} valuation rings.

\begin{lemma}\label{lemma157}
Suppose that $R$ is an integral domain and that 
$R_{\mathfrak m}$ is a valuation ring for every maximal ideal~$\mathfrak m$ of~$R$.
Then every nonzero finitely generated ideal $I$ is invertible. In other words, 
$R$ is a Pr\"ufer domain.
\end{lemma}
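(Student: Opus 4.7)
The plan is to establish invertibility via the local-to-global principle developed in Section~\ref{ch7}. Fix a nonzero finitely generated ideal $I$ of $R$. To show $I I^{-1} = R$, by Corollary~\ref{equality_test_cor} it suffices to verify that $(I I^{-1}) R_{\mathfrak{m}} = R_{\mathfrak{m}}$ for each maximal ideal $\mathfrak{m}$ of $R$.

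The next step is to convert this into a statement purely inside $R_{\mathfrak{m}}$. By the compatibility of localization with products (from Section~\ref{ch5}) together with Proposition~\ref{prop49}, which requires that $I$ be finitely generated, we have
$$
(I I^{-1}) R_{\mathfrak{m}} = (I R_{\mathfrak{m}})(I^{-1} R_{\mathfrak{m}}) = (I R_{\mathfrak{m}})(I R_{\mathfrak{m}})^{-1},
$$
where the last inverse is taken inside $R_{\mathfrak{m}}$. So it suffices to prove that $I R_{\mathfrak{m}}$ is invertible as a fractional ideal of $R_{\mathfrak{m}}$.

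Now I would use the hypothesis that $R_{\mathfrak{m}}$ is a valuation ring. By Corollary~\ref{cor41_new}, $I R_{\mathfrak{m}}$ is a nonzero finitely generated ideal of $R_{\mathfrak{m}}$. By part~(2) of Exercise~\ref{valuation_ring_ex}, every finitely generated ideal in a valuation ring is principal, so $I R_{\mathfrak{m}} = x R_{\mathfrak{m}}$ for some nonzero $x$ in the fraction field. Finally, principal nonzero fractional ideals are invertible by Proposition~\ref{easy_invert_prop}, so $I R_{\mathfrak{m}}$ is invertible in $R_{\mathfrak{m}}$, completing the argument.

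There is no real obstacle here, since all the machinery is in place: the only subtle point is remembering that Proposition~\ref{prop49} needs $I$ to be finitely generated in order to identify $(IR_{\mathfrak{m}})^{-1}$ with $I^{-1} R_{\mathfrak{m}}$, which is precisely why the theorem is stated for finitely generated ideals rather than all fractional ideals.
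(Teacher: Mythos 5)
Your proof is correct and follows the same local-to-global strategy as the paper: localize at each maximal ideal $\mathfrak m$, observe that $I R_{\mathfrak m}$ is nonzero and finitely generated hence principal (Exercise~\ref{valuation_ring_ex}), hence invertible, and then glue via Proposition~\ref{prop49} and Corollary~\ref{equality_test_cor}. The only cosmetic difference is that you cite Proposition~\ref{easy_invert_prop} directly for ``principal implies invertible'' where the paper cites Proposition~\ref{propInv}; your choice is the more direct one.
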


\begin{proof}
Assume $I$ is nonzero and finitely generated.
Then $I R_{\mathfrak m}$  is nonzero and finitely generated for each maximal ideal~$\mathfrak m$.
By assumption,~$R_{\mathfrak m}$ is a valuation ring, and so every finitely
generated ideal is principal (Exercise~\ref{valuation_ring_ex}). 
So $I R_{\mathfrak m}$ is principal, and hence invertible by Proposition~\ref{propInv}.
In particular, if $J = I I^{-1}$ then 
$$J R_{\mathfrak m} = (I R_{\mathfrak m})(I R_{\mathfrak m})^{-1} = R_{\mathfrak m}$$
(see Proposition~\ref{prop49}). This holds for each maximal ideal $\mathfrak m$,
so $J = R$ (Corollary~\ref{equality_test_cor}). Thus $I$ is invertible.
\end{proof}

\begin{theorem}
Let $R$ be an integral domain. Then $R$ is a Pr\"ufer domain
if and only if it is a weak cancellation domain.
\end{theorem}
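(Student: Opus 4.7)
The plan is to observe that this theorem essentially follows by assembling the two directions from lemmas already in place, so very little new work is required. One direction, that every Pr\"ufer domain is a weak cancellation domain, is exactly Proposition~\ref{prop169}, so I would just cite it.

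For the converse, suppose $R$ is a weak cancellation domain. The strategy is to reduce to the local situation and then invoke Lemma~\ref{lemma157}. By Lemma~\ref{lemma156}, for every prime ideal $\mathfrak p$ of $R$, the localization $R_{\mathfrak p}$ is a valuation ring. In particular, this holds for every maximal ideal $\mathfrak m$, so $R_{\mathfrak m}$ is a valuation ring for every maximal ideal of $R$. Applying Lemma~\ref{lemma157} then gives that every nonzero finitely generated ideal of $R$ is invertible, which is exactly the definition of a Pr\"ufer domain.

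There is no real obstacle here; the hard work was done earlier in establishing Lemma~\ref{lemma156} (via the chain Lemma~\ref{cancel_lemma1}--\ref{cancel_lemma6} generalized to the weak setting as Lemma~\ref{lemma155} and Lemma~\ref{lemma174}) and Lemma~\ref{lemma157} (using the fact that finitely generated ideals in valuation rings are principal, from Exercise~\ref{valuation_ring_ex}). The proof of the theorem itself is essentially a one-line chain of implications; the only thing to double-check is that Lemma~\ref{lemma156} really applies to maximal ideals (which it does, since it is stated for all prime ideals), and that Lemma~\ref{lemma157} takes valuation-ring localizations at maximal ideals as its hypothesis (which matches exactly).
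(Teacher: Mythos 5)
Your proof is correct and follows exactly the same route as the paper: cite Proposition~\ref{prop169} for one direction, then for the converse apply Lemma~\ref{lemma156} to get valuation rings at every maximal ideal and conclude via Lemma~\ref{lemma157}. Nothing to add.
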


\begin{proof}
We have already established one direction (Proposition~\ref{prop169}), so let $R$ be a weak cancellation domain.
By Lemma~\ref{lemma156},~$R_{\mathfrak m}$ is a valuation ring
for every nonzero maximal ideal~$\mathfrak m$.
So by Lemma~\ref{lemma157},~$R$ is a Pr\"ufer domain
\end{proof}

\begin{theorem}
Let $R$ be an integral domain. Then the following are equivalent
\begin{enumerate}
\item
$R$ is a Pr\"ufer domain.
\item
$R_{\mathfrak p}$ is a valuation ring for all prime ideals $\mathfrak p$.
\item
$R_{\mathfrak m}$ is a valuation ring for all maximal ideals $\mathfrak m$.
\end{enumerate}
\end{theorem}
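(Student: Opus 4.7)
The plan is to close a short cycle $(1) \Rightarrow (2) \Rightarrow (3) \Rightarrow (1)$, leveraging the three lemmas immediately preceding the theorem together with the just-proved equivalence between Pr\"ufer domains and weak cancellation domains. Almost all of the substantive work has already been done; the statement here is essentially a repackaging.

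For $(1) \Rightarrow (2)$, suppose $R$ is a Pr\"ufer domain. By the preceding theorem, $R$ is a weak cancellation domain. Now apply Lemma~\ref{lemma156}, which tells us that in any weak cancellation domain $R$ and for any prime ideal $\mathfrak{p}$, for every $x \in K^{\times}$ either $x$ or $x^{-1}$ lies in $R_{\mathfrak p}$. That is precisely the statement that $R_{\mathfrak p}$ is a valuation ring. The implication $(2) \Rightarrow (3)$ is immediate, since every maximal ideal is prime, so specializing the hypothesis to maximal ideals gives the claim.

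For $(3) \Rightarrow (1)$, suppose $R_{\mathfrak m}$ is a valuation ring for every maximal ideal $\mathfrak m$. This is exactly the hypothesis of Lemma~\ref{lemma157}, which concludes that every nonzero finitely generated ideal of $R$ is invertible, i.e. that $R$ is a Pr\"ufer domain.

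The only ``obstacle'' worth flagging is purely bookkeeping: one must notice that Lemma~\ref{lemma156} is stated for all prime ideals, not just maximal ones, so the step $(1) \Rightarrow (2)$ really does yield the stronger conclusion about every localization $R_{\mathfrak p}$, rather than only about $R_{\mathfrak m}$. Everything else is a direct citation of previously established results.
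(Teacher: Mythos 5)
Your proof is correct and follows essentially the same cycle $(1)\Rightarrow(2)\Rightarrow(3)\Rightarrow(1)$ as the paper, invoking the same two lemmas (the valuation-ring lemma for weak cancellation domains and the converse going from local valuation rings to invertibility of finitely generated ideals). The only cosmetic difference is that the paper explicitly notes the zero prime ideal case ($R_{\{0\}}=K$ is a valuation ring) as a preliminary remark, whereas you leave it implicit; this is a trivial point and your argument is otherwise identical.
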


\begin{proof}  Note that if $\mathfrak p$ is the zero ideal then $R_{\mathfrak p}$ is a valuation ring
since it is a field.

\noindent
$(1) \Rightarrow (2)$. This follows from Lemma~\ref{lemma156}.

\noindent
$(2) \Rightarrow (3)$. All maximal ideals are prime ideals.

\noindent
$(3) \Rightarrow (1)$. This follows from Lemma~\ref{lemma157}
\end{proof}

We restate Lemma~\ref{lemma174} in terms of Pr\"ufer domains.

\begin{theorem}\label{thm179}
Every Pr\"ufer domain is integrally closed.
\end{theorem}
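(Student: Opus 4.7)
The plan is to exploit two pieces of machinery already assembled in this appendix: the equivalence of Pr\"ufer domains with weak cancellation domains (the theorem immediately preceding Theorem~\ref{thm179}), together with Lemma~\ref{lemma174}, which states that every weak cancellation domain is integrally closed. Chaining these two results gives the theorem in one line, so the work is really already done; the proof should essentially just be a citation.

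For readers who would prefer a more direct local argument, I would mention the alternative route through the characterization that $R$ is Pr\"ufer if and only if $R_{\mathfrak m}$ is a valuation ring for every maximal ideal $\mathfrak m$. One then notes that a valuation ring is integrally closed: given $x \in K$ satisfying a monic equation $x^n + a_{n-1} x^{n-1} + \cdots + a_0 = 0$ with $a_i \in R_{\mathfrak m}$, if $x \notin R_{\mathfrak m}$ then $x^{-1} \in \mathfrak m R_{\mathfrak m}$ by the defining property of a valuation ring, and multiplying the relation by $x^{-(n-1)}$ expresses $x$ as an element of $R_{\mathfrak m}$, a contradiction. Since each $R_{\mathfrak m}$ is integrally closed in $K$, Proposition~\ref{prop56} gives that $R$ itself is integrally closed.

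The main obstacle is really just choosing which path to present, since both are short. I would favor the first route because it is immediate and emphasizes how the web of characterizations already built up in the appendix pays off. The second path has the pedagogical advantage of reminding the reader that valuation rings are integrally closed, a useful standalone fact, but it requires a small extra computation. I would therefore write the proof as a one-sentence appeal to the previous theorem and Lemma~\ref{lemma174}, and include the valuation-ring route as a brief remark afterwards.
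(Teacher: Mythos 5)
Your primary route — citing the equivalence of Pr\"ufer domains with weak cancellation domains and then invoking Lemma~\ref{lemma174} — is exactly how the paper handles it (the theorem is introduced as a restatement of that lemma). The alternative valuation-ring argument you sketch is also correct and is in fact what the exercise immediately following the theorem asks the reader to produce, so mentioning it as a remark is a reasonable choice.
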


\begin{example}
The following claim (which we state without proof) shows the importance of Pr\"ufer domains:

Suppose that $R$ is a Pr\"ufer domain with fraction field~$K$, and suppose that $L$ is an algebraic
extension of~$K$, then the integral closure of $R$ in~$L$ is also a Pr\"ufer domain.

For example, the ring of algebraic integers in the algebraic closure of $\bQ$, or in any algebraic
extension of~$\bQ$, is a Pr\"ufer domain.
\end{example}

\begin{exercise}
Recall that if $I$ and $J$ are ideals in a Dedekind domain then
$$ (I + J) (I \cap J)=I J$$
(see Exercise~\ref{ex28}). 
Suppose that $R$ is an integral domain where this identity holds.
Show that if $I$ and $J$ are invertible ideals then so are $I+J$ and $I\cap J$.
Conclude that all finitely generated fractional ideals must be invertible.
Conclude further that $R$ is a Pr\"ufer domain.
\end{exercise}

\begin{exercise}
(1) Why is every valuation ring a Pr\"ufer domain? Because of this fact,  every valuation ring is integrally closed 
by Theorem~\ref{thm179}.
(2) Give a simple direct proof that every valuation ring is integrally closed (i.e., a proof that does not use 
the notions of Pr\"ufer domain or weak cancellation domain).
\end{exercise}

Finally we mention (but do not develop) the ideal of a \emph{Krull domain}.
These were introduced by W.~Krull in 1931.
Suppose~$\{ v_i \}_{i\in I}$ is a family of discrete valuations of a field~$K$
such that, for each~$x \in K^\times$ there are only a finite number of $i\in I$
such that $v_i(x) \ne 0$.
Then the corresponding intersection intersection of DVRs
$$
R = \bigcap_{i\in I} \mathcal O_{v_i}
$$
is called the \emph{Krull domain} associated to $\{ v_i \}_{i\in I}$.
An integral domain is called a \emph{Krull domain} if it can be represented
as such an intersection for some such  $\{ v_i \}_{i\in I}$.

\begin{example}
Every Dedekind domain $R$ is a Krull domain. Here we take $\{ v_i \}_{i\in I}$
to be the collection of valuations associated with the set of nonzero prime ideals of $R$.

If $R$ happens to be a field $K$, then this gives an empty family. The empty intersection
in this case is  defined to be just $K$ itself.
\end{example}

\begin{example}
We won't justify the following, but these examples shows the scope 
of the notion of a Krull domain: 

If $R$ is Noetherian domain then it is a Krull domain 
if and only if $R$ is integrally closed. However, there are non Noetherian Krull domains. Even Noetherian Krull domains do not have to have
the property that every nonzero prime ideal is maximal. If $R$ is a Krull domain,
then so is $R[X]$. Every UFD is a Krull domain.
\end{example}

\begin{proposition}
Let $R$ be a Krull domain. Then $R$ is integrally closed.
\end{proposition}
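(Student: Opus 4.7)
The plan is to prove this directly from the definition of Krull domain together with Proposition~\ref{every_dvr_is_integrally_closed_prop}, which tells us that every DVR is integrally closed in its fraction field. The strategy is ``integral closure commutes with intersections'': an arbitrary intersection of integrally closed subrings of a fixed field $K$ is again integrally closed in $K$. Since $R$ is by hypothesis such an intersection of DVRs $\mathcal O_{v_i}$ (each of which has fraction field $K$ by Proposition~\ref{prop1}), the result should fall out almost immediately.

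More concretely, first I would fix the setup: write $R = \bigcap_{i \in I} \mathcal O_{v_i}$ where each $v_i \colon K^{\times} \to \bZ$ is a discrete valuation, and note that the fraction field of $R$ sits inside $K$. Let $x$ be an element of the fraction field of $R$ that is integral over $R$, so $x$ satisfies a relation of the form
\[
x^n + a_{n-1} x^{n-1} + \cdots + a_1 x + a_0 = 0
\]
with coefficients $a_j \in R$. Since $R \subseteq \mathcal O_{v_i}$ for every $i$, each $a_j$ lies in $\mathcal O_{v_i}$, so $x$ is integral over $\mathcal O_{v_i}$ as well.

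Now I would invoke Proposition~\ref{every_dvr_is_integrally_closed_prop}: each DVR $\mathcal O_{v_i}$ is integrally closed in its own fraction field, which is $K$. Since $x \in K$ is integral over $\mathcal O_{v_i}$, we conclude $x \in \mathcal O_{v_i}$. This holds for every $i \in I$, so
\[
x \in \bigcap_{i \in I} \mathcal O_{v_i} = R,
\]
which is exactly what we wanted.

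There really is no obstacle here; the only thing to be slightly careful about is the ambient field in which integrality is taking place, which is why I explicitly note that each $\mathcal O_{v_i}$ has fraction field $K$ (by Proposition~\ref{prop1}) and that the fraction field of $R$ embeds in $K$. The finiteness condition built into the definition of a Krull domain plays no role in this particular argument; it is only the presentation of $R$ as an intersection of integrally closed subrings of a common field that matters.
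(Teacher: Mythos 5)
Your proof is correct and is essentially the paper's own argument, just written out in full detail: the paper simply observes that $R$ is an intersection of integrally closed domains (the DVRs $\mathcal O_{v_i}$) and is therefore integrally closed. Your care about the ambient field and the role of Proposition~\ref{prop1} is a reasonable expansion of that one-line observation, and you correctly note that the finiteness condition in the definition of Krull domain is irrelevant here.
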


\begin{proof}
Observe that $R$ is the intersection of integrally closed domains, so is integrally closed.
\end{proof}


\chapter*{Appendix F: Another route to unique factorization}

We know by Theorem~\ref{thm_principal_product} that an integral domain
is a Dedekind domain if and only if has the following property:
for each nonzero ideal $I$ there is a nonzero ideal~$J$ such that~$IJ$ is principal.
Call this the \emph{principal-complement property}.
This property is sometimes used as a route to unique factorization of ideals, and was
the main route before the 1920s.

In this appendix we explore a more concrete approach to Theorem~\ref{thm_principal_product}.
More specifically, we show how to derive unique factorization of ideals for
rings such as the ring of integers~$\mathcal O_K$
in an algebraic number field~$K$, assuming that we have somehow established the
principal-complement property for $\mathcal O_K$ (it doesn't matter how).
Our proof of the unique factorization of ideal (Theorem~\ref{thmF}) will be low-tech and
not require the concepts 
of Noetherian, fractional ideal, integrally closed, discrete valuation ring, or localization.
It does use, however,
the concepts of prime and maximal ideal, and
the correspondence of ideals for quotient rings (see Proposition~\ref{prop116} in Appendix~C),
concepts that are fairly ubiquitous in an introductory abstract algebra sequence.
(Of course, to establish the principal-complement property in the first place you will certainly
need some of the more advanced concepts --- certainly the integrally closed property since
this is a defining property of~$\mathcal O_K$.)

As mentioned above, this approach is important from a historical perspective.
In fact, before the abstract concept of a Dedekind domain arose with Emmy Noether,
proofs of the unique factorization of ideals in~$\mathcal O_K$, for instance Hurwitz's proof,
establish the principal-complement property for $\mathcal O_K$ as a step to unique factorization of ideals.
Such proofs also exploit the fact that $\mathcal O_K/I$ is finite for nonzero ideals~$I$.\footnote{See
Hilbert's famous \emph{Zahlbericht} of 1897 for an example of this approach. Hilbert states he is following Hurwitz.}

In this appendix we will explore this route to unique factorization of ideals in a ring~$R$
that has similarities to~$\mathcal O_K$. So up through Theorem~\ref{thmF} we assume
the following:
\begin{enumerate}
\item
$R$ is an integral domain such that, for each nonzero $I$, the ring $R/I$ is finite.
\item
For every nonzero ideal $I$ of $R$ there is a nonzero ideal $J$ such that $IJ$ is principal.
\end{enumerate}

Now we derive some consequences of these two assumptions:

\begin{proposition}\label{propF1}
Every nonzero prime ideal~$\mathfrak p$ of $R$ is a maximal ideal.
\end{proposition}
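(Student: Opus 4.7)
The plan is to use only assumption (1); the principal-complement property is not needed for this particular statement. Let $\mathfrak{p}$ be a nonzero prime ideal of $R$. Then $R/\mathfrak{p}$ is an integral domain (because $\mathfrak{p}$ is prime) and finite (by assumption (1) applied to $I = \mathfrak{p}$).

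The key step will be to invoke the standard fact that every finite integral domain is a field. For completeness I would sketch the proof of this fact: given a nonzero element $a$ of a finite integral domain $D$, consider the multiplication-by-$a$ map $D \to D$, $x \mapsto ax$. It is injective by cancellation in the integral domain, hence surjective by finiteness, so some $b \in D$ satisfies $ab = 1$, showing $a$ is a unit.

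Applying this to $R/\mathfrak{p}$ shows that $R/\mathfrak{p}$ is a field, which by the standard characterization of maximal ideals means $\mathfrak{p}$ is maximal. There is no real obstacle here; the whole argument is a direct appeal to assumption (1) plus a well-known lemma. The harder work in this appendix will come later, when one has to use assumption (2) in a genuinely nontrivial way to extract a factorization.
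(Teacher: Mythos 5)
Your proof is correct and follows exactly the route the paper takes: observe $R/\mathfrak p$ is a finite integral domain by assumption (1), invoke the fact that finite integral domains are fields, and conclude $\mathfrak p$ is maximal. The only difference is that you spell out the standard injective-implies-surjective argument for the lemma, which the paper leaves as an exercise.
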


\begin{proof}
By assumption on~$R$ the quotient ring $R/\mathfrak p$ is a finite. 
However, every finite integral domain is a field (we leave this as an exercise).
Thus $\mathfrak p$ is maximal.
\end{proof}

\begin{proposition}\label{propF2}
Every proper nonzero ideal~$I$ of $R$ is contained in a prime ideal.
\end{proposition}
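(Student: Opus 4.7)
The plan is to exploit the finiteness hypothesis $R/I$ is finite, which makes the existence of a containing maximal ideal almost immediate via the ideal correspondence for quotient rings (Proposition~\ref{prop116} in Appendix~C).

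First I would reduce to finding a maximal ideal of $R$ containing $I$. Since $I$ is a proper ideal, the zero ideal of $R/I$ is a proper ideal of $R/I$, so the set of proper ideals of $R/I$ is nonempty. Because $R/I$ is finite, it has only finitely many subsets, hence only finitely many ideals, so among the proper ideals of $R/I$ one can pick a maximal element $\overline{\mathfrak m}$ (under inclusion) without any appeal to Zorn's lemma or the Noetherian property. This $\overline{\mathfrak m}$ is a maximal ideal of $R/I$.

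Next, by the correspondence of Proposition~\ref{prop116}, $\overline{\mathfrak m}$ corresponds to a unique ideal $\mathfrak m$ of $R$ containing $I$, and $\mathfrak m$ is maximal in $R$ because the correspondence is inclusion-preserving and bijective onto the set of ideals of $R$ containing $I$ (so any ideal strictly between $\mathfrak m$ and $R$ would descend to one strictly between $\overline{\mathfrak m}$ and $R/I$, contradicting maximality of $\overline{\mathfrak m}$). Since maximal ideals are always prime, $\mathfrak m$ is a prime ideal of $R$ containing $I$. Moreover $\mathfrak m \supseteq I \ne \{0\}$, so $\mathfrak m$ is a nonzero prime; this matches the conclusion (and is consistent with Proposition~\ref{propF1}).

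I do not expect any real obstacle here: the entire argument rests on the elementary observation that a finite commutative ring with unity (namely $R/I$) always possesses a maximal ideal. The only step worth double-checking is that the correspondence of Proposition~\ref{prop116} genuinely transports a maximal proper ideal of $R/I$ to a maximal proper ideal of $R$ (equivalently, that it preserves properness), but that is built into the statement of the correspondence since $R/I$ itself corresponds to $R$.
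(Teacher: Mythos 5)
Your argument is correct and is essentially the paper's own proof: $R/I$ is finite, hence has a maximal ideal, and Proposition~\ref{prop116} transports it to a prime ideal of $R$ containing $I$. The only difference is cosmetic — you pull back a maximal ideal and then invoke primeness in $R$, while the paper notes primeness already in $R/I$ and pulls back a prime; both steps rest on the same correspondence.
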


\begin{proof}
By assumption on~$R$, the quotient ring $R/I$ is finite.
Thus $R/I$ must have a maximal ideal, which must be a prime ideal.
By the correspondence of ideals, this gives a prime ideal of $R$ containing~$I$.
\end{proof}

\begin{proposition}\label{propF3}
The cancellation law holds in~$R$: if $I_1, I_2, J$ are nonzero ideals such 
that~$I_1 J = I_2 J$ then $I_1 = I_2$.
\end{proposition}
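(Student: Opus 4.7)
The plan is to use the principal-complement property to reduce cancellation by an arbitrary nonzero ideal $J$ to the much easier cancellation by a nonzero principal ideal, which is immediate from $R$ being an integral domain.

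First I would invoke hypothesis~(2) on $J$: there is a nonzero ideal $J'$ of $R$ such that $JJ'$ is principal, say $JJ' = aR$ for some nonzero $a\in R$. Multiplying both sides of the assumed equation $I_1 J = I_2 J$ on the right by $J'$ and using associativity and commutativity of the ideal product, this gives $I_1 (JJ') = I_2 (JJ')$, i.e., $I_1 \cdot aR = I_2 \cdot aR$. A direct check from the definition of the product of ideals shows $I \cdot aR = aI \defeq \{ a x \mid x \in I\}$ for any ideal $I$, so the equation becomes $aI_1 = aI_2$.

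It remains to cancel the nonzero element $a$. Given any $x\in I_1$, we have $ax \in aI_1 = aI_2$, so $ax = ay$ for some $y \in I_2$; since $R$ is an integral domain and $a\ne 0$, this forces $x = y \in I_2$. Thus $I_1 \subseteq I_2$, and by symmetry $I_2 \subseteq I_1$, giving $I_1 = I_2$ as required.

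There is really no main obstacle here: the whole proposition is an immediate application of the principal-complement property combined with the integral-domain hypothesis. I do not expect to need the finiteness hypothesis~(1) for this statement (it was essential for Propositions~\ref{propF1} and~\ref{propF2}, but cancellation follows purely from~(2) plus the integral-domain assumption). The only minor point to be careful about is the identity $I\cdot aR = aI$ and the verification that cancellation of a nonzero element in an integral domain extends from elements to ideals, both of which are routine.
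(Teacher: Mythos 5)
Your proof is correct and takes essentially the same approach as the paper: multiply through by an ideal $J'$ with $JJ'$ principal, then cancel the nonzero principal generator using the integral-domain hypothesis. Your write-up merely spells out the easy principal case that the paper leaves as "straightforward," and your observation that hypothesis~(1) is not needed here is also accurate.
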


\begin{proof}
If $J$ is a nonzero principal ideal, then this is straightforward.
In general, let~$J'$ be a nonzero ideal such that $J J' = a R$.
Multiply both sides of $I_1 J = I_2 J$ by~$J'$ to get $I_1 J J' = I_2 J J'$.
Thus $I_1 (Ra)=  I_2 (Ra)$.  Hence $I_1 = I_2$.
\end{proof}

\begin{proposition}\label{propF4}
Let $I$ and $J$ be nonzero ideals of~$R$.
Then $J \subseteq I$ if and only if~$I \mid J$.
\end{proposition}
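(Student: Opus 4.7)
The plan is to prove the nontrivial direction ``$J \subseteq I \implies I \mid J$'' by exploiting the principal-complement hypothesis to reduce the general case to the easy case of divisibility by a principal ideal. The reverse direction ``$I \mid J \implies J \subseteq I$'' is immediate from the definition of ideal product, since if $J = I I'$ then every generator of $J$ is a sum of products $a b$ with $a \in I, b \in I'$, which lies in $I$.

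For the forward direction, I would start by invoking assumption (2) to produce a nonzero ideal $I'$ and a nonzero element $a \in R$ with $I I' = a R$. Assuming $J \subseteq I$, multiply both sides by $I'$ to get $J I' \subseteq I I' = a R$. So every element of $J I'$ is a multiple of $a$, and since $R$ is an integral domain with $a \ne 0$, each $b \in J I'$ has a unique $c \in R$ with $a c = b$. Collect these: define
\[
K \;\defeq\; \{\, c \in R \mid a c \in J I' \,\}.
\]
A routine check (closure under subtraction, closure under multiplication by elements of $R$) shows that $K$ is an ideal of $R$, and by construction $a K = J I'$.

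Next I would compute
\[
a \cdot I K \;=\; I \cdot (a K) \;=\; I \cdot (J I') \;=\; J \cdot (I I') \;=\; J \cdot a R \;=\; a J.
\]
Since $R$ is a domain and $a \ne 0$, the map $L \mapsto a L$ is injective on ideals, so $I K = J$, which exhibits $I \mid J$.

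I do not expect a serious obstacle: the principal-complement hypothesis essentially reduces every divisibility question to the principal case, where ``contains iff divides'' is transparent in any integral domain. The one thing to verify with a little care is that $K$ really is an ideal of $R$ (not merely a subset), but this is immediate from the domain hypothesis and the definition of $J I'$ as a product of ideals. No use of Noetherian hypotheses, primeness, or the finiteness of $R/I$ is needed here; only assumption (2) and the fact that $R$ is a domain. Note that this argument effectively recovers Proposition~\ref{divisibility_principal_prop} in the special form we need, tailored to the current hypotheses.
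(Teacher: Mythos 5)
Your argument is correct and follows essentially the same route as the paper: use the principal-complement hypothesis to get $I I' = aR$, multiply through by $I'$ to reduce to the case of a principal divisor, extract the complementary ideal ($K$ in your notation, $I'$ in the paper's), and finish by cancellation. The only cosmetic difference is that the paper cancels the ideal $I''$ by invoking the already-proved cancellation law (Proposition~\ref{propF3}), whereas you fold everything into a single equation $a\cdot IK = aJ$ and cancel the element $a$ directly in the domain, which sidesteps the dependence on Proposition~\ref{propF3} entirely.
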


\begin{proof}
One direction is straightforward, so assume that $J \subseteq I$.
First we consider the case where $I = a R$.
If $I' = \{ b/a \mid b \in J\}$ then it is straightforward to show that~$I'$ is an ideal such that~$J = (a R) I'$.
In general, let $I''$ be such that $I I''$ is of the form $a R$.
Then $J I'' \subseteq I I'' = a R$. As before, there is an $I'$ such that
$$
J I'' = (a R) I' = (I I'') I' = (I I') I''.
$$
By cancellation, $J = I I'$.
\end{proof}

\begin{proposition}\label{propF5}
Let $\mathfrak p$ be a nonzero prime ideal of $R$. If $I$ and $J$ are nonzero ideals of $R$ such that $\mathfrak p \mid I J$
then $\mathfrak p \mid I$ or $\mathfrak p \mid J$.
\end{proposition}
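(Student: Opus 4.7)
The plan is to convert the divisibility statement into a containment statement using Proposition~\ref{propF4}, and then reduce the ideal-level conclusion to the element-level definition of a prime ideal. So first I would translate: the hypothesis $\mathfrak p \mid IJ$ is equivalent to $IJ \subseteq \mathfrak p$, and the desired conclusion $\mathfrak p \mid I$ or $\mathfrak p \mid J$ is equivalent to $I \subseteq \mathfrak p$ or $J \subseteq \mathfrak p$. Note that Proposition~\ref{propF4} applies to both directions here since $I, J, \mathfrak p$ are all nonzero (in particular the translation of the conclusion makes sense even when we don't yet know which inclusion holds).

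With the problem rephrased this way, I would argue by contradiction. Assume $I \not\subseteq \mathfrak p$ and $J \not\subseteq \mathfrak p$, and choose $a \in I \smallsetminus \mathfrak p$ and $b \in J \smallsetminus \mathfrak p$. Then $ab \in IJ \subseteq \mathfrak p$. Since $\mathfrak p$ is a prime ideal, either $a \in \mathfrak p$ or $b \in \mathfrak p$, which contradicts the choice of $a$ and $b$. So one of $I \subseteq \mathfrak p$ or $J \subseteq \mathfrak p$ holds, which by Proposition~\ref{propF4} gives $\mathfrak p \mid I$ or $\mathfrak p \mid J$.

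There isn't really a hard step here: the entire proof rests on the ``to contain is to divide'' equivalence of Proposition~\ref{propF4}, which does the real work of converting between the multiplicative and inclusion pictures. Once that equivalence is available, the element-level definition of prime ideal gives the result immediately. I would not expect this to require cancellation (Proposition~\ref{propF3}) or the existence of principal complements directly --- those were used to establish Proposition~\ref{propF4}, and this proposition inherits their consequences through it.
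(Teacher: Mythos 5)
Your proof is correct. The paper gives no proof for this proposition, following its stated convention that omitted proofs are left to the reader as straightforward; your argument is precisely the intended one, reducing divisibility to containment via Proposition~\ref{propF4} and then appealing to the element-level definition of a prime ideal.
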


\begin{proposition}\label{propF6}
Let $\mathfrak p_1, \mathfrak p_2$ be nonzero prime ideals of $R$. If $\mathfrak p_1 \mid \mathfrak p_2$ then $\mathfrak p_1 = \mathfrak p_2$.
\end{proposition}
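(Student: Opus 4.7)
The plan is to chain together two facts that are already in the toolkit: every nonzero prime of $R$ is maximal (Proposition~\ref{propF1}), and divisibility implies containment for ideals (the easy direction of Proposition~\ref{propF4}). So I would begin by unpacking the hypothesis $\mathfrak p_1 \mid \mathfrak p_2$ as the existence of an ideal $I$ with $\mathfrak p_2 = \mathfrak p_1 I$; since any product of ideals sits inside each factor, this immediately gives $\mathfrak p_2 \subseteq \mathfrak p_1$.

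Next I would invoke maximality. Since $\mathfrak p_2$ is a nonzero prime, Proposition~\ref{propF1} tells us $\mathfrak p_2$ is maximal. On the other hand, $\mathfrak p_1$ is a prime ideal, hence proper, so $\mathfrak p_2 \subseteq \mathfrak p_1 \subsetneq R$. Maximality of $\mathfrak p_2$ forces $\mathfrak p_1 = \mathfrak p_2$, which is exactly what we want.

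There is no real obstacle here: once Proposition~\ref{propF1} is available, the statement is a one-line consequence of containment together with maximality. The only thing worth double-checking is that we really do have $\mathfrak p_2 \subseteq \mathfrak p_1$ from $\mathfrak p_1 \mid \mathfrak p_2$, but this is the trivial half of Proposition~\ref{propF4} and does not require the principal-complement hypothesis. In particular, the argument uses neither cancellation (Proposition~\ref{propF3}) nor the nontrivial direction of Proposition~\ref{propF4}, so it fits naturally right after those results without any logical detour.
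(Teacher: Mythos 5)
Your proof is correct, and it is the argument the paper intends (no proof is given in the text because, per the author's stated policy, the omission signals that it is a routine verification). You correctly observe that $\mathfrak p_1 \mid \mathfrak p_2$ gives $\mathfrak p_2 \subseteq \mathfrak p_1$, that Proposition~\ref{propF1} makes $\mathfrak p_2$ maximal, and that maximality of the smaller ideal together with properness of $\mathfrak p_1$ forces equality.
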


\begin{lemma}
If $I \mid J$ but $I \neq J$ then $R/I$ has fewer elements than $R/J$.
\end{lemma}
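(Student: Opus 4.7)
The plan is to exploit the natural surjection between the two quotient rings induced by the inclusion $J \subseteq I$, together with the finiteness hypothesis on quotients of~$R$.

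First I would observe that since $I \mid J$, Proposition~\ref{propF4} (or just the definition of divisibility together with the containment direction) gives $J \subseteq I$. Combined with the hypothesis $I \neq J$, this strengthens to $J \subsetneq I$. Now the inclusion $J \subseteq I$ means that every element of $J$ maps to zero under the quotient homomorphism $R \to R/I$, so this homomorphism factors through~$R/J$, yielding a well-defined surjective ring homomorphism
$$
\pi \colon R/J \;\longrightarrow\; R/I, \qquad [a]_J \;\mapsto\; [a]_I.
$$
The kernel of $\pi$ is exactly the image of $I$ in~$R/J$, namely $I/J$.

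Next I would use the strict containment $J \subsetneq I$ to pick an element $a \in I \smallsetminus J$; then $[a]_J$ is a nonzero element of the kernel of $\pi$, so $\pi$ is not injective. Since $R/J$ is finite by assumption (1) on~$R$, a surjective non-injective map from $R/J$ to $R/I$ forces~$|R/I| < |R/J|$ strictly. (Equivalently, one may count: $|R/J| = |R/I| \cdot |I/J|$ with $|I/J| \ge 2$.)

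There is no real obstacle here; the only subtle point is remembering to invoke the finiteness hypothesis, since without it the pigeonhole argument collapsing "surjective and non-injective" into "strictly larger" would not be available. The argument is otherwise a direct unwinding of the definitions and a single application of Proposition~\ref{propF4}.
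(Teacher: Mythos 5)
Your proof is correct and uses the same idea as the paper, which invokes the third isomorphism theorem $(R/J)/(I/J) \cong R/I$; your surjection $\pi\colon R/J \to R/I$ with kernel $I/J$ is precisely this isomorphism unpacked, and the finiteness hypothesis finishes it off in the same way.
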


\begin{proof}
This follows from the isomorphism $(R/J)/(I/J) \cong R/I$.
\end{proof}

\begin{theorem}\label{thmF}
Every nonzero proper ideal $I$ of $R$ is uniquely the product of nonzero prime ideals of~$R$.
(Uniqueness is up to order).
\end{theorem}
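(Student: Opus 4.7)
The plan is to prove existence of prime factorization by strong induction on the cardinality of the finite ring $R/I$, and then prove uniqueness by the standard argument using the primality criterion for divisibility (Proposition~\ref{propF5}) together with cancellation (Proposition~\ref{propF3}).

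For existence, given a nonzero proper ideal $I$, I would invoke Proposition~\ref{propF2} to find a nonzero prime ideal $\mathfrak{p}$ containing $I$. Then by Proposition~\ref{propF4}, $\mathfrak{p} \mid I$, so there is a nonzero ideal $I'$ with $I = \mathfrak{p} I'$ (nonzero because $R$ is an integral domain and $I \ne 0$). If $I' = R$, then $I = \mathfrak{p}$ and we are done. Otherwise $I'$ is a nonzero proper ideal with $I' \mid I$; moreover $I' \ne I$, because $I' = I$ would give $\mathfrak{p} I = I$, which by Proposition~\ref{propF3} (cancellation) forces $\mathfrak{p} = R$, contradicting that $\mathfrak{p}$ is a proper prime. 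By the preceding lemma, $R/I'$ has strictly fewer elements than $R/I$, so by strong induction on the finite cardinality $|R/I|$, $I'$ is a product of nonzero primes $\mathfrak{p}_2 \cdots \mathfrak{p}_k$, whence $I = \mathfrak{p} \mathfrak{p}_2 \cdots \mathfrak{p}_k$. The base of the induction is the smallest case where $R/I$ is just large enough to admit a maximal ideal, which is handled by the same argument since we end up with $I' = R$.

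For uniqueness, suppose $I = \mathfrak{p}_1 \cdots \mathfrak{p}_k = \mathfrak{q}_1 \cdots \mathfrak{q}_l$ are two prime factorizations, and proceed by induction on $\min(k,l)$ (or on $k+l$). Since $\mathfrak{p}_1 \mid \mathfrak{q}_1 \cdots \mathfrak{q}_l$, iterating Proposition~\ref{propF5} gives some index $i$ with $\mathfrak{p}_1 \mid \mathfrak{q}_i$. By Proposition~\ref{propF6}, $\mathfrak{p}_1 = \mathfrak{q}_i$. After reindexing so that $i=1$, apply Proposition~\ref{propF3} to cancel $\mathfrak{p}_1$ from both sides, obtaining $\mathfrak{p}_2 \cdots \mathfrak{p}_k = \mathfrak{q}_2 \cdots \mathfrak{q}_l$, and invoke the inductive hypothesis. (One should verify the edge cases where one side is the empty product, which would force the other side to equal $R$, impossible since each factor is proper.)

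The main obstacle is mostly bookkeeping: making sure the induction on $|R/I|$ is genuinely well-founded (provided by the lemma plus the finiteness assumption on $R/I$), and handling the degenerate cases of the induction on factorization length cleanly. There is no real difficulty since the heavy lifting — that containment equals divisibility, that cancellation holds, and that primes behave like primes — has already been established in Propositions~\ref{propF3} through~\ref{propF6}. In effect, the theorem reduces to the observation that the monoid of nonzero ideals under multiplication behaves exactly like the multiplicative monoid of positive integers once one has divisibility-by-containment, cancellation, and the prime divisibility property, and the classical $\mathbb{Z}$-style argument for unique factorization goes through verbatim.
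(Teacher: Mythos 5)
Your proof is correct and takes essentially the same approach as the paper: extract a prime factor via Propositions~\ref{propF2} and~\ref{propF4}, use cancellation (Proposition~\ref{propF3}) together with the cardinality lemma to guarantee termination/descent, and prove uniqueness via Propositions~\ref{propF5}, \ref{propF6}, and cancellation. The only difference is presentational — you phrase the existence argument as strong induction on $|R/I|$ while the paper phrases it as an iterative process that must terminate — but these are the same argument.
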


\begin{proof}
Let $\mathfrak p_1$ be a prime ideal containing~$I$ (Proposition~\ref{propF2}).
Observe that $\mathfrak p_1 \mid I$ so $I = \mathfrak p_1 I_1$ for some
nonzero ideal $I_1$ (Proposition~\ref{propF4}). If $I_1 = R$ we are done.
Otherwise, continue by writing $I_1 = \mathfrak p_2 I_2$ with $\mathfrak p_2$ a prime ideal
and $I_2$ a nonzero ideal.  We continue in this way.
If, at any point, $I_i = R$ we stop, and have the existence of a prime factorization of~$I$.

We still need to argue that this process stops. Observe that if $I_i$ is a proper ideal
then $I_i \ne I_{i+1}$. Otherwise $I_i = \mathfrak p_{i+1} I_{i+1} =  \mathfrak p_{i+1} I_i$, 
which implies $\mathfrak p_{i+1} = R$
by the cancellation law. This contradicts the definition of prime ideal.
So, by the above lemma, $R / I_{i+1}$ has fewer elements
than~$R / I_{i}$. Since each $R/I_{i}$ is finite we must eventually have $I_{i} = R$.

Finally, we prove uniqueness in the usual way (using Proposition~\ref{propF5}, Proposition~\ref{propF6},
and the cancellation law).
\end{proof}

The above approach quite simple and appealing, so much so that 
it is natural to suspect that the primary difficulty in pursuing this route is showing 
the principal-complement property. How do we establish this property in a relatively
concrete manner? There are two 
approaches that I am aware of. One is related to Gauss's lemma, and was used by Hurwitz
in one of his proofs.\footnote{See E.~Hecke, \emph{Vorlesungen  \"uber die Theorie
der algebraischen Zahlen} for a proof which Hecke attributes to Hurwitz, with simplifications 
he attributes to Steinitz,
that uses a basic form of Gauss's lemma (see also Lemma 2 of Hilbert's \emph{Zahlbericht}
for the basic form of Gauss's lemma needed). Note: what I am calling ``Gauss's lemma'' 
(See Section~\ref{ch_gauss_lemma} above)
is a generalization of the classical Gauss's lemma for $\bZ[X]$ 
and this generalization is related to Kronecker's theory of forms
developed the 19th century.
}
We will not pursue the Gauss's lemma approach here.\footnote{I can mention that it
depends on proving a special case of Gauss's lemma for integrally closed integral domains, and then setting up a polynomial
$f$ such that $I$ is the content of~$f$. One then  finds another polynomial $g$ such
that the content of $f g$ is principal. The desired $J$ is then the content of $g$.}
 The other is related to the finiteness of the class group of~$\mathcal O_K$.\footnote{A proof based
 on proving the class number is finite is given in \emph{A Classical Introduction to Modern Number Theory},
 by K.~Ireland and M~Rosen. Their approach is also based on results of Hurwitz.}
We will not discuss how one would establish this finiteness claim, but will instead show how this finiteness claim
can be used to establish the principal-complement property.  We will work in a general integral domain $R$,
but of course in practice we have in mind the ring of integers~$\mathcal O_K$ in an algebraic number field~$K$.
(Dedekind domains arising in algebraic geometry typically have infinite class group.)
In what follows we will make use of fractional ideals and the notions of Noetherian
and integrally closed.

\begin{definition} \label{class_monoid_def1}
Two  fractional ideals $I$ and $J$ in an integral domain~$R$ are \emph{equivalent
modulo principal ideals} if $I = (x R) J$ for some principal fractional ideal~$xR$.
\end{definition}

\begin{remark}
If you prefer to just use integral ideals, then $I$ and $J$ are equivalent if and only if
$a I = b J$ for some $a, b \in R$ nonzero.
\end{remark}

\begin{proposition}
The above relation is an equivalence relation on the set of 
fractional ideals. Every class contains an integral ideal.
\end{proposition}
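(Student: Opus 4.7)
The plan is to verify reflexivity, symmetry, and transitivity of the relation directly from the definition, using that fractional ideals are nonzero by definition and that the principal fractional ideals form a group under multiplication (Proposition~\ref{easy_invert_prop}). Then I would produce an integral representative of each class by clearing denominators.

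For reflexivity, I would take $x = 1$ and observe $I = R \cdot I = (1R)I$. For transitivity, if $I = (xR)J$ and $J = (yR)L$, then
$$
I = (xR)(yR)L = (xyR)L,
$$
so $I$ is equivalent to $L$ via the principal ideal $(xy)R$. The only subtle point is symmetry: if $I = (xR)J$, I must argue that $x \in K^\times$ so that I can multiply by $x^{-1}R$. This follows because both $I$ and $J$ are fractional ideals and hence nonzero by definition, so the equation $I = (xR)J$ forces $x \neq 0$; then multiplying both sides by the principal fractional ideal $x^{-1}R$ gives
$$
(x^{-1}R)I = (x^{-1}R)(xR)J = R J = J,
$$
so $J = (x^{-1}R)I$ with $x^{-1} \in K^\times$, as required.

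For the second assertion, given a fractional ideal $I$, the definition guarantees a nonzero $d \in R$ with $dI \subseteq R$. Then $dI = (dR)I$ is both an integral ideal of $R$ (by the defining property of fractional ideals) and equivalent to $I$ in the above sense, so every equivalence class meets the set of integral ideals.

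I do not anticipate a genuine obstacle here; the only thing to be careful about is not to accidentally allow $x = 0$ (which would not give a principal \emph{fractional} ideal and would make $I = 0$, contradicting the standing assumption that fractional ideals are nonzero). Once that observation is in place, the three axioms and the clearing-denominators step are immediate.
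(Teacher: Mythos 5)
Your proof is correct and is exactly the routine verification the paper expects (no proof is supplied in the text, signalling it is left to the reader). The three equivalence-relation axioms and the clearing-of-denominators step are all handled properly, and your use of Proposition~\ref{easy_invert_prop} to multiply by $x^{-1}R$ for symmetry is the right move.

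One small observation: the ``subtle point'' you flag about needing $x \in K^\times$ is already built into the paper's Definition~\ref{class_monoid_def1}, which requires $xR$ to be a \emph{principal fractional ideal} --- and by the earlier convention a principal fractional ideal is $xR$ with $x \in K^\times$. So you do not actually need to deduce $x \neq 0$ from nonvanishing of $I$ and $J$; it is part of the hypothesis. Your argument is of course still valid as a sanity check, and it would be the right thing to say if one were starting from a looser-looking equation $I = (xR)J$ with $x$ allowed to range over all of $K$.
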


\begin{lemma}
Suppose $I_1$ and $I_2$ are fractional ideals that are equivalent modulo principal ideals. 
Then $I_1 J$ is equivalent to $I_2 J$
for all fractional ideals $J$.
\end{lemma}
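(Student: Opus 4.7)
The plan is to unfold the definition of equivalence modulo principal ideals and multiply through by $J$, using associativity and commutativity of the product of fractional ideals. Since all the heavy lifting has already been done earlier in the paper (the product of fractional ideals is associative and commutative, and principal fractional ideals are closed under multiplication), this should be essentially a one-line computation.

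First I would use Definition~\ref{class_monoid_def1} to write $I_1 = (xR) I_2$ for some $x \in K^\times$. Then I would multiply both sides on the right by $J$ to obtain
\[
I_1 J = \bigl((xR) I_2\bigr) J = (xR) (I_2 J),
\]
where the second equality is just associativity of the product of $R$-submodules of $K$ (established back in Section~\ref{ch2.5}). Since $xR$ is a principal fractional ideal, this exhibits $I_1 J$ and $I_2 J$ as equivalent modulo principal ideals, as required.

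There really isn't a main obstacle here: the result is essentially a direct consequence of the definition together with associativity, and the only thing to be careful about is that the claim is phrased symmetrically (``$I_1 J$ is equivalent to $I_2 J$''), which is automatic since the equivalence relation is symmetric. No additional hypotheses on $R$ are needed beyond its being an integral domain.
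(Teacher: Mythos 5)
Your proof is correct, and since the paper leaves this lemma unproved (signaling it as a straightforward exercise), your one-line argument via $I_1 = (xR)I_2 \Rightarrow I_1 J = (xR)(I_2 J)$ is exactly the intended route. Nothing further is needed.
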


\begin{definition} \label{class_monoid_def2}
Let $R$ be an integral domain. Let $[I]$ and $[J]$ be equivalence classes under the relation 
of Definition~\ref{class_monoid_def1}. Then define the product $[I] [J]$ as $[IJ]$.
This is well-defined by the above lemma. 
We call the set of such equivalence classes under products the \emph{class monoid of~$R$}.
\end{definition}

\begin{proposition}
The class monoid of an integral domain~$R$ is a commutative monoid with unit $[R]$.
\end{proposition}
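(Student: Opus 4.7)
The plan is to verify the monoid axioms directly, leveraging the fact that the collection of fractional ideals of~$R$ already forms a commutative monoid under the product operation with identity~$R$, as established earlier. Well-definedness of the product $[I][J] \defeq [IJ]$ on equivalence classes has already been handled by the preceding lemma, so all that remains is to transfer the monoid structure from fractional ideals to equivalence classes.

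First I would verify commutativity: since $IJ = JI$ for fractional ideals, we immediately obtain $[I][J] = [IJ] = [JI] = [J][I]$. Next, for associativity, using associativity of the product on fractional ideals,
$$
([I][J])[K] = [IJ][K] = [(IJ)K] = [I(JK)] = [I][JK] = [I]([J][K]).
$$
Finally, for the identity, using that $R$ is the identity for the fractional ideal monoid, we have $[R][I] = [RI] = [I]$ and $[I][R] = [IR] = [I]$, so $[R]$ serves as the unit.

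There is essentially no obstacle here: every verification reduces to applying the corresponding property on representatives, which is already established for the monoid of fractional ideals. The only subtlety worth flagging is making sure the chosen representatives really do give the claimed products on the nose (not merely up to equivalence), but this is immediate from the definition of the product on classes and the monoid laws for fractional ideals.
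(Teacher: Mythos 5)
Your proof is correct, and it is exactly the straightforward verification the paper has in mind (the paper omits the proof precisely because it reduces, axiom by axiom, to the already-established monoid structure on fractional ideals, once well-definedness is settled by the preceding lemma). Nothing to add.
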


\begin{theorem}
Let $R$ be an integrally closed Noetherian ring
whose class monoid~$R$ is finite. Then for every
ideal $I$ there is an ideal $J$ such that~$I J$ is principal.
\end{theorem}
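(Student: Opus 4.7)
The plan is to reduce the theorem to a pigeonhole argument on the powers of $[I]$ in the finite class monoid, and then to extract an actual identity of fractional ideals from the resulting equation of equivalence classes by invoking the cancellation law for integrally closed Noetherian domains (Exercise~\ref{ex15}). The zero ideal can be handled at once by taking $J = R$, since $0 = 0R$ is principal, so I may restrict attention to nonzero $I$, in which case $[I]$ is an honest element of the class monoid.

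First I would consider the sequence $[I], [I]^2, [I]^3, \ldots$ in the finite class monoid of $R$. By pigeonhole there exist positive integers $m < n$ with $[I]^m = [I]^n$. Unpacking the equivalence relation (Definition~\ref{class_monoid_def1}), this means that $I^m = (xR)\, I^n$ for some $x \in K^\times$, where $K$ is the fraction field of $R$. Since $I^n = I^{n-m} \cdot I^m$, I can rewrite this as
$$
I^m \;=\; \bigl((xR)\, I^{n-m}\bigr)\, I^m.
$$

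At this point I would invoke Exercise~\ref{ex15}, which is tailor-made for the present hypotheses: for fractional ideals $I_1, J$ over an integrally closed Noetherian domain, the equality $I_1 J = J$ forces $I_1 = R$. Applying this with $I_1 = (xR)\, I^{n-m}$ (a fractional ideal) and $J = I^m$ (also a fractional ideal) yields $(xR)\, I^{n-m} = R$, so $I^{n-m} = x^{-1} R$. Because $I^{n-m}$ is an integral ideal, we must in fact have $x^{-1} \in R$, so $I^{n-m}$ is a principal integral ideal. Setting $J := I^{n-m-1}$, interpreted as $R$ when $n - m = 1$, I obtain $IJ = I^{n-m}$, which is principal, as required.

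The main obstacle — or rather, the only step with real content beyond pigeonhole — is the cancellation, and this is precisely where both hypotheses are used: integrally closed together with Noetherian is what lets Exercise~\ref{ex15} (built on Proposition~\ref{subset_cancel_prop} and Exercise~\ref{exx}) upgrade the class equality $[I]^m = [I]^n$ to an actual equality of fractional ideals involving only $I^{n-m}$. Everything else — extracting the scalar $x$ from the equivalence, recognizing that an integral ideal which is principal as a fractional ideal is already principal as an integral ideal, and peeling off one factor of $I$ — is routine.
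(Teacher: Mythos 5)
Your proof is correct and follows essentially the same route as the paper's: pigeonhole on the finite class monoid to get $[I]^m = [I]^n$ with $0 < m < n$, then cancellation via Exercise~\ref{ex15} (enabled by the integrally closed and Noetherian hypotheses) to conclude that $I^{n-m}$ is principal. The only difference is bookkeeping: the paper writes $x$ as a ratio of elements of $R$ and clears denominators before cancelling with the second part of Exercise~\ref{ex15}, whereas you apply the $I_1 J = J \implies I_1 = R$ form directly to the fractional-ideal identity, which is a slightly tidier path to the same conclusion.
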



\begin{proof}
By finiteness,
$[I]^m = [I]^{n}$ for integers $m, n$ with $0 < m < n$. 
This means that~$I^m = I^{n} (x R)$ for some principal fractional ideal $x R$.
We can write this as
$$
 I^m (a R) = I^m I^k (b R)
$$
where $k = n - m$, and $a, b \in R$ nonzero such that $x = a/b$.
By Exercise~\ref{ex15} we cancel to get $aR = I^k (R b)$. 
Let $J$ be~$I^{k-1}(R b)$.
\end{proof}

\begin{remark}
This approach is fairly concrete.
This proof uses Exercise~\ref{ex15}, which in turn
uses standard properties resulting from~$R$ being integrally closed. Exercise~\ref{ex15}  also
depends on Exercise~\ref{exx} which can be established using linear algebra.
The proof uses fractional ideals for convenience, but it is easily adapted to an approach that uses integral ideals only.
The notion of Noetherian can be replaced by showing directly in $\mathcal O_K$ 
that ideals (and fractional ideals) have a finite $\bZ$-basis.

The above result also establishes invertibility of each $[I]$, so the class monoid is seen to be a group.
Observe that for a Dedekind domain, the class monoid is just the class group
of fractional ideals modulo fractional principal ideals.
\end{remark}

\begin{exercise}
Show that two fractional ideals $I$ and $J$ of an integral domain $R$ are isomorphic as $R$-modules
if and only if they are equivalent modulo principal ideals (Definition~\ref{class_monoid_def1}).
So in some sense the class monoid classifies isomorphism types of fractional ideals.
As a first step, show that any isomorphism between~$R$-submodules of the fraction field~$K$
is of the form $x \mapsto c x$ where $c \in K^\times$.

Hint: if $\varphi: I\to J$ is an isomorphism between $R$-submodules of~$K$,
and if one nonzero value is given,  $x_2 = \varphi(x_1)$ say, show that $\varphi(x) = c x$
where $c = x_2/x_1$.
\end{exercise}


\end{document}